\documentclass[a4paper,11pt]{amsart}
\usepackage{amsmath,amsthm,amssymb,mathrsfs,stmaryrd,mathtools}
\usepackage{bm}
\usepackage{xurl}
\usepackage{color}
\usepackage{eucal}
\usepackage{slashed}
\usepackage{physics}
\usepackage{graphicx}
\usepackage{tikz}
\usetikzlibrary{nfold,arrows}
\usepackage{tikz-cd}
\usetikzlibrary{intersections, calc, arrows, arrows.meta, shadows}
\usepackage[utf8]{inputenc} 
\usepackage[T1]{fontenc}
\usepackage[style=alphabetic,sorting=nty]{biblatex}
\usepackage{geometry} 
\usepackage[inline]{enumitem} 
\setlist[enumerate]{itemsep=2pt,parsep=2pt,before={\parskip=2pt}}
\setlist[description]{style=standard}
\usepackage[colorlinks=true, hyperindex, linkcolor=magenta, pagebackref=false, citecolor=cyan, pdfpagelabels]{hyperref} 
\usepackage{aliascnt}
\usepackage{cleveref}
\usepackage{quiver}
\usepackage{mathtools}
\mathtoolsset{showonlyrefs=true}

\newcommand{\calA}{\mathcal{A}}
\newcommand{\calB}{\mathcal{B}}
\newcommand{\calC}{\mathcal{C}}
\newcommand{\calD}{\mathcal{D}}
\newcommand{\calE}{\mathcal{E}}
\newcommand{\calG}{\mathcal{G}}
\newcommand{\calH}{\mathcal{H}}

\newcommand{\calP}{\mathcal{P}}

\newcommand{\calT}{\mathcal{T}}
\newcommand{\calU}{\mathcal{U}}
\newcommand{\calV}{\mathcal{V}}

\newcommand{\frakC}{\mathfrak{C}}

\newcommand{\bbA}{\mathbb{A}}
\newcommand{\bbE}{\mathbb{E}}
\newcommand{\bbF}{\mathbb{F}}
\newcommand{\bbG}{\mathbb{G}}
\newcommand{\bbH}{\mathbb{H}}
\newcommand{\bbI}{\mathbb{I}}
\newcommand{\bbJ}{\mathbb{J}}
\newcommand{\bbK}{\mathbb{K}}
\newcommand{\bbL}{\mathbb{L}}
\newcommand{\bbN}{\mathbb{N}}
\newcommand{\bbP}{\mathbb{P}}
\newcommand{\bbR}{\mathbb{R}}
\newcommand{\bbZ}{\mathbb{Z}} 

\newcommand{\FdagSSet}{F_{\dag}^{\sSet}}
\newcommand{\UdagSSet}{U_{\dag}^{\sSet}}
\newcommand{\FdagSCat}{F_{\dag}^{\sCat}}
\newcommand{\UdagSCat}{U_{\dag}^{\sCat}}
\newcommand{\FCatDag}{F_{\sCat}^{\dag}}
\newcommand{\UCatDag}{U_{\sCat}^{\dag}}
\newcommand{\FCatDagO}{F_{\sCat,O}^{\dag}}
\newcommand{\UCatDagO}{U_{\sCat,O}^{\dag}}

\newcommand{\AInv}{\mathrm{AInv}}
\newcommand{\Bergner}{\mathrm{Bergner}}
\newcommand{\DCH}{\mathrm{DCH}}
\newcommand{\rmDH}{\mathrm{DH}}
\newcommand{\ES}{\mathrm{ES}}
\newcommand{\Fun}{\mathrm{Fun}}
\newcommand{\Kan}{\mathrm{Kan}}
\newcommand{\qCat}{\mathrm{qCat}}
\newcommand{\sCat}{\mathrm{sCat}}
\newcommand{\Set}{\mathrm{Set}}
\newcommand{\sGpd}{\mathrm{sGpd}}
\newcommand{\sGph}{\mathrm{sGph}}
\newcommand{\Sing}{\mathrm{Sing}}
\newcommand{\sSet}{\mathrm{sSet}}

\newcommand{\ai}{\mathrm{ai}}
\newcommand{\cell}{\mathrm{cell}}
\newcommand{\cod}{\mathrm{cod}}
\newcommand{\cof}{\mathrm{cof}}
\DeclareMathOperator*{\colim}{colim}
\newcommand{\cu}{\mathrm{cu}}
\renewcommand{\ev}{\mathrm{ev}}
\newcommand{\fib}{\mathrm{fib}}
\newcommand{\gph}{\mathrm{gph}}
\newcommand{\h}{\mathrm{h}} 
\newcommand{\Ho}{\mathrm{Ho}}
\DeclareMathOperator{\Hom}{Hom}
\newcommand{\inj}{\mathrm{inj}}
\DeclareMathOperator{\id}{id}
\newcommand{\Joyal}{\mathrm{Joyal}}
\newcommand{\loc}{\mathrm{loc}}
\DeclareMathOperator{\Map}{Map}
\DeclareMathOperator{\Mor}{Mor}
\DeclareMathOperator{\Ob}{Ob}
\DeclareMathOperator{\myop}{op}
\newcommand{\rmM}{\mathrm{M}}
\newcommand{\Path}{\mathrm{Path}}
\newcommand{\proj}{\mathrm{proj}}
\newcommand{\rev}{\mathrm{rev}}
\newcommand{\RMap}{\mathrm{RMap}}
\newcommand{\rmtag}{\mathrm{tag}}
\newcommand{\rmtop}{\mathrm{top}}
\newcommand{\sa}{\mathrm{sa}}
\newcommand{\sk}{\mathrm{sk}} 
\newcommand{\Tw}{\mathrm{Tw}}
\newcommand{\und}{\mathrm{und}}

\usepackage{relsize}
\usepackage[bbgreekl]{mathbbol}
\usepackage{amsfonts}
\DeclareSymbolFontAlphabet{\mathbb}{AMSb} 
\DeclareSymbolFontAlphabet{\mathbbl}{bbold}
\newcommand{\prism}{{\mathlarger{\mathbbl{\Delta}}}}

\DeclareFontFamily{U}{dmjhira}{}
\DeclareFontShape{U}{dmjhira}{m}{n}{
  <-> dmjhira
}{}
\DeclareFontSubstitution{U}{dmjhira}{m}{n}
\newcommand{\yo}{\text{{\usefont{U}{dmjhira}{m}{n}\symbol{"48}}}}

\theoremstyle{definition}
\newtheorem{theorem}{Theorem}[subsection]

\newaliascnt{conjecture}{theorem}

\aliascntresetthe{conjecture}
\crefname{conjecture}{conjecture}{conjectures}
\Crefname{conjecture}{Conjecture}{Conjectures}

\newaliascnt{construction}{theorem}
\newtheorem{construction}[construction]{Construction}
\aliascntresetthe{construction}
\crefname{construction}{construction}{constructions}
\Crefname{construction}{Construction}{Constructions}

\newaliascnt{corollary}{theorem}
\newtheorem{corollary}[corollary]{Corollary}
\aliascntresetthe{corollary}
\crefname{corollary}{corollary}{corollaries}
\Crefname{corollary}{Corollary}{Corollaries}

\newaliascnt{definition}{theorem}
\newtheorem{definition}[definition]{Definition}
\aliascntresetthe{definition}
\crefname{definition}{definition}{definitions}
\Crefname{definition}{Definition}{Definitions}

\newaliascnt{example}{theorem}
\newtheorem{example}[example]{Example}
\aliascntresetthe{example}
\crefname{example}{example}{examples}
\Crefname{example}{Example}{Examples}

\newaliascnt{lemma}{theorem}
\newtheorem{lemma}[lemma]{Lemma}
\aliascntresetthe{lemma}
\crefname{lemma}{lemma}{lemmas}
\Crefname{lemma}{Lemma}{Lemmas}

\newaliascnt{notation}{theorem}
\newtheorem{notation}[notation]{Notation}
\aliascntresetthe{notation}
\crefname{notation}{notation}{notations}
\Crefname{notation}{Notation}{Notations}

\newaliascnt{observation}{theorem}

\aliascntresetthe{observation}
\crefname{observation}{observation}{observations}
\Crefname{observation}{Observation}{Observations}

\newaliascnt{proposition}{theorem}
\newtheorem{proposition}[proposition]{Proposition}
\aliascntresetthe{proposition}
\crefname{proposition}{proposition}{propositions}
\Crefname{proposition}{Proposition}{Propositions}

\newaliascnt{remark}{theorem}
\newtheorem{remark}[remark]{Remark}
\aliascntresetthe{remark}
\crefname{remark}{remark}{remarks}
\Crefname{remark}{Remark}{Remarks}

\newaliascnt{fact}{theorem}

\aliascntresetthe{fact}
\crefname{fact}{fact}{facts}
\Crefname{fact}{Fact}{Facts}

\newaliascnt{terminology}{theorem}

\aliascntresetthe{terminology}
\crefname{terminology}{terminology}{terminologies}
\Crefname{terminology}{Terminology}{Terminologies}

\theoremstyle{definition}
\newtheorem*{theorem*}{Theorem}
\newtheorem*{conjecture*}{Conjecture}
\newtheorem*{construction*}{Construction}
\newtheorem*{corollary*}{Corollary}
\newtheorem*{definition*}{Definition}
\newtheorem*{example*}{Example}
\newtheorem*{lemma*}{Lemma}
\newtheorem*{notation*}{Notation}
\newtheorem*{observation*}{Observation}
\newtheorem*{proposition*}{Proposition}
\newtheorem*{remark*}{Remark}
\newtheorem*{terminology*}{Terminology}

\newtheorem*{maintheorema}{Theorem A}
\newtheorem*{maintheoremb}{Theorem B}
\newtheorem*{maintheoremc}{Theorem C}
\newtheorem*{maintheoremd}{Theorem D}

\title[Models for dagger $(\infty,1)$-categories I]{Models for dagger $(\infty,1)$-categories I:\\
Dagger simplicial sets and unitary cores}
\author{Keima Akasaka} 
\thanks{Please contact \url{quasi.cosmoi@gmail.com}, if you find any typos or incorrect arguments.}
\date{\today}

\begin{document}

\begin{abstract}
  We develop simplicial-set models for dagger $(\infty,1)$-categories.
  We first establish a model structure on dagger simplicial categories whose weak equivalences are local weak equivalences that are essentially surjective up to coherently unitary equivalence.
  We then construct the dagger Joyal model structure on dagger simplicial sets and prove that the dagger rigidification--nerve adjunction is a Quillen equivalence.
  We next introduce the unitary core and use it to characterize dagger Joyal equivalences intrinsically.
  Finally, we compare the dagger Joyal model structure with the DCH anti-involutive Joyal model structure.
\end{abstract}

\maketitle 
\setcounter{tocdepth}{1} 
\tableofcontents

\section{Introduction}

Dagger structures arise whenever morphisms carry a preferred adjoint or reversal.
An ordinary dagger category is a category equipped with an identity-on-objects involutive functor to its opposite.
The aim of this paper is to develop model-categorical presentations of the corresponding dagger $(\infty,1)$-categories and to compare them by Quillen equivalences (\cref{dj.thm.equivalence}).
Our constructions are dagger analogues of the Bergner model structure on simplicial categories, the Joyal model structure on simplicial sets, and the complete-Segal-space comparison of Joyal and Tierney \cite{Bergner07,Rez01,JT06}.

Passing to dagger objects introduces genuinely equivariant phenomena.
In particular, the equation $[u]^{\dag}=[u]^{-1}$ in the homotopy category does not in general encode the homotopy-coherent data required by our object-level notion of equivalence:
a weakly unitary class need not be coherently unitary (\cref{bg.prop.weak-not-coherent}).
We encode this additional coherence by natural dagger intervals, modeled by fixed-object cofibrant resolutions of the walking unitary isomorphism.
For ordinary dagger categories, regarded as simplicial categories with discrete mapping spaces, coherent unitarity agrees with the usual notion of a unitary isomorphism (\cref{bg.lem.discrete}).

\subsection*{The main theorems}

Our first main result constructs the dagger Bergner model structure.

\begin{maintheorema}[\cref{bg.thm.main,bg.thm.left-proper}]
  There exists a left proper combinatorial model structure
  $\sCat^{\dag}_{\Bergner}$ on $\sCat^{\dag}$ in which:
  \begin{enumerate}
    \item the weak equivalences are the dagger DK-equivalences $W_{\dag}$;
    \item the generating cofibrations are $I_{\dag}$ and the generating trivial cofibrations are $J_{\dag}$;
    \item the fibrations are $J_{\dag}\text{-}\inj$, and the trivial fibrations are exactly the functors which are surjective on objects and locally trivial Kan fibrations.
  \end{enumerate}
\end{maintheorema}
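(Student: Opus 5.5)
We would build the model structure from Kan's recognition theorem for cofibrantly generated model categories, applied to the locally presentable category $\sCat^{\dag}$ with the sets $I_{\dag}$ and $J_{\dag}$. First, we check that $\sCat^{\dag}$ is locally presentable. It is the category of algebras for a finitary monad on $\sCat$ (equivalently, of models of a limit sketch on simplicial graphs with involution), so presentability is formal. Next, $I_{\dag}$ consists of the images under $\FCatDag$ of the Bergner generating cofibrations, namely $\emptyset\to *$ and the suspensions $\Sigma\partial\Delta^n\to\Sigma\Delta^n$ with daggered hom-pairs. Hence $I_{\dag}\text{-}\inj$ consists of the functors that are surjective on objects and locally trivial Kan fibrations, by adjunction. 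This gives clause (3) for trivial fibrations once we show $I_{\dag}\text{-}\inj=J_{\dag}\text{-}\inj\cap W_{\dag}$.

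The verification splits into the standard conditions.
\begin{enumerate}
\item $W_{\dag}$ satisfies 2-out-of-3 and is closed under retracts. Local weak equivalences clearly satisfy both. For essential surjectivity up to coherently unitary equivalence, we use that coherent unitarity is detected by maps out of a natural dagger interval. Composition and 2-out-of-6 then follow by transporting such intervals along local weak equivalences, using fixed-object cofibrant resolution so that any two choices of interval are comparable.
\item $I_{\dag}\text{-}\inj\subseteq J_{\dag}\text{-}\inj\cap W_{\dag}$. A surjective-on-objects local trivial fibration is plainly a dagger DK-equivalence. It lifts against $J_{\dag}$ because $J_{\dag}\subseteq I_{\dag}\text{-}\cof$: the local horns are relative cell complexes, and the interval inclusions $*\to \bbJ_{\dag}$ (the natural dagger interval) are $I_{\dag}$-cellular since $\bbJ_{\dag}$ is cofibrant.
\item $J_{\dag}\text{-}\cof\subseteq W_{\dag}$. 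Pushouts of local horn inclusions are local trivial cofibrations and bijective on objects, so they are in $W_{\dag}$ by the Bergner argument applied to the underlying simplicial categories. Pushouts of $*\to\bbJ_{\dag}$ add an object coherently unitarily equivalent to an existing one. The hard point is that the mapping spaces of $C\amalg_{*}\bbJ_{\dag}$ are weakly equivalent to those of $C$ via the retraction $\bbJ_{\dag}\to *$. We expect to show this by a fixed-object cofibrancy argument: since $\bbJ_{\dag}\to *$ is a local trivial fibration between fixed-object cofibrant dagger categories, the pushout along $*\to\bbJ_{\dag}$ is a homotopy pushout, which reduces to a free-product computation of hom-spaces as in Bergner's proof. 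Closure under transfinite composition follows because $W_{\dag}$ is detected on compact generators.
\item $J_{\dag}\text{-}\inj\cap W_{\dag}\subseteq I_{\dag}\text{-}\inj$. A $J_{\dag}$-fibration is a local Kan fibration with the lifting of coherently unitary equivalences. If it is also in $W_{\dag}$, then it is locally a trivial fibration. For surjectivity on objects, take $y$ in the target, lift it up to a coherent unitary equivalence $f(x)\simeq y$ represented by a map $\bbJ_{\dag}\to D$, and lift that map along $f$ using $*\to\bbJ_{\dag}$. This is where we need that weakly unitary classes are not enough, and why the intervals are natural resolutions; a sketch of this step would invoke \cref{bg.lem.discrete}-type comparisons.
\end{enumerate}

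Left properness is proved separately. Pushouts along cofibrations preserve $W_{\dag}$ because cofibrant dagger simplicial categories have cofibrant hom-objects in a suitable sense. Hom-spaces of pushouts along $I_{\dag}$-cells are computed as filtered colimits of homotopy-invariant necklace or free-product formulas, as in the Bergner and Lurie left-properness argument, with daggers acting compatibly. The main obstacle is step (3), namely controlling mapping spaces after gluing in $\bbJ_{\dag}$.
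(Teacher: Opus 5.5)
Your overall framework matches the paper: the recognition theorem applied to $I_{\dag}$, $J_{\dag}$, $W_{\dag}$, with $I_{\dag}\text{-}\inj\subseteq J_{\dag}\text{-}\inj$ deduced from $J_{\dag}\subseteq I_{\dag}\text{-}\cof$. However, your treatment of the interval generators has a genuine gap.

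In the paper $\bbJ_{\dag}$ is the walking dagger arrow. Its mapping space $\bbJ_{\dag}(0,0)\cong\bbN$ is discrete and not contractible, so $\mathbf 1_{\dag}\to\bbJ_{\dag}$ is not even a weak equivalence. The actual generators are $\iota_{0,\bbH}:\mathbf 1_{\dag}\to\bbH$ for \emph{every} $\bbH$ in a set $\calH_{\dag}$ of representatives of countable natural dagger intervals.

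If you meant a single fixed interval, your step (4) does not go through. Coherent unitarity of $[v]:fc\to d$ only provides a dagger functor $\bbH\to R\calD\langle fc,d\rangle$ into a fixed-object fibrant replacement. Since $\calD$ need not be fibrant, there is no reason for a given interval to admit an honest dagger functor into $\calD$ with endpoints $fc$ and $d$. Without such a functor, the lifting property against $\mathbf 1_{\dag}\to\bbH$ says nothing about $d$. Comparisons in the style of \cref{bg.lem.discrete} do not help for non-discrete mapping spaces.

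The missing idea is realization on the nose, \cref{bg.lem.realization}, which works in two steps:
\begin{enumerate}
\item Pull a fibrant witness back along $\calC\langle x,y\rangle\to R\calC\langle x,y\rangle$, using right properness of the fixed-object structure. This gives a $\bbG$ with contractible mapping spaces mapping to $\calC\langle x,y\rangle$.
\item Apply the extraction \cref{bg.lem.extraction}, a dagger version of Bergner's countable cell-attachment argument in which self-conjugate attaching maps are handled by free $C_2$-cells.
\end{enumerate}
The interval produced depends on the target. That is why the paper's $J_{\dag}$ ranges over all countable intervals, with countability imposed to make $\calH_{\dag}$ a set (\cref{bg.lem.interval-existence}). \cref{bg.rem.interval-correction} shows that even the natural extra requirement that $p_{\bbH}$ be a local fibration breaks extraction.

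Your step (1) needs the same machinery. Transporting intervals along local weak equivalences gives invariance and reflection of coherent unitarity (\cref{bg.lem.invariance}, \cref{bg.prop.cu-groupoid}(5)). But two-out-of-three also needs two further facts:
\begin{enumerate}
\item coherently unitary classes are isomorphisms in $\h\calC$, which the translation argument uses;
\item coherently unitary classes compose.
\end{enumerate}
For composition, the paper realizes both witnesses on the nose, glues them into the amalgam $\bbH\amalg_{\mathbf 1_{\dag}}\bbK$, shows its mapping spaces are contractible (\cref{bg.prop.amalgam}), and extracts a new interval.

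Composition is also what gives the object condition for transfinite composites in your step (3). The other ingredient there is that the transition maps are local monomorphisms; saying $W_{\dag}$ is ``detected on compact generators'' is not enough.

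Conversely, step (3), which you flag as the main obstacle, is routine once stated correctly. Your justification that ``$\bbJ_{\dag}\to *$ is a local trivial fibration'' does not make sense: the object sets differ, and $p_{\bbH}$ is only a local weak equivalence. The paper's \cref{bg.prop.extension} instead uses Dwyer--Kan homotopy invariance of fixed-object free products to collapse $\bbH$ to $\bbE^{O}_{\{x,y\}}$, and reads off the mapping spaces from reduced words.

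For left properness, the paper avoids recomputing mapping spaces. It applies $\UdagSCat$, which creates pushouts and sends dagger cofibrations to Bergner cofibrations (each dagger cell is two ordinary cells by \cref{bg.lem.reduction}), and invokes ordinary left properness. It then recovers the coherent-unitary object condition by functoriality, a step your sketch omits.
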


The construction proceeds through fixed-object model structures, interval extraction and realization, and a global recognition argument.
Our second model is carried by dagger simplicial sets.
Unlike the ordinary Joyal structure, its cofibrations cannot be all monomorphisms:
a self-conjugate positive-dimensional simplex gives an explicit obstruction.
We therefore introduce free cofibrations and obtain the following model structure and comparison.

\begin{maintheoremb}[\cref{dj.thm.main,dj.thm.equivalence}]
  There exists a left proper combinatorial model structure
  $\sSet^{\dag}_{\Joyal}$ on $\sSet^{\dag}$ in which:
  \begin{enumerate}
    \item the cofibrations are the free cofibrations of \cref{dj.def.free-cof},
    i.e.\ the saturation of $I^{\sSet}_{\dag}$;
    \item the weak equivalences are the dagger Joyal equivalences $W^{\sSet}_{\dag}$;
    \item the trivial fibrations are exactly the morphisms $p$ such that $\UdagSSet(p)$ is a trivial Kan fibration.
  \end{enumerate}
  Moreover, the dagger rigidification adjunction (constructed in \cref{dj.lem.lift})
  \begin{align}
    \frakC_{\dag}:
    \sSet^{\dag}_{\Joyal}
    \rightleftarrows
    \sCat^{\dag}_{\Bergner}
    :N_{\dag}
  \end{align}
  is a Quillen equivalence.
\end{maintheoremb}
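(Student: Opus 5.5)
We build the model structure by transfer along the free/forgetful data and then identify it via the rigidification adjunction. Dagger simplicial sets are simplicial sets with an involutive identification $X\cong X^{\myop}$, which is the identity on vertices; equivalently, they are $\bbZ/2$-equivariant objects for the twisted action. We take $I^{\sSet}_{\dag}=\FdagSSet(\partial\Delta^n\to\Delta^n)$ as generating cofibrations. Its saturation is the class of free cofibrations of \cref{dj.def.free-cof}; it excludes monomorphisms that attach self-conjugate simplices, which is exactly the obstruction noted in the introduction.

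We define $W^{\sSet}_{\dag}$ as the class of maps $f$ with $\frakC_{\dag}(f)\in W_{\dag}$. The model structure then comes from a left-induced or Jeff Smith-type recognition argument. In order, we check the following.
\begin{enumerate}
  \item $W^{\sSet}_{\dag}$ is accessible and satisfies 2-out-of-3. Both follow because $\frakC_{\dag}$ is accessible and $W_{\dag}$ is an accessible class in the combinatorial model category $\sCat^{\dag}_{\Bergner}$ of Theorem A.
  \item Maps with the right lifting property against $I^{\sSet}_{\dag}$ are exactly the maps $p$ with $\UdagSSet(p)$ a trivial Kan fibration. This is the adjunction $\FdagSSet\dashv\UdagSSet$.
  \item Such maps lie in $W^{\sSet}_{\dag}$.
  \item Pushouts and transfinite compositions of free cofibrations lying in $W^{\sSet}_{\dag}$ stay in $W^{\sSet}_{\dag}$.
\end{enumerate}
For step (4) we use that $\frakC_{\dag}$ is a left adjoint (\cref{dj.lem.lift}). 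It sends $I^{\sSet}_{\dag}$ into cofibrations of $\sCat^{\dag}_{\Bergner}$, computed as $\frakC_{\dag}\FdagSSet=\FdagSCat\frakC$ on generators. Hence $\frakC_{\dag}$ preserves cofibrations, and trivial cofibrations are preserved by left properness of $\sCat^{\dag}_{\Bergner}$.

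Left properness of $\sSet^{\dag}_{\Joyal}$ follows from left properness of $\sCat^{\dag}_{\Bergner}$ once step (4) is in place, since $W^{\sSet}_{\dag}$ is created by $\frakC_{\dag}$ and $\frakC_{\dag}$ preserves pushouts along cofibrations.

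For the Quillen equivalence, $\frakC_{\dag}$ preserves cofibrations and, by definition, creates weak equivalences. It therefore suffices to show that the derived counit $\frakC_{\dag}(QN_{\dag}\calC)\to\calC$ is in $W_{\dag}$ for every fibrant $\calC$, where $Q$ is cofibrant replacement. The plan is to forget the dagger structure. The underlying simplicial category of a fibrant dagger simplicial category is Bergner-fibrant. The underlying simplicial set of a free-cofibrant dagger simplicial set is cofibrant in the ordinary sense, and $\frakC$ commutes with forgetting on the relevant objects. Then the ordinary Lurie–Joyal statement that $\frakC N\calC\to\calC$ is a DK-equivalence gives local weak equivalence.

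What remains is essential surjectivity up to coherently unitary equivalence. This is automatic in our case, because the counit is bijective on objects: $N_{\dag}\calC$ has the objects of $\calC$ as vertices, and the cofibrant replacement can be taken to be bijective on vertices.

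\textbf{Main obstacle.} The hard step is step (3): showing that a map $p$ with $\UdagSSet(p)$ a trivial Kan fibration has $\frakC_{\dag}(p)$ essentially surjective up to coherently unitary equivalence. Local weak equivalence follows from the ordinary case after forgetting. For objects, surjectivity on vertices gives essential surjectivity, and identities are trivially coherently unitary, so this part should also go through. The genuinely delicate point is therefore the claim that free cofibrant replacement commutes with forgetting. To establish it, I would first show directly that $\UdagSSet$ sends each cell of $I^{\sSet}_{\dag}$ to a monomorphism, namely a pushout of two copies of $\partial\Delta^n\to\Delta^n$ glued along the conjugation.
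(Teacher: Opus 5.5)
Your proposal is correct and follows essentially the paper's route: Smith's recognition theorem applied to $I^{\sSet}_{\dag}$ and $W^{\sSet}_{\dag}=\frakC_{\dag}^{-1}(W_{\dag})$, left properness pulled back from $\sCat^{\dag}_{\Bergner}$, and the counit at a fibrant $\calC$ handled by forgetting to the ordinary Joyal--Bergner equivalence, with coherent essential surjectivity witnessed by identities because the relevant maps are surjective on objects. Two small corrections: \emph{first}, in step (4) the stability of $\cof(I^{\sSet}_{\dag})\cap W^{\sSet}_{\dag}$ under pushouts and transfinite composites holds because $\frakC_{\dag}(j)$ is a dagger Bergner trivial cofibration (a cofibration lying in $W_{\dag}$), and trivial cofibrations in any model category are closed under these operations, so left properness plays no role there; \emph{second}, the ``main obstacle'' you flag is vacuous, since every simplicial set is Joyal-cofibrant, whereas the ingredient that actually needs an argument is that $\FdagSCat$ sends Bergner cofibrations to dagger Bergner cofibrations, which the paper obtains from $\FdagSCat\dashv\UdagSCat$ because $\UdagSCat$ carries dagger Bergner trivial fibrations (surjective on objects, locally trivial Kan fibrations) to ordinary ones.
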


Every fibrant dagger simplicial set has an underlying quasi-category, but no converse characterization of the fibrant objects is asserted.
For a dagger quasi-category $X$, its unitary core $\calU(X)$ is the full Kan subcomplex of $(X^{\simeq})^{hC_2}$ spanned by the canonical homotopy fixed points associated to the vertices of $X$; 
see \cref{def.unitary_core}.
It gives an intrinsic recognition theorem for the weak equivalences of the dagger Joyal model structure.

\begin{maintheoremc}[\cref{dj.thm.intrinsic-equivalence}]
  Let $f:X\to Y$ be a dagger functor between dagger quasi-categories.
  The following conditions are equivalent:
  \begin{enumerate}
    \item $f$ is a dagger Joyal equivalence, equivalently a weak equivalence in $\sSet^{\dag}_{\Joyal}$;
    \item $f$ is fully faithful and the map
    $\pi_0\calU(f):\pi_0\calU(X)\to\pi_0\calU(Y)$ is surjective;
    \item $f$ is fully faithful and
    $\calU(f):\calU(X)\to\calU(Y)$ is a weak homotopy equivalence.
  \end{enumerate}
\end{maintheoremc}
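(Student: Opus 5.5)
The plan is to reduce everything to the dagger Bergner side via the Quillen equivalence of Theorem B, and then to identify the unitary core with a space of coherently unitary equivalences between objects. First I will show that a dagger functor $f:X\to Y$ between dagger quasi-categories is a dagger Joyal equivalence if and only if $\frakC_{\dag}(f)$, or equivalently a fibrant replacement of it, lies in $W_{\dag}$. This follows from the Quillen equivalence and the fact that every object of $\sSet^{\dag}_{\Joyal}$ is cofibrant; if not every object is cofibrant, I will first cofibrantly replace by a free cofibration, which does not change the underlying quasi-category. By the definition of $W_{\dag}$, this says two things. First, $\frakC_{\dag}f$ is a local weak equivalence, which amounts to $f$ being fully faithful on underlying quasi-categories, using the standard comparison of mapping spaces of $\frakC X$ with those of $X$ via the forgetful functor. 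Second, $f$ is essentially surjective up to coherently unitary equivalence, meaning every object of $Y$ admits a coherently unitary equivalence to some $f(x)$.

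The key intermediate step is a lemma identifying $\pi_0\calU(X)$ with the set of objects of $X$ modulo coherently unitary equivalence, and more precisely identifying the path spaces of $\calU(X)$ between vertices $x,x'$ with the space of coherently unitary equivalences $x\to x'$. Coherently unitary equivalences are maps out of the natural dagger interval. To prove the lemma, I will compute paths in $(X^{\simeq})^{hC_2}$ between canonical homotopy fixed points as homotopy fixed points of the $C_2$-space of equivalences $x\simeq x'$, where $C_2$ acts by $u\mapsto (u^{\dag})^{-1}$. I will then match these homotopy fixed points with the mapping space out of the fixed-object cofibrant resolution of the walking unitary isomorphism. This matching is the main obstacle. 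It requires showing that the natural dagger interval corepresents exactly this homotopy fixed point data, which is presumably encoded in the interval extraction and realization results used in the construction of Theorem A. I would first try to compute $\Map(\text{interval},\frakC_{\dag}X)$ through a $C_2$-equivariant cell structure whose orbits $C_2\times\Delta^n$ model $EC_2$.

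Granting the lemma, the implications go as follows. (1)$\Rightarrow$(3): a dagger Joyal equivalence induces an equivalence of the relevant data. Since $\calU$ is built functorially from $X^{\simeq}$, which $f$ preserves up to equivalence, and since homotopy fixed points preserve equivalences of $C_2$-spaces, $\calU(f)$ is a weak equivalence on the components hit. Essential surjectivity up to coherent unitary equivalence gives surjectivity on $\pi_0$. Alternatively, (1) implies that $f$ has a homotopy inverse in $\sSet^{\dag}_{\Joyal}$ between fibrant objects, and $\calU$ sends dagger homotopies to homotopies. (3)$\Rightarrow$(2) is immediate. For (2)$\Rightarrow$(1), full faithfulness gives the local weak equivalence, and $\pi_0$-surjectivity together with the lemma gives essential surjectivity up to coherently unitary equivalence, so $f\in W_{\dag}$ by the first paragraph. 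Full faithfulness is also what upgrades $\pi_0$-surjectivity to injectivity and to an equivalence on path spaces, via the lemma, which accounts for the equivalence with (3).
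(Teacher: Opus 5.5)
Your overall architecture matches the paper's proof. Since $W^{\sSet}_{\dag}$ is defined as $\frakC_{\dag}^{-1}(W_{\dag})$, you need neither the Quillen equivalence nor cofibrancy. That is fortunate, because not every object is cofibrant: $Q^1$ of \cref{dj.not.Q} is not free. The local condition is full faithfulness (\cref{dj.lem.intrinsic-local}). Your upgrade in (1)$\Rightarrow$(3) is the mechanism the paper uses for (2)$\Rightarrow$(3). The functor $f$ is an underlying Joyal equivalence, so $f^{\simeq}$ is a weak equivalence. Then $\Map_{C_2}(EC_2,-)$ preserves Borel equivalences. Finally, path spaces between canonical vertices in the full subcomplexes $\calU(-)$ agree with those in the ambient homotopy fixed point complexes (\cref{dj.lem.full-subkan}). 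Your cycle (1)$\Rightarrow$(3)$\Rightarrow$(2)$\Rightarrow$(1) therefore needs only the $\pi_0$-form of your lemma, which is \cref{dj.lem.core-interval}. The path-space refinement is unnecessary. Drop the ``homotopy inverse'' alternative: dagger quasi-categories are not known to be fibrant (\cref{dj.rem.fibrant-open}) and need not be cofibrant.

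The genuine gap is that this lemma carries essentially all the content, and your sketch omits both ingredients the paper needs for it.

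\emph{First ingredient: identifying the action.} Identifying the pointwise $\tau$-action on $P_{x,x'}|X^{\simeq}|$ with $u\mapsto(u^{\dag})^{-1}$ is not a formal computation. $\tau$ reverses the orientation of simplices, so it is not the realization of a simplicial involution of $X^{\simeq}$. Homotopy fixed points also require a zigzag of $C_2$-equivariant equivalences, not an underlying equivalence that intertwines the actions only up to homotopy. The paper obtains this from the Real edgewise subdivision $(|\Tw(K)|,|D_{\Tw}|)\cong(|K|,\tau_K)$, which turns the reversal into a strict simplicial involution. It combines this with the twisted-arrow section of \cref{dj.prop.real-identity-section} and the Dugger--Spivak and path-category comparisons (\cref{dj.lem.real-edgewise,dj.lem.equivariant-path}).

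\emph{Second ingredient: passing to groupoids.} A free $C_2$-cell model of $\bbI_{\dag}$ built on $E_{\bullet}C_2$ corepresents $\underline{\Map}_{C_2}(E_{\bullet}C_2,\bbG(0,1)_{\sigma})$ only among strict dagger simplicial groupoids. The relation $[a]^{\dag}=[\omega a]^{-1}$ needs inverses (\cref{dj.lem.unitary-derived-fixed}), so your proposed computation of maps from the interval into $\frakC_{\dag}X$ cannot be carried out directly in $\sCat^{\dag}$. The paper first observes that dagger functors out of $\bbH$ land in the invertible components, because $\h\bbH$ is a groupoid. It then replaces those components of $\frakC_{\dag}X\langle x,y\rangle$ by the degreewise groupoid completion $\frakC_{\dag}(X^{\simeq})[\Mor^{-1}]$ via Dwyer--Kan localization (\cref{dj.lem.core-interval-mapping}). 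The interval extraction and realization results behind Theorem A, where you guessed this data was encoded, are not what is used.
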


Our final main result shows that the inclusion of dagger simplicial sets into anti-involutive simplicial sets does not give a Quillen comparison between the two model structures.

\begin{maintheoremd}[\cref{pointset.cor.no-naive-quillen}]
  Neither of the following adjunctions is a Quillen adjunction:
  \begin{align}
    q_{\rmtag,!}:
    \sSet^{\AInv}_{\DCH}
    \rightleftarrows
    \sSet^{\dag}_{\Joyal}:
    q_{\rmtag}^*
    \quad \text{and} \quad
    q_{\rmtag}^*:
    \sSet^{\dag}_{\Joyal}
    \rightleftarrows
    \sSet^{\AInv}_{\DCH}
    :q_{\rmtag,*}.
  \end{align}
  In particular, the dagger Joyal model structure is not obtained by restricting the DCH model structure to its strict fixed-vertex objects.
\end{maintheoremd}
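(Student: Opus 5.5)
Both claims are \emph{non}-statements, so the plan is to exhibit, for each adjunction, one explicit map that violates a Quillen condition. For each pair I will test the left adjoint on generating (trivial) cofibrations, because that is where a failure is easiest to certify. The input I rely on is the description of $\sSet^{\dag}_{\Joyal}$ from \cref{dj.thm.main}: its cofibrations are only the free cofibrations, the saturation of $I^{\sSet}_{\dag}$. I also use the obstruction mentioned before Theorem~B: a self-conjugate positive-dimensional simplex cannot be attached by a free cofibration. On the DCH side I only need two facts. First, the cofibrations of $\sSet^{\AInv}_{\DCH}$ include every monomorphism of anti-involutive simplicial sets. Second, its weak equivalences are detected by an equivariant invariant (underlying Joyal equivalence together with a fixed-point comparison). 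I would recall both from the DCH paper at the start of the proof.

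\emph{First adjunction $q_{\rmtag,!}\dashv q_{\rmtag}^*$.} I will show that $q_{\rmtag,!}$ does not preserve cofibrations. Take the anti-involutive simplicial set $\Delta^1_\sigma$: one edge $0\to 1$ with the anti-involution exchanging the endpoints and sending the edge to itself. Consider its boundary inclusion $\partial\Delta^1_\sigma\hookrightarrow\Delta^1_\sigma$, or, if strict fixed vertices are needed to land in the tagged part, the variant with fixed vertices $0=1$ and a self-conjugate loop. This is a monomorphism, hence a DCH cofibration. The key step is to compute $q_{\rmtag,!}$ of it from the definition of $q_{\rmtag}$. I expect the result to be a monomorphism of dagger simplicial sets that adjoins a single self-conjugate nondegenerate $1$-simplex. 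By the obstruction lemma for free cofibrations, such a map is not in the saturation of $I^{\sSet}_{\dag}$. Hence it is not a cofibration, and the adjunction is not Quillen.

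\emph{Second adjunction $q_{\rmtag}^*\dashv q_{\rmtag,*}$.} Here the left adjoint $q_{\rmtag}^*$ preserves monomorphisms, so it sends free cofibrations to DCH cofibrations; the failure must therefore come from trivial cofibrations. I would take a generating trivial cofibration of $\sSet^{\dag}_{\Joyal}$ of ``interval'' type: the inclusion of a vertex into the dagger nerve of a natural dagger interval, i.e.\ of a cofibrant model of the walking unitary isomorphism. Its underlying simplicial set is a Joyal equivalence, so any failure must be detected by the equivariant part of the DCH weak equivalences. The plan is to compute the relevant DCH invariant, the homotopy fixed points of the groupoid core $(-)^{\simeq}$ for the $C_2$-action built from the anti-involution, on both sides. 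On the interval side this invariant should see the non-trivial fixed-point data encoded by the two endpoints and the coherent unitary structure; on the vertex side it is a point. I expect a mismatch in $\pi_0$ or $\pi_1$. If the interval example turns out to be a DCH equivalence after all, I will instead use the weakly-but-not-coherently-unitary example of \cref{bg.prop.weak-not-coherent}. In $\sSet^{\dag}_{\Joyal}$ the map collapsing or inverting that class is \emph{not} an equivalence, but after $q_{\rmtag}^*$ the DCH structure, which only sees homotopy fixed points, should treat it as one. A trivial cofibration in the opposite direction, obtained from a factorization in $\sSet^{\dag}_{\Joyal}$, then contradicts the Quillen condition via Ken Brown's lemma applied to the right adjoint $q_{\rmtag,*}$.

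The final sentence of the theorem follows at once. If the dagger Joyal structure were the restriction of the DCH structure to strict fixed-vertex objects, then $q_{\rmtag}^*$ would preserve cofibrations and weak equivalences, contradicting the second part.

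The main obstacle is the second adjunction: an explicit and correct computation of the DCH weak-equivalence invariant on a concrete dagger trivial cofibration. I would try the interval example first, since its unitary core by \cref{def.unitary_core} is directly computable.
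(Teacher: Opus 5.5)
\textbf{There is a genuine gap.} It comes from the two facts you import about the DCH structure, and both are wrong.

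\emph{What the DCH structure actually is.} It is right-induced from the ordinary Joyal structure. So its weak equivalences are exactly the maps whose underlying simplicial map is a Joyal equivalence; there is no fixed-point comparison. Its cofibrations are the normal monomorphisms, generated by $F_{\AInv}(\partial\Delta^n)\to F_{\AInv}(\Delta^n)$. For these, the involution must act freely on new nondegenerate simplices in \emph{every} degree, including degree $0$.

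\emph{First adjunction.} Your example $\partial\Delta^1_\sigma\to\Delta^1_\sigma$ adjoins an edge fixed by the involution, and so does the loop variant. Hence it is not a DCH cofibration. That $q_{\rmtag,!}$ sends it to the non-free map $\partial Q^1\to Q^1$ therefore proves nothing. In fact no example of this kind can exist. Since $q_{\rmtag,!}F_{\AInv}\cong\FdagSSet$, the functor $q_{\rmtag,!}$ sends normal monomorphisms to free cofibrations (\cref{pointset.cor.normal-to-free}). The failure must occur on trivial cofibrations, and this is what the paper uses. The map $d_{\ai}\colon F_{\AInv}(\Delta^0)\to F_{\AInv}(\bbJ)$ is $F_{\AInv}$ of a Joyal trivial cofibration, hence a DCH trivial cofibration. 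Its image $\FdagSSet(\Delta^0)\to\FdagSSet(\bbJ)$ is not a dagger Joyal equivalence. The reason is that $\FdagSCat\frakC(\bbJ)$ maps by a dagger functor to the $\bbZ$-groupoid $\calD$ of \cref{pointset.prop.weak-counterexample}, which has no unitary between its two objects (\cref{pointset.thm.generator-comparison}).

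\emph{Second adjunction.} Here the situation is exactly reversed. Every dagger Joyal equivalence is an underlying Joyal equivalence. To see this, use $\UdagSCat\frakC_{\dag}=\frakC\UdagSSet$, the fact that coherently unitary classes are isomorphisms, and the fact that $\frakC$ reflects weak equivalences. So $q_{\rmtag}^*$ preserves weak equivalences. Your plan of finding a dagger trivial cofibration whose image fails the ``equivariant part'' of a DCH equivalence cannot succeed, because there is no such part.

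What fails is precisely the step you asserted. The functor $q_{\rmtag}^*$ preserves monomorphisms but not cofibrations, because every vertex of a dagger simplicial set is strictly fixed. The generating free cofibration $\varnothing\to\FdagSSet(\Delta^0)$ is sent to the adjunction of a fixed vertex, which is not normal (\cref{pointset.thm.cofibrations,pointset.cor.cofibrant-obstruction}); this is the paper's argument. Indeed, any dagger cofibration that adjoins a vertex, such as a cofibrant model of your interval inclusion, is sent to a non-normal map, but for this cofibration reason and not a weak-equivalence one.

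\emph{Your Ken Brown fallback.} It can be made to work, but not with \cref{bg.prop.weak-not-coherent}. For a one-object target, coherent-unitary essential surjectivity is automatic, so an underlying Joyal equivalence into it is already a dagger Joyal equivalence. Instead, use $N_{\dag}(F)$ from \cref{pointset.prop.weak-counterexample}:
\begin{itemize}
\item $q_{\rmtag}^*N_{\dag}(F)$ is a DCH weak equivalence between DCH-fibrant objects;
\item full faithfulness of $q_{\rmtag}^*$ gives $q_{\rmtag,*}q_{\rmtag}^*\cong\id$;
\item so a right Quillen $q_{\rmtag,*}$ would force $N_{\dag}(F)$ to be a dagger Joyal equivalence, which it is not.
\end{itemize}
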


\subsection*{Organization}

This first part is organized as follows:
\begin{itemize}
  \item \cref{bg.section} constructs the dagger Bergner model structure and develops coherent unitarity;
  \item \cref{dj.section} constructs the dagger Joyal model structure and proves the rigidification equivalence;
  \item \cref{dj.core-section} introduces the unitary core and proves the intrinsic recognition theorem for dagger Joyal equivalences;
  \item \cref{pointset.ainv-section} compares dagger simplicial sets with anti-involutive simplicial sets and proves Theorem D.
\end{itemize}

The companion second part \cite{Aka26-2} develops the dagger Rezk models and establishes the dagger Joyal--Tierney equivalence.

\subsection*{Notation}

Unless stated otherwise, weak equivalences and fibrations of simplicial sets refer to the Kan--Quillen model structure.
Terminology relative to another model structure is indicated by the ambient notation or specified explicitly.

We introduce our main notational conventions.
\begin{itemize}
  \item We let $\sCat$ denote the category of simplicial categories and simplicial functors.
  \item We let $\sSet$ denote the category of simplicial sets.
  \item We let $\sCat^{\dag}$ denote the category of dagger simplicial categories and dagger simplicial functors.
  \item We let $\sSet^{\dag}$ denote the category of dagger simplicial sets and dagger morphisms.
  \item We write $\UdagSCat:\sCat^{\dag}\to\sCat$ and $\UdagSSet:\sSet^{\dag}\to\sSet$ for the forgetful functors.
  \item For a simplicial category $\calC$, we write $\h\calC$ for its homotopy category.
  \item We let $C_2:=\{1,\omega\}$ with $\omega^2=1$ denote the cyclic group of order $2$.
\end{itemize}

\subsection*{AI declaration}

In preparing this paper, the author used GPT-5.6 Sol to improve the English and identify linguistic errors. 
It was also used to identify relevant references, allowing the author to shorten proofs developed independently by the author by citing existing results. 
The author assumes full responsibility for all mathematical content.

\subsection*{Acknowledgements}

I would like to thank Jan Steinebrunner and Theo Johnson-Freyd for their helpful comments on the early draft.
This work was supported by JST SPRING, Grant Number JPMJSP2109.

\section{The dagger Bergner model structure}\label{bg.section}

In this section, we construct a model structure on the category $\sCat^{\dag}$ of dagger simplicial categories, following the strategy of \cite{Bergner07} for $\sCat$ (\cref{bg.thm.main}).
The essential new phenomenon is that the correct object-level notion of equivalence is coherently unitary equivalence.

\subsection{Dagger simplicial categories}

We first define the notion of a dagger simplicial category, which is a simplicial category together with a dagger structure.
We also introduce dagger simplicial graphs and their free categories in order to describe the cellular constructions explicitly (\cref{bg.lem.reduction}).

\begin{definition}\label{bg.def.dagger-scat}
  A \emph{dagger simplicial category} $(\calC,\dag_{\calC})$ consists of a simplicial category $\calC$ together with a simplicial functor $\dag_{\calC}:\calC\to\calC^{\myop}$ which is the identity on objects and satisfies $\dag_{\calC}^{\myop}\dag_{\calC}=\id_{\calC}$.
  
  A \emph{dagger simplicial functor} $f:(\calC,\dag_{\calC})\to(\calD,\dag_{\calD})$ is a simplicial functor $f:\calC\to\calD$ satisfying $\dag_{\calD}f=f^{\myop}\dag_{\calC}$.
\end{definition}

\begin{remark}
  Equivalently, a dagger structure on $\calC$ is a family of morphisms of simplicial sets $(-)^{\dag}:\calC(x,y)\to\calC(y,x)$ with $(f^{\dag})^{\dag}=f$, $\id_x^{\dag}=\id_x$, and $(g\circ f)^{\dag}=f^{\dag}\circ g^{\dag}$.
\end{remark}

\begin{notation}
  We let $\sCat^{\dag}$ denote the category of dagger simplicial categories and dagger simplicial functors, and let $\UdagSCat:\sCat^{\dag}\to\sCat$ denote the forgetful functor.
\end{notation}

\begin{lemma}\label{bg.lem.creation}
  The forgetful functor $\UdagSCat:\sCat^{\dag}\to\sCat$ creates all small limits and colimits.
\end{lemma}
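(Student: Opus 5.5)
We follow the standard recipe for categories of "algebras for an endostructure": given a diagram $F:I\to\sCat^{\dag}$, we form the limit or colimit of $\UdagSCat F$ in $\sCat$, show it carries a unique dagger structure making the cone or cocone dagger, and show this is universal. The key observation is that the assignment $\calC\mapsto\calC^{\myop}$ is an automorphism of $\sCat$, an involutive isomorphism of categories. It therefore preserves all limits and colimits strictly: $(\lim_i\calC_i)^{\myop}=\lim_i\calC_i^{\myop}$ and similarly for colimits, with legs $f_i^{\myop}$. This is the only input needed beyond the definitions.

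\emph{Step 1 (existence of the structure).} Let $L=\lim_i \UdagSCat F(i)$ with projections $p_i$. The dagger functors $\dag_i:\calC_i\to\calC_i^{\myop}$ form a natural transformation $\UdagSCat F\Rightarrow(\UdagSCat F)^{\myop}$, because each transition map $F(u)$ is a dagger functor. Hence they induce a unique simplicial functor $\dag_L:L\to L^{\myop}$ with $p_i^{\myop}\dag_L=\dag_i p_i$.

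\emph{Step 2 (the dagger axioms).} We check them by the uniqueness part of the universal property.
\begin{itemize}
\item Both $\dag_L^{\myop}\dag_L$ and $\id_L$ satisfy $p_i\circ(-)=p_i$, since $\dag_i^{\myop}\dag_i=\id$. So they are equal.
\item $\dag_L$ is the identity on objects. This holds because the object functor $\Ob:\sCat\to\Set$ preserves limits (it is a right adjoint to the discrete-category functor), and it preserves colimits too (the free-category colimit formula computes objects as the colimit of object sets). Then $\Ob(\dag_L)$ is the map induced by the identities $\Ob(\dag_i)$, hence the identity.
\end{itemize}
The argument for colimits is the same, using the coprojections instead.

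\emph{Step 3 (universality and uniqueness).} By construction the projections are dagger functors, and $\dag_L$ is the unique dagger structure with this property, by uniqueness in Step 1. Now take a cone of dagger functors $g_i:\calD\to\calC_i$, and let $g:\calD\to L$ be the induced simplicial functor. Then $\dag_L g$ and $g^{\myop}\dag_{\calD}$ both satisfy $p_i^{\myop}\circ(-)=\dag_i g_i=g_i^{\myop}\dag_{\calD}$, so they agree. Thus $g$ is a dagger functor. Together with faithfulness of $\UdagSCat$, this gives creation of limits; colimits are dual.

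The only point needing care is the identity-on-objects condition for colimits, where one must know that $\Ob$ preserves colimits in $\sCat$. If needed, I would verify this via the description of colimits in $\sCat$ as a free simplicial category on the colimit simplicial graph modulo relations, which has object set $\colim\Ob$.
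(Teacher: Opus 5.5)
Your proposal is correct and takes essentially the same route as the paper. You induce $\dag$ on the (co)limit using that $(-)^{\myop}$ is an involutive automorphism of $\sCat$, check involutivity and dagger-compatibility by the uniqueness clause of the universal property, and get the identity-on-objects condition from $\Ob$ preserving limits and colimits; the paper writes out colimits and dualizes, while you do the reverse. For the one point you flag, the cleanest justification is that $\Ob:\sCat\to\Set$ also has a right adjoint, the indiscrete simplicial category with all mapping spaces $\Delta^0$, so it preserves colimits and no explicit colimit formula is needed.
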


\begin{proof}
  Let $(\calC_i,\dag_i)$ be a small diagram and put $\calC:=\colim_i\UdagSCat\calC_i$.
  Since $(-)^{\myop}$ is an involutive autoequivalence of $\sCat$, the maps $\lambda_i^{\myop}\dag_i$ induce a unique functor $\dag_{\calC}:\calC\to\calC^{\myop}$.
  Both $\dag_{\calC}^{\myop}\dag_{\calC}$ and $\id_{\calC}$ agree after precomposition with every colimit map, so they are equal.
  Since the object functor preserves colimits and each $\dag_i$ fixes objects, $\dag_{\calC}$ fixes objects as well.
  The colimit universal property therefore restricts to $\sCat^{\dag}$.
  The limit argument is dual because $(-)^{\myop}$ and the object functor preserve limits.
\end{proof}

We next define the notion of dagger simplicial graph, which is a dagger simplicial category without composition.

\begin{definition}\label{bg.def.dagger-graph}
  A \emph{dagger simplicial graph} $G$ consists of a set $\Ob(G)$ of vertices, simplicial sets $G(x,y)$ for all ordered pairs, and involutions $\tau_{x,y}:G(x,y)\to G(y,x)$ with $\tau_{y,x}\tau_{x,y}=\id$.

  A \emph{map of dagger simplicial graphs} $f : G \to H$ consists of a map $f : \Ob(G) \to \Ob(H)$ and a morphism of simplicial sets $f_{x,y} : G(x,y) \to H(fx,fy)$ satisfying $\tau_{fx,fy}^{H} f_{x,y} = f_{y,x} \tau_{x,y}^{G}$.
\end{definition}

\begin{notation}
  We let $\sGph^{\dag}$ denote the category of dagger simplicial graphs and maps of dagger simplicial graphs, and let $\UCatDag:\sCat^{\dag}\to\sGph^{\dag}$ denote the forgetful functor.
\end{notation}

\begin{lemma}\label{bg.lem.presentable}
  The forgetful functor $\UCatDag:\sCat^{\dag}\to\sGph^{\dag}$ is monadic.
  In particular, $\sCat^{\dag}$ is locally finitely presentable.
\end{lemma}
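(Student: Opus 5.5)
The plan is to apply Beck's monadicity theorem to $\UCatDag$ after constructing a left adjoint $\FCatDag:\sGph^{\dag}\to\sCat^{\dag}$. Local finite presentability will then follow from the standard fact that the category of algebras for a finitary monad on a locally finitely presentable category is locally finitely presentable (Adámek--Rosický). For this we also check two things: that $\sGph^{\dag}$ is locally finitely presentable, and that $\UCatDag$ preserves filtered colimits.

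\emph{The free dagger simplicial category.} Given a dagger simplicial graph $G$, I take the underlying simplicial graph of $G$ and form the usual free simplicial category. Its mapping spaces are
\begin{align}
  \FCatDag G(x,y)=\coprod_{n\ge 0}\ \coprod_{x=x_0,\dots,x_n=y} G(x_{n-1},x_n)\times\cdots\times G(x_0,x_1),
\end{align}
with composition given by concatenation and the $n=0$ term supplying identities. The dagger sends a string $(g_n,\dots,g_1)$ to $(\tau g_1,\dots,\tau g_n)$, reversing the order. This is simplicial, identity on objects, involutive because $\tau$ is, and reverses composition, so it is a dagger structure.

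The unit is the inclusion of length-one strings, which commutes with the involutions. For the universal property, a graph map $G\to\UCatDag\calD$ extends uniquely to a simplicial functor $\FCatDag G\to\calD$. This functor is automatically a dagger functor: both $\dag_{\calD}f$ and $f^{\myop}\dag$ are simplicial functors out of a free category that agree on generators, by the graph-map condition $\tau^H f=f\tau^G$. So the adjunction exists.

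\emph{Monadicity.} $\UCatDag$ is conservative, since a dagger functor that is bijective on objects and on mapping spaces has an inverse which is again a dagger functor. It remains to show that $\UCatDag$ creates coequalizers of $\UCatDag$-split pairs. I expect this step to be the main obstacle, although it should reduce to known facts.

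\begin{itemize}
  \item The forgetful functor $\sCat\to\sGph$ is monadic, being the free-category monad on simplicial graphs with a fixed-object variation allowed.
  \item A $\UCatDag$-split pair is in particular split in $\sGph$, so the coequalizer $q:\calC\to\calQ$ exists in $\sCat$ and is created there.
  \item The split coequalizer in $\sGph^{\dag}$ gives involutions on $\calQ$ compatible with $q$. Because $q$ is a split epimorphism on graphs, the functoriality and involutivity identities for $\dag_{\calQ}$ can be checked after precomposing with $q$. This is the same argument as in \cref{bg.lem.creation}, and \cref{bg.lem.creation} also provides the coequalizer in $\sCat^{\dag}$.
\end{itemize}

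Alternatively, one can sidestep Beck by observing that $\sCat^{\dag}$ is the category of algebras for the monad $\UCatDag\FCatDag$ by direct inspection: an algebra structure amounts to a composition law compatible with the involutions.

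\emph{Local finite presentability.} $\sGph^{\dag}$ is locally finitely presentable. It is a category of presheaves on a small category, built from $\Delta$ indexed over pairs together with the $C_2$-swap. A cleaner description is as the comma category of $C_2$-equivariant graphs over sets, which is locally finitely presentable. Filtered colimits in $\sCat^{\dag}$ are computed on underlying graphs: filtered colimits of simplicial sets commute with finite products, so composition passes to the colimit. Hence $\UCatDag$ is finitary, the monad is finitary, and $\sCat^{\dag}$ is locally finitely presentable.
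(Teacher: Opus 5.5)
Your proposal is correct and follows essentially the same route as the paper: the word construction of $\FCatDag$ with the order-reversing dagger, Beck's theorem via $\UCatDag$-split coequalizers, and finitariness of the word monad over the locally finitely presentable category $\sGph^{\dag}$ combined with Ad\'amek--Rosick\'y. The differences lie only in supporting details: you present $\sGph^{\dag}$ as a presheaf category rather than as the models of a finitary essentially algebraic theory, and you obtain the split coequalizers by reducing to monadicity of $\sCat\to\sGph$ together with \cref{bg.lem.creation}, instead of the paper's direct degreewise argument.
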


\begin{proof}
  Given a dagger simplicial graph $G$, we define the free dagger simplicial category $\FCatDag G$ as follows:
  the objects are the vertices of $G$ and a morphism from $x$ to $y$ is given by a (possibly empty) sequence of directed edges $(e_{n}, \cdots, e_{1})$ where the target of each edge matches the source of the next, starting at $x$ and ending at $y$.
  For a morphism $f : x \to y$, the dagger $f^{\dag} : y \to x$ is given by $(\tau(e_{1}),\cdots,\tau(e_{n}))$.
  
  The functor $\UCatDag$ reflects isomorphisms.
  Moreover, the split coequalizer in dagger simplicial graphs of every $\UCatDag$-split pair inherits a unique composition by the usual degreewise argument for ordinary free categories.
  Beck's theorem therefore gives monadicity.

  The word monad is finitary, and $\sGph^{\dag}$ is locally finitely presentable as the category of models of a finitary essentially algebraic theory.
  \Cite[Theorems 3.36 and 2.78]{AR94} now gives the final assertion.
\end{proof}

\begin{notation}\label{bg.not.cells}
  We let
  \begin{itemize}
    \item $\mathbf 1_{\dag}$ denote the terminal dagger simplicial category;
    \item $\mathbf 2_{\dag}$ denote the discrete dagger simplicial category with two objects $0,1$;
    \item $G_{K}$ denote the dagger simplicial graph $G_{K}$ with objects $0,1$, edge spaces 
    \begin{align}
      G_{K}(0,1)=K, ~~ G_{K}(1,0)=K, ~\text{and}~ G_{K}(0,0)=G_{K}(1,1)=\varnothing,
    \end{align}
    and involution the identification of the two copies of $K$.
    \item $\bbA_{\dag}(K) := \FCatDag(G_{K})$ denote the free dagger simplicial category of $G_{K}$;
    \item $\bbA(K)$ denote the simplicial category which has a single nontrivial mapping space $\bbA(K)(0,1)=K$.
  \end{itemize}
\end{notation}

\begin{remark}
  By \cref{bg.lem.presentable}, for every $\calC \in \sCat^{\dag}$, we have an equivalence
  \begin{align}\label{bg.eq.A-univ}
    \Hom_{\sCat^{\dag}}(\bbA_{\dag}(K),\calC)
    \simeq
    \Hom_{\sGph^{\dag}}(G_{K},\UCatDag\calC) 
    \simeq 
    \coprod_{x,y\in\Ob(\calC)}\Hom_{\sSet}(K,\calC(x,y)),
  \end{align}
  since a map of dagger graphs $G_{K} \to \UCatDag\calC$ is determined by a pair $(x,y)$ and a map $K\to\calC(x,y)$. 
  If $K$ is a finite simplicial set, then \eqref{bg.eq.A-univ} shows that $\bbA_{\dag}(K)$ is finitely compact.
\end{remark}

\begin{definition}
  Let $i:K\to L$ be a monomorphism of simplicial sets:
  \begin{itemize}
    \item for a simplicial functor $\bbA(K)\to\calC$ classifying $u:K\to\calC(x,y)$, the \emph{ordinary free cell attachment} $\calC[i,u]$ is the pushout of $\bbA(i):\bbA(K)\to\bbA(L)$ along this map in $\sCat$:
    $\calC[i,u] := \calC \amalg_{\bbA(K)} \bbA(L)$;
    \item for a dagger simplicial functor $\bbA_{\dag}(K)\to\calC$ classifying $u:K\to\calC(x,y)$, the \emph{dagger free cell attachment} $\calC[i]_{\dag}$ is the pushout of $\bbA_{\dag}(i):\bbA_{\dag}(K)\to\bbA_{\dag}(L)$ along this map in $\sCat^{\dag}$:
    $\calC[i]_{\dag}:= \calC \amalg_{\bbA_{\dag}(K)} \bbA_{\dag}(L)$.
  \end{itemize}
\end{definition}

\begin{lemma}\label{bg.lem.ordinary-cell}
  Let $\calC$ be a simplicial category, let $x,y \in \calC$, let $i:K\to L$ be a monomorphism of simplicial sets, and let $u:K\to\calC(x,y)$ be morphisms of simplicial sets.
  Then we have:
  \begin{enumerate}
    \item the functor $\calC\to\calC[i,u]$ is bijective on objects and each $\calC(a,b)\to\calC[i,u](a,b)$ is a monomorphism;
    \item if $i$ is in addition a weak homotopy equivalence, then each $\calC(a,b)\to\calC[i,u](a,b)$ is a weak homotopy equivalence;
  \end{enumerate}
\end{lemma}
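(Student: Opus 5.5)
We will describe the mapping spaces of the pushout $\calC[i,u]=\calC\amalg_{\bbA(K)}\bbA(L)$ explicitly as a colimit of words, following the standard analysis used in \cite{Bergner07} for cell attachments. Since $\bbA(K)$ and $\bbA(L)$ have the same object set $\{0,1\}$ and $\bbA(i)$ is the identity on objects, the pushout is computed in simplicial categories with fixed object set $\Ob(\calC)$. This gives (1) on objects immediately. Also, the category $\sCat$ with fixed object set is the category of simplicial objects in categories with fixed objects, and pushouts there are computed degreewise. So it suffices to understand, in each simplicial degree $n$, the free amalgamation of the category $\calC_n$ with the new morphisms $L_n\smallsetminus K_n$ from $x$ to $y$.

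Concretely, in each degree we will identify $\calC[i,u](a,b)_n$ with the coproduct over $k\ge 0$ of the sets of words
\begin{align}
  \calC(y,b)_n\times(L_n\smallsetminus K_n)\times\calC(y,x)_n\times(L_n\smallsetminus K_n)\times\cdots\times(L_n\smallsetminus K_n)\times\calC(a,x)_n,
\end{align}
with $k$ new letters; the summand $k=0$ is $\calC(a,b)_n$. Composition concatenates words, composing adjacent $\calC$-letters. We then check the pushout universal property directly; this is a normal-form argument, and it works because no relations are imposed on the new letters except those coming from $K$.

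Faces and degeneracies do not preserve $L\smallsetminus K$, so the description is not simplicial summandwise. Instead we will assemble it as a simplicial filtration. Let $F_k(a,b)\subseteq\calC[i,u](a,b)$ be the sub-simplicial set of words with at most $k$ letters from $L$. Then $F_0=\calC(a,b)$, the union of the $F_k$ is everything, and each step is a pushout
\begin{align}
  \calC(y,b)\times\bigl(\textstyle\prod\text{-pushout-product of } i \text{ with itself } k\text{ times}\bigr)\times\calC(a,x)\cdots\to F_k,
\end{align}
that is, $F_{k-1}\to F_k$ is a pushout of
\begin{align}
  \calC(y,b)\times\calC(y,x)^{k-1}\times\calC(a,x)\times(i^{\square k}),
\end{align}
where $i^{\square k}$ is the iterated pushout-product of $i$. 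Verifying this pushout square degreewise via the normal form is the main obstacle. It is the step where care is needed: the $\calC$-letters are interleaved, and a word drops filtration exactly when some $L$-letter lies in $K$ and is then absorbed into the adjacent $\calC$-letters.

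Given the filtration, the conclusions follow. Pushout-products of monomorphisms are monomorphisms, and products with a simplicial set preserve monomorphisms. So each $F_{k-1}\to F_k$ is a monomorphism, and hence so is the transfinite composite $\calC(a,b)\to\calC[i,u](a,b)$; this proves (1). If $i$ is moreover a weak homotopy equivalence, then $i^{\square k}$ is a trivial cofibration in the Kan–Quillen model structure by the pushout-product axiom. Products with any simplicial set preserve trivial cofibrations, since Kan–Quillen is cartesian and every object is cofibrant. Therefore each $F_{k-1}\to F_k$ is a trivial cofibration, and the sequential colimit of trivial cofibrations is again one, which gives (2).
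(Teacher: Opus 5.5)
Your argument is correct, but it takes a different route from the paper.

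\textbf{The paper's route.} The paper stays inside the ordinary Bergner model structure. First, $\bbA(i)$ is a Bergner cofibration; when $i$ is a weak equivalence it is also an identity-on-objects local weak equivalence, hence a trivial cofibration. Cobase change preserves (trivial) cofibrations. Bergner cofibrations are monomorphisms on mapping spaces by \cite[Remark A.3.2.5]{HTT}, and a DK-equivalence is in particular a local weak equivalence.

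\textbf{Your route.} You compute $\calC[i,u]$ explicitly. You use degreewise normal forms for freely adjoining the arrows $L_n\smallsetminus K_n\colon x\to y$ to $\calC_n$. You then filter by word length; the layers are cobase changes of $M_k\times i^{\square k}$, where $M_k=\calC(y,b)\times\calC(y,x)^{k-1}\times\calC(a,x)$ and $i^{\square k}$ has domain $\partial_k=\bigcup_j L^{j-1}\times K\times L^{k-j}$.

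The step you flag as the main obstacle is routine:
\begin{itemize}
\item degeneracies preserve $L\smallsetminus K$, since if $s_j\ell\in K$ then $\ell=d_js_j\ell\in K$;
\item faces can only lower the number of new letters;
\item in each degree, $F_k\smallsetminus F_{k-1}$ is in bijection with $(M_k\times L^k)\smallsetminus(M_k\times\partial_k)$, so the square is degreewise a pushout of sets.
\end{itemize}

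\textbf{A phrasing to correct.} The pushout is not literally taken in $\sCat_{\Ob\calC}$, because $\bbA(K)\to\calC$ is not the identity on objects; it identifies $0$ and $1$ when $x=y$. The right justification is that colimits in $\sCat$ are computed degreewise in categories. This works because the object-set functor is a left adjoint (to the indiscrete-category functor), so the degreewise colimit has a constant object set. Note also that (1) needs no filtration: the $k=0$ summand of the normal form already gives injectivity.

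\textbf{Comparison.} Your route is self-contained and gives a sharper result. It provides an explicit cell structure exhibiting each $\calC(a,b)\to\calC[i,u](a,b)$ as a monomorphism, and as a Kan--Quillen trivial cofibration when $i$ is a weak equivalence. This also justifies the reduced-word and countability claims the paper uses later, for instance in \cref{bg.lem.extraction}. The paper's route is shorter, at the cost of relying on the Bergner model structure and the HTT remark.
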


\begin{proof}
  (1)
  Since $i$ is a cofibration of simplicial sets, $\bbA(i)$ is a cofibration in the ordinary Bergner model structure.
  Hence its pushout $\calC\to\calC[i,u]$ is a Bergner cofibration.
  It is the identity on objects, and ordinary Bergner cofibrations induce monomorphisms on all mapping spaces by \cite[Remark A.3.2.5]{HTT}.

  (2)
  If $i$ is also a weak homotopy equivalence, then $\bbA(i)$ is a local weak equivalence and hence a trivial Bergner cofibration.
  Its pushout is again a trivial Bergner cofibration.
  In particular, it is a local weak equivalence.
\end{proof}

\begin{lemma}\label{bg.lem.reduction}
  There exists an equivalence of simplicial categories
  \begin{align}
    \UdagSCat(\calC[i]_{\dag})
    \simeq 
    ((\UdagSCat\calC)[i,u])[i,u^{\dag}],
  \end{align}
  the ordinary free cell attachment of $i$ at $(x,y)$ along $u$, followed by the ordinary free cell attachment of $i$ at $(y,x)$ along the composite $u^{\dag}:K\xrightarrow{u}\calC(x,y)\xrightarrow{\dag}\calC(y,x)$.
\end{lemma}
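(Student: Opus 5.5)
The strategy is to compare universal properties: I will show that both sides corepresent the same functor on $\sCat$. In fact they should be isomorphic, not just equivalent. Put $\calD:=((\UdagSCat\calC)[i,u])[i,u^{\dag}]$. First I construct a dagger structure on $\calD$ extending $\dag_{\calC}$. The morphisms of $\calD$ are generated by morphisms of $\calC$ together with the two new cells $L\to\calD(x,y)$ and $L\to\calD(y,x)$. The second cell is glued along $u^{\dag}$, and the first along $u$.

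To define $\dag_{\calD}:\calD\to\calD^{\myop}$, I use the pushout universal property in $\sCat$, since $(-)^{\myop}$ is an automorphism of $\sCat$ and preserves pushouts. On $\calC$ it is $\calC\xrightarrow{\dag}\calC^{\myop}\to\calD^{\myop}$. The first cell $L\to\calD(x,y)$ is sent to the second cell, read in $\calD^{\myop}(x,y)=\calD(y,x)$. The second cell is sent to the first. Compatibility on $K$ holds for a specific reason: the first cell restricts to $u$, and $\dag u=u^{\dag}$ is the restriction of the second cell. The other way round it uses $\dag\dag=\id$. Because $\calD$ is an iterated pushout, I do this in two stages. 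Alternatively, I can describe $\calD$ as the single pushout of $\bbA(K)\amalg\bbA(K)\to\bbA(L)\amalg\bbA(L)$, with both attachments made simultaneously, and glue objects via $\calC$. Both $\dag_{\calD}^{\myop}\dag_{\calD}$ and $\id$ agree on generators, so they are equal. Since $\calC\to\calD$ is bijective on objects by \cref{bg.lem.ordinary-cell}(1), $\dag_{\calD}$ fixes objects.

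Next I verify that $(\calD,\dag_{\calD})$ has the universal property of $\calC[i]_{\dag}$ in $\sCat^{\dag}$. By \eqref{bg.eq.A-univ}, a dagger functor $\calC[i]_{\dag}\to\calE$ is the same as a dagger functor $F:\calC\to\calE$ together with $v:L\to\calE(Fx,Fy)$ satisfying $v|_K=Fu$. A simplicial functor $G:\calD\to\UdagSCat\calE$ is the same as $F$ together with $v_1:L\to\calE(Fx,Fy)$ and $v_2:L\to\calE(Fy,Fx)$, where $v_1|_K=Fu$ and $v_2|_K=Fu^{\dag}$. Such a $G$ is a dagger functor precisely when $F$ is one and $v_2=v_1^{\dag}$; this is checked on generators using the definition of $\dag_{\calD}$. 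The map $v\mapsto(v,v^{\dag})$ is therefore a natural bijection, and Yoneda gives $\calC[i]_{\dag}\cong\calD$ in $\sCat^{\dag}$. Applying $\UdagSCat$, which preserves the pushout by \cref{bg.lem.creation}, yields the stated isomorphism, and in particular the equivalence.

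I expect the main obstacle to be making the definition of $\dag_{\calD}$ honest. The second attachment is taken along $u^{\dag}$ in the already enlarged category, so it is awkward to define a dagger stage by stage: the intermediate category $(\UdagSCat\calC)[i,u]$ carries no dagger. I would handle this by first rewriting $\calD$ as the simultaneous pushout described above. The two attachments happen in different hom-components attached to the same base $\calC$, so the iterated and simultaneous pushouts coincide by pasting of pushouts. The involution then swaps the two summands of $\bbA(L)\amalg\bbA(L)$, which is manifestly compatible with the involution on $\calC$.
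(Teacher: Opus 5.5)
Your proof is correct. It reaches the same identification, in fact an isomorphism as you note, by a somewhat different route from the paper.

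The paper works entirely on the underlying side. By \cref{bg.lem.creation}, $\UdagSCat$ preserves the defining pushout. It then identifies $\UdagSCat\bbA_{\dag}(K)$ with $\bbA(K)^{01}\amalg_{\mathbf 2}\bbA(K)^{10}$, the free simplicial category on the underlying graph of $G_K$. It reads the underlying classifying map as the pair $(u,u^{\dag})$ and concludes by pasting pushouts, which is essentially your ``simultaneous pushout'' step.

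You instead equip the ordinary iterated pushout $\calD$ with a dagger by hand and check that $(\calD,\dag_{\calD})$ corepresents the same functor on $\sCat^{\dag}$ as $\calC[i]_{\dag}$. In effect you reprove the relevant instance of the creation lemma. This costs more verification, but it buys two things. You never need the description of the underlying category of a free dagger category, only the universal property of $\bbA_{\dag}(L)$. And you see explicitly that the dagger exchanges the two newly attached cells, which is the feature used later, e.g.\ in \cref{bg.thm.fixed-object} and \cref{bg.lem.extraction}.

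One small redundancy: once you have $\calC[i]_{\dag}\cong(\calD,\dag_{\calD})$ in $\sCat^{\dag}$, applying $\UdagSCat$ returns $\calD$ tautologically. So your final appeal to \cref{bg.lem.creation} is unnecessary.
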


\begin{proof}
  By \cref{bg.lem.creation}, the functor $\UdagSCat$ preserves the pushout:
  \begin{align}
    \UdagSCat(\calC[i]_{\dag})
    \simeq
    \UdagSCat\calC
    \amalg_{\UdagSCat\bbA_{\dag}(K)}
    \UdagSCat\bbA_{\dag}(L).
  \end{align}

  The underlying simplicial category of $\bbA_{\dag}(K)$ is the free simplicial category on the graph $G_K$, which is the coproduct, over the discrete category $\mathbf 2$, of the one-edge graphs $K$ at $(0,1)$ and $K$ at $(1,0)$; 
  hence 
  \begin{align}
    \UdagSCat\bbA_{\dag}(K)=\bbA(K)^{01}\amalg_{\mathbf 2}\bbA(K)^{10},
  \end{align}
  where the superscripts record the position of the nontrivial edge space.
  Then we have 
  \begin{align}
    \UdagSCat(\calC[i]_{\dag})
    \simeq
    (\UdagSCat\calC \amalg_{\bbA(K)^{01}} \bbA(L)^{01})
    \amalg_{\UdagSCat\calC}
    (\UdagSCat\calC \amalg_{\bbA(K)^{10}} \bbA(L)^{10}).
  \end{align}
  The first and the second complements are determined by $u : K \to \calC(x,y)$ and $u^{\dag} : K \to \calC(y,x)$, respectively.
  Thus it is equivalent to the two successive ordinary free cell attachments $((\UdagSCat\calC)[i,u])[i,u^{\dag}]$.
\end{proof}

\subsection{The fixed-object model structure}

The category of dagger graphs on $O$ decomposes into ordinary simplicial-set factors off the diagonal and projective $C_2$-factors on the diagonal.
Transferring this product model structure along the free-category monad gives the desired fixed-object model structures on $\sCat^{\dag}_O$ (\cref{bg.thm.fixed-object}).

\begin{notation}\label{bg.not.fixed-object}
  For a set $O$, we let $\sCat^{\dag}_O\subseteq\sCat^{\dag}$ and $\sGph^{\dag}_O\subseteq\sGph^{\dag}$ denote the subcategories of objects with object set exactly $O$ and morphisms which are the identity on $O$.
\end{notation}

\begin{definition}
  A morphism of $\sCat^{\dag}_O$ is a \emph{local weak equivalence} (resp.\ \emph{local fibration}) if every induced map of mapping spaces is a weak homotopy equivalence (resp.\ Kan fibration).
\end{definition}

\begin{lemma}\label{bg.lem.gph-decomposition}
  Choose a total order on $O$.  
  The construction $G \mapsto ((G(x,y))_{x<y},(G(x,x),\tau_{x,x})_x)$ defines an equivalence of categories
  \begin{align}
    \sGph^{\dag}_O
    \simeq
    \prod_{x<y}\sSet \times \prod_{x\in O}\Fun(\mathbf BC_2,\sSet).
  \end{align}
\end{lemma}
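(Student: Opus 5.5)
We will prove the lemma by writing down an explicit inverse functor and checking that both composites are the identity. Write $\Phi$ for the functor in the statement. A dagger graph $G$ with object set $O$ consists of the simplicial sets $G(x,y)$ for all ordered pairs together with the involutions $\tau_{x,y}$. Two cases arise.

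For $x\neq y$, the relation $\tau_{y,x}\tau_{x,y}=\id$ shows that $\tau_{x,y}:G(x,y)\to G(y,x)$ is an isomorphism with inverse $\tau_{y,x}$. Hence the unordered pair $\{x,y\}$ contributes no data beyond $G(x,y)$ for the chosen $x<y$.

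For $x=y$, the map $\tau_{x,x}$ is an involution of $G(x,x)$. This is exactly a $C_2$-action, that is, an object of $\Fun(\mathbf BC_2,\sSet)$.

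Morphisms in $\sGph^{\dag}_O$ are the identity on $O$ and commute with the $\tau$'s. Consequently a morphism restricts to a map $G(x,y)\to H(x,y)$ for each $x<y$ and to a $C_2$-equivariant map $G(x,x)\to H(x,x)$ for each $x$. This makes $\Phi$ a functor.

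To define the inverse $\Psi$, start from a family $((K_{x,y})_{x<y},(L_x,\sigma_x)_x)$ and set:
\begin{itemize}
  \item $\Psi(G)(x,y)=K_{x,y}$ and $\Psi(G)(y,x)=K_{x,y}$ for $x<y$, with $\tau_{x,y}=\tau_{y,x}=\id$;
  \item $\Psi(G)(x,x)=L_x$ with $\tau_{x,x}=\sigma_x$.
\end{itemize}
The dagger-graph axiom holds by construction. On a morphism, $\Psi$ uses the given component on both $(x,y)$ and $(y,x)$; this is compatible with $\tau$ because $\tau$ is the identity there. The composite $\Phi\Psi$ is then literally the identity.

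It remains to build a natural isomorphism $\Psi\Phi\cong\id$, and this is the only step requiring care. We define its components $\eta_G:\Psi\Phi G\to G$ as follows:
\begin{itemize}
  \item the identity on $G(x,y)$ for $x<y$;
  \item $\tau_{x,y}:G(x,y)\to G(y,x)$ on the $(y,x)$ entry;
  \item the identity on each diagonal entry $G(x,x)$.
\end{itemize}

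We then check three things.
\begin{enumerate}
  \item $\eta_G$ commutes with the involutions. On the $(x,y)$ entry the condition reads $\tau_{x,y}\circ\id=\tau_{x,y}\circ\id$. On the $(y,x)$ entry it reads $\tau_{y,x}\tau_{x,y}=\id$. On the diagonal it is trivial.
  \item $\eta_G$ is an isomorphism, since each component is one.
  \item $\eta$ is natural in $G$, because a morphism $f$ satisfies $f_{y,x}\tau_{x,y}=\tau_{x,y}f_{x,y}$.
\end{enumerate}

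This shows that $\Phi$ and $\Psi$ are mutually inverse equivalences. The total order on $O$ is used only to choose one representative from each unordered pair.
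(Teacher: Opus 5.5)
Your proof is correct and follows the paper's route. The paper defines exactly the same inverse $\Psi$ (identity involutions off the diagonal, $\sigma_x$ on the diagonal) and only asserts that one can check it is inverse to the construction. Your explicit natural isomorphism $\eta_G:\Psi\Phi G\to G$, which uses $\tau_{x,y}$ on the $(y,x)$ entries, fills in the point the paper glosses over: $\Psi\Phi$ is naturally isomorphic to the identity but not equal to it.
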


\begin{proof}
  We define a functor 
  \begin{align}
    \Psi : \prod_{x<y}\sSet \times \prod_{x\in O}\Fun(\mathbf BC_2,\sSet) \to \sGph^{\dag}_O
  \end{align}
  as follows:
  For a given $((K_{x,y})_{x<y}, (H_{x},\sigma_{x})_{x})$, we define an object $G \in \sGph^{\dag}_O$ by $\Ob(G) = O$,
  \begin{align}
    G(x,y) := \begin{cases}
      K_{x,y} & (x < y) \\
      K_{y,x} & (y < x) \\
      H_{x}   & (x = y)
    \end{cases}
    \quad \text{and} \quad
    \tau_{x,y}
    := \begin{cases}
      \id_{K_{x,y}} & (x < y) \\
      \id_{K_{y,x}} & (y < x) \\
      \sigma_{x}    & (x = y).
    \end{cases}
  \end{align}
  We can check that $\Psi$ is an inverse of the construction $G \mapsto ((G(x,y))_{x<y},(G(x,x),\tau_{x,x})_x)$.
\end{proof}

\begin{proposition}\label{gb.cor.sGph-model}
  The category $\sGph^{\dag}_O$ admits a cofibrantly generated model structure in which weak equivalences and fibrations are detected on the underlying spaces $G(x,y)$ for all $(x,y)$.

  Consequently, its generating (trivial) cofibrations are the generating (trivial) cofibrations of $\sSet$ placed in a single factor $x<y$, together with $C_2\cdot(\partial\Delta^n\to\Delta^n)$ (resp. $C_2\cdot(\Lambda^n_k\to\Delta^n)$) placed in a single diagonal factor, where $C_{2}\cdot(-) : \sSet \to \Fun(\mathbf B C_{2},\sSet)$ is the left adjoint to the forgetful functor.
\end{proposition}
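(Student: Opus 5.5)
The plan is to transport a product model structure across the equivalence of \cref{bg.lem.gph-decomposition}. On each off-diagonal factor $\sSet$ (indexed by $x<y$) I put the Kan--Quillen model structure. On each diagonal factor $\Fun(\mathbf BC_2,\sSet)$ I put the projective model structure, in which weak equivalences and fibrations are detected on the underlying simplicial set. The projective structure exists and is cofibrantly generated because $\mathbf BC_2$ is a small category and $\sSet$ is cofibrantly generated. Its generating (trivial) cofibrations are obtained by applying the left adjoint $C_2\cdot(-)$ of evaluation at the unique object to $\partial\Delta^n\to\Delta^n$ (resp.\ $\Lambda^n_k\to\Delta^n$).

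A small product of cofibrantly generated model categories is again cofibrantly generated, with weak equivalences and fibrations taken factorwise. Its generating (trivial) cofibrations are the generators of a single factor, extended by the initial object $\varnothing$ in every other factor. This is the standard product construction and is where the ``placed in a single factor'' description of the statement comes from. Smallness of domains holds because every object of $\sSet$ is small, and hence so is every object of $\Fun(\mathbf BC_2,\sSet)$. Transferring this structure along the equivalence $\Psi$ of \cref{bg.lem.gph-decomposition} gives a cofibrantly generated model structure on $\sGph^{\dag}_O$, whose generators are the images under $\Psi$ of the factorwise generators.

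The remaining point is to check that weak equivalences and fibrations are detected on all the spaces $G(x,y)$, not only on those with $x\le y$. For $x<y$ this holds by definition. For $y<x$, the involution $\tau_{x,y}$ is an isomorphism of simplicial sets $G(x,y)\cong G(y,x)$, natural in maps of dagger graphs, so $f_{x,y}$ is isomorphic as an arrow to $f_{y,x}$ and has the same properties. For $x=y$, the projective structure detects weak equivalences and fibrations on the underlying simplicial set $G(x,x)$, forgetting $\tau_{x,x}$.

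I do not expect a genuine obstacle, since every step is a formal combination of standard facts. The point needing care is to state precisely how a generator lying in one factor becomes an object of $\sGph^{\dag}_O$ under $\Psi$:
\begin{itemize}
  \item a generator $i$ in the factor $x<y$ appears as $i$ in both $G(x,y)$ and $G(y,x)$, with $\tau$ exchanging the two copies, and every other space is empty;
  \item a generator in the diagonal factor at $x$ appears in $G(x,x)$ with its free $C_2$-action, and every other space is empty.
\end{itemize}
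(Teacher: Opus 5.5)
Your proposal is correct and follows the paper's own proof: the Kan--Quillen structure on the off-diagonal factors and the projective structure on the diagonal $\Fun(\mathbf BC_2,\sSet)$ factors, with the product model structure transported along the equivalence of \cref{bg.lem.gph-decomposition}. Your additional checks fill in details the paper leaves implicit: detecting the spaces with $y<x$ through the natural isomorphism $\tau_{x,y}$, and spelling out what a single-factor generator looks like under $\Psi$.
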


\begin{proof}
  For the off-diagonal factor $\sSet$, we put the Kan--Quillen model structure.
  For the diagonal factor $\Fun(\mathbf BC_2,\sSet)$, we put the projective model structure.
  Then consider the product model structure on $\sGph^{\dag}_O$.
  By \cref{bg.lem.gph-decomposition}, weak equivalences and fibrations can be judged by underlying simplicial sets, respectively.
\end{proof}

\begin{theorem}\label{bg.thm.fixed-object}
  For every set $O$, the category $\sCat^{\dag}_O$ admits a right proper, simplicial, and cofibrantly generated model structure satisfying:
  \begin{itemize}
    \item weak equivalences are the local weak equivalences;
    \item fibrations are the local fibrations;
    \item generating (trivial) cofibrations are the images under the free functor $\FCatDagO:\sGph^{\dag}_O\to\sCat^{\dag}_O$ of those of \cref{gb.cor.sGph-model}.
  \end{itemize}
\end{theorem}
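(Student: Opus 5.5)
We transfer the model structure of \cref{gb.cor.sGph-model} along the free--forgetful adjunction $\FCatDagO:\sGph^{\dag}_O\rightleftarrows\sCat^{\dag}_O:\UCatDagO$, using the standard transfer theorem for cofibrantly generated model structures (Kan's recognition theorem, as in Hirschhorn 11.3.2). The fixed-object version of \cref{bg.lem.presentable} gives this adjunction: restricting the word construction to graphs with object set $O$ gives a left adjoint, and $\sCat^{\dag}_O$ is locally presentable. In particular, all objects are small, so the small-object argument applies to the sets $\FCatDagO(I)$ and $\FCatDagO(J)$. By construction, the weak equivalences and fibrations of the transferred structure are the maps whose underlying graph maps are weak equivalences or fibrations. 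Since the model structure on $\sGph^{\dag}_O$ detects these on each $G(x,y)$, they are exactly the local weak equivalences and local fibrations.

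The key hypothesis is the acyclicity condition: every relative $\FCatDagO(J)$-cell complex must be a local weak equivalence. Weak equivalences are closed under transfinite composition on underlying spaces, so it suffices to treat a single pushout along a generator. There are two cases.
\begin{itemize}
\item An off-diagonal generator $\Lambda^n_k\to\Delta^n$ placed at $(x,y)$ with $x<y$. Its free image is $\bbA_{\dag}(\Lambda^n_k)\to\bbA_{\dag}(\Delta^n)$, restricted to the fixed object set. \cref{bg.lem.reduction} identifies the underlying simplicial category of the pushout with two successive ordinary free cell attachments, and \cref{bg.lem.ordinary-cell}(2) makes each of them a local weak equivalence.
\item A diagonal generator $C_2\cdot(\Lambda^n_k\to\Delta^n)$ at $x$. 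Here the dagger acts freely on the two copies of the generating cell, and the free dagger category on it has underlying simplicial category free on a graph with two loops at $x$, exchanged by $\tau$. The forgetful functor creates pushouts (\cref{bg.lem.creation}, which also holds in the fixed-object setting). Hence the underlying pushout is again an iterated ordinary cell attachment of $\Lambda^n_k\to\Delta^n$ at $(x,x)$, once along $u$ and once along $u^{\dag}$, and \cref{bg.lem.ordinary-cell}(2) applies again.
\end{itemize}
This identification of the underlying category of a free dagger cell with ordinary cells is the main obstacle. In the diagonal case one must check that the word description of $\FCatDag$ really yields a free category on the underlying graph $C_2\cdot K$, so that the argument of \cref{bg.lem.reduction} goes through verbatim. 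I expect this to hold because words in $\FCatDag$ are exactly words in edges, and the dagger only reverses and applies $\tau$.

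For right properness, note that fibrations and pullbacks are computed on mapping spaces, since pullbacks in $\sCat^{\dag}_O$ are created in graphs. Pullbacks of weak equivalences along local Kan fibrations therefore remain weak equivalences by right properness of the Kan--Quillen structure.

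For the simplicial structure, we tensor and cotensor with simplicial sets as in the ordinary fixed-object case, with the dagger acting objectwise. The cotensor is $\calC^K(a,b)=\calC(a,b)^K$, with dagger given by postcomposition, and the tensor is its left adjoint, which exists by presentability. Axiom SM7 may be checked in its pullback-hom form on fibrations. The map $\calC^L\to\calC^K\times_{\calD^K}\calD^L$ is computed on each mapping space as the usual pullback-power map in $\sSet$, which is a (trivial) fibration by SM7 for Kan complexes. This finishes the proof.
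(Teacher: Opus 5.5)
Your proposal is correct. For the transfer, the acyclicity condition and right properness it is essentially the paper's argument: both the off-diagonal and the diagonal horn cells reduce, via \cref{bg.lem.reduction}, to two ordinary free horn attachments handled by \cref{bg.lem.ordinary-cell}(2). One small variation is that you handle transfinite composites using closure of weak homotopy equivalences under arbitrary filtered colimits of simplicial sets. You therefore do not need the monomorphism statement of \cref{bg.lem.ordinary-cell}(1) that the paper invokes.

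The genuine difference is in the proof of simpliciality. The paper writes down a simplicial strength for the word monad $T=\UCatDagO\FCatDagO$ and checks its compatibility with the dagger, unit and multiplication. It then shows that $T$ preserves reflexive coequalizers and appeals to Johnson--Noel \cite{JN14}, both for the tensored and cotensored simplicial structure on the algebras and for simpliciality of the right-induced model structure. You instead use a special feature of the fixed-object setting: cotensors are computed mapping-spacewise, $\calC^K(a,b)=\calC(a,b)^K$, with the dagger acting by postcomposition. This cotensor inherits from $\sSet$ the coherence $(\calC^K)^L\cong\calC^{K\times L}$ and turns colimits in $K$ into limits. So setting $\Map(\calC,\calD)_n:=\Hom(\calC,\calD^{\Delta^n})$ gives a cotensored enrichment, and tensors come from the adjoint functor theorem. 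SM7 in its equivalent pullback-cotensor form then reduces, on each mapping space, to SM7 for the Kan--Quillen structure, because fibrations, trivial fibrations, pullbacks and cotensors are all mapping-spacewise.

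Your route is shorter and avoids verifying the strength axioms and reflexive-coequalizer preservation, but it gives the tensors only abstractly. The paper's route produces the tensors from the monad as reflexive coequalizers of free objects, so the enrichment is visibly the one induced by the simplicial monad. It is also an instance of a general transfer theorem that does not rely on cotensors being levelwise.
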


\begin{proof}
  We transfer the model structure of \cref{gb.cor.sGph-model} along the monadic adjunction $\FCatDagO\dashv \UCatDagO$ using \cite[Theorem 11.3.2]{Hir02}.
  
  \emph{Small hypotheses:}
  The domains of the graph generating cofibrations and generating trivial cofibrations are finitely presentable.
  Since the word monad preserves filtered colimits, the forgetful functor $\UCatDagO$ creates them, and the adjunction shows that the corresponding free-category domains are finitely presentable.

  \emph{The acyclic condition:}
  We must show that a relative $\FCatDagO(J_{\gph})$-cell complex is a local weak equivalence.
  For an off-diagonal cell, a pushout along $\FCatDagO$ of a generating trivial cofibration of $\sGph^{\dag}_O$ is a composite of two ordinary free cell attachments along the trivial cofibration $\Lambda^n_k\to\Delta^n$ (by \cref{bg.lem.reduction}).
  By \cref{bg.lem.ordinary-cell} (2), each is a local weak equivalence.
  For a diagonal projective cell, the free $C_2$-orbit attaches a loop and its dagger at the same object; 
  after forgetting the dagger it is again the composite of the two ordinary horn attachments of \cref{bg.lem.reduction}.  
  Hence it is a local weak equivalence as well.
  We obtain the same result for general cases, since local weak equivalences are stable under transfinite composition because filtered colimits of simplicial sets along monomorphisms (by \cref{bg.lem.ordinary-cell} (1)) preserve weak equivalences.
  
  \emph{Existence:}
  By \cite[Theorem 11.3.2]{Hir02}, weak equivalences and fibrations are detected after applying $\UCatDagO: \sCat^{\dag}_O \to \sGph^{\dag}_O$.
  By \cref{gb.cor.sGph-model}, they are local weak equivalences and local fibrations, respectively.

  \emph{Simplicialness:}
  Put $\calG:=\sGph^{\dag}_O$, $\calA:=\sCat^{\dag}_O$, $F:=\FCatDagO$, $U:=\UCatDagO$, and $T:=UF$.
  By \cref{bg.lem.gph-decomposition} and \cite[Theorem 11.7.3]{Hir02}, the product model structure on $\calG$ is cofibrantly generated and simplicial:
  The tensor of a dagger graph with a simplicial set is defined mapping-spacewise, with trivial $C_2$-action on the second factor.

  The word formula for the monad is
  \begin{align}
    TG(x,y)
    =
    \coprod_{\substack{n\geq0\\x=x_0,\ldots,x_n=y}}
    \prod_{r=1}^{n}G(x_{r-1},x_r).
  \end{align}
  It has the simplicial strength
  \begin{align}
    \operatorname{st}_{G,K}:
    TG\otimes K&\to T(G\otimes K)
    :((g_n,\ldots,g_1),k) \mapsto ((g_n,k),\ldots,(g_1,k)).
  \end{align}
  On the summand $n=0$, it is the unique map $K\to\Delta^0$:
  it sends the empty word labelled by $k$ to the empty word.

  The formula is natural in $G$ and $K$ and satisfies the unit and associativity identities for a simplicial strength:
  in the associativity identity, both composites label every letter by the same simplex.
  
  It also commutes with the dagger and with the unit and multiplication of $T$:
  the dagger reverses the word and applies the dagger letterwise, the unit inserts a one-letter word, and the multiplication concatenates words.
  Thus $T$ is a simplicial monad.

  Reflexive coequalizers in $\calG$ are computed mapping-spacewise and degreewise in sets.
  They are sifted colimits, so they commute with finite products in sets.
  The word formula therefore shows that $T$ preserves reflexive coequalizers.

  Hence $T$ satisfies the hypotheses of \cite[Proposition~2.14]{JN14};
  in particular, $\calA$ is a bicomplete simplicial category.
  The smallness and acyclic conditions verified above are precisely the remaining hypotheses of \cite[Theorem~3.2]{JN14}.
  That theorem gives a right-induced cofibrantly generated simplicial model structure on $\calA$.
  Its weak equivalences and fibrations are created by $U$, so it is the transferred model structure constructed above.
  Thus the transferred model structure is simplicial.

  \emph{Right properness:}
  Let $w : \calC \to \calD$ be a weak equivalence and let $p : \calE \to \calD$ be a fibration in this model structure.
  Consider the pullback $\calP := \calC \times_{\calD} \calE$.
  For every $x,y \in P$, 
  \begin{align}
    \calP(x,y) \simeq \calC(p_{1}x,p_{1}y) \times_{\calD(wp_{1}x,wp_{1}y)} \calE(p_{2}x,p_{2}y).
  \end{align}
  Then $\calC(p_{1}x,p_{1}y) \to \calD(wp_{1}x,wp_{1}y)$ is a weak homotopy equivalence and $\calE(p_{2}x,p_{2}y) \to \calD(wp_{1}x,wp_{1}y)$ is a fibration. 
  Since Kan--Quillen model structure is right proper, the map 
  \begin{align}
    \calC(p_{1}x,p_{1}y) \times_{\calD(wp_{1}x,wp_{1}y)} \calE(p_{2}x,p_{2}y) \to \calE(p_{2}x,p_{2}y)
  \end{align}
  is a weak equivalence.
  Thus $P \to \calE$ is a local weak equivalence.
\end{proof}

\begin{definition}\label{bg.def.amalgam}
  Let $\calB,\calB'\in\sCat^{\dag}$ and choose objects $b\in\Ob(\calB)$ and $b'\in\Ob(\calB')$.
  The \emph{amalgam $ \bbP$ of $\calB$ and $\calB'$ at $b$ and $b'$} is the pushout $\bbP:=\calB\amalg_{\mathbf 1_{\dag}}\calB'$ in $\sCat^{\dag}$, where the maps $\mathbf 1_{\dag}\to\calB$ and $\mathbf 1_{\dag}\to\calB'$ select $b$ and $b'$, respectively.  
  
  We write $o\in\Ob(\bbP)$ for their common image and identify the other objects with their images in $\bbP$.
\end{definition}

\begin{notation}\label{bg.not.fixed-free-product}
  For a set $O$, we write $*_{O}$ for the coproduct in $\sCat_O$.
  If $\calA$ is a simplicial category with object set $S\subseteq O$, we define $\calA^{O}\in\sCat_O$ and $\bbE_S^{O}$ of $\sCat_O$ by 
  \begin{align}
    \calA^{O}(p,q):=
    \begin{cases}
      \calA(p,q) & p,q\in S,\\
      \Delta^0 & p=q\in O\setminus S\\
      \varnothing & \text{otherwise}
    \end{cases}
    \quad \text{and} \quad 
    \bbE_S^{O}(p,q):=
    \begin{cases}
      \Delta^0 & p,q\in S,\text{ or }p=q,\\
      \varnothing & \text{otherwise},
    \end{cases}
  \end{align}
  and abbreviate $\bbE_O^{O}$ to $\bbE_O$.
\end{notation}

\begin{lemma}\label{bg.prop.amalgam}
  Let $\bbP=\calB\amalg_{\mathbf 1_{\dag}}\calB'$ be an amalgam and suppose that every mapping space of $\calB$ and of $\calB'$ is nonempty and weakly contractible.
  Then every mapping space of $\bbP$ is nonempty and weakly contractible.
\end{lemma}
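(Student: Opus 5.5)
Since $\UdagSCat$ creates colimits (\cref{bg.lem.creation}), the underlying simplicial category of $\bbP$ is the pushout $\UdagSCat\calB\amalg_{[0]}\UdagSCat\calB'$ in $\sCat$. The dagger structure plays no role, so it suffices to prove the statement for an ordinary amalgam of two simplicial categories whose mapping spaces are all nonempty and weakly contractible. Write $O=\Ob(\calB)\amalg_{\{b=b'\}}\Ob(\calB')$.

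The plan is a homotopy-invariance argument in the fixed-object setting. Consider the unique identity-on-objects functors $\calB\to\bbE_{\Ob\calB}$ and $\calB'\to\bbE_{\Ob\calB'}$ to the chaotic (indiscrete) categories; by hypothesis these are local weak equivalences. Extend them to $O$ via \cref{bg.not.fixed-free-product}: $\bbP$ is the coproduct $\calB^{O}*_{O}\calB'^{O}$ in $\sCat_O$, and the chaotic amalgam $\bbE_{\Ob\calB}^{O}*_{O}\bbE_{\Ob\calB'}^{O}$ is, as an ordinary category, just the indiscrete category $\bbE_O$ (any two objects are joined by a unique morphism, since a pushout of indiscrete groupoids along a point is indiscrete). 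So it suffices to show that $*_{O}$ preserves local weak equivalences between the given objects, and to compute the target.

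To avoid cofibrancy issues, first replace $\calB$ and $\calB'$ cofibrantly in the ordinary fixed-object (Dwyer--Kan) model structures on $\sCat_{\Ob\calB}$ and $\sCat_{\Ob\calB'}$, using cofibrant replacements $Q\calB\to\calB$ that are trivial fibrations. The functors $(-)^{O}$ preserve cofibrations and weak equivalences between these, and the coproduct $*_{O}$ of cofibrant objects is left Quillen in each variable. Hence $Q\calB^{O}*_{O}Q\calB'^{O}\to\bbE^{O}*\bbE^{O}=\bbE_O$ is a weak equivalence when the target is also treated via a cofibrant replacement, and mapping spaces of $\bbE_O$ are points.

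The main obstacle is comparing $Q\calB^{O}*Q\calB'^{O}$ with the actual $\bbP$, which is not cofibrant, since $*_{O}$ need not preserve weak equivalences between noncofibrant objects. I would handle this concretely via the word description of the free product: a morphism $p\to q$ in $\calB*_{\{b\}}\calB'$ is an alternating composite passing through $b$, so
\[
\bbP(p,q)\cong \coprod_{n}\calB\text{-}\calB'\text{-}\cdots\text{ alternating products of the form } \calX(b,q)\times\calX'(b,b)^{\circ}\times\cdots\times\calX(p,b),
\]
where $\calX'(b,b)^{\circ}$ means the reduced words, i.e.\ with the identity letters quotiented out. Equivalently, $\bbP(b,b)$ is the free product of the simplicial monoids $\calB(b,b)$ and $\calB'(b,b)$. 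Computing this as a two-sided bar construction (a homotopy pushout of monoids along the unit, which is well-behaved degreewise because units split off as summands after a Reedy-type filtration by word length), one finds a filtration whose subquotients are smash products of contractible spaces, hence contractible. This gives $\bbP(b,b)\simeq *$. The general $\bbP(p,q)$ is then $\calX(b,q)\times\bbP(b,b)\times\calX(p,b)$ (or just $\calB(p,q)$ when $p,q\in\calB$, plus words through $b$), again contractible; nonemptiness is immediate since a composite through $b$ always exists.
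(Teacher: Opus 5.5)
Your reduction matches the paper's: forget the dagger, write $\UdagSCat\bbP\cong(\UdagSCat\calB)^O*_O(\UdagSCat\calB')^O$, collapse both factors to chaotic categories, and identify the chaotic free product with $\bbE_O$ by reduced words. The paper then finishes in one line by citing homotopy invariance of fixed-object free products \cite[Proposition~2.7]{DK80}, with no cofibrancy hypothesis. None is needed here: the units $\Delta^0\to\calA(x,x)$ are monomorphisms, so the word-length filtration exhibits each stage as a pushout along a fat-wedge monomorphism, and the gluing lemma applies. Your cofibrant-replacement paragraph is therefore unnecessary, and it is also a dead end. $\bbE^O_{\Ob\calB}*_O\bbE^O_{\Ob\calB'}$ is itself a free product of non-cofibrant objects, so left-Quillen-ness of $*_O$ does not identify $Q\calB^O*_OQ\calB'^O$ with $\bbE_O$, for the same reason it does not identify it with $\bbP$.

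The genuine gap is in the last step.
\begin{itemize}
\item Your filtration argument for $\bbP(b,b)=\calB(b,b)*\calB'(b,b)$ is sound in substance. The $k$-th subquotient is a wedge of two alternating $k$-fold smash products of weakly contractible pointed simplicial sets, and since $F_{k-1}$ is weakly contractible, $F_k\to F_k/F_{k-1}$ is a weak equivalence. Units do not literally split off as summands, since a face of a non-unit simplex can be the unit; what the argument uses is only that the unit is a simplicial subset.
\item The formula $\bbP(p,q)\cong\calB'(b,q)\times\bbP(b,b)\times\calB(p,b)$ is false. The composition map from this product is surjective, but it identifies $(g\circ m',w,f)$ with $(g,m'\circ w,f)$ for $m'\in\calB'(b,b)$, and similarly on the right. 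So $\bbP(p,q)$ is a balanced product over the endomorphism monoids, and its homotopy type cannot simply be read off from the factors.
\item The case $p,q\in\Ob\calB$ (``$\calB(p,q)$ plus words through $b$'') is not computed at all.
\end{itemize}

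A quick repair: choose vertices $a\in\bbP(p,b)$ and $c\in\bbP(b,q)$ coming from $\calB$ or $\calB'$; they exist by nonemptiness. Their classes are isomorphisms in $\h\bbP$, because $\h\calB$ and $\h\calB'$ are chaotic. Then \cref{bg.lem.translation} makes $c\circ-\circ a:\bbP(b,b)\to\bbP(p,q)$ a weak homotopy equivalence. Alternatively, run the length filtration on every $\bbP(p,q)$, which amounts to proving the invariance result the paper cites.
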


\begin{proof}
  Set $O:=\Ob(\calB)\amalg_{\{o\}}\Ob(\calB')$.  
  The collapse functors $(\UdagSCat\calB)^O \to \bbE_{\Ob(\calB)}^O$ and $(\UdagSCat\calB')^O \to \bbE_{\Ob(\calB')}^O$ are local weak equivalences by the hypothesis on the mapping spaces.

  Homotopy invariance of fixed-object free products \cite[Proposition 2.7]{DK80} therefore gives a local weak equivalence
  \begin{align}
    \UdagSCat\bbP\cong(\UdagSCat\calB)^O*_{O}(\UdagSCat\calB')^O
    \to \bbE_{\Ob(\calB)}^O*_{O}\bbE_{\Ob(\calB')}^O.
  \end{align}
  Since $\Ob(\calB)\cap \Ob(\calB')=\{o\}$ in the tagged pushout $O$, the reduced-word normal form of \cite[\S 1.4 (iii)]{DK80} canonically identifies the target with $\bbE_O$.
  Thus every mapping space of the target is $\Delta^0$, and the result follows.
\end{proof}

\begin{lemma}\label{bg.prop.extension}
  Let $\calC\in\sCat^{\dag}$, let $x\in\calC$, and let $\calB$ be a dagger simplicial category with object set $\{0,1\}$ all of whose mapping spaces are nonempty and weakly contractible.
  Form the amalgam $\calC\langle\calB\rangle:=\calC\amalg_{\mathbf 1_{\dag}}\calB$ of
  \cref{bg.def.amalgam} by gluing $x=0$, and write $y$ for the image of $1$.
  Then:
  \begin{enumerate}
    \item for all $a,b\in\Ob(\calC)$ the map $\calC(a,b)\to\calC\langle\calB\rangle(a,b)$ is a monomorphism and a weak homotopy equivalence;
    \item for every $a\in\Ob(\calC)$ and every vertex $h\in\calB(0,1)$, the maps $h\circ- : \calC(a,x)\to\calC\langle\calB\rangle(a,y)$ and $-\circ h^{\dag} : \calC(x,a)\to\calC\langle\calB\rangle(y,a)$ are weak homotopy equivalences.
  \end{enumerate}
\end{lemma}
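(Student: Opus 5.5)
Both parts reduce to statements about the underlying simplicial categories. By \cref{bg.lem.creation}, $\UdagSCat$ preserves the pushout defining $\calC\langle\calB\rangle$, so $\UdagSCat\calC\langle\calB\rangle\cong\UdagSCat\calC\amalg_{\mathbf 1}\UdagSCat\calB$. The dagger structure plays no further role in (1). In (2) it only enters through the map $-\circ h^{\dag}$, which is the image of $h\circ-$ under the dagger isomorphisms $\calC(x,a)\cong\calC(a,x)$ and $\calC\langle\calB\rangle(y,a)\cong\calC\langle\calB\rangle(a,y)$. So it suffices to treat $h\circ-$.

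For (1), I compare with the collapse $\calB\to\mathbf 1$. Set $O:=\Ob(\calC)\amalg\{y\}$ and write, as in the proof of \cref{bg.prop.amalgam}, $\UdagSCat\calC\langle\calB\rangle\cong(\UdagSCat\calC)^O*_O(\UdagSCat\calB)^O$. The collapse $(\UdagSCat\calB)^O\to\bbE^O_{\{x,y\}}$ is a local weak equivalence because $\calB$ has contractible mapping spaces. By \cite[Proposition 2.7]{DK80} it induces a local weak equivalence
\begin{align}
  \UdagSCat\calC\langle\calB\rangle\to(\UdagSCat\calC)^O*_O\bbE^O_{\{x,y\}}.
\end{align}
The target is $\calC$ with a strict isomorphism $x\cong y$ freely adjoined. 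By the reduced-word normal form \cite[\S1.4]{DK80}, its mapping spaces between objects of $\calC$ are exactly those of $\calC$, since the words $x\to y\to x$ collapse to identities. Hence the composite $\calC(a,b)\to\calC\langle\calB\rangle(a,b)\to\calC(a,b)$ is the identity, and by 2-out-of-3 the first map is a weak equivalence.

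For the monomorphism claim, there are two routes. The first is to note that $\calC(a,b)\to\calC\langle\calB\rangle(a,b)$ has a retraction given by the functor $\calC\langle\calB\rangle\to\calC$ that collapses $\calB$ onto $x$; this functor exists because $\mathbf 1$ is terminal in $\sCat^{\dag}$. The second is to read it off directly from the reduced-word description, in which $\calC(a,b)$ is the summand of words of length one. The retraction route is cleaner, so I plan to use it.

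For (2), I again pass to the target above, where $h$ maps to the unique isomorphism $\iota:x\to y$. The commuting square
\begin{align}
  \calC(a,x)\xrightarrow{h\circ-}\calC\langle\calB\rangle(a,y)\to(\calC*\bbE)(a,y)
\end{align}
has right map a weak equivalence, and the composite equals $\calC(a,x)\to(\calC*\bbE)(a,x)\xrightarrow{\iota\circ-}(\calC*\bbE)(a,y)$. The first map here is a weak equivalence by the argument for (1), and the second is an isomorphism. So 2-out-of-3 gives (2).

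The main obstacle is justifying the normal-form identification of $(\UdagSCat\calC)^O*_O\bbE^O_{\{x,y\}}$, that is, showing that freely adjoining a strict isomorphism to a new object does not change the old mapping spaces. If citing DK80 feels thin, I would instead exhibit an explicit equivalence of categories to $\calC$ sending $y\mapsto x$. This functor is fully faithful and essentially surjective with discrete isomorphism data, so it induces isomorphisms on mapping spaces between old objects.
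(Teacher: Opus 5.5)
Your proposal is correct and follows the paper's proof. Both arguments collapse $(\UdagSCat\calB)^O\to\bbE^O_{\{x,y\}}$, apply \cite[Proposition 2.7]{DK80}, identify the old mapping spaces of $(\UdagSCat\calC)^O*_O\bbE^O_{\{x,y\}}$ by reduced words, and conclude by two-out-of-three. The only differences are cosmetic: you get injectivity from the retraction $\calC\langle\calB\rangle\to\calC$ where the paper reads it off the normal form, and you reduce $-\circ h^{\dag}$ to $h\circ-$ via the dagger where the paper uses directly that $h^{\dag}$ maps to $u^{-1}$.
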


\begin{proof}
  (1)
  Put $O:=\Ob(\calC)\amalg\{y\}$.  
  Collapsing its four mapping spaces gives a local weak equivalence $(\UdagSCat\calB)^O\to\bbE_{\{x,y\}}^O$.
  Applying \cite[Proposition 2.7]{DK80} to this map and the identity of $(\UdagSCat\calC)^O$ gives a local weak equivalence
  \begin{align}
    q:\UdagSCat(\calC\langle\calB\rangle)
    \cong(\UdagSCat\calC)^O*_{O}(\UdagSCat\calB)^O
    \to Q:=(\UdagSCat\calC)^O*_{O}\bbE_{\{x,y\}}^O.
  \end{align}
  Write $u:x\to y$ for the unique non-identity arrow of $\bbE_{\{x,y\}}^O$ with this source and target.  
  For $a,b\in\Ob(\calC)$, the reduced-word normal form gives isomorphisms 
  \begin{align}
    \calC(a,b)\xrightarrow{\cong}Q(a,b) &: f \mapsto f, \\
    \calC(a,x)\xrightarrow{\cong}Q(a,y) &: f \mapsto uf, \\
    \calC(x,a)\xrightarrow{\cong}Q(y,a) &: g \mapsto gu^{-1}.
  \end{align}
  For $a,b\in\Ob(\calC)$, the composite 
  \begin{align}
    \calC(a,b)\to\calC\langle\calB\rangle(a,b) \xrightarrow{q}Q(a,b)
  \end{align}
  is the first isomorphism.
  Since the second map is a weak equivalence, the first is one by two-out-of-three.  
  In each simplicial degree, its image consists of the length-one words from the first factor, together with the empty identity word when $a=b$, so uniqueness of reduced normal forms makes it injective.
  It is therefore a monomorphism.

  (2)
  Under the collapse $(\UdagSCat\calB)^O\to\bbE_{\{x,y\}}^O$, the vertices $h$ and $h^{\dag}$ map to $u$ and $u^{-1}$.  
  Consequently, maps in (2), followed by the corresponding component of $q$, is one of the isomorphisms in the above.  
  Since $q$ is a local weak equivalence, two-out-of-three proves (2).
\end{proof}

\subsection{Natural dagger intervals and coherently unitary equivalences}

We define natural dagger intervals and coherently unitary equivalences (\cref{bg.def.cu}), and prove that the coherently unitary morphisms form a subgroupoid stable under, and reflected by, local weak equivalences (\cref{bg.prop.cu-groupoid}).
These intervals refine weak unitarity by retaining the homotopy-coherent data relating an inverse to the dagger.

\begin{notation}\label{bg.not.intervals}
  We let 
  \begin{itemize}
    \item $\bbJ_{\dag}:=\bbA_{\dag}(\Delta^0)$ denote the \emph{walking dagger arrow}: 
    the dagger category with generating arrow $a:0\to1$ and formal dagger $a^{\dag}:1\to0$, no relations imposed.
    \item $\bbI_{\dag}$ denote the \emph{walking unitary isomorphism}: 
    the dagger ($1$-)category with objects $0,1$, morphisms generated by $u:0\to1$ and $u^{\dag}$ subject to $u^{\dag}u=\id_0$ and $uu^{\dag}=\id_1$ (so all four hom-sets are singletons), regarded as a dagger simplicial category with discrete mapping spaces.
    \item $q:\bbJ_{\dag}\to\bbI_{\dag}$ denote the quotient dagger functor with $q(a)=u$.
  \end{itemize}
\end{notation}

\begin{definition}\label{bg.def.interval}
  A \emph{natural dagger interval} is a factorization 
  \begin{align}
    \bbJ_{\dag}\xrightarrow{j_{\bbH}}\bbH\xrightarrow{p_{\bbH}}\bbI_{\dag}
  \end{align}
  of $q$ in $\sCat^{\dag}_{\{0,1\}}$ satisfying:
  \begin{enumerate}
    \item $j_{\bbH}$ is a cofibration in the fixed-object model structure of \cref{bg.thm.fixed-object} and $p_{\bbH}$ is a local weak equivalence;
    \item every mapping space of $\bbH$ is countable.
  \end{enumerate}
  We write $h_{\bbH}:=j_{\bbH}(a)\in\bbH(0,1)_0$.
  We fix once and for all a set $\calH_{\dag}$ of representatives of the isomorphism classes of natural dagger intervals.
  Here an isomorphism of intervals is an isomorphism under $\bbJ_{\dag}$ and over $\bbI_{\dag}$, so it preserves endpoints and the distinguished generator.
\end{definition}

\begin{lemma}\label{bg.lem.interval-existence}
  Natural dagger intervals exist, and their isomorphism classes form a set.
\end{lemma}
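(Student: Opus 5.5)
Existence comes from factoring $q$ in the fixed-object model structure of \cref{bg.thm.fixed-object} on $\sCat^{\dag}_{\{0,1\}}$, with the factorization produced by the small object argument so that the countability condition can be checked directly. Since $\bbJ_{\dag}=\bbA_{\dag}(\Delta^0)$ and $\bbI_{\dag}$ both have object set $\{0,1\}$ and $q$ is the identity on objects, $q$ is a morphism of $\sCat^{\dag}_{\{0,1\}}$. We factor it as a relative $I$-cell complex $j:\bbJ_{\dag}\to\bbH$ followed by an $I$-injective map $p:\bbH\to\bbI_{\dag}$. Here $I$ is the set of generating cofibrations, namely the images under $\FCatDagO$ of $\partial\Delta^n\to\Delta^n$ placed off the diagonal and of $C_2\cdot(\partial\Delta^n\to\Delta^n)$ placed on the diagonal. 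Then $j$ is a cofibration and $p$ is a trivial fibration, hence a local weak equivalence, so condition (1) holds.

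For condition (2) we run the small object argument for only $\omega$ steps. This suffices because the domains of $I$ are finitely presentable, as shown in the proof of \cref{bg.thm.fixed-object}. The stage $\bbH_0=\bbJ_{\dag}$ has countable mapping spaces, since its mapping spaces consist of words in $a,a^{\dag}$ and so are countable discrete sets. Inductively, suppose $\bbH_k$ has countable mapping spaces. Each mapping space of $\bbI_{\dag}$ is $\Delta^0$, and $\Hom(\partial\Delta^n,K)$ is countable for $K$ countable. Hence the set of lifting problems at stage $k$ is countable, since it is indexed by maps from finite boundaries into $\bbH_k$ and all maps to the target are unique. Therefore $\bbH_{k+1}$ is obtained by a countable coproduct of free dagger cell attachments. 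By \cref{bg.lem.reduction}, each attachment is a pair of ordinary cell attachments, and the word description of pushouts along $\bbA(\partial\Delta^n)\to\bbA(\Delta^n)$ shows that such an attachment preserves countability of mapping spaces, since words of finite length over countable alphabets form a countable set. A countable colimit of countable simplicial sets is countable, so $\bbH=\colim_k\bbH_k$ has countable mapping spaces. The diagonal generators $C_2\cdot(\cdots)$ attach two simplices related by the dagger, which is again covered by the same word count.

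For the set-theoretic claim, a natural dagger interval has a fixed object set $\{0,1\}$ and four countable mapping spaces. Up to isomorphism, each of its underlying simplicial sets can therefore be taken to have simplices in $\bbN$ in each degree. The whole structure (face and degeneracy maps, composition, dagger, $j_{\bbH}$, $p_{\bbH}$) is then a set of data on these fixed carriers. Consequently every interval is isomorphic under $\bbJ_{\dag}$ and over $\bbI_{\dag}$ to one whose carriers are subsets of $\bbN\times\bbN$, and such intervals form a set. The isomorphism classes therefore form a set, and we may choose the set of representatives $\calH_{\dag}$.

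The main obstacle is the countability bookkeeping for the cell attachments: one has to make sure that the free dagger pushout does not enlarge mapping spaces beyond countable. I would handle this with the explicit reduced-word description, or the Dwyer--Kan normal form for free products used in \cref{bg.prop.amalgam}. The point is that every simplex of the pushout is a finite word in simplices of $\bbH_k$ and of $\Delta^n$, and there are only countably many such words.
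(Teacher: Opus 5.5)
Your proposal is correct and follows the same route as the paper: an $\omega$-stage small object argument factoring $q$ in the fixed-object model structure on $\sCat^{\dag}_{\{0,1\}}$, a count showing that each stage involves only countably many lifting problems and finite cells, so that all mapping spaces remain countable, and a transport of structure to fixed countable carriers to get a set of isomorphism classes. Your degreewise word description of the pushouts is, if anything, slightly more careful than the paper's appeal to ``free categories on countable graphs,'' since the intermediate stages are pushouts of non-free categories rather than free categories.
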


\begin{proof}
  Apply the small object argument to $q$ using the finite generating cofibrations of \cref{bg.thm.fixed-object}, and choose its standard sequential, or $\omega$-stage, form.
  This factors $q$ as a cofibration followed by a trivial fibration.
  
  The category $\bbJ_{\dag}$ has degreewise countable mapping spaces.
  Since the generating domains are finite, a map from one of them into a stage with countable mapping spaces is determined by finitely many simplices;
  hence at each stage there are only countably many lifting problems and only countably many finite cells are attached.
  Free categories on countable graphs have countable mapping spaces.
  Hence every stage, and the sequential colimit $\bbH$, has countable mapping spaces.
  
  A dagger simplicial category on $\{0,1\}$ with countable mapping spaces is specified by a set-indexed family of countable data, so the isomorphism classes form a set.
\end{proof}

\begin{remark}\label{bg.rem.interval-correction}
  Although the construction of \cref{bg.lem.interval-existence} produces an interval for which $p_{\bbH}$ is also a local fibration, the latter property is not required of an arbitrary natural dagger interval.
  The condition that $p_{\bbH}$ is a local weak equivalence and local fibration is incompatible with the extraction property needed below.

  Here is an explicit obstruction;
  let $P=\{0<1\}$, regard $N(P)$ as a simplicial monoid under $\max$ with unit $0$, and let $\calC$ be the one-object
  dagger simplicial category with $\calC(*,*)=N(P)$ and dagger the identity.
  In the two-object copy $\bbG=\calC\langle *,*\rangle$, send the generator of $\bbJ_{\dag}$ to the vertex $1$.  
  If this map factors through an interval satisfying the old local-fibration condition, then $\bbH(0,0)\to\Delta^0$ is a trivial Kan fibration.  
  Thus $\bbH(0,0)$ would be a connected Kan complex.  
  Every simplicial map from a connected Kan complex to $N(P)$ is constant on vertices: 
  its functor on fundamental categories has groupoidal source, whereas the only isomorphisms in the poset $P$ are identities.  
  A dagger functor $\bbH\to\bbG$ must therefore send both $\id_0$ and $h_{\bbH}^{\dag}h_{\bbH}$ to $0$.  
  On the other hand, the chosen image of $h_{\bbH}$ forces the latter image to be $\max(1,1)=1$, a contradiction.
\end{remark}

\begin{lemma}\label{bg.lem.interval-basic}
  Let $\bbH$ be a natural dagger interval.
  Then:
  \begin{enumerate}
    \item every mapping space $\bbH(i,j)$ is nonempty and weakly contractible;
    \item $[h_{\bbH}]$ is an isomorphism in $\h\bbH$ and $[h_{\bbH}]^{\dag}=[h_{\bbH}]^{-1}$;
    \item the relabeled category $\bbH^{\rev}$, obtained by exchanging the names of the objects $0,1$ and equipped with $j_{\bbH^{\rev}}(a):=j_{\bbH}(a)^{\dag}$, is again a natural dagger interval.
  \end{enumerate}
\end{lemma}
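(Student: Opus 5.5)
For (1), I would start from \cref{bg.def.interval}(1). There $p_{\bbH}:\bbH\to\bbI_{\dag}$ is a local weak equivalence in $\sCat^{\dag}_{\{0,1\}}$. Every mapping space of $\bbI_{\dag}$ is a single point $\Delta^0$. Hence each map $\bbH(i,j)\to\Delta^0$ is a weak homotopy equivalence, so each $\bbH(i,j)$ is weakly contractible. Weakly contractible spaces are nonempty, since $\varnothing\to\Delta^0$ is not a weak equivalence on $\pi_0$. One can also see nonemptiness directly: $\bbH(0,1)$ contains $h_{\bbH}$, $\bbH(1,0)$ contains $h_{\bbH}^{\dag}$, and the endomorphism spaces contain identities.

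For (2), I would pass to homotopy categories. The functor $p_{\bbH}$ induces $\h\bbH\to\h\bbI_{\dag}=\bbI_{\dag}$. By (1), every hom-set $\pi_0\bbH(i,j)$ of $\h\bbH$ is a singleton. So $\h\bbH$ is the indiscrete category on $\{0,1\}$. In it, $[h_{\bbH}^{\dag}][h_{\bbH}]$ lies in the singleton $\pi_0\bbH(0,0)$, so it equals $[\id_0]$. Similarly $[h_{\bbH}][h_{\bbH}^{\dag}]=[\id_1]$. Therefore $[h_{\bbH}]$ is an isomorphism with inverse $[h_{\bbH}^{\dag}]=[h_{\bbH}]^{\dag}$. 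Here I use that the dagger descends to $\h\bbH$ because it is a simplicial functor.

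For (3), I would use the swap isomorphism $\sigma$ of $\bbI_{\dag}$ exchanging $0$ and $1$. It is a dagger isomorphism with $\sigma(u)=u^{\dag}$. There is a similar swap on $\bbJ_{\dag}$ sending $a\mapsto a^{\dag}$, which is well defined because $\bbJ_{\dag}$ is free on $G_{\Delta^0}$ and the graph swap respects the involution. Relabeling transports everything along these isomorphisms. The new map $j_{\bbH^{\rev}}$ is $j_{\bbH}$ precomposed with the swap of $\bbJ_{\dag}$, and the new $p_{\bbH^{\rev}}$ is $\sigma\circ p_{\bbH}$. Then I would check that $p_{\bbH^{\rev}}\circ j_{\bbH^{\rev}}$ sends $a$ to $\sigma(p_{\bbH}(j_{\bbH}(a)^{\dag}))=\sigma(u^{\dag})=u$, so it equals $q$.

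The one point needing care is that the relabeling functor $\sCat^{\dag}_{\{0,1\}}\to\sCat^{\dag}_{\{0,1\}}$ is an automorphism of the fixed-object model structure. By \cref{bg.lem.gph-decomposition}, it permutes the off-diagonal and diagonal factors of $\sGph^{\dag}_{\{0,1\}}$ (it preserves diagonal versus off-diagonal). So it preserves the generating cofibrations listed in \cref{bg.thm.fixed-object}, and it commutes with $\FCatDagO$. Hence it preserves cofibrations, being an isomorphism of categories preserving the generators and therefore their saturation. It also preserves local weak equivalences, which are defined mapping-spacewise. Countability of the mapping spaces is unchanged. This verification is the only mild obstacle; everything else is formal.
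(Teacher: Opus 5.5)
Your proposal is correct and follows essentially the same route as the paper. Part (1) comes from the local weak equivalence $p_{\bbH}$ to $\bbI_{\dag}$, whose mapping spaces are points. For part (2), you use that the hom-sets of $\h\bbH$ are singletons, while the paper uses injectivity of $\h p_{\bbH}$ on hom-sets; these amount to the same thing. Part (3) transports along the endpoint swaps of $\bbJ_{\dag}$ and $\bbI_{\dag}$, exactly as the paper does with $s_{\bbI}qs_{\bbJ}=q$. Your explicit check that the relabeling automorphism of $\sCat^{\dag}_{\{0,1\}}$ preserves the generating fixed-object cofibrations supplies a detail the paper only asserts.
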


\begin{proof}
  (1) 
  Since the mapping spaces of $\bbI_{\dag}$ are points and $p_{\bbH}$ is a local weak equivalence, 
  \begin{align}
    \bbH(i,j) \to \bbI_{\dag}(i,j) \simeq \Delta^{0}
  \end{align}
  is a weak homotopy equivalence.
  Nonemptiness holds since $h_{\bbH}$, its dagger, their composites and the identities provide vertices in all four spaces.
  
  (2) 
  Since $p_{\bbH}$ is a weak homotopy equivalence, $\h p_{\bbH}$ induces a bijection on hom-sets.
  In $\h\bbI_{\dag}$, the relations $[u]^{\dag}[u] = \id$ and $[u][u]^{\dag} = \id$ hold.
  Then we also have $[h_{\bbH}]^{\dag}[h_{\bbH}] = \id$ and $[h_{\bbH}][h_{\bbH}]^{\dag} = \id$.
  
  (3)
  Let $s_{\bbJ}$ and $s_{\bbI}$ be the endpoint-swapping automorphisms of $\bbJ_{\dag}$ and $\bbI_{\dag}$, respectively, and let $r_{\bbH}:\bbH^{\rev}\to\bbH$ be the relabeling isomorphism.
  Define $j_{\bbH^{\rev}}:=r_{\bbH}^{-1}j_{\bbH}s_{\bbJ}$ and $p_{\bbH^{\rev}}:=s_{\bbI}p_{\bbH}r_{\bbH}$.
  Since $s_{\bbI}q=qs_{\bbJ}$, we have 
  \begin{align}
    p_{\bbH^{\rev}}j_{\bbH^{\rev}} = s_{\bbI}qs_{\bbJ} = q.
  \end{align}
  The cofibration, local weak-equivalence and countability conditions are preserved by the displayed isomorphisms.
  Thus $\bbH^{\rev}$ is a natural dagger interval.
\end{proof}

\begin{lemma} \label{bg.1toH_is_cofibration}
  The functor $\mathbf 1_{\dag}\to\bbH$ selecting the object $0$ (or $1$) lies in $I_{\dag}\text{-}\cof$, for the set $I_{\dag}$ of \cref{bg.def.IJ} below.
\end{lemma}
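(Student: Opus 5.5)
The plan is to factor the inclusion $\mathbf 1_{\dag}\to\bbH$ into three stages and check that each stage lies in $I_{\dag}\text{-}\cof$. I expect $I_{\dag}$ (\cref{bg.def.IJ}) to consist of $\varnothing\to\mathbf 1_{\dag}$ together with the maps $\bbA_{\dag}(\partial\Delta^n)\to\bbA_{\dag}(\Delta^n)$ for $n\ge 0$. The argument rests on that reading of $I_{\dag}$, so I would first confirm it against \cref{bg.def.IJ}. The factorization is
\begin{align}
  \mathbf 1_{\dag}\longrightarrow \mathbf 2_{\dag}\longrightarrow \bbJ_{\dag}\xrightarrow{\;j_{\bbH}\;}\bbH .
\end{align}
The case of the object $1$ follows by symmetry, either by swapping the names $0,1$ in $\mathbf 2_{\dag}$ or by applying \cref{bg.lem.interval-basic}(3).

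\emph{First two stages.} The map $\mathbf 1_{\dag}\to\mathbf 2_{\dag}$ is the pushout of $\varnothing\to\mathbf 1_{\dag}$ along $\varnothing\to\mathbf 1_{\dag}$, using \cref{bg.lem.creation}. Since $\partial\Delta^0=\varnothing$, \eqref{bg.eq.A-univ} gives $\bbA_{\dag}(\varnothing)\cong\mathbf 2_{\dag}$. Hence $\mathbf 2_{\dag}\to\bbJ_{\dag}=\bbA_{\dag}(\Delta^0)$ is literally the generator $\bbA_{\dag}(\partial\Delta^0)\to\bbA_{\dag}(\Delta^0)$.

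\emph{Third stage.} By \cref{bg.def.interval}, $j_{\bbH}$ is a cofibration in $\sCat^{\dag}_{\{0,1\}}$. By \cref{bg.thm.fixed-object} it is therefore a retract, in $\sCat^{\dag}_{\{0,1\}}$, of a transfinite composite of pushouts of images under $\FCatDagO$ of the generators of \cref{gb.cor.sGph-model}. I would transport this presentation into $\sCat^{\dag}$ in three steps.
\begin{enumerate}
  \item A retract diagram in $\sCat^{\dag}_{\{0,1\}}$ is in particular one in $\sCat^{\dag}$, and $I_{\dag}\text{-}\cof$ is closed under retracts.
  \item Pushouts and transfinite composites of identity-on-objects maps in $\sCat^{\dag}_{\{0,1\}}$ agree with those formed in $\sCat^{\dag}$. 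This holds because the object functor preserves colimits by \cref{bg.lem.creation}, so the $\sCat^{\dag}$-colimit already has object set $\{0,1\}$ and satisfies the fixed-object universal property.
  \item Each fixed-object generating cell is a pushout in $\sCat^{\dag}$ of a map in $I_{\dag}$.
\end{enumerate}

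\emph{Checking step (3).} For an off-diagonal cell at $(x,y)$ with $x\neq y$, the map $\FCatDagO$ of $\partial\Delta^n\to\Delta^n$ placed in that factor is the pushout of $\bbA_{\dag}(\partial\Delta^n)\to\bbA_{\dag}(\Delta^n)$ along the map $\bbA_{\dag}(\partial\Delta^n)\to\FCatDagO(\ldots)$ classifying the boundary data at $(x,y)$; this is immediate from \eqref{bg.eq.A-univ}. The diagonal cells $\FCatDagO\bigl(C_2\cdot(\partial\Delta^n\to\Delta^n)\bigr)$ at $(x,x)$ are the step I expect to need the most care. My approach is to push out $\bbA_{\dag}(\partial\Delta^n)\to\bbA_{\dag}(\Delta^n)$ along a functor sending both objects $0,1$ to $x$. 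In the pushout, the identification of $0$ and $1$ makes the generating edge space $\Delta^n$ together with its formal dagger exactly a free $C_2$-orbit of endomorphism simplices at $x$. Indeed, by \eqref{bg.eq.A-univ} and the universal property of $\FCatDagO$, maps out of either side correspond to a map $\Delta^n\to\calC(x,x)$ extending given boundary data. So the two pushouts coincide. This identification is the one point where the universal properties must be matched carefully, since the pushout is along a map that is not injective on objects.

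Since $I_{\dag}\text{-}\cof$ is closed under composition, the three stages together show that $\mathbf 1_{\dag}\to\bbH$ lies in $I_{\dag}\text{-}\cof$.
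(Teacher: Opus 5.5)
Your proposal is correct and follows essentially the same route as the paper. You reach $\bbJ_{\dag}$ via $\mathbf 1_{\dag}\to\mathbf 2_{\dag}\cong\bbA_{\dag}(\partial\Delta^0)\to\bbA_{\dag}(\Delta^0)$, write $j_{\bbH}$ as a retract of a fixed-object relative cell complex, and recognize each off-diagonal and diagonal fixed-object cell as a pushout in $\sCat^{\dag}$ of a generator in $I_{\dag}$ whose global pushout keeps the object set $\{0,1\}$; the only cosmetic difference is that you retract $j_{\bbH}$ and then compose, whereas the paper composes first and exhibits $\mathbf 1_{\dag}\to\bbH$ as a retract of $\mathbf 1_{\dag}\to\bbK$.
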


\begin{proof}
  We first construct the inclusion of the selected object into the walking dagger arrow.
  The pushout of the object generator $\varnothing\to\mathbf1_{\dag}$ along $\varnothing\to\mathbf1_{\dag}$ is the discrete two-object dagger category $\mathbf2_{\dag}$.
  Since $\partial\Delta^0=\varnothing$, there are canonical identifications $\bbA_{\dag}(\partial\Delta^0)\cong\mathbf2_{\dag}$.
  Attaching this $0$-cell therefore freely adjoins the arrow $a:0\to1$ and its dagger $a^{\dag}:1\to0$.
  Hence the composite $\mathbf1_{\dag}\to\mathbf2_{\dag}\to\bbJ_{\dag}$ lies in $I_{\dag}\text{-}\cof$.

  By definition of a natural dagger interval, $j_{\bbH}:\bbJ_{\dag}\to\bbH$ is a cofibration in the fixed-object model structure on $\sCat^{\dag}_{\{0,1\}}$.
  Since that model structure is cofibrantly generated by \cref{bg.thm.fixed-object}, there exist a relative fixed-object generating-cofibration complex $k:\bbJ_{\dag}\to\bbK$ and maps $\bbH\xrightarrow{s}\bbK\xrightarrow{r}\bbH$ under $\bbJ_{\dag}$ such that $rs=\id_{\bbH}$, $s j_{\bbH}=k$, and $r k=j_{\bbH}$.

  We compare the fixed-object generating cells with the generators in $I_{\dag}$.
  By the universal property \eqref{bg.eq.A-univ}, an off-diagonal fixed-object generating cell is a pushout in $\sCat^{\dag}$ of $\bbA_{\dag}(\partial\Delta^n)\to\bbA_{\dag}(\Delta^n)$ along a classifying pair of distinct objects.
  A diagonal projective $C_2$-cell is the same pushout with the two classifying objects both sent to the chosen object; the two freely adjoined copies of the simplex are then exchanged by the dagger.
  In both cases the attachment is the identity on the fixed object set, so the global pushout retains that object set and agrees with the corresponding fixed-object pushout.
  Thus every cell of $k$ is a pushout of an element of $I_{\dag}$, and $k$ is a relative $I_{\dag}$-cell complex in $\sCat^{\dag}$.

  Composing $k$ with the $I_{\dag}$-cofibration $\mathbf1_{\dag}\to\bbJ_{\dag}$ constructed above shows that $\mathbf1_{\dag}\to\bbK$ lies in $I_{\dag}\text{-}\cof$.
  Since $s$ and $r$ are maps under $\bbJ_{\dag}$, precomposition with $\mathbf1_{\dag}\to\bbJ_{\dag}$ exhibits $\mathbf1_{\dag}\to\bbH$ as a retract of $\mathbf1_{\dag}\to\bbK$.
  The class $I_{\dag}\text{-}\cof$ is closed under retracts, so the endpoint inclusion $\mathbf1_{\dag}\to\bbH$ selecting $0$ belongs to it.
  The assertion for the endpoint $1$ follows by the same construction with the two objects exchanged, or by applying the result for endpoint $0$ to the reversed interval of \cref{bg.lem.interval-basic} (3).
\end{proof}

\begin{construction}\label{bg.constr.two-object-copy}
  For objects $x,y$ of $\calC\in\sCat^{\dag}$ (possibly equal), let $ \calC\langle x,y\rangle\in\sCat^{\dag}_{\{0,1\}}$ have mapping spaces $\calC\langle x,y\rangle(i,j):=\calC(x_i,x_j)$ with $x_0=x$, $x_1=y$, composition and dagger inherited from $\calC$.
  Note that $x=y$, the two formal objects remain distinct.
  Then a dagger functor $f:\calC\to\calD$ induces a morphism $f\langle x,y\rangle:\calC\langle x,y\rangle\to\calD\langle fx,fy\rangle$ in $\sCat^{\dag}_{\{0,1\}}$
\end{construction}

\begin{notation}
  Let $\Theta \in \sCat^{\dag}_{\{0,1\}}$.
  We fix a functorial fibrant replacement $\Theta \xrightarrow{r_{\Theta}} R\Theta \to *$, where $r_{\Theta}$ is a trivial cofibration and $R\Theta\to\ast$ is a fibration in the model structure of \cref{bg.thm.fixed-object}.
\end{notation}

We can consider two notions of \emph{unitary isomorphism} in dagger simplicial categories:
weakly unitary and coherently unitary.
We show that they are different and the right notion is the latter one.

\begin{definition}\label{bg.def.weakly-unitary}
  Let $\calC$ be a dagger simplicial category.
  Then the homotopy category $\h\calC$ (with $\Hom_{\h\calC}(x,y)=\pi_0\calC(x,y)$) inherits a dagger structure.
  An isomorphism $[u]:x\to y$ in $\h\calC$ is \emph{weakly unitary} if $[u]^{\dag}=[u]^{-1}$.
\end{definition}

\begin{definition}\label{bg.def.cu}
  Let $\calC\in\sCat^{\dag}$ and $u\in\calC(x,y)_0$.
  The class $[u]\in\h\calC$ is a \emph{coherently unitary equivalence} if there exist a natural dagger interval $\bbH$ and a dagger simplicial functor $\Phi:\bbH\to R\calC\langle x,y\rangle$ in $\sCat^{\dag}_{\{0,1\}}$ such that $[\Phi(h_{\bbH})]=[r(u)]$ in $\pi_0R\calC\langle x,y\rangle(0,1)$, where $r := r_{\calC\langle x,y\rangle} : \calC\langle x,y\rangle \to R\calC\langle x,y\rangle$. 
\end{definition}

\begin{notation}
  We let $(\h\calC)^{\cu}\subseteq\h\calC$ denote the subgraph of coherently unitary classes;
  we prove that it is in fact a subgroupoid (see \cref{bg.cor.cohuni_is_subgroupoid}).
\end{notation}

\begin{remark}\label{bg.rem.weak-vs-coherent}
  Weak unitarity is only a condition on path components.
  \Cref{bg.prop.weak-not-coherent} exhibits a one-object dagger simplicial category and a weakly unitary $[u]$ which is not \emph{coherently} unitary in the sense of \cref{bg.def.cu}.
  
  Consequently, the object-level condition in the definition of the weak equivalences of $\sCat^{\dag}$ cannot be phrased in terms of $\h\calC$ alone, in contrast with Bergner's setting, where the object-level condition only refers to isomorphisms in the homotopy category.
  This is the essential point of departure from \cite{Bergner07}.
\end{remark}

\begin{lemma}\label{bg.lem.invariance}
  Let $\Theta,\Theta'\in\sCat^{\dag}_{\{0,1\}}$ be fibrant and let $s:\calC\langle x,y\rangle\to\Theta$, $s':\calC\langle x,y\rangle\to\Theta'$ be local weak equivalences.
  Then there exists a natural dagger interval witness for $[s(u)]$ in $\Theta$ if and only if there exists one for $[s'(u)]$ in $\Theta'$.
  
  In particular, \cref{bg.def.cu} is independent of the choice of functorial fibrant replacement, and may be tested on any fibrant local-weak-equivalence receptacle of $\calC\langle x,y\rangle$.
\end{lemma}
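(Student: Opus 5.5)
We plan to reduce to a single comparison map between fibrant receptacles, using the fixed-object model structure of \cref{bg.thm.fixed-object} on $\sCat^{\dag}_{\{0,1\}}$. First, since $s$ and $s'$ are local weak equivalences out of the same object $\calC\langle x,y\rangle$, we factor $s$ as a trivial cofibration $\calC\langle x,y\rangle\to\Theta''$ followed by a fibration $\Theta''\to\Theta$. The map $\Theta''\to\Theta$ is then a trivial fibration. Lifting $s'$ against the fibrant object $\Theta'$ along the trivial cofibration gives a map $g:\Theta''\to\Theta'$, and $g$ is a local weak equivalence by two-out-of-three. It is therefore enough to prove the following claim: if $w:\Theta_1\to\Theta_2$ is a local weak equivalence between fibrant objects under $\calC\langle x,y\rangle$, then a witness for $[s_1(u)]$ exists if and only if a witness for $[s_2(u)]=[w s_1(u)]$ exists. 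Here ``under'' is needed because $w$ must carry $s_1(u)$ to $s_2(u)$.

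For the forward direction, take a witness $\Phi:\bbH\to\Theta_1$. Then $w\Phi$ is a witness in $\Theta_2$, since $\pi_0$ of the mapping space is compatible with $w$.

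For the converse, take a witness $\Phi:\bbH\to\Theta_2$. By Ken Brown's lemma in the fixed-object structure, we factor $w$ as a trivial cofibration $i:\Theta_1\to\Theta_3$ followed by a retraction-type trivial fibration $p:\Theta_3\to\Theta_2$; the object $\Theta_3$ is fibrant. The map $j_{\bbH}$ is a cofibration, and $\bbJ_{\dag}$ is free on the vertex $a$, so the fixed-object map $\bbJ_{\dag}\to\bbH$ is a cofibration and $\bbH$ is cofibrant. Since $p$ is a trivial fibration, $\Phi$ lifts to $\Phi_3:\bbH\to\Theta_3$. Next we need a map back to $\Theta_1$. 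Because $\Theta_1$ is fibrant and $i$ is a trivial cofibration, there is a retraction $\rho:\Theta_3\to\Theta_1$ with $\rho i=\id$. We set $\Psi:=\rho\Phi_3$ and check the path component: $\rho$ is a local weak equivalence, and
\[
[\rho\Phi_3(h_{\bbH})]=[s_1(u)]
\]
because applying $\rho$ to the equality $[p\Phi_3(h)]=[w s_1 (u)]=[p i s_1(u)]$ uses that $p$ induces a bijection on $\pi_0$. This gives $[\Phi_3(h)]=[i s_1(u)]$, and applying $\rho$ yields the claim. The ``in particular'' then follows by taking $s=r_{\calC\langle x,y\rangle}$ for two choices of functorial fibrant replacement, or one replacement and any other fibrant receptacle.

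The main obstacle is this converse direction, specifically ensuring that the witness can be pulled back along a weak equivalence. The point that needs care is that natural dagger intervals are not required to have $p_{\bbH}$ a fibration (\cref{bg.rem.interval-correction}), so the argument uses only the cofibrancy of $\bbH$ in the fixed-object structure, and nothing more. One more check is needed: that the cofibration $j_{\bbH}$ composed with $\varnothing$-like initial data really makes $\bbH$ cofibrant in $\sCat^{\dag}_{\{0,1\}}$. The initial object there is the discrete $\mathbf 2_{\dag}$, and $\mathbf 2_{\dag}\to\bbJ_{\dag}$ is a pushout of the off-diagonal generator $\FCatDagO(\partial\Delta^0\to\Delta^0)$, so this holds.
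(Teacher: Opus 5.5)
Your proof is correct, and the first half matches the paper. In both arguments you factor $s$ as a trivial cofibration followed by a (necessarily trivial) fibration and lift $s'$ along the trivial cofibration. This produces a cospan of local weak equivalences out of a common object under $\calC\langle x,y\rangle$. You should state explicitly that this object $\Theta''$ is fibrant, since your claim requires it; it holds because $\Theta''\to\Theta$ is a fibration with fibrant target.

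The routes differ in how a witness is pulled back along a local weak equivalence $w:\Theta_1\to\Theta_2$ between fibrant objects.

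\begin{itemize}
\item The paper uses the simplicial structure of \cref{bg.thm.fixed-object}. Since $\bbH$ is cofibrant, this gives a bijection $w_*:[\bbH,\Theta_1]\cong[\bbH,\Theta_2]$ on homotopy classes. It then picks $\Phi$ with $w\Phi\simeq\Phi'$ and uses that evaluation at $h_{\bbH}$ is constant on homotopy classes.
\item You factor $w=pi$, lift $\Phi$ strictly through the trivial fibration $p$, and retract along the trivial cofibration $i$ into the fibrant $\Theta_1$. This is essentially the surjectivity half of that homotopy-class bijection, written out by hand.
\end{itemize}

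Your route never needs homotopies between dagger functors, nor a check that evaluation at $h_{\bbH}$ respects them. Its only homotopical input is injectivity on $\pi_0$ of mapping spaces for $p$, and the simplicial enrichment is not used at all. The paper's version is shorter, given the standard fact about homotopy classes.

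Minor remarks:
\begin{itemize}
\item The factorization of $w$ is just the factorization axiom plus two-out-of-three, not Ken Brown's lemma.
\item $\rho$ only needs to be a dagger functor; its being a local weak equivalence is never used.
\item Your sentence ``$\bbJ_{\dag}$ is free on the vertex $a$, so the fixed-object map $\bbJ_{\dag}\to\bbH$ is a cofibration'' conflates two facts. $j_{\bbH}$ is a cofibration by definition. What needs the generating cell $\mathbf 2_{\dag}\to\bbJ_{\dag}$ is the cofibrancy of $\bbJ_{\dag}$, which your final sentence correctly supplies.
\end{itemize}
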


\begin{proof}
  First suppose that a local weak equivalence $g:\Theta\to\Theta'$ satisfies $gs=s'$. 
  The fixed-object initial category $\mathbf2_{\dag}$ is cofibrant, $\bbJ_{\dag}$ is obtained from it by one generating cell, and $j_{\bbH}$ is a cofibration. 
  Hence every natural dagger interval $\bbH$ is cofibrant.  
  Since the fixed-object model structure is simplicial by \cref{bg.thm.fixed-object}, a weak equivalence between fibrant objects induces a bijection $ g_*:[\bbH,\Theta]\xrightarrow{\cong}[\bbH,\Theta']$ on homotopy classes.  
  Evaluation at $h_{\bbH}$ sends homotopic maps to the same element of $\pi_0$ of the $(0,1)$-mapping space.

  If $\Phi'$ witnesses $[s'(u)]$, choose $\Phi$ with $g\Phi\simeq\Phi'$.  
  Then we have 
  \begin{align}
    [g\Phi(h)]=[\Phi'(h)]=[gs(u)];
  \end{align}
  the bijection on $\pi_0$ of mapping spaces gives $[\Phi(h)]=[s(u)]$.  
  The reverse implication follows by composition with $g$.

  For arbitrary $s,s'$, factor $s=gc$ with $c$ a cofibration and $g$ a trivial fibration.  
  Since $s$ is a weak equivalence, $c$ is a trivial cofibration, and $\Theta_0:=\cod(c)$ is fibrant.  
  The lifting property of $c$ against $\Theta'\to *$ supplies $g':\Theta_0\to\Theta'$ with $g'c=s'$.  
  Both $g$ and $g'$ are local weak equivalences, the latter by two-out-of-three.  
  Applying the preceding case to the two cospan legs identifies witnesses in $\Theta$ and $\Theta'$ with  witnesses in $\Theta_0$, and proves the claim.
\end{proof}

\begin{lemma}\label{bg.lem.discrete}
  Let $\calA$ be a dagger $1$-category, regarded as a dagger simplicial category with discrete mapping spaces.
  Then $[u]=u$ is coherently unitary if and only if $u$ is unitary, i.e.\ $u^{\dag}=u^{-1}$.
\end{lemma}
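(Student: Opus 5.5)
Both directions go through the fibrant object $\calA\langle x,y\rangle$ and \cref{bg.lem.invariance}. Since $\calA$ has discrete mapping spaces, every mapping space of $\calA\langle x,y\rangle$ is a discrete simplicial set, hence a Kan complex. So $\calA\langle x,y\rangle$ is fibrant in the fixed-object model structure of \cref{bg.thm.fixed-object}. The identity $\calA\langle x,y\rangle\to\calA\langle x,y\rangle$ is a local weak equivalence into a fibrant object. By \cref{bg.lem.invariance}, $[u]$ is coherently unitary if and only if some natural dagger interval $\bbH$ admits a dagger functor $\Phi:\bbH\to\calA\langle x,y\rangle$ in $\sCat^{\dag}_{\{0,1\}}$ with $[\Phi(h_{\bbH})]=[u]$. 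Because $\calA\langle x,y\rangle(0,1)$ is discrete, this last condition just says $\Phi(h_{\bbH})=u$.

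\emph{Only if.} Suppose such a $\Phi$ exists. By \cref{bg.lem.interval-basic} (2), $[h_{\bbH}]^{\dag}[h_{\bbH}]=\id_0$ and $[h_{\bbH}][h_{\bbH}]^{\dag}=\id_1$ in $\h\bbH$. Now $\Phi$ induces a dagger functor $\h\Phi:\h\bbH\to\h\calA\langle x,y\rangle=\calA\langle x,y\rangle$, and the target is its own homotopy category because its mapping spaces are discrete. Applying $\h\Phi$ gives $u^{\dag}u=\id_x$ and $uu^{\dag}=\id_y$. The dagger and composition of $\calA\langle x,y\rangle$ are inherited from $\calA$, so $u^{\dag}=u^{-1}$ in $\calA$. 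The case $x=y$ causes no trouble, since the formal objects stay distinct while the compositions are those of $\calA$.

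\emph{If.} Suppose $u^{\dag}=u^{-1}$. Then there is a dagger functor $\psi:\bbI_{\dag}\to\calA\langle x,y\rangle$ with $\psi(u_{\bbI})=u$, where $u_{\bbI}$ is the generator of $\bbI_{\dag}$. It is well defined because $\bbI_{\dag}$ is presented by the generator $u_{\bbI}$ and its dagger, subject exactly to the relations $u_{\bbI}^{\dag}u_{\bbI}=\id$ and $u_{\bbI}u_{\bbI}^{\dag}=\id$, and these hold for $u$. Pick any natural dagger interval $\bbH$, which exists by \cref{bg.lem.interval-existence}, and set $\Phi:=\psi\circ p_{\bbH}$. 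Then $\Phi(h_{\bbH})=\psi(p_{\bbH}j_{\bbH}(a))=\psi(q(a))=\psi(u_{\bbI})=u$, so $\Phi$ witnesses coherent unitarity.

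The only delicate step is the reduction to the unreplaced receptacle $\calA\langle x,y\rangle$. This rests on its fibrancy, which holds because discrete simplicial sets are Kan, and on \cref{bg.lem.invariance} applied to $s=\id$ and $s'=r_{\calA\langle x,y\rangle}$, which compares witnesses in $\calA\langle x,y\rangle$ with those in $R\calA\langle x,y\rangle$ as \cref{bg.def.cu} requires. Everything after that is formal.
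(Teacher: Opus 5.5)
Your proof is correct and follows essentially the paper's route. You reduce via \cref{bg.lem.invariance} to the fibrant receptacle $\calA\langle x,y\rangle$, then pass to components, and for the converse precompose a functor $\bbI_{\dag}\to\calA\langle x,y\rangle$ with $p_{\bbH}$. The only cosmetic difference is that you read off $u^{\dag}u=\id_x$ and $uu^{\dag}=\id_y$ from \cref{bg.lem.interval-basic}~(2) via $\h\Phi$, whereas the paper uses (1) to identify $\pi_0\bbH\cong\bbI_{\dag}$ and factor $\Phi$ through it.
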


\begin{proof}
  Discrete simplicial sets are Kan complexes, so $\calA\langle x,y\rangle$ is fibrant and may serve as the receptacle in \cref{bg.lem.invariance}.
  A simplicial functor from $\bbH$ to a category with discrete mapping spaces factors uniquely through the component category $\pi_0\bbH$ (apply $\pi_0$ to all mapping spaces; composition and dagger descend).
  By \cref{bg.lem.interval-basic} (1), all hom-sets of $\pi_0\bbH$ are singletons, so $\pi_0\bbH\simeq\bbI_{\dag}$ as dagger categories, with $h_{\bbH}\mapsto u_{\bbI}$.
  Hence witnesses $\bbH\to\calA\langle x,y\rangle$ hitting $u$ correspond to dagger functors $\bbI_{\dag}\to\calA\langle x,y\rangle$ with $u_{\bbI}\mapsto u$, which exist precisely when $u^{\dag}u=\id_x$ and $uu^{\dag}=\id_y$.
  Conversely, if $u$ is unitary, precompose such a functor with $p_{\bbH}$.
\end{proof}

\begin{proposition}\label{bg.prop.weak-not-coherent}
  Let $G=S^1\times C_2$ with componentwise multiplication, let $\theta:G\to G$ be the involutive automorphism $\theta(z,1)=(z,1)$, $\theta(z,\omega)=(-z,\omega)$, and let $\calC$ be the one-object dagger simplicial category with $\calC(\ast,\ast)=\Sing(G)$ and $g^{\dag}:=\theta(g^{-1})$ (anti-multiplicative and involutive since $\theta$ is a homomorphism and $G$ is abelian).
  Then $u:=(1,\omega)$ is weakly unitary, but $[u]$ is not coherently unitary.
\end{proposition}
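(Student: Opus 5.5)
The plan is to check weak unitarity directly on path components. For the failure of coherent unitarity, I will extract an equivariant obstruction from the endomorphism space $\bbH(0,0)$ of any putative witness.

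\emph{Weak unitarity.} The space $\Sing(G)$ has two path components, $S^1\times\{1\}$ and $S^1\times\{\omega\}$. We have $u^{-1}=(1,\omega)$ and $u^{\dag}=\theta(u^{-1})=(-1,\omega)$. These lie in the same component, so $[u]^{\dag}=[u]^{-1}$ in $\h\calC$ and $[u]$ is weakly unitary.

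\emph{Choice of receptacle.} Since $\Sing(G)$ is a Kan complex, $\calC\langle\ast,\ast\rangle$ is fibrant in the fixed-object model structure. By \cref{bg.lem.invariance} we may therefore test coherent unitarity on $\Theta:=\calC\langle\ast,\ast\rangle$ itself, with $s=\id$. Suppose, for contradiction, that $\Phi:\bbH\to\Theta$ is a witness.

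\emph{A computation on vertices.} For every vertex $v=(z,\omega)$ of $S^1\times\{\omega\}$,
\begin{align}
  v^{\dag}v=\theta(z^{-1},\omega)(z,\omega)=(-z^{-1},\omega)(z,\omega)=(-1,1).
\end{align}
The vertex $\Phi(h_{\bbH})$ lies in the component of $u$, which is $S^1\times\{\omega\}$. Hence $\Phi(h_{\bbH}^{\dag}h_{\bbH})=(-1,1)$, whereas $\Phi(\id_0)=(1,1)$.

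\emph{Equivariant structure.} The dagger makes $\bbH(0,0)$ a $C_2$-simplicial set, and both $\id_0$ and $h_{\bbH}^{\dag}h_{\bbH}$ are strictly fixed vertices. On $\Theta(0,0)=\Sing(G)$ the dagger is $g\mapsto\theta(g^{-1})$. On the identity component $S^1\times\{1\}$ this is just inversion $z\mapsto z^{-1}$, because $\theta$ is trivial there. Since $\bbH(0,0)$ is weakly contractible by \cref{bg.lem.interval-basic}(1), $\Phi$ restricts to a $C_2$-equivariant map
\begin{align}
  \bbH(0,0)\to\Sing(S^1\times\{1\}),
\end{align}
landing in the identity component.

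\emph{The obstruction.} Apply homotopy fixed points $(-)^{hC_2}$, after Kan replacement if needed. Because $\bbH(0,0)$ is weakly contractible, $\bbH(0,0)^{hC_2}$ is contractible, so the homotopy fixed points induced by the two strict fixed points $\id_0$ and $h^{\dag}_{\bbH}h_{\bbH}$ lie in the same component. Their images $(1,1)$ and $(-1,1)$ would then lie in the same component of $(S^1)^{hC_2}$ for the inversion action.

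This is the step I expect to be the main obstacle: showing that $1$ and $-1$ lie in \emph{different} components of $(S^1)^{hC_2}$. I would argue via $S^1\simeq B\bbZ$, with inversion acting on $\pi_1=\bbZ$ by the sign. Then $\pi_0(S^1)^{hC_2}$ is a torsor under
\begin{align}
  H^1(C_2;\bbZ_{\mathrm{sign}})=\ker(1+\sigma)/\operatorname{im}(1-\sigma)=\bbZ/2\bbZ,
\end{align}
using that $H^0(C_2;\bbZ_{\mathrm{sign}})=0$, so the relevant obstruction theory is clean. I would then check that the strict fixed points $\pm1$ realise the two classes. Concretely, a path $\gamma$ from $1$ to $-1$ in $S^1$ satisfies $\gamma^{-1}\simeq\bar\gamma$ rel endpoints only if it has winding number $\tfrac12$, which is impossible for a genuine loop class. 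Equivalently, the standard $C_2$-CW structure on $S^1$ with reflection has fixed points $\{\pm1\}$, and the inclusion of fixed points into homotopy fixed points is $\pi_0$-bijective for this finite free-away-from-fixed-points complex.

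This contradicts the previous paragraph, so no witness $\Phi$ exists and $[u]$ is not coherently unitary.
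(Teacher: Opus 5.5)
Your argument is sound, and at bottom it is the paper's argument repackaged through homotopy fixed points. The paper never forms $(-)^{hC_2}$. It picks an edge zigzag $c$ from $h^{\dag}h$ to $\id_0$ in $\bbH(0,0)$ and observes that $c^{\dag}$ is another zigzag with the same strictly fixed endpoints. Triviality of $\Pi_1|\bbH(0,0)|$ then gives $[c]=[c^{\dag}]$. Pushing forward along $\Phi$ yields a path $p$ from $-1$ to $1$ in $S^1$ that is homotopic rel endpoints to $\bar p(t)=p(t)^{-1}$. The lift displacement $d(p)\in\pi+2\pi\bbZ$ would then satisfy $d(p)=-d(p)$, which is impossible.

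This is exactly the $1$-truncated content of your two claims: that $\bbH(0,0)^{hC_2}$ is connected, and that $\pm1$ lie in different components of $(S^1)^{hC_2}$. Your route gives the obstruction a conceptual name: the fixed points $\pm1$ represent the two elements of the $H^1(C_2;\bbZ_{\mathrm{sign}})$-torsor $\pi_0(S^1)^{hC_2}$. The price is the Borel machinery ($EC_2$ cofibrant, $\Map_{C_2}(EC_2,-)$ preserving weak equivalences, naturality of constant homotopy fixed points), which the paper only develops later for the unitary core. The paper's argument needs only weak contractibility of $\bbH(0,0)$ and the covering theory of $S^1$.

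The step you flagged is the one you must tighten. The torsor count only says that $\pi_0(S^1)^{hC_2}$ has two elements; it does not say which classes the constant maps $c_1$ and $c_{-1}$ represent. Your concrete sentence is also misstated. Since $\bar\gamma(t)=\overline{\gamma(t)}=\gamma(t)^{-1}$ pointwise, the relation ``$\gamma^{-1}\simeq\bar\gamma$'' holds identically and carries no information.

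What you need is the following. Suppose there is a path from $c_1$ to $c_{-1}$ in $\Map_{C_2}(EC_2,S^1)$, and restrict it to the edge of $EC_2$ from the vertex $1$ to the vertex $\omega$. This gives a square whose sides $s=0$ and $s=1$ are constant at $1$ and $-1$, and whose other two sides are $\gamma(s)$ and $\omega\cdot\gamma(s)=\bar\gamma(s)$. Hence $\gamma\simeq\bar\gamma$ rel endpoints for some path $\gamma$ from $1$ to $-1$. Lifting to $\bbR$, the displacement of $\gamma$ is an odd multiple of $\pi$, yet it must equal its own negative, which is a contradiction. With that sentence in place your proof is complete.

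Drop the alternative claim that $X^{C_2}\to X^{hC_2}$ is $\pi_0$-bijective for finite $C_2$-complexes. It is a Sullivan-conjecture-type assertion that you have not justified, and it is not needed.
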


\begin{proof}
  \emph{Weak unitarity:} 
  Since $u^{-1}=u$, we have
  \begin{align}
    u^{\dag} = \theta(u^{-1}) = \theta(u) = (-1,\omega).
  \end{align}
  Since $(1,\omega),(-1,\omega) \in S^{1} \times \{\omega\}$, we obtain $[u^{\dag}] = [u]$ in $\pi_{0}(G)$.
  Since $\pi_{0}G \simeq C_{2}$ and the nontrivial element $\pi_0(G)\simeq C_2$ is its own inverse; hence $[u]^{-1} = [u]$.
  Thus 
  \begin{align}
    [u]^{\dag} = [u^{\dag}] = [u] = [u]^{-1}.
  \end{align}

  \emph{Non-coherently unitary:}
  Suppose $[u]$ is coherently unitary.
  The two-object copy $\Theta:=\calC\langle\ast,\ast\rangle$ has all four mapping spaces equal to the Kan complex $\Sing(G)$, hence is fibrant in $\sCat^{\dag}_{\{0,1\}}$; 
  by \cref{bg.lem.invariance} there are a natural dagger interval $\bbH$ and a dagger functor $\Phi:\bbH\to\Theta$ with $[\Phi(h)]=[u]$, where $h:=h_{\bbH}$.
  Write $u':=\Phi(h)\in G$.
  Then $u'\in S^1\times\{\omega\}$ since $u'$ and $u$ are in the same component.
  
  Consider the vertex $h^{\dag}h\in\bbH(0,0)_0$.
  Its image is 
  \begin{align}
    \Phi(h^{\dag}h) = \Phi(h)^{\dag}\Phi(h) = u'^{\dag}u' = \theta(u'^{-1})u'.
  \end{align}
  When we put $u' = (z,\omega)$, then $u'^{-1} = (z^{-1},\omega)$ and $\theta(u'^{-1}) = (-z^{-1},\omega)$.
  Thus $\Phi(h^{\dag}h) = (-1,1)$ which is independent of $u'$.
  On the other hand, we have $\Phi(\id_{0}) = \id_{0} = (1,1)$.
  Therefore two vertices $h^{\dag}h$ and $\id_{0}$ are sent to $(-1,1)$ and $(1,1)$, respectively.

  Since $\bbH(0,0)$ is weakly contractible (by \cref{bg.lem.interval-basic}), the fundamental groupoid $\Pi_1|\bbH(0,0)|$ of its geometric realization is connected and has exactly one morphism between any two objects;
  in particular, it is equivalent to the terminal groupoid.
  Thus there is an edge zigzag $c:h^{\dag}h\to\id_0$, and any two such zigzags represent the same morphism in $\Pi_1|\bbH(0,0)|$.

  The dagger of $\bbH$ restricts to a simplicial involution of $\bbH(0,0)$ fixing both vertices $h^{\dag}h$ (as $(h^{\dag}h)^{\dag}=h^{\dag}h$) and $\id_0$ (as $\id_{0}^{\dag} = \id_{0}$).
  Then we obtain an another zigzag $c^{\dag} : h^{\dag}h \to \id_{0}$ with the same endpoints.
  Triviality of $\Pi_1|\bbH(0,0)|$ gives $[c]=[c^{\dag}]$ in $\Pi_1|\bbH(0,0)|(h^{\dag}h,\id_0)$.

  Thus $\Phi(c) : \Phi(h^{\dag}h) \to \Phi(\id_{0})$ represents a zigzag in $\Sing(G)$, i.e. a path class from $(-1,1) \to (1,1)$ in $G$.
  Since both endpoints lie in the component $S^{1}\times\{1\}$, the zigzag in $\Sing(G)$ represents a path class in that component.  
  Choose a representative path $p : [0,1] \to S^{1}$ with $p(0) = -1$ and $p(1) = 1$.

  Since the path in $S^{1} \times \{1\}$, the dagger is given by $(z,1) \mapsto (z^{-1},1)$.
  Then $\Phi(c^{\dag})$ is given by the path $\overline{p}(t) := p(t)^{-1}$.
  Since $(-1)^{-1} = -1$ and $1^{-1} = 1$, it has the same endpoints as those of $p(t)$;
  so $\overline{p}(0) = -1$ and $\overline{p}(1) = 1$.
  Since $[c] = [c^{\dag}]$, $[\Phi(c)] = [\Phi(c^{\dag})]$;
  So $p$ and $\overline{p}$ are homotopic in $S^{1}$.
  
  Consider the lifting $\exp : \bbR \to S^{1} : t \mapsto e^{it}$.
  Take a lift $\tilde{p} : [0,1] \to \bbR$ of $p$.
  Consider $d(p) := \tilde{p}(1)-\tilde{p}(0)$, which is independent of the lifts.
  Since $p(0) = -1$ and $p(1) = 1$, $\tilde{p}(0) \in \pi + 2\pi\bbZ$ and $\tilde{p}(1) \in 2\pi\bbZ$;
  so $d(p) \in \pi + 2\pi\bbZ$.
  On the other hand, the lift of $\overline{p}(t) = p(t)^{-1}$ is given by $-\tilde{p}(t)$.
  Thus $d(\overline{p}) = -d(p)$.
  However, $d(p)$ is a homotopy invariant (with the same endpoints).
  So $d(\overline{p}) = d(p)$.
  Therefore $d(p) = 0$, which is contradiction. 
\end{proof}

\begin{lemma}\label{bg.lem.extraction}
  Let $\bbG\in\sCat^{\dag}_{\{0,1\}}$ have all four mapping spaces nonempty and weakly contractible, and let $v:\bbJ_{\dag}\to\bbG$ be a dagger functor.
  Then $v$ factors as $\bbJ_{\dag}\xrightarrow{j_{\bbH}}\bbH \to\bbG$ with $\bbH$ a natural dagger interval.
\end{lemma}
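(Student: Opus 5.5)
The plan is to produce $\bbH$ as a cofibrant replacement of $v$ in the fixed-object model structure of \cref{bg.thm.fixed-object}. The map $p_{\bbH}$ will then be forced: $\bbI_{\dag}$ has all four hom-sets singletons, so it is the terminal object of $\sCat^{\dag}_{\{0,1\}}$. Hence for any $\bbH\in\sCat^{\dag}_{\{0,1\}}$ there is a unique map $p_{\bbH}:\bbH\to\bbI_{\dag}$, and $p_{\bbH}j_{\bbH}=q$ holds automatically, since both sides are maps into the terminal object. Such a $p_{\bbH}$ is a local weak equivalence exactly when every mapping space of $\bbH$ is weakly contractible. So I must find a countable $\bbH$ with weakly contractible mapping spaces, a fixed-object cofibration $j_{\bbH}:\bbJ_{\dag}\to\bbH$, and a map $\bbH\to\bbG$ under $\bbJ_{\dag}$.

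\emph{Step 1 (countable reduction).} First I construct a sub-dagger simplicial category $\bbG'\subseteq\bbG$ on $\{0,1\}$ with the following properties: it contains the image of $v$, it is closed under composition and dagger, and all four mapping spaces are countable and weakly contractible. I build it as an increasing union $\bbG'_0\subseteq\bbG'_1\subseteq\cdots$ of countable sub-dagger categories, starting from the sub-dagger category generated by $v(a)$. Given $\bbG'_k$, its mapping spaces have only countably many finite subcomplexes. For each finite subcomplex $L\subseteq\bbG'_k(i,j)$, weak contractibility of $\bbG(i,j)$ and compactness (via $|{-}|$, or after applying $\mathrm{Ex}^\infty$, which commutes with filtered colimits of subcomplexes) give a finite subcomplex $L'\subseteq\bbG(i,j)$ containing $L$ in which every class of $\pi_*$ coming from $L$ dies, and which meets every component. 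I adjoin all these $L'$ and close under composition and dagger; the result is still countable. In the union, every finite subcomplex lies in some $\bbG'_k$, so its homotopy is killed in $\bbG'_{k+1}$, and the mapping spaces of $\bbG'$ are weakly contractible.

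\emph{Step 2 (factorization).} Next I apply the standard $\omega$-stage small object argument, with the finite generating cofibrations of \cref{bg.thm.fixed-object}, to $v:\bbJ_{\dag}\to\bbG'$. This gives $\bbJ_{\dag}\xrightarrow{j}\bbH\xrightarrow{w}\bbG'$ with $j$ a cofibration and $w$ a trivial fibration, in particular a local weak equivalence. Countability of every stage, and hence of $\bbH$, follows verbatim from the argument of \cref{bg.lem.interval-existence}: $\bbG'$ is countable, so there are only countably many lifting problems at each stage. Then $\bbH(i,j)\simeq\bbG'(i,j)$ is weakly contractible. By the first paragraph, $\bbH$ is a natural dagger interval, and the composite $\bbH\to\bbG'\hookrightarrow\bbG$ gives the required factorization of $v$.

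The main obstacle is Step 1. The killing of homotopy has to be interleaved with closure under composition and dagger, which may reintroduce homotopy. This is why I use the $\omega$-indexed union rather than a single step. Some care is also needed because $\bbG(i,j)$ need not be Kan, so the compactness argument should be run through $\mathrm{Ex}^\infty$ or geometric realization. If that becomes awkward, the fallback is to drop the subcategory $\bbG'$ and run a modified small object argument directly over $\bbG$. At each stage one would attach only the countably many cells needed to kill homotopy of the current countable stage, with fillers in $\bbG$ obtained from the same compactness argument.
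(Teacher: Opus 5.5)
Your argument is correct, but it is organized differently from the paper's proof. The paper never passes to a subcategory of $\bbG$. It builds $\bbH$ directly over $\bbG$, starting from $\bbJ_{\dag}$, as a countable iterated cell complex. At each stage it uses Bergner's finite-extension lemma \cite[Lemma~4.2]{Bergner07} to kill each of the countably many sphere classes in the current mapping spaces. This is done by finitely many dagger free-cell attachments along $\bbA_{\dag}(\partial\Delta^r)\to\bbA_{\dag}(\Delta^r)$, with the fillers chosen in $\bbG$, so the map to $\bbG$ is built cell by cell. It treats the self-conjugate diagonal $C_2$-cells explicitly and gets countability because only countably many cells are ever attached, following the diagonal construction of \cite[Lemma~2.4]{Bergner07}.

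You instead separate the set-theoretic issue from the homotopical one. A L\"owenheim--Skolem-type closure first produces a countable sub-dagger category $\bbG'\subseteq\bbG$ with weakly contractible mapping spaces. The standard $\omega$-stage small object argument of \cref{bg.lem.interval-existence}, applied to $\bbJ_{\dag}\to\bbG'$, then does all the cell attaching, including the diagonal $C_2$-cells. Your Step 1 is genuinely needed: running the small object argument over an arbitrary $\bbG$ would attach one cell per lifting problem and could destroy countability. The paper avoids this by attaching cells only where a class must die.

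Your route buys modularity and the extra information that $\bbH\to\bbG'$ is a local trivial fibration. The paper's route buys a construction over $\bbG$ itself that mirrors Bergner's proof.

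In Step 1 you need neither $\mathrm{Ex}^\infty$ nor a separate condition about meeting components. Since $|\bbG(i,j)|$ is contractible, the inclusion $|L|\to|\bbG(i,j)|$ is null-homotopic. The null-homotopy has compact image, so it lies in $|L'|$ for some finite $L'\supseteq L$. This kills all of $\pi_*$ of $|L|$, including $\pi_0$, in one step.
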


\begin{proof}
  We first explain how to realize a finite simplicial extension by dagger free-cell attachments.
  Let $\bbK\in\sCat^{\dag}_{\{0,1\}}$, let $\phi:\bbK\to\bbG$ be a dagger functor, and fix an ordered pair $(i,j)$.
  Suppose that a map $\alpha:S^m\to|\bbK(i,j)|$ becomes null-homotopic after composition with $|\phi|$.
  By \cite[Lemma 4.2]{Bergner07}, the map $\bbK(i,j)\to\bbG(i,j)$ admits a factorization 
  \begin{align}
    \bbK(i,j)\to A'\xrightarrow{\bar\phi}\bbG(i,j)
  \end{align}
  such that $A'$ is obtained from $\bbK(i,j)$ by adjoining finitely many nondegenerate simplices and $\alpha$ is null-homotopic in $|A'|$.
  Order the new nondegenerate simplices by dimension and choose a finite filtration 
  \begin{align}
    \bbK(i,j)=A_0\to A_1\to\cdots\to A_N=A'
  \end{align}
  in which every $A_s\to A_{s+1}$ is a pushout of a boundary inclusion $\partial\Delta^{r_s}\to\Delta^{r_s}$.

  We inductively construct dagger simplicial categories $\bbK_s$, dagger functors $\phi_s:\bbK_s\to\bbG$, and maps $\theta_s:A_s\to\bbK_s(i,j)$ satisfying $\phi_s(i,j)\theta_s=\bar\phi|_{A_s}$
  Put $\bbK_0:=\bbK$, $\phi_0:=\phi$, and $\theta_0:=\id_{\bbK(i,j)}$.
  Suppose that the construction has been made through stage $s$, and let $\lambda_s:\partial\Delta^{r_s}\to A_s$ be the next attaching map.
  Form the pushout 
  \begin{align}
    \bbK_{s+1} :=\bbK_s\amalg_{\bbA_{\dag}(\partial\Delta^{r_s})} \bbA_{\dag}(\Delta^{r_s}),
  \end{align}
  where the classifying map corresponds under \eqref{bg.eq.A-univ} to $(i,j)$ and $\theta_s\lambda_s:\partial\Delta^{r_s}\to\bbK_s(i,j)$.
  Let $\chi_s:\Delta^{r_s}\to A_{s+1}\to A'$ be the characteristic simplex of $A_s\to A_{s+1}$ followed by the inclusion into $A'$.
  
  Put $b_s:=\bar\phi\chi_s:\Delta^{r_s}\to\bbG(i,j)$.
  Its boundary is $\bar\phi|_{A_s}\lambda_s=\phi_s(i,j)\theta_s\lambda_s$.
  The datum $(i,j,b_s)$ defines a dagger functor $\bbA_{\dag}(\Delta^{r_s})\to\bbG$ which agrees with $\phi_s$ on $\bbA_{\dag}(\partial\Delta^{r_s})$.
  The pushout therefore gives a dagger functor $\phi_{s+1}:\bbK_{s+1}\to\bbG$.
  Its characteristic simplex also gives a map $\theta_{s+1}:A_{s+1}\to\bbK_{s+1}(i,j)$ extending $\theta_s$ and satisfying $\phi_{s+1}\theta_{s+1}=\bar\phi|_{A_{s+1}}$.

  If $i\neq j$, this pushout adjoins the chosen simplex at $(i,j)$ and its dagger-conjugate at $(j,i)$.
  If $i=j$, it is the free projective $C_2$-cell on the diagonal described in \cref{gb.cor.sGph-model,bg.thm.fixed-object}.
  After forgetting the dagger, the latter is the composite of the two ordinary free-cell attachments of \cref{bg.lem.reduction}.
  Thus, even when the attaching map is self-conjugate, two formal simplices are attached and exchanged by the dagger, and they are sent to $b_s$ and $b_s^{\dag}$.
  No equality $b_s=b_s^{\dag}$ and no equivariant null-homotopy in $\bbG(i,i)$ are required.

  Iterating this construction gives a finite fixed-object cofibration $\bbK\to\widetilde{\bbK}$ over $\bbG$, together with a map $A'\to\widetilde{\bbK}(i,j)$.
  Consequently, $\alpha$ is null-homotopic in $|\widetilde{\bbK}(i,j)|$.
  By \cref{bg.lem.reduction,bg.lem.ordinary-cell}, the transition is a monomorphism on every mapping simplicial set.
  If the mapping spaces of $\bbK$ are countable, then so are those of $\widetilde{\bbK}$:
  after forgetting the dagger, only finitely many ordinary free cells are attached, and all new simplices are represented by finite composable words in a countable collection of old and new simplices.

  We now repeat the countable diagonal construction in the proof of \cite[Lemma~2.4]{Bergner07}, using the preceding finite dagger extension in place of the ordinary extension.
  Put $\bbH_0:=\bbJ_{\dag}$ and $\psi_0:=v$.
  Given a countable $\bbH_n\to\bbG$, the sphere classes in its four mapping spaces form a countable set by \cite[Lemma~4.3]{Bergner07}.
  Enumerate these classes and apply the finite dagger extension successively to kill them over $\bbG$;
  let $\bbH_{n+1}$ be the resulting sequential colimit.
  Thus every sphere class in $\bbH_n(i,j)$ becomes null-homotopic in $\bbH_{n+1}(i,j)$.
  The transition maps are monomorphisms on mapping spaces, and the mapping spaces remain countable because only countably many finite cells and finite composable words are added.

  Put $\bbH:=\colim_{n}\bbH_n$ and let $\psi:\bbH\to\bbG$ be the induced dagger functor.
  Since the free-category monad is finitary, this colimit is created on the mapping dagger graph.
  A map from a sphere into $|\bbH(i,j)|$ has compact image and therefore factors through $|\bbH_N(i,j)|$ for some $N$;
  its class is killed in $\bbH_{N+1}(i,j)$.
  Hence every sphere map into $|\bbH(i,j)|$ is null-homotopic.
  The four mapping spaces are nonempty already in $\bbJ_{\dag}$, so the case $m=0$ gives path-connectedness and the cases $m\geq1$ give vanishing homotopy groups.
  Therefore all four mapping spaces of $\bbH$ are weakly contractible.
  They are countable because $\bbH$ is obtained by a countable union of finite cell attachments.

  Flattening the double sequence of cells into an $\omega^2$-indexed transfinite composite shows that $j_{\bbH}:\bbJ_{\dag}=\bbH_0\to\bbH$ is a relative fixed-object cofibration, and it satisfies $\psi j_{\bbH}=v$.
  The unique endpoint-preserving dagger functor $p_{\bbH}:\bbH\to\bbI_{\dag}$ is a local weak equivalence and satisfies $p_{\bbH}j_{\bbH}=q$.
  Hence $\bbH$ is a natural dagger interval and $\psi$ is the required factorization.
\end{proof}

\begin{proposition}\label{bg.lem.realization}
  Let $\calC\in\sCat^{\dag}$ and let $[u]:x\to y$ be coherently unitary.
  For every vertex $u'\in\calC(x,y)_0$ representing $[u]$, there exist a natural dagger interval $\bbH$ and a dagger functor $\phi:\bbH\to\calC$ with $\phi(0)=x$, $\phi(1)=y$ and $\phi(h_{\bbH})=u'$ \emph{on the nose}.
\end{proposition}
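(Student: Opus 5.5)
The plan is to upgrade the homotopy-level witness of \cref{bg.def.cu} to a strict one in three moves: rigidify the value on the generator, transport the witness from $R\calC\langle x,y\rangle$ back to $\calC\langle x,y\rangle$ by a Bergner-style cell construction, and finally compose with the canonical dagger functor $\varepsilon:\calC\langle x,y\rangle\to\calC$ sending $0\mapsto x$ and $1\mapsto y$.

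\emph{Step 1: making the generator hit $r(u')$ on the nose.} Put $\Theta:=R\calC\langle x,y\rangle$ and $r:=r_{\calC\langle x,y\rangle}$. Since $[u']=[u]$, \cref{bg.def.cu} gives a natural dagger interval $\bbH$ and $\Phi:\bbH\to\Theta$ with $[\Phi(h_{\bbH})]=[r(u')]$ in $\pi_0\Theta(0,1)$. Because $\Theta(0,1)$ is Kan, there is an edge $\gamma$ from $\Phi(h_{\bbH})$ to $r(u')$. By \eqref{bg.eq.A-univ}, $\gamma$ is the same datum as a homotopy $\bbJ_{\dag}\otimes\Delta^1\to\Theta$ from $\Phi j_{\bbH}$ to the functor classifying $r(u')$. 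The fixed-object structure is simplicial (\cref{bg.thm.fixed-object}) and $j_{\bbH}$ is a cofibration, so $\bbH\otimes\{0\}\cup\bbJ_{\dag}\otimes\Delta^1\to\bbH\otimes\Delta^1$ is a trivial cofibration. Lifting against $\Theta\to\ast$ then yields $\Phi':\bbH\to\Theta$ with $\Phi'(h_{\bbH})=r(u')$ exactly.

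\emph{Step 2: descending to $\calC\langle x,y\rangle$.} I would rerun the countable construction of \cref{bg.lem.extraction}, now over the non-fibrant target $\calC\langle x,y\rangle$. Start from $\bbH'_0:=\bbJ_{\dag}$ and $\psi_0:\bbJ_{\dag}\to\calC\langle x,y\rangle$ classifying $u'$. Along the way, carry the following auxiliary data:
\begin{itemize}
  \item a fibrant replacement $R\bbH$ of $\bbH$, whose mapping spaces are contractible Kan complexes because $R\bbH\to\bbI_{\dag}$ is a local trivial fibration;
  \item an extension $\widetilde\Phi:R\bbH\to\Theta$ of $\Phi'$, which exists because $\Theta$ is fibrant;
  \item at each stage, a comparison $g_n:\bbH'_n\to R\bbH$ under $\bbJ_{\dag}$;
  \item at each stage, a simplicial homotopy $r\psi_n\simeq\widetilde\Phi g_n$ relative to $\bbJ_{\dag}$.
\end{itemize}

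These data control the spheres. Take any sphere $\alpha$ in $\bbH'_n(i,j)$ and consider its image in $\Theta(i,j)$. Via the homotopy, this image agrees up to homotopy with the image of $g_n\alpha$, and $g_n\alpha$ is null because $R\bbH(i,j)$ is contractible. So the image of $\alpha$ is null in $\Theta(i,j)$. Since $r$ is a local weak equivalence, $\psi_n\alpha$ is then null in $|\calC(x_i,x_j)|$. This is exactly the hypothesis needed for the finite dagger extension step in \cref{bg.lem.extraction}, which rests on \cite[Lemma 4.2]{Bergner07}. That step produces $\bbH'_{n+1}$ by dagger free-cell attachments, with a new $\psi_{n+1}$ extending $\psi_n$.

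\emph{Step 3: extending the auxiliary data, and conclusion.} For each new free cell $\bbA_{\dag}(\partial\Delta^r)\to\bbA_{\dag}(\Delta^r)$ I must extend $g_n$ and the homotopy. Extending $g_n$ is possible because the mapping spaces of $R\bbH$ are contractible Kan complexes. Extending the homotopy is a relative lifting problem, and I would solve it using fibrancy of $\Theta$ together with the simplicial structure. Once this is done, the colimit $\bbH'$ has countable, weakly contractible mapping spaces, and $\bbJ_{\dag}\to\bbH'$ is a fixed-object cofibration, exactly as in the proof of \cref{bg.lem.extraction}. Hence $\bbH'$ is a natural dagger interval. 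The required functor is $\phi:=\varepsilon\psi$, which satisfies $\phi(h_{\bbH'})=u'$ on the nose.

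The main obstacle is Step 3. The homotopy $r\psi_{n+1}\simeq\widetilde\Phi g_{n+1}$ has to be extended over each newly attached cell, so the choice of $g_{n+1}$ on that cell must be compatible with the chosen null-homotopy in $\calC$. On diagonal cells there is an additional constraint: the dagger pairs the two attached simplices, so the extension must be made on the free $C_2$-orbit consistently. My first attempt would be to choose the null-homotopy in $\calC$ first, and then solve one joint lifting problem for $(g_{n+1},\text{homotopy})$ against the local fibration $\Theta^{\Delta^1}\times_{\Theta}R\bbH\to\Theta^{\partial\Delta^1}$-type map. Whether that map has the right lifting property on these cells is exactly the point I expect to need care.
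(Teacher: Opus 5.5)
Your Step 1 is correct, and it is a legitimate variant of the paper's first move. The paper attaches a free dagger $1$-simplex to the interval and takes its far end as the new generator. You instead keep $\bbH$ and move the functor by homotopy extension along the trivial cofibration $\bbH\otimes\{0\}\cup\bbJ_{\dag}\otimes\Delta^1\to\bbH\otimes\Delta^1$.

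The genuine gap is Step 3, as you suspect, and the repair you propose does not work as stated. Suppose you first choose the finite cells in $\calC(x_i,x_j)$, as in \cref{bg.lem.extraction} run over $\calC\langle x,y\rangle$, and only afterwards extend $g_n$ and the homotopy over each new cell $\Delta^r$. Then you are lifting against $\mathrm{ev}_0:\Theta(i,j)^{\Delta^1}\times_{\Theta(i,j)}R\bbH(i,j)\to\Theta(i,j)$, where the fibre product is taken over $\mathrm{ev}_1$ and $\widetilde\Phi$. This map is a Kan fibration but not a trivial one. Its fibre is the homotopy fibre of $\widetilde\Phi:R\bbH(i,j)\to\Theta(i,j)$, which is a loop space of $\Theta(i,j)\simeq\calC(x_i,x_j)$ because $R\bbH(i,j)\simeq\ast$. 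The relative lifting problem for $\partial\Delta^r\to\Delta^r$ therefore has an obstruction in $\pi_{r-1}$ of that fibre, i.e. in $\pi_r\calC(x_i,x_j)$. This obstruction does not vanish for an arbitrary choice. Two null-homotopies of the sphere $\psi_n\alpha$ differ by an element of $\pi_{m+1}\calC(x_i,x_j)$, and only those compatible with your existing homotopy extend. So the cells cannot be chosen in $\calC$ alone. Each cell must be chosen together with its $R\bbH$-component and its homotopy component.

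The missing idea is to put the auxiliary data into the target. A triple $(\psi_n,g_n,H_n)$ as in your Step 2 is exactly a dagger functor $\bbH'_n\to\bbG':=\calC\langle x,y\rangle\times_{\Theta}\Theta^{\Delta^1}\times_{\Theta}R\bbH$. Here the cotensor is taken mapping-spacewise, with the dagger applied pointwise.

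\begin{itemize}
\item Each mapping space of $\bbG'$ is a homotopy pullback of $\calC(x_i,x_j)\xrightarrow{\simeq}\Theta(i,j)\leftarrow R\bbH(i,j)$, hence nonempty and weakly contractible.
\item The triple $(u',\mathrm{const}_{r(u')},h)$, with $h$ the image of $h_{\bbH}$, is a vertex of $\bbG'(0,1)$.
\item Applying \cref{bg.lem.extraction} to this vertex chooses every new cell directly in $\bbG'$. All three components then extend simultaneously, the diagonal $C_2$-orbits are handled exactly as in that lemma, and no separate lifting problem arises.
\item Projecting to $\calC\langle x,y\rangle$ and then to $\calC$ finishes the proof.
\end{itemize}

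This is the paper's argument in different clothing. The paper replaces the witness by a fibration $q:\bbH_1\to\Theta$ and uses the strict pullback $\calC\langle x,y\rangle\times_{\Theta}\bbH_1$. Its mapping spaces are weakly contractible by right properness of the fixed-object model structure, and the paper then applies \cref{bg.lem.extraction} to it.
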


\begin{proof}
  Let $r:\calC\langle x,y\rangle\to R(\calC\langle x,y\rangle)$ be the chosen trivial cofibration.  
  Choose a witness $\Phi_0:\bbH_0\to R(\calC\langle x,y\rangle)$ with $[\Phi_0(h_0)]=[r(u')]$.  
  Since $R(\calC\langle x,y\rangle)(0,1)$ is a Kan complex, a finite zigzag between these vertices composes, by horn filling, to a $1$-simplex $e$ with endpoints $\Phi_0(h_0)$ and $r(u')$.

  Attach the free dagger $1$-simplex along its initial vertex $\bbH'_0:=\bbH_0 \amalg_{\bbA_{\dag}(\Delta^{\{0\}})} \bbA_{\dag}(\Delta^1)$ and extend $\Phi_0$ by $e$;
  call the extension $\Phi'_0$ and the terminal vertex of the new simplex $h'_0$. 
  We use the copy of $\bbJ_{\dag}$ classified by $h'_0$ as the new interval source.  
  It factors as
  \begin{align}
    \bbJ_{\dag} \cong \bbA_{\dag}(\Delta^{\{1\}}) \to \bbA_{\dag}(\Delta^1) \to \bbH'_0.
  \end{align}
  The first arrow is a fixed-object cofibration because $\Delta^{\{1\}}\to\Delta^1$ is a monomorphism.
  The second arrow is the pushout of the cofibration $j_{\bbH_0}:\bbA_{\dag}(\Delta^{\{0\}})\to\bbH_0$.
  Hence the composite $\bbJ_{\dag}\to\bbH'_0$ is a fixed-object cofibration.
  On the other hand, $\bbH_0\to\bbH'_0$ is the pushout of the free cell induced by $\Delta^{\{0\}}\to\Delta^1$, hence is a local weak equivalence by \cref{bg.lem.reduction,bg.lem.ordinary-cell}.  
  Thus $\bbH'_0$ has weakly contractible mapping spaces and is countable.  
  Its collapse to $\bbI_{\dag}$ makes it a natural dagger interval, and $\Phi'_0(h'_0)=r(u')$ strictly.

  Factor $\Phi'_0$ as a trivial cofibration $k:\bbH'_0\to\bbH_1$ followed by a fibration $q:\bbH_1\to R(\calC\langle x,y\rangle)$, and form $\bbG:=\calC\langle x,y\rangle\times_{R(\calC\langle x,y\rangle)}\bbH_1$.
  Right properness of \cref{bg.thm.fixed-object}, applied to the weak equivalence $r$ and the fibration $q$, shows that $\bbG\to\bbH_1$ is a local weak equivalence.  
  Since $\bbH'_0\to\bbH_1$ is also one, every mapping space of $\bbG$ is nonempty and weakly contractible. 
  Moreover $(u',k(h'_0))$ is a vertex of $\bbG(0,1)$ because $qk(h'_0)=\Phi'_0(h'_0)=r(u')$. 
  It classifies a dagger functor $v:\bbJ_{\dag}\to\bbG$.

  Apply \cref{bg.lem.extraction} to $v$.  
  The resulting factorization through a natural dagger interval $\bbH$ and the composite $\bbH\to\bbG\to\calC\langle x,y\rangle\to\calC$ send the two formal objects to $x,y$ and $h_{\bbH}$ to $u'$ on the nose.
\end{proof}

\begin{proposition}\label{bg.prop.cu-groupoid}
  Let $\calC,\calD\in\sCat^{\dag}$. 
  Then we have:
  \begin{enumerate}
    \item every identity $[\id_x]$ is coherently unitary;
    \item if $[u]$ is coherently unitary, then $[u]$ is an isomorphism, $[u]^{\dag}=[u]^{-1}$, and $[u^{\dag}]$ is coherently unitary;
    \item composites of composable coherently unitary equivalences are coherently unitary;
    \item every dagger simplicial functor preserves coherent unitarity;
    \item if $f:\calC\to\calD$ is a local weak equivalence, then $[u]$ is coherently unitary if and only if $[f(u)]$ is.
  \end{enumerate}
\end{proposition}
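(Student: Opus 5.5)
The plan is to work throughout with the on-the-nose realizations supplied by \cref{bg.lem.realization} and the extraction lemma \cref{bg.lem.extraction}. This lets us avoid manipulating fibrant replacements directly: coherent unitarity of $[u]$ then means that there is a dagger functor $\phi:\bbH\to\calC$ with $\phi(h_{\bbH})=u$. Conversely, given such a $\phi$, composing $\phi\langle 0,1\rangle$ with $r$ produces a witness in the sense of \cref{bg.def.cu}, since $\bbH\to\bbH\langle 0,1\rangle$ is the identity in $\sCat^{\dag}_{\{0,1\}}$. With this equivalence in hand, the items are handled as follows.

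(1) Take the interval produced by \cref{bg.lem.interval-existence} and compose $p_{\bbH}$ with the functor $\bbI_{\dag}\to\calC\langle x,x\rangle$ sending $u\mapsto\id_x$. This functor is well defined because $\id_x^{\dag}\id_x=\id_x$.

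(2) Given a realization $\phi$, apply the functor $\h\phi$ to the identities of \cref{bg.lem.interval-basic}(2). This shows that $[u]$ is an isomorphism and that $[u]^{\dag}=[u]^{-1}$. For $[u^{\dag}]$, precompose $\phi$ with the relabeling $\bbH^{\rev}\to\bbH$ of \cref{bg.lem.interval-basic}(3); the new generator is sent to $u^{\dag}$.

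(4) Composing a realization with a dagger functor $f$ gives a realization of $f(u)$.

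(3) This is the main construction. Let $\phi:\bbH\to\calC$ and $\phi':\bbH'\to\calC$ realize $u:x\to y$ and $v:y\to z$. Form the amalgam $\bbP=\bbH\amalg_{\mathbf 1_{\dag}}\bbH'$, gluing $1\in\bbH$ to $0\in\bbH'$. By \cref{bg.prop.amalgam}, all mapping spaces of $\bbP$ are nonempty and weakly contractible, and $\phi,\phi'$ glue to a dagger functor $\bbP\to\calC$. Restrict $\bbP$ to the full dagger subcategory on the two outer objects. This gives $\bbG\in\sCat^{\dag}_{\{0,1\}}$ with contractible mapping spaces, together with a vertex $h'h\in\bbG(0,1)$, which classifies $\bbJ_{\dag}\to\bbG$. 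Applying \cref{bg.lem.extraction} yields an interval mapping to $\bbG$ and then to $\calC$, sending the generator to $vu$. The case $x=z$ causes no trouble, because the outer objects remain formally distinct in $\bbP$.

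(5) The direction ``only if'' is (4). For the converse, suppose $[f(u)]$ is coherently unitary. The map $f\langle x,y\rangle:\calC\langle x,y\rangle\to\calD\langle fx,fy\rangle$ is a local weak equivalence. Compose it with $r:\calD\langle fx,fy\rangle\to R\calD\langle fx,fy\rangle$; the composite is a local weak equivalence into a fibrant object and sends $u$ to $r(f(u))$. By \cref{bg.lem.invariance}, the existence of a witness is the same for this receptacle as for $R\calC\langle x,y\rangle$. A witness for $[f(u)]$ in $R\calD\langle fx,fy\rangle$ is exactly a witness for $[s(u)]$ with $s=r\circ f\langle x,y\rangle$, and so $[u]$ is coherently unitary.

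The main obstacle is (3). There one must check carefully that the restricted amalgam lies in $\sCat^{\dag}_{\{0,1\}}$ with the induced dagger, and that \cref{bg.lem.extraction} applies to it. Everything else reduces formally to the realization and invariance lemmas.
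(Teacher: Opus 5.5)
Your proposal is correct and follows essentially the same route as the paper. The paper obtains (2) and (4) from the on-the-nose witnesses of \cref{bg.lem.realization}, and (3) from the amalgam of \cref{bg.prop.amalgam} combined with \cref{bg.lem.extraction}; using the full subcategory on the outer objects instead of the paper's two-object copy $\bbA\langle 0,2\rangle$ is only cosmetic. For (5) it applies \cref{bg.lem.invariance} with $s=r_{\calC\langle x,y\rangle}$ and $s'=r_{\calD\langle fx,fy\rangle}\circ f\langle x,y\rangle$, exactly as you do.
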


\begin{proof}
  (1) Choose any natural dagger interval, which exists by \cref{bg.lem.interval-existence}, and compose 
  \begin{align}
    \bbH\xrightarrow{p_{\bbH}}\bbI_{\dag}\to\calC\langle x,x\rangle\xrightarrow{r}R\calC\langle x,x\rangle,
  \end{align}
  where the middle functor sends both objects to the formal objects and $u\mapsto\id_x$ (a dagger functor since $\id_x^{\dag}=\id_x$); the generator maps into the class $[\id_x]$.

  (2) By \cref{bg.lem.realization}, we can choose an honest witness $\phi:\bbH\to\calC$ with $\phi(h)=u'$ representing $[u]$.
  The functor $\h\phi$ carries the relations of \cref{bg.lem.interval-basic} (2), so $[u']$ is an isomorphism with $[u']^{\dag}=[u']^{-1}$.
  Moreover $\phi$, read through the relabeled interval $\bbH^{\rev}$ of \cref{bg.lem.interval-basic} (3), is an honest witness for $[u'^{\dag}]=[u]^{\dag}$; composing with $r$ gives coherence of $[u^{\dag}]$.

  (3) Let $[u]:x\to y$ and $[v]:y\to z$ be coherently unitary with honest witnesses $\phi:\bbH\to\calC$, $\psi:\bbK\to\calC$ hitting representatives $u'$, $v'$ (by \cref{bg.lem.realization}).
  Since $\phi(1)=y=\psi(0)$, the two witnesses glue to a dagger functor $\chi:\bbA:=\bbH\amalg_{\mathbf 1_{\dag}}\bbK\to\calC$ on the amalgam.
  By \cref{bg.prop.amalgam}, all mapping spaces of $\bbA$ are nonempty and weakly contractible; hence so are those of the two-object copy $\bbA\langle 0,2\rangle$ (\cref{bg.constr.two-object-copy}, applied to the outer objects).
  The vertex $h_{\bbK}\circ h_{\bbH}\in\bbA(0,2)_0$ defines $\bbJ_{\dag}\to\bbA\langle 0,2\rangle$; \cref{bg.lem.extraction} produces a natural dagger interval $\bbL$ and $\bbL\to\bbA\langle0,2\rangle$ under $\bbJ_{\dag}$.
  Composing with the functor $\bbA\langle0,2\rangle\to\calC\langle x,z\rangle$ induced by $\chi$ and with $r$, we obtain a witness whose generator lands in $[r(v'\circ u')]=[r(v\circ u)]$.

  (4) If $\phi:\bbH\to\calC$ is an honest witness for $[u]$ (which exists by \cref{bg.lem.realization}), then $f\phi$ is an honest witness for $[f(u)]$; compose with $r_{\calD\langle fx,fy\rangle}$.

  (5) The two-object copy $f\langle x,y\rangle:\calC\langle x,y\rangle\to\calD\langle fx,fy\rangle$ is a local weak equivalence.
  Apply \cref{bg.lem.invariance} with $\Theta:=R\calC\langle x,y\rangle$, $s:=r_{\calC\langle x,y\rangle}$ and $\Theta':=R\calD\langle fx,fy\rangle$, $s':=r_{\calD\langle fx,fy\rangle}\circ f\langle x,y\rangle$ (a composite of local weak equivalences into a fibrant object): a witness for $[s'(u)]=[r f(u)]$ exists if and only if one exists for $[s(u)]=[r(u)]$.
  The former existence is, by definition, coherent unitarity of $[f(u)]$, and the latter is coherent unitarity of $[u]$.
\end{proof}

\begin{corollary} \label{bg.cor.cohuni_is_subgroupoid}
  The coherently unitary classes are the morphisms of a wide subgroupoid $(\h\calC)^{\cu}\subseteq(\h\calC)^{\simeq}$.
\end{corollary}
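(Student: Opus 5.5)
The plan is to read the corollary off \cref{bg.prop.cu-groupoid}, checking the axioms of a wide subgroupoid of $(\h\calC)^{\simeq}$ one at a time.

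First, \cref{bg.prop.cu-groupoid} (2) shows that every coherently unitary class $[u]$ is an isomorphism in $\h\calC$. Hence $(\h\calC)^{\cu}$ is a subgraph of the maximal subgroupoid $(\h\calC)^{\simeq}$, with the same objects.

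Next, \cref{bg.prop.cu-groupoid} (1) shows that every identity $[\id_x]$ lies in $(\h\calC)^{\cu}$. This gives both wideness and closure under identities.

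Composition is supplied by \cref{bg.prop.cu-groupoid} (3). The one point to check is that the notion depends only on the class and not on the representative. This holds because \cref{bg.def.cu} is phrased entirely in $\pi_0 R\calC\langle x,y\rangle(0,1)$, and $r$ induces a map on $\pi_0$. So $[v]\circ[u]=[v\circ u]$ is coherently unitary for any choice of representatives.

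Inverses are the only step needing a small argument. By \cref{bg.prop.cu-groupoid} (2), if $[u]$ is coherently unitary then $[u]^{-1}=[u]^{\dag}=[u^{\dag}]$. The same item says $[u^{\dag}]$ is coherently unitary, so the inverse lies in $(\h\calC)^{\cu}$.

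Together these show that $(\h\calC)^{\cu}$ is a subcategory of $(\h\calC)^{\simeq}$ that contains all objects and is closed under inverses, i.e.\ a wide subgroupoid. No step presents a real obstacle; the only care needed is the well-definedness on classes, which is built into \cref{bg.def.cu}.
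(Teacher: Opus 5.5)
Your proposal is correct and matches the paper, which states this corollary without a separate proof because it follows directly from \cref{bg.prop.cu-groupoid}~(1)--(3). As in your argument, item (2) supplies both that coherently unitary classes are isomorphisms and that $[u]^{-1}=[u]^{\dag}=[u^{\dag}]$ is again coherently unitary; your check that \cref{bg.def.cu} depends only on the class in $\pi_0$ is a point the paper leaves implicit.
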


\subsection{The dagger Bergner model structure}

We finally construct the dagger Bergner model structure on the category $\sCat^{\dag}$ (\cref{bg.thm.main}).
We combine local weak equivalences with coherent-unitary essential surjectivity to define dagger DK-equivalences, and include natural intervals among the generating trivial cofibrations.

\begin{definition}\label{bg.def.W}
  A dagger simplicial functor $f:\calC\to\calD$ is a \emph{dagger DK-equivalence} if it satisfies the following conditions:
  \begin{enumerate}
    \item for all $x,y\in\Ob(\calC)$, the map $\calC(x,y)\to\calD(fx,fy)$ is a weak homotopy equivalence;
    \item for every $d\in\Ob(\calD)$ there exist $c\in\Ob(\calC)$ and a coherently unitary equivalence $[v]:fc\to d$ in $\h\calD$.
  \end{enumerate}
\end{definition}

\begin{notation}\label{bg.def.IJ}
  We set
  \begin{align}
    W_{\dag}
    &:= 
    \{\text{dagger DK-equivalences}\},
    \\
    I_{\dag}
    &:=
    \{\bbA_{\dag}(\partial\Delta^n)\to\bbA_{\dag}(\Delta^n)\}_{n\geq0}
    \cup
    \{\varnothing\to\mathbf 1_{\dag}\},
    \\
    J_{\dag}
    &:=
    \{\bbA_{\dag}(\Lambda^n_k)\to\bbA_{\dag}(\Delta^n)\}_{n\geq1,0\leq k\leq n}
    \cup
    \{\iota_{0,\bbH}:\mathbf 1_{\dag}\to\bbH\mid\bbH\in\calH_{\dag}\},
    \\
    J^{\loc}_{\dag}
    &:=
    \{\bbA_{\dag}(\Lambda^n_k)\to\bbA_{\dag}(\Delta^n)\}_{n\geq1,0\leq k\leq n}.
  \end{align}
  Here $\iota_{0,\bbH}$ denotes the endpoint inclusion selecting the object $0$ of $\bbH$.
\end{notation}

\begin{lemma}\label{bg.lem.I-inj}
  Let $f : \calC \to \calD$ be a dagger simplicial functor.
  Then these following are equivalent:
  \begin{enumerate}
    \item the functor $f$ lies in $I_{\dag}\text{-}\inj$;
    \item the functor $f$ is surjective on objects and every morphism $\calC(x,y)\to\calD(fx,fy)$ is a trivial Kan fibration.
  \end{enumerate}
\end{lemma}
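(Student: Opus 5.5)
The plan is to split $I_{\dag}$ into its two kinds of generators and translate each lifting property through the universal property \eqref{bg.eq.A-univ} of $\bbA_{\dag}(K)$.

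First I treat $\varnothing\to\mathbf 1_{\dag}$. A lifting square against $f$ consists of an object $d\in\Ob(\calD)$, and a lift is an object $c$ with $fc=d$. So $f$ has the right lifting property against $\varnothing\to\mathbf 1_{\dag}$ exactly when $f$ is surjective on objects.

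Next I treat $\bbA_{\dag}(\partial\Delta^n)\to\bbA_{\dag}(\Delta^n)$. By \eqref{bg.eq.A-univ}, a dagger functor $\bbA_{\dag}(\partial\Delta^n)\to\calC$ is a pair of objects $(x,y)$ together with a map $\partial\Delta^n\to\calC(x,y)$, and similarly for $\calD$ and for $\Delta^n$. Naturality of \eqref{bg.eq.A-univ} in both $K$ and the target category is the only point needing care; it holds because the bijection comes from the free–forgetful adjunction of \cref{bg.lem.presentable} composed with the graph-level description, both natural. Granting this, a commutative square from $\bbA_{\dag}(\partial\Delta^n)\to\bbA_{\dag}(\Delta^n)$ to $f$ becomes the following data:
\begin{itemize}
  \item objects $x,y\in\Ob(\calC)$;
  \item a map $a:\partial\Delta^n\to\calC(x,y)$;
  \item a map $b:\Delta^n\to\calD(fx,fy)$ with $b|_{\partial\Delta^n}=f\circ a$.
\end{itemize}
Here the objects of the bottom functor are forced to be $fx,fy$ because the square commutes on objects. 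A diagonal filler is then exactly a map $\Delta^n\to\calC(x,y)$ extending $a$ and lifting $b$.

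Running over all $n$ and all pairs $(x,y)$, the lifting property against these generators is therefore equivalent to every $\calC(x,y)\to\calD(fx,fy)$ having the right lifting property against all $\partial\Delta^n\to\Delta^n$. That is, each of these maps is a trivial Kan fibration. Combining the two parts gives (1)$\Leftrightarrow$(2).

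One subtlety I will address is the case $x=y$. The dagger might seem to impose an equivariance constraint on the lift. It does not, because $G_K$ has the two copies of $K$ exchanged only between $(0,1)$ and $(1,0)$, and both formal objects may map to the same $x$. So the map on $G_K$ is still determined freely by a single map $K\to\calC(x,x)$, and the lift is unconstrained. I expect no real obstacle beyond stating this naturality cleanly.
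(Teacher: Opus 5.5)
Your proposal is correct and follows the paper's proof. Both split $I_{\dag}$ into the object generator $\varnothing\to\mathbf 1_{\dag}$, which gives surjectivity on objects, and the cells $\bbA_{\dag}(\partial\Delta^n)\to\bbA_{\dag}(\Delta^n)$, which via \eqref{bg.eq.A-univ} become ordinary boundary-lifting problems for $\calC(x,y)\to\calD(fx,fy)$. Your remarks on naturality of \eqref{bg.eq.A-univ} and on the case $x=y$ (no equivariance constraint arises, since $G_K(0,0)=G_K(1,1)=\varnothing$) are accurate elaborations of points the paper leaves implicit.
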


\begin{proof}
  Lifting against $\varnothing\to\mathbf 1_{\dag}$ is surjectivity on objects.
  By \eqref{bg.eq.A-univ}, a lifting problem against $\bbA_{\dag}(\partial\Delta^n)\to\bbA_{\dag}(\Delta^n)$ is the same as a pair $(x,y)$ together with an ordinary lifting problem of $\partial\Delta^n\to\Delta^n$ against $\calC(x,y)\to\calD(fx,fy)$; the right lifting property against all these is the trivial Kan fibration condition.
\end{proof}

\begin{corollary}\label{bg.cor.Iinj-in-W}
  $I_{\dag}\text{-}\inj\subseteq W_{\dag}$.
\end{corollary}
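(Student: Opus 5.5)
By \cref{bg.lem.I-inj}, a functor $f:\calC\to\calD$ in $I_{\dag}\text{-}\inj$ is surjective on objects and each map $\calC(x,y)\to\calD(fx,fy)$ is a trivial Kan fibration. We verify the two conditions of \cref{bg.def.W} in turn.

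Condition (1) is immediate, since every trivial Kan fibration is a weak homotopy equivalence.

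For condition (2), let $d\in\Ob(\calD)$. Surjectivity on objects gives some $c\in\Ob(\calC)$ with $fc=d$. Then $[\id_d]:fc\to d$ is a morphism in $\h\calD$, and it is coherently unitary by \cref{bg.prop.cu-groupoid}~(1). The only care needed here is that \cref{bg.def.W} asks for a coherently unitary equivalence $fc\to d$ with possibly distinct endpoints. When $fc=d$ on the nose, the two-object copy $\calD\langle d,d\rangle$ of \cref{bg.constr.two-object-copy} still has two distinct formal objects, so \cref{bg.def.cu} applies verbatim to $u=\id_d\in\calD(d,d)_0$. Part (1) of \cref{bg.prop.cu-groupoid} was proved in exactly this setting.

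Hence $f\in W_{\dag}$. There is no real obstacle: the argument is a direct unwinding of \cref{bg.lem.I-inj} together with the existence of a natural dagger interval (\cref{bg.lem.interval-existence}), which underlies the coherent unitarity of identities.
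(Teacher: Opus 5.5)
Your proposal is correct and follows the paper's proof exactly: \cref{bg.lem.I-inj} gives surjectivity on objects and local trivial Kan fibrations, and the object condition is witnessed by $[\id_d]:fc=d\to d$, which is coherently unitary by \cref{bg.prop.cu-groupoid}~(1). Your remark that $\calD\langle d,d\rangle$ keeps two distinct formal objects is accurate, and it is the setting in which part~(1) of that proposition is proved.
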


\begin{proof}
  Let $f \in I_{\dag}\text{-}\inj$.
  By \cref{bg.lem.I-inj}, it is surjective on objects and locally a weak equivalence.
  That is, for every $d \in \calD$, there exists $c \in \calC$ satisfying $fc=d$.
  Then the object-level condition is witnessed by the identity $\id_{d} : fc = d \to d$, which is coherently unitary by \cref{bg.prop.cu-groupoid} (1).
\end{proof}

\begin{lemma}\label{bg.lem.translation}
  Let $\calC$ be a simplicial category and let $a:x'\to x$ and $b:y\to y'$ be vertices whose classes in $\h\calC$ are isomorphisms.
  Then $b\circ-\circ a:\calC(x,y)\to\calC(x',y')$ is a weak homotopy equivalence.
\end{lemma}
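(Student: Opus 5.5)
We reduce to two one-sided cases. Since $b\circ-\circ a=(b\circ-)\circ(-\circ a)$, and a composite of weak homotopy equivalences is one, it suffices to prove two things: $-\circ a:\calC(x,y)\to\calC(x',y)$ is a weak homotopy equivalence whenever $[a]$ is an isomorphism in $\h\calC$, and dually $b\circ-:\calC(x,y)\to\calC(x,y')$ is one whenever $[b]$ is. The second statement follows from the first applied to $\calC^{\myop}$, so we concentrate on precomposition. No dagger structure is involved, so the argument is purely about simplicial categories.

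Choose a vertex $a':x\to x'$ with $[a'][a]=[\id_{x'}]$ and $[a][a']=[\id_x]$. The key observation is that homotopic vertices induce homotopic precomposition maps. Suppose $c,c'\in\calC(w,z)_0$ lie in the same path component. Then there is a zigzag of edges $e:\Delta^1\to\calC(w,z)$ joining them. For each such edge, the composite
\begin{align}
  \calC(z,y)\times\Delta^1\xrightarrow{\id\times e}\calC(z,y)\times\calC(w,z)\xrightarrow{\circ}\calC(w,y)
\end{align}
is a simplicial homotopy from $-\circ c$ to $-\circ c'$. Hence $-\circ c$ and $-\circ c'$ are connected by a zigzag of simplicial homotopies. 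They therefore induce the same map after geometric realization up to homotopy, and the same map in the homotopy category of spaces.

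Apply this together with strict associativity and unitality of composition. The composite $(-\circ a)\circ(-\circ a')=-\circ(a'a)$ is homotopic to $-\circ\id_{x'}=\id$. Symmetrically, $(-\circ a')\circ(-\circ a)=-\circ(aa')$ is homotopic to $\id$. Thus $-\circ a$ becomes an isomorphism in $\Ho(\sSet)$, so it is a weak homotopy equivalence.

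The only delicate point is that the mapping spaces need not be Kan complexes, so "same path component" means connected by a zigzag of edges rather than by a single edge. This is harmless: each edge gives an honest simplicial homotopy, and maps related by zigzags of homotopies agree after realization. Applying the same argument in $\calC^{\myop}$ handles $b$, and composing the two one-sided equivalences gives the claim for $b\circ-\circ a$.
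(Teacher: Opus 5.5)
Your proof is correct and follows essentially the same route as the paper: you choose representatives of the inverse classes and observe that an edge in a mapping space induces, via composition, a simplicial homotopy between the corresponding translation maps, so zigzags of edges make the translations mutually inverse in $\Ho(\sSet)$. The only difference is cosmetic: you split into one-sided pre- and post-composition (dualizing via $\calC^{\myop}$), whereas the paper treats $b\circ-\circ a$ and $\bar b\circ-\circ\bar a$ simultaneously.
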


\begin{proof}
  Choose vertices $\bar a:x\to x'$ and $\bar b:y'\to y$ representing the inverse classes of $[a]$ and $[b]$.
  Put $T(f):=b\circ f\circ a$ and $S(g):=\bar b\circ g\circ\bar a$.
  The vertices $\bar b b$, $b\bar b$, $a\bar a$ and $\bar a a$ lie in the same path components as the corresponding identities.
  An edge in a mapping space induces, by simplicial composition, a homotopy after geometric realization between the corresponding composition maps.
  Edge zigzags therefore show that $|ST|$ and $|TS|$ are homotopic to the respective identity maps.
  Thus $|S|$ and $|T|$ are homotopy inverses, so $T$ is a weak homotopy equivalence.
\end{proof}

\begin{proposition}\label{bg.prop.W-23}
  $W_{\dag}$ is closed under retracts and satisfies two-out-of-three.
\end{proposition}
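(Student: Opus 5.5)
We verify the two closure properties separately, treating the local condition (1) and the object-level condition (2) of \cref{bg.def.W} in turn. Throughout we use \cref{bg.prop.cu-groupoid}: coherently unitary classes form a subgroupoid (\cref{bg.cor.cohuni_is_subgroupoid}), are preserved by dagger functors, and are preserved and reflected by local weak equivalences.

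\emph{Retracts.} Let $f:\calC\to\calD$ be a retract of $g:\calC'\to\calD'\in W_{\dag}$, with maps $i_{\calC},r_{\calC},i_{\calD},r_{\calD}$. Condition (1) holds because each $\calC(x,y)\to\calD(fx,fy)$ is a retract of $\calC'(i x,i y)\to\calD'(g i x,g i y)$ in $\sSet$, and weak homotopy equivalences are closed under retracts. For (2), take $d\in\calD$ and choose $c'\in\calC'$ with a coherently unitary $[v]:g c'\to i_{\calD}d$. Apply $r_{\calD}$: by \cref{bg.prop.cu-groupoid} (4), $[r_{\calD}v]:r_{\calD}g c'=f r_{\calC}c'\to r_{\calD}i_{\calD}d=d$ is coherently unitary. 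So $c:=r_{\calC}c'$ works.

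\emph{Two-out-of-three.} Let $f:\calC\to\calD$ and $g:\calD\to\calE$.
\begin{itemize}
  \item If $f,g\in W_{\dag}$, condition (1) for $gf$ is composition. For (2), given $e$, pick $[w]:gd\to e$ and $[v]:fc\to d$. Then $[w]\circ g_*[v]$ is coherently unitary by \cref{bg.prop.cu-groupoid} (3),(4).
  \item If $gf,g\in W_{\dag}$, condition (1) for $f$ follows by two-out-of-three in $\sSet$. For (2), given $d$, pick $[w]:gfc\to gd$, namely the composite of the witness for $gd$ with $g_*$. The class $[w]$ lives in $\h\calE(gfc,gd)$. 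Since $g$ is locally a weak equivalence, $\pi_0\calD(fc,d)\to\pi_0\calE(gfc,gd)$ is bijective, so $[w]=[g(v)]$ for some $[v]:fc\to d$. By \cref{bg.prop.cu-groupoid} (5), applied to the local weak equivalence $g$, this $[v]$ is coherently unitary. Statement (5) as written is for local weak equivalences; it applies here by its proof, since $g\langle fc,d\rangle$ is a local weak equivalence of two-object copies, which is all \cref{bg.lem.invariance} uses.
  \item If $gf,f\in W_{\dag}$, condition (2) for $g$ is immediate: given $e$, the witness $[w]:gfc\to e$ from $gf$ uses the object $fc$ of $\calD$.
\end{itemize}

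The main obstacle is condition (1) for $g$ when $g$'s local maps have sources not in the image of $f$. For $d,d'\in\calD$, choose coherently unitary (hence invertible, by \cref{bg.prop.cu-groupoid} (2)) $[v]:fc\to d$ and $[v']:fc'\to d'$ with representatives $v,v'$. Choose a representative $\bar v$ of $[v]^{-1}$. Consider the square formed by
\begin{align}
  v'\circ-\circ\bar v&:\calD(fc,fc')\to\calD(d,d'),\\
  g(v')\circ-\circ g(\bar v)&:\calE(gfc,gfc')\to\calE(gd,gd').
\end{align}
The square commutes strictly by functoriality of $g$. Both horizontal maps are weak equivalences by \cref{bg.lem.translation}, since $g$ preserves isomorphisms in $\h$. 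The left vertical map $g:\calD(fc,fc')\to\calE(gfc,gfc')$ is a weak equivalence by two-out-of-three applied to $\calC(c,c')\to\calD(fc,fc')\to\calE(gfc,gfc')$. Hence the right vertical map is a weak equivalence by two-out-of-three.
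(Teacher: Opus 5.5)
Your proof is correct and follows the paper's argument essentially step for step. It handles retracts via \cref{bg.prop.cu-groupoid}~(4), the object condition for $f$ by lifting $[w]$ along the $\pi_0$-bijection of $g$ and applying part~(5), and the local condition for $g$ by the strictly commuting translation square of \cref{bg.lem.translation}; your translation runs from $\calD(fc,fc')$ to $\calD(d,d')$ rather than the reverse, which makes no difference. Your caveat about part~(5) is unnecessary: $g\in W_{\dag}$ is by definition a local weak equivalence, so part~(5) applies verbatim.
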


\begin{proof}
  \emph{Retracts.}
  The local condition is closed under retracts since weak equivalences of simplicial sets are.
  Let $f:\calC\to\calD$ be a retract of $g:\calC'\to\calD'$ via $(i_{\calC},i_{\calD},r_{\calC},r_{\calD})$ with $g\in W_{\dag}$, and let $d\in\calD$.
  Choose $c'$ and coherently unitary $[v]:gc'\to i_{\calD}(d)$.
  Then $[r_{\calD}(v)]:f(r_{\calC}c')\to d$ is coherently unitary by \cref{bg.prop.cu-groupoid} (4).

  \emph{Two-out-of-three.}
  Let $\calC\xrightarrow{f}\calD\xrightarrow{g}\calE$.
  \begin{enumerate}
    \item If $f,g\in W_{\dag}$: we show $gf\in W_{\dag}$.
    Local condition:
    it follows objectwise from two-out-of-three for weak homotopy equivalences.
    Object condition: 
    given $e\in\calE$, choose $[w]:gd\to e$ and $[v]:fc\to d$ coherently unitary; then $[w]\circ[g(v)]:gfc\to e$ is coherently unitary by \cref{bg.prop.cu-groupoid} (3,4).
    \item If $f,gf\in W_{\dag}$, we show $g\in W_{\dag}$.
    Object condition: 
    for $e\in\calE$, a coherently unitary $[w]:gf(c)\to e$ serves for $g$ with the object $f(c)$.
    Local condition: 
    fix $d,d'\in\calD$ and choose coherently unitary $[\alpha]:fc\to d$, $[\beta]:fc'\to d'$ with representatives $\alpha,\beta$, and a representative $\bar\beta$ of $[\beta]^{-1}$.
    The maps $\varphi\mapsto\bar\beta\circ\varphi\circ\alpha$ and $\psi\mapsto g(\bar\beta)\circ\psi\circ g(\alpha)$ are weak equivalences $\calD(d,d')\to\calD(fc,fc')$ and $\calE(gd,gd')\to\calE(gfc,gfc')$ by \cref{bg.lem.translation} (the classes $[\alpha],[\beta],[g\alpha],[g\beta]$ are isomorphisms by \cref{bg.prop.cu-groupoid} (2,4)), and they commute strictly with $g$.
    The composite $\calC(c,c')\to\calE(gfc,gfc')$ is a weak equivalence because $gf$ is local, while $\calC(c,c')\to\calD(fc,fc')$ is one because $f$ is local.  
    Hence $\calD(fc,fc')\to\calE(gfc,gfc')$ is a weak equivalence.  
    The strictly commutative translation square and two-out-of-three now give the result for $\calD(d,d')\to\calE(gd,gd')$.
    \item If $g,gf\in W_{\dag}$, we show $f\in W_{\dag}$.
    Local condition:
    for $x,y\in\calC$, the maps $\calC(x,y)\to\calE(gfx,gfy)$ and $\calD(fx,fy)\to\calE(gfx,gfy)$ are weak equivalences, hence so is $\calC(x,y)\to\calD(fx,fy)$.
    Object condition: 
    let $d\in\calD$.
    Choose coherently unitary $[w]:gf(c)\to g(d)$ ($gf\in W_{\dag}$).
    Since $g$ is a local weak equivalence, $\pi_0\calD(fc,d)\to\pi_0\calE(gfc,gd)$ is bijective; let $[u]:fc\to d$ be the class with $[g(u)]=[w]$.
    By \cref{bg.prop.cu-groupoid} (5), $[u]$ is coherently unitary.
  \end{enumerate}
\end{proof}

\begin{lemma}\label{bg.lem.interval-attachment}
  Let $\calC\in\sCat^{\dag}$, $x\in\calC$, and $\bbH\in\calH_{\dag}$.
  Then the pushout inclusion $\calC\to\calC\langle\bbH\rangle:=\calC\amalg_{\mathbf 1_{\dag}}\bbH$ (gluing $x=0$) lies in $W_{\dag}\cap I_{\dag}\text{-}\cof$.
\end{lemma}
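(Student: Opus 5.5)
The statement has two parts, and we treat them separately: that the inclusion $\calC\to\calC\langle\bbH\rangle$ lies in $I_{\dag}\text{-}\cof$, and that it lies in $W_{\dag}$.

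\emph{Cofibrancy.} By \cref{bg.1toH_is_cofibration}, the endpoint inclusion $\iota_{0,\bbH}:\mathbf 1_{\dag}\to\bbH$ lies in $I_{\dag}\text{-}\cof$. The map $\calC\to\calC\amalg_{\mathbf 1_{\dag}}\bbH$ is by definition the pushout of $\iota_{0,\bbH}$ along the functor $\mathbf 1_{\dag}\to\calC$ selecting $x$. Since $I_{\dag}\text{-}\cof$ is closed under pushouts, the inclusion is in $I_{\dag}\text{-}\cof$.

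\emph{Weak equivalence.} We verify the two conditions of \cref{bg.def.W} for $\iota:\calC\to\calC\langle\bbH\rangle$. Write $y$ for the image of $1$; the objects of $\calC\langle\bbH\rangle$ are $\Ob(\calC)\amalg\{y\}$.
\begin{enumerate}
  \item \emph{Local condition.} By \cref{bg.lem.interval-basic} (1), $\bbH$ is a dagger simplicial category on $\{0,1\}$ whose four mapping spaces are nonempty and weakly contractible. Hence \cref{bg.prop.extension} (1) applies with $\calB=\bbH$, and shows that each $\calC(a,b)\to\calC\langle\bbH\rangle(a,b)$ is a monomorphism and a weak homotopy equivalence.
  \item \emph{Object condition.} Objects of the form $a\in\Ob(\calC)$ are hit on the nose, and are witnessed by $[\id_a]$, which is coherently unitary by \cref{bg.prop.cu-groupoid} (1). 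For the new object $y$, take $c=x$ and the class $[\chi(h_{\bbH})]:x\to y$, where $\chi:\bbH\to\calC\langle\bbH\rangle$ is the canonical pushout leg. This functor sends $0\mapsto x$ and $1\mapsto y$. The identity $\bbH\to\bbH$ is trivially an honest witness that $[h_{\bbH}]$ is coherently unitary in $\bbH$: compose with $r_{\bbH\langle0,1\rangle}$ and note that $\bbH\langle 0,1\rangle=\bbH$. By \cref{bg.prop.cu-groupoid} (4), the class $[\chi(h_{\bbH})]$ is then coherently unitary in $\calC\langle\bbH\rangle$.
\end{enumerate}

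The main point needing care is the application of \cref{bg.prop.extension}. The amalgam there is formed with a $\calB$ on object set $\{0,1\}$ whose mapping spaces are weakly contractible, and this hypothesis is exactly \cref{bg.lem.interval-basic} (1). So the only thing to check is that the pushout in $\sCat^{\dag}$ along $\mathbf 1_{\dag}$ is the amalgam of \cref{bg.def.amalgam}, which holds by definition.

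A second subtlety is whether \cref{bg.def.cu} requires the witness to be taken into $R\calC\langle x,y\rangle$ rather than $\calC$ itself. This is handled by composing with $r$, exactly as in the proof of \cref{bg.prop.cu-groupoid} (4). Note that this argument never needs the extraction lemma.
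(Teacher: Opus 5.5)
Your proposal is correct and follows the paper's proof. Cofibrancy comes from pushing out \cref{bg.1toH_is_cofibration}, the local condition from \cref{bg.lem.interval-basic} (1) together with \cref{bg.prop.extension} (1), and the object condition from composing the pushout leg $\bbH\to\calC\langle\bbH\rangle\langle x,y\rangle$ with $r$. One small caveat: citing \cref{bg.prop.cu-groupoid} (4) as a black box formally runs through \cref{bg.lem.realization}, and hence through the extraction lemma, so your closing remark holds only for the direct composite with $r$ — which is exactly the paper's argument.
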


\begin{proof}
  It is a pushout of $\mathbf 1_{\dag}\to\bbH\in I_{\dag}\text{-}\cof$ (\cref{bg.1toH_is_cofibration}), hence in $I_{\dag}\text{-}\cof$.
  The mapping spaces of $\bbH$ are nonempty and weakly contractible (\cref{bg.lem.interval-basic} (1)), so \cref{bg.prop.extension} (1) gives the local condition.

  For the object condition, old objects are hit strictly.
  The canonical functor $\bbH\to\calC\langle\bbH\rangle$ with endpoints $x$ and $y$ induces a fixed-object functor $\bbH\to\calC\langle\bbH\rangle\langle x,y\rangle$.
  Composing it with $r_{\calC\langle\bbH\rangle\langle x,y\rangle}$ gives the required coherent-unitary witness for the image of $h_{\bbH}$.
\end{proof}

\begin{proposition}\label{bg.prop.Jcell}
  $J_{\dag}\text{-}\cell\subseteq W_{\dag}\cap I_{\dag}\text{-}\cof$.
\end{proposition}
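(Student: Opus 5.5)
We must show two things about a relative $J_{\dag}$-cell complex: that it lies in $I_{\dag}\text{-}\cof$ and that it lies in $W_{\dag}$. The plan is to check both properties on single pushouts of generators and then pass to transfinite composites.

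\emph{Cofibration part.} Since $I_{\dag}\text{-}\cof$ is closed under pushouts and transfinite composition, it suffices to check that each generator of $J_{\dag}$ lies in $I_{\dag}\text{-}\cof$. For $\iota_{0,\bbH}$ this is \cref{bg.1toH_is_cofibration}. For $\bbA_{\dag}(\Lambda^n_k)\to\bbA_{\dag}(\Delta^n)$ we note that $\Lambda^n_k\to\Delta^n$ is a relative $\{\partial\Delta^m\to\Delta^m\}$-cell complex. The functor $\bbA_{\dag}(-)$ sends pushouts in $\sSet$ along monomorphisms to pushouts in $\sCat^{\dag}$, because it is a left adjoint into $\sCat^{\dag}_{\{0,1\}}$ and the fixed-object pushouts agree with the global ones (the argument of \cref{bg.1toH_is_cofibration}). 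It also preserves transfinite composites. Hence each horn generator is a relative $I_{\dag}$-cell complex.

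\emph{Single pushouts are in $W_{\dag}$.} Consider a pushout $\calC\to\calC'$ of a horn generator along a map classifying $u:\Lambda^n_k\to\calC(x,y)$. This pushout is $\calC[i]_{\dag}$. By \cref{bg.lem.reduction}, its underlying simplicial functor is a composite of two ordinary free cell attachments along the trivial cofibration $\Lambda^n_k\to\Delta^n$. By \cref{bg.lem.ordinary-cell} (2) it is therefore a local weak equivalence. It is bijective on objects, so the object condition holds via identities by \cref{bg.prop.cu-groupoid} (1). A pushout of $\iota_{0,\bbH}$ is exactly $\calC\to\calC\langle\bbH\rangle$, which lies in $W_{\dag}$ by \cref{bg.lem.interval-attachment}.

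\emph{Transfinite composites.} This is the main obstacle, since $W_{\dag}$ is not obviously closed under transfinite composition. Let $\calC_0\to\calC_1\to\cdots\to\calC_\lambda=\colim_\beta\calC_\beta$ be a cell complex with each step in $W_{\dag}$.
\begin{enumerate}
  \item \emph{Local condition.} Each step is injective on objects and induces monomorphisms on mapping spaces: for horn cells this follows from \cref{bg.lem.ordinary-cell} (1), and for interval cells from \cref{bg.prop.extension} (1). By \cref{bg.lem.creation}, colimits are computed in $\sCat$. A filtered colimit along maps that are injective on objects has mapping spaces equal to the filtered colimit of the mapping spaces, taken from the stage where both objects appear. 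Filtered colimits of weak equivalences along monomorphisms are weak equivalences, so for $a,b\in\calC_0$ the map $\calC_0(a,b)\to\calC_\lambda(a,b)$ is a weak equivalence. A transfinite induction handles limit ordinals in the same way.
  \item \emph{Object condition.} We argue by transfinite induction. Suppose each $\calC_0\to\calC_\beta$ with $\beta<\gamma$ lies in $W_{\dag}$. At a successor ordinal, use two-out-of-three from \cref{bg.prop.W-23}. At a limit ordinal $\gamma$, every object $d$ of $\calC_\gamma$ already lies in some $\calC_\beta$ with $\beta<\gamma$. So there is a coherently unitary $[v]:c\to d$ in $\h\calC_\beta$ with $c\in\calC_0$. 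Its image in $\calC_\gamma$ is again coherently unitary by \cref{bg.prop.cu-groupoid} (4).
\end{enumerate}

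Together these show that every transfinite composite of pushouts of $J_{\dag}$ lies in $W_{\dag}\cap I_{\dag}\text{-}\cof$.
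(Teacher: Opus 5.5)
Your proposal is correct and follows the paper's proof essentially step for step. It uses the same ingredients: \cref{bg.lem.reduction,bg.lem.ordinary-cell} for horn cells, \cref{bg.lem.interval-attachment} for interval cells, and filtered colimits along monomorphisms for the local condition. The only difference is organizational: you run the object condition for transfinite composites as a transfinite induction, using composition closure from \cref{bg.prop.W-23} at successor stages and \cref{bg.prop.cu-groupoid} (4) at limit stages. This is a slightly tidier way of making precise the paper's argument that each new object is ``connected to an earlier object'' and the coherently unitary classes then compose.
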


\begin{proof}
  A pushout of $\bbA_{\dag}(\Lambda^n_k)\to\bbA_{\dag}(\Delta^n)$ is, by \cref{bg.lem.reduction}, a composite of two ordinary free cell attachments along the trivial cofibration $\Lambda^n_k\to\Delta^n$; by \cref{bg.lem.ordinary-cell} (1,2) it is bijective on objects, locally a monomorphism, and locally a weak equivalence; the object condition holds via identities.
  It lies in $I_{\dag}\text{-}\cof$ since $\Lambda^n_k\to\Delta^n$ is a relative $\{\partial\Delta^m\to\Delta^m\}$-cell complex and $\bbA_{\dag}(-)$ preserves cell structures.
  A pushout of an interval generator is covered by \cref{bg.lem.interval-attachment}.

  For transfinite composites $\calC_0\to\colim_{\alpha}\calC_{\alpha}$: membership in $I_{\dag}\text{-}\cof$ is automatic.
  All transition maps are locally monomorphisms (\cref{bg.lem.ordinary-cell} (1) and \cref{bg.prop.extension} (1)) and locally weak equivalences on the objects present at each stage; since filtered colimits of weak equivalences of simplicial sets along monomorphisms are weak equivalences, the composite is a local weak equivalence.
  Every object of the colimit appears at some stage, where it is either present in $\calC_0$ or connected to an earlier object by a coherently unitary equivalence (\cref{bg.lem.interval-attachment}); coherent unitarity is preserved by the leg functors into the colimit (\cref{bg.prop.cu-groupoid} (4)) and composes (\cref{bg.prop.cu-groupoid} (3)), so the composite is coherently essentially surjective.
\end{proof}

\begin{lemma}\label{bg.lem.Iinj-in-Jinj}
  $I_{\dag}\text{-}\inj\subseteq W_{\dag}\cap J_{\dag}\text{-}\inj$.
\end{lemma}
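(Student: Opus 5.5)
The inclusion into $W_{\dag}$ is \cref{bg.cor.Iinj-in-W}, so it remains to show $I_{\dag}\text{-}\inj\subseteq J_{\dag}\text{-}\inj$. Since $J_{\dag}\text{-}\inj$ is defined generator-by-generator, it suffices to check that every $f:\calC\to\calD$ in $I_{\dag}\text{-}\inj$ has the right lifting property against each element of $J_{\dag}$. We use the description of \cref{bg.lem.I-inj}: $f$ is surjective on objects and each $\calC(x,y)\to\calD(fx,fy)$ is a trivial Kan fibration.

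The cleanest route is to show that $J_{\dag}\subseteq I_{\dag}\text{-}\cof$; the claim then follows, since $I_{\dag}\text{-}\cof$ is characterized by the left lifting property against $I_{\dag}\text{-}\inj$. There are two kinds of generator.

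\emph{Horn generators.} $\Lambda^n_k\to\Delta^n$ is a relative $\{\partial\Delta^m\to\Delta^m\}$-cell complex. The functor $\bbA_{\dag}(-)$ preserves pushouts and transfinite composites along monomorphisms, being a left adjoint on the relevant slice by \eqref{bg.eq.A-univ}. Hence $\bbA_{\dag}(\Lambda^n_k)\to\bbA_{\dag}(\Delta^n)$ is a relative $I_{\dag}$-cell complex; this was already observed in \cref{bg.prop.Jcell}. Alternatively, one can argue directly: by \eqref{bg.eq.A-univ}, a lifting problem against $f$ amounts to a pair $(x,y)$ and a horn-lifting problem against the trivial Kan fibration $\calC(x,y)\to\calD(fx,fy)$. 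Such a problem is solvable, and the dagger-compatibility of the lift is automatic by the universal property.

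\emph{Interval generators.} The endpoint inclusion $\iota_{0,\bbH}:\mathbf 1_{\dag}\to\bbH$ lies in $I_{\dag}\text{-}\cof$ by \cref{bg.1toH_is_cofibration}, so it lifts against $f$.

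The only step needing any care is the interval generator. Its content is exactly \cref{bg.1toH_is_cofibration}, which relies on the cellular comparison between fixed-object generating cells, including the diagonal $C_2$-cells, and the global generators $\bbA_{\dag}(\partial\Delta^n)\to\bbA_{\dag}(\Delta^n)$. Since that lemma is already established, the proof reduces to assembling these facts.
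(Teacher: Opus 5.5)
Your proposal is correct, but it handles the interval generators by a different route from the paper. You argue formally. First, $J_{\dag}\subseteq I_{\dag}\text{-}\cof$: the horns follow from the cell structure of $\Lambda^n_k\to\Delta^n$, and the intervals from \cref{bg.1toH_is_cofibration}. Second, $I_{\dag}\text{-}\cof$ is by definition the class of maps with the left lifting property against $I_{\dag}\text{-}\inj$. In fact you could simply cite \cref{bg.prop.Jcell}, which already gives $J_{\dag}\text{-}\cell\subseteq I_{\dag}\text{-}\cof$; the lemma is then the standard formal consequence used in Hovey's recognition theorem.

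The paper treats the horns exactly as in your ``alternative'', via \eqref{bg.eq.A-univ} and the trivial Kan fibration condition. For the intervals it constructs the lift by hand:
\begin{itemize}
  \item given $c$ and $\phi:\bbH\to\calD$ with $\phi(0)=fc$, it uses surjectivity on objects to pick $c'$ over $\phi(1)$;
  \item it notes that $f\langle c,c'\rangle:\calC\langle c,c'\rangle\to\calD\langle fc,\phi(1)\rangle$ is a trivial fibration in the fixed-object model structure of \cref{bg.thm.fixed-object};
  \item it lifts the fixed-object cofibrant $\bbH$ against it and evaluates back into $\calC$.
\end{itemize}

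The paper's argument is independent of the global cell comparison in \cref{bg.1toH_is_cofibration}. It also shows exactly where surjectivity on objects and cofibrancy of $\bbH$ enter. Yours is shorter and purely formal, but it routes through \cref{bg.1toH_is_cofibration}, where surjectivity on objects is hidden in the $\varnothing\to\mathbf 1_{\dag}$ cell. Since that lemma is needed for \cref{bg.prop.Jcell} anyway, nothing extra is assumed.
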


\begin{proof}
  Let $f\in I_{\dag}\text{-}\inj$.
  By \cref{bg.lem.I-inj}, $f$ is surjective on objects and locally a trivial Kan fibration.
  By \cref{bg.cor.Iinj-in-W}, it lies in $W_{\dag}$.
  Trivial Kan fibrations have the right lifting property against all monomorphisms, in particular against horns, so $f$ lifts against $J^{\loc}_{\dag}$ by \eqref{bg.eq.A-univ}.
  
  For an interval generator, consider a square with upper map $c:\mathbf 1_{\dag}\to\calC$ and lower map $\phi:\bbH\to\calD$, $\phi(0)=fc$.
  Choose $c'\in f^{-1}(\phi(1))$ and regard $\phi$ as a map $\bbH\to\calD\langle fc,\phi(1)\rangle$ in $\sCat^{\dag}_{\{0,1\}}$.
  The induced morphism $f\langle c,c'\rangle:\calC\langle c,c'\rangle\to\calD\langle fc,\phi(1)\rangle$ is a local trivial Kan fibration, i.e.\ a trivial fibration in the fixed-object model structure, and
  $\bbH$ is cofibrant there because $\bbJ_{\dag}$ is cofibrant and $j_{\bbH}$ is a cofibration.
  Hence $\phi$ lifts to $\ell:\bbH\to\calC\langle c,c'\rangle$; composing with the evaluation $\calC\langle c,c'\rangle\to\calC$ solves the original lifting problem.
\end{proof}

\begin{lemma}\label{bg.lem.WJinj-in-Iinj}
  $W_{\dag}\cap J_{\dag}\text{-}\inj\subseteq I_{\dag}\text{-}\inj$.
\end{lemma}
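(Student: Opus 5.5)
Let $f:\calC\to\calD$ lie in $W_{\dag}\cap J_{\dag}\text{-}\inj$. By \cref{bg.lem.I-inj} we must show two things: every $\calC(x,y)\to\calD(fx,fy)$ is a trivial Kan fibration, and $f$ is surjective on objects. The local part is immediate. By \eqref{bg.eq.A-univ}, lifting against $J^{\loc}_{\dag}$ means that each $\calC(x,y)\to\calD(fx,fy)$ has the right lifting property against all horn inclusions, i.e.\ is a Kan fibration. Since $f\in W_{\dag}$, each such map is also a weak homotopy equivalence, hence a trivial Kan fibration. Consequently $f\langle c,c'\rangle:\calC\langle c,c'\rangle\to\calD\langle fc,fc'\rangle$ is a trivial fibration in the fixed-object model structure of \cref{bg.thm.fixed-object} for any objects $c,c'$.

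For surjectivity on objects, fix $d\in\Ob(\calD)$. By the object condition in \cref{bg.def.W}, choose $c$ and a coherently unitary $[v]:fc\to d$. By \cref{bg.lem.realization} there is a natural dagger interval $\bbH$ and a dagger functor $\phi:\bbH\to\calD$ with $\phi(0)=fc$, $\phi(1)=d$ and $\phi(h_{\bbH})=v$ on the nose. Replacing $\bbH$ by its isomorphic representative in $\calH_{\dag}$ (isomorphisms preserve endpoints and the generator), we may assume $\bbH\in\calH_{\dag}$. The square with top map $c:\mathbf 1_{\dag}\to\calC$, left map $\iota_{0,\bbH}$ and bottom map $\phi$ commutes, since $\phi(0)=fc$. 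Lifting against the interval generator in $J_{\dag}$ produces $\ell:\bbH\to\calC$ with $f\ell=\phi$. Then $f(\ell(1))=\phi(1)=d$, so $d$ lies in the image of $f$.

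Combining the two parts with \cref{bg.lem.I-inj} gives $f\in I_{\dag}\text{-}\inj$. The main obstacle is the object-level step: coherent unitarity is defined through a fibrant replacement of $\calC\langle x,y\rangle$, so an abstract witness does not directly yield a square against $\iota_{0,\bbH}$. This is why the argument passes through \cref{bg.lem.realization}, which rectifies the witness to an honest functor into $\calD$ with strict endpoints. Only the endpoint equality $\phi(0)=fc$ is needed to form the lifting square; the strict value $\phi(h_{\bbH})=v$ is not used.
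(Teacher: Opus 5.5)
Your proposal is correct and follows the paper's proof essentially step for step. The local trivial Kan fibration condition comes from lifting against $J^{\loc}_{\dag}$ together with the local weak equivalence condition. Surjectivity on objects comes from rectifying a coherently unitary witness via the realization proposition, replacing the interval by its representative in $\mathcal{H}_{\dag}$, and lifting against the interval generator. Your extra observation that $f\langle c,c'\rangle$ is a fixed-object trivial fibration is true but not used.
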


\begin{proof}
  We use \cref{bg.lem.I-inj}.
  Let $f:\calC\to\calD$ lie in $W_{\dag}\cap J_{\dag}\text{-}\inj$.
  Lifting against $J^{\loc}_{\dag}$ makes every $\calC(x,y)\to\calD(fx,fy)$ a Kan fibration; being also a weak equivalence, it is a trivial Kan fibration.
  
  It remains to prove surjectivity on objects.
  Let $d\in\calD$ and choose a coherently unitary $[v]:fc\to d$.
  By \cref{bg.lem.realization}, there exist a natural dagger interval and an honest map into $\calD$ with endpoints $fc$ and $d$; we obtain $\psi:\bbH\to\calD$ with $\psi(0)=fc$, $\psi(1)=d$.
  Replace $\bbH$ by its chosen isomorphic representative in $\calH_{\dag}$ and precompose $\psi$ with that isomorphism.  
  Thus $\mathbf1_{\dag}\to\bbH$ is one of the actual generators in $J_{\dag}$.
  The square with upper map $c$ and lower map $\psi$ admits a lift $\ell$ since $f\in J_{\dag}\text{-}\inj$; then $c':=\ell(1)$ satisfies $f(c')=d$.
\end{proof}

\begin{theorem}[The dagger Bergner model structure]\label{bg.thm.main}
  There exists a combinatorial model structure $\sCat^{\dag}_{\Bergner}$ on $\sCat^{\dag}$ in which:
  \begin{enumerate}
    \item the weak equivalences are the dagger DK-equivalences $W_{\dag}$;
    \item the generating cofibrations are $I_{\dag}$ and the generating trivial cofibrations are $J_{\dag}$;
    \item the fibrations are $J_{\dag}\text{-}\inj$, and the trivial fibrations are exactly the functors which are surjective on objects and locally trivial Kan fibrations.
  \end{enumerate}
\end{theorem}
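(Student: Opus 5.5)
The plan is to apply Kan's recognition theorem for cofibrantly generated model structures, \cite[Theorem 11.3.1]{Hir02}. It asks for four things. First, $W_{\dag}$ must satisfy two-out-of-three and be closed under retracts. Second, the domains of $I_{\dag}$ and $J_{\dag}$ must be small relative to the relevant cell complexes. Third, $J_{\dag}\text{-}\cell\subseteq W_{\dag}\cap I_{\dag}\text{-}\cof$. Fourth, $I_{\dag}\text{-}\inj=W_{\dag}\cap J_{\dag}\text{-}\inj$. Nearly all of this has already been proved.
\begin{itemize}
  \item Two-out-of-three and retract closure are \cref{bg.prop.W-23}.
  \item The inclusion $J_{\dag}\text{-}\cell\subseteq W_{\dag}\cap I_{\dag}\text{-}\cof$ is \cref{bg.prop.Jcell}.
  \item The inclusion $I_{\dag}\text{-}\inj\subseteq W_{\dag}\cap J_{\dag}\text{-}\inj$ is \cref{bg.lem.Iinj-in-Jinj}.
  \item The reverse inclusion $W_{\dag}\cap J_{\dag}\text{-}\inj\subseteq I_{\dag}\text{-}\inj$ is \cref{bg.lem.WJinj-in-Iinj}.
\end{itemize}
So the remaining work is the smallness hypothesis, which also yields combinatoriality.

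For smallness, I would use \cref{bg.lem.presentable}: $\sCat^{\dag}$ is locally finitely presentable, so every object is small, and in particular the domains of $I_{\dag}$ and $J_{\dag}$ are. For concreteness one can say more. The objects $\varnothing$, $\mathbf 1_{\dag}$ and $\bbA_{\dag}(K)$ with $K$ finite are finitely presentable by \eqref{bg.eq.A-univ}. The only other domains are the endpoint inclusions $\mathbf 1_{\dag}\to\bbH$, whose domain $\mathbf 1_{\dag}$ is finitely presentable. The codomains $\bbH$ have countable mapping spaces, but Kan's theorem needs smallness only of the domains.

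Note also that $J_{\dag}$ is a genuine set, since $\calH_{\dag}$ was fixed as a set of representatives using \cref{bg.lem.interval-existence}. Together with local presentability, this makes the model structure combinatorial.

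Once Kan's theorem applies, the rest follows from the construction. The cofibrations are $I_{\dag}\text{-}\cof$ and the fibrations are $J_{\dag}\text{-}\inj$, which gives (2) and the first half of (3). The trivial fibrations are $I_{\dag}\text{-}\inj$. By \cref{bg.lem.I-inj}, these are exactly the functors that are surjective on objects and locally trivial Kan fibrations, which is the second half of (3).

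I expect no step to be a serious obstacle here; the heavy lifting sits in \cref{bg.prop.Jcell} and \cref{bg.lem.WJinj-in-Iinj}. If anything needs care, it is matching the version of the recognition theorem to what was proved. Some formulations ask for $J_{\dag}\text{-}\cof\subseteq W_{\dag}$ rather than $J_{\dag}\text{-}\cell\subseteq W_{\dag}$. If that form is needed, I would use the small object argument to write any $J_{\dag}$-cofibration as a retract of a relative $J_{\dag}$-cell complex, and then apply retract closure of $W_{\dag}$ from \cref{bg.prop.W-23}.
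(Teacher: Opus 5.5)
Your proposal is correct and is essentially the paper's proof. The paper applies the recognition theorem in the form \cite[Theorem 2.1.19]{Hov99}, which takes $J_{\dag}\text{-}\cell\subseteq W_{\dag}\cap I_{\dag}\text{-}\cof$ directly, so your retract remark is not needed there; like you, it feeds in \cref{bg.prop.W-23,bg.prop.Jcell,bg.lem.Iinj-in-Jinj,bg.lem.WJinj-in-Iinj}, checks smallness of the domains $\varnothing$, $\mathbf 1_{\dag}$ and $\bbA_{\dag}(K)$ with $K$ finite, reads off (3) from \cref{bg.lem.I-inj}, and gets combinatoriality from presentability. One minor slip: finite presentability of $\mathbf 1_{\dag}$ does not come from \eqref{bg.eq.A-univ}, but from filtered colimits being created on objects and mapping spaces (or simply because every object of a locally presentable category is small).
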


\begin{proof}
  We apply \cite[Theorem 2.1.19]{Hov99} to $(\sCat^{\dag},W_{\dag},I_{\dag},J_{\dag})$.
  The collection $\calH_{\dag}$ is a set by \cref{bg.lem.interval-existence}, so $I_{\dag}$ and $J_{\dag}$ are sets.
  The domains of their members are $\varnothing$, $\mathbf1_{\dag}$ and objects $\bbA_{\dag}(K)$ with $K$ finite.
  The objects $\bbA_{\dag}(K)$ are finitely presentable by \eqref{bg.eq.A-univ}, and $\mathbf1_{\dag}$ is finitely compact because filtered colimits are created on objects and mapping simplicial sets.
  Hence the smallness hypotheses of the recognition theorem hold.

  $W_{\dag}$ is closed under retracts and satisfies two-out-of-three (\cref{bg.prop.W-23}).
  $J_{\dag}\text{-}\cell\subseteq W_{\dag}\cap I_{\dag}\text{-}\cof$ is \cref{bg.prop.Jcell};
  $I_{\dag}\text{-}\inj\subseteq W_{\dag}\cap J_{\dag}\text{-}\inj$ is \cref{bg.lem.Iinj-in-Jinj};
  and $W_{\dag}\cap J_{\dag}\text{-}\inj\subseteq I_{\dag}\text{-}\inj$ is \cref{bg.lem.WJinj-in-Iinj}.
  The theorem yields the cofibrantly generated model structure with fibrations $J_{\dag}\text{-}\inj$ and trivial fibrations $I_{\dag}\text{-}\inj$, identified in \cref{bg.lem.I-inj}; 
  combinatoriality follows from presentability.
\end{proof}

\begin{proposition}\label{bg.thm.left-proper}
  The dagger Bergner model structure $\sCat^{\dag}_{\Bergner}$ is left proper.
\end{proposition}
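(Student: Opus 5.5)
The plan is to show that pushouts of weak equivalences along cofibrations are weak equivalences. Since the cofibrations are the retracts of relative $I_{\dag}$-cell complexes, and since $W_{\dag}$ is closed under retracts and two-out-of-three (\cref{bg.prop.W-23}), it suffices to treat three cases: (a) a pushout along a single generator $\varnothing\to\mathbf 1_{\dag}$; (b) a pushout along a single generator $\bbA_{\dag}(\partial\Delta^n)\to\bbA_{\dag}(\Delta^n)$; (c) transfinite composites of such pushouts. Concretely, let $w:\calC\to\calD$ be in $W_{\dag}$ and let $i:\calC\to\calC'$ be an $I_{\dag}$-cell attachment, with $w':\calC'\to\calD':=\calD\amalg_{\calC}\calC'$ the induced map. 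For (c), write $\calC'=\colim_\alpha\calC_\alpha$ and $\calD'=\colim_\alpha\calD_\alpha$ with $\calD_\alpha=\calD\amalg_{\calC}\calC_\alpha$. Each stage map $\calC_\alpha\to\calD_\alpha$ is a pushout of the previous one along a single cell, so the stagewise statement propagates. The colimit then needs two facts. First, the local condition passes to the colimit because the transition maps are locally monomorphisms (\cref{bg.lem.ordinary-cell}~(1), applied on both sides). Second, the object condition passes because each object appears at some finite stage and coherent unitarity is preserved by the leg functors (\cref{bg.prop.cu-groupoid}~(4)).

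Case (a) is immediate: $w'$ is $w\amalg\id_{\mathbf 1_{\dag}}$, and both conditions hold componentwise, with the identity serving as the witness for the new object.

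Case (b) is the heart of the argument. The object sets do not change, so the object condition for $w'$ follows from that for $w$ together with \cref{bg.prop.cu-groupoid}~(4). For the local condition, use \cref{bg.lem.reduction}: after forgetting the dagger, $i$ becomes two successive ordinary free cell attachments of $\partial\Delta^n\to\Delta^n$, one at $(x,y)$ along $u$ and one at $(y,x)$ along $u^{\dag}$. The same description holds for the pushout on the $\calD$ side, along $wu$ and $(wu)^{\dag}=w(u^{\dag})$. The problem therefore reduces to the ordinary simplicial-categorical statement: for a simplicial functor $F:\calA\to\calB$ that is locally a weak equivalence but not necessarily bijective on objects, the induced functor $\calA[i,u]\to\calB[i,Fu]$ is locally a weak equivalence. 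This is exactly the local part of left properness of the ordinary Bergner model structure; see \cite{Bergner07} and \cite[A.3.2]{HTT}, where it is proved by the explicit word description of mapping spaces in a free cell attachment, filtered by the number of occurrences of the new cell. Apply this twice.

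The step I expect to be the main obstacle is justifying that reduction cleanly. The underlying $\UdagSCat w$ need not be an ordinary DK-equivalence, because the object condition uses coherent unitarity and not mere homotopy-category essential surjectivity. However, only the local part of Bergner's argument is needed, and that part uses only local weak equivalence on the objects actually involved. If a direct citation for this does not fit, I would reprove it by the fixed-object method. Restrict $w$ to its image object set to obtain a functor of $\sCat_{\Ob\calD}$, by adding identity-only objects as in \cref{bg.not.fixed-free-product}. Then invoke homotopy invariance of fixed-object free products \cite[Proposition 2.7]{DK80}, applied to the cell $\bbA(\Delta^n)\amalg_{\bbA(\partial\Delta^n)}(-)$. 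The cofibrancy needed there is provided by the cell attachments and the monomorphism statement of \cref{bg.lem.ordinary-cell}.
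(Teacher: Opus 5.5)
Your argument is correct, but it takes a longer route than the paper's, and the obstacle you single out does not exist. By \cref{bg.prop.cu-groupoid}~(2), every coherently unitary class is an isomorphism in $\h\calD$. So coherent-unitary essential surjectivity implies ordinary essential surjectivity, and $\UdagSCat w$ is an ordinary DK-equivalence.

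The paper uses exactly this. It notes that $\UdagSCat$ creates pushouts (\cref{bg.lem.creation}). It also notes that $\UdagSCat$ sends dagger Bergner cofibrations to ordinary ones: this is checked on $I_{\dag}$ via \cref{bg.lem.reduction} and then propagated to cell complexes and retracts. Ordinary left properness \cite[Proposition~A.3.2.4]{HTT}, applied once to the forgotten square, then gives the local condition for $w'$. The coherent-unitary object condition is checked directly, by the same argument you give: objects coming from $\calC'$ are hit strictly, and objects coming from $\calD$ are joined to the image by a coherently unitary class pushed forward along $\calD\to\calD'$ via \cref{bg.prop.cu-groupoid}~(4).

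This makes several parts of your proof unnecessary: the cell-by-cell induction, the limit-stage colimit argument, the retract reduction, and the fallback through \cite[Proposition~2.7]{DK80}. That fallback would not apply literally as stated anyway, since a cell attachment is a pushout over $\bbA(\partial\Delta^n)$ rather than a fixed-object free product.

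What your version buys is that it uses only the fully faithful half of ordinary left properness. It would therefore survive a variant of $W_{\dag}$ whose object condition did not imply ordinary essential surjectivity. That local statement itself follows from ordinary left properness by replacing $\calB$ with its full subcategory on the image of $F$, because the words computing $\calB[i,Fu](Fa,Fb)$ involve only mapping spaces between image objects.
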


\begin{proof}
  Let $f:\calC\to\calC'$ be a dagger DK-equivalence and let $i:\calC\to\calB$ be a cofibration.  
  Write $f':\calB\to\calB':=\calB\amalg_{\calC}\calC'$ for its cobase change.

  We compare this square with the ordinary Bergner model structure.  
  The forgetful functor $\UdagSCat:\sCat^{\dag}\to\sCat$ creates pushouts (by \cref{bg.lem.creation}).  
  It also carries dagger Bergner cofibrations to ordinary Bergner cofibrations.  
  Indeed, it carries $\varnothing\to\mathbf1_{\dag}$ to the ordinary object generator, and it carries every local dagger generating cofibration to a composite of two ordinary local generating cofibrations (by \cref{bg.lem.reduction}). 
  The assertion follows for relative cell complexes and then for their retracts.

  The functor $\UdagSCat f$ is an ordinary DK-equivalence. 
  It is locally a weak equivalence, and a coherent-unitary class is in particular an isomorphism in the ordinary homotopy category, so coherent-unitary essential surjectivity implies ordinary essential surjectivity.  
  The ordinary Bergner model structure is left proper by \cite[Proposition~A.3.2.4]{HTT}.
  Applying ordinary left properness to the pushout square created by $\UdagSCat$ shows that $\UdagSCat f'$ is an ordinary DK-equivalence.  
  In particular, $f'$ is a local weak equivalence.

  It remains only to recover the coherent-unitary object condition.  
  Every object of $\calB'$ is represented either by an object of $\calB$ or by an object $d$ of $\calC'$.  
  Objects of the first kind lie strictly in the image of $f'$.  
  For the second kind choose $c\in\calC$ and a coherent-unitary class $[v]:f(c)\to d$.  
  The canonical functor $\calC'\to\calB'$ carries $[v]$ to a coherent-unitary class by \cref{bg.prop.cu-groupoid} (4), and this class joins $f'(i(c))$ to $d$.  
  Thus $f'$ is coherently unitarily essentially surjective and hence belongs to $W_{\dag}$.
\end{proof}

\begin{remark}\label{bg.rem.not-transfer}
  The model structure $\sCat^{\dag}_{\Bergner}$ is not obtained by transferring the Bergner model structure along the free--forgetful adjunction:
  a transferred structure would impose essential surjectivity up to arbitrary homotopy equivalences, whereas \cref{bg.def.W} imposes it up to coherently unitary ones.
  For dagger $1$-categories the coherent notion agrees with the classical one (by \cref{bg.lem.discrete}), so the classical dagger category theory is recovered.
\end{remark}

\begin{corollary}\label{bg.cor.fibrant}
  An object $\calC\in\sCat^{\dag}$ is fibrant if and only if every mapping space $\calC(x,y)$ is a Kan complex.
\end{corollary}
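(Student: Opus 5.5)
The plan is to unwind the right lifting property of $\calC\to\mathbf 1_{\dag}$ against the generating trivial cofibrations $J_{\dag}$, using \cref{bg.thm.main} (3): $\calC$ is fibrant if and only if $\calC\to\mathbf 1_{\dag}$ lies in $J_{\dag}\text{-}\inj$. Since $J_{\dag}=J^{\loc}_{\dag}\cup\{\iota_{0,\bbH}\mid\bbH\in\calH_{\dag}\}$, I will treat the two families of generators separately. I expect the horn generators to encode exactly the Kan condition, and the interval generators to impose no condition at all when the target is terminal.

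\emph{Horn generators.} By the universal property \eqref{bg.eq.A-univ}, a commutative square from $\bbA_{\dag}(\Lambda^n_k)\to\bbA_{\dag}(\Delta^n)$ to $\calC\to\mathbf 1_{\dag}$ amounts to a pair of objects $(x,y)$ of $\calC$ together with a map $\Lambda^n_k\to\calC(x,y)$. The lower map into $\mathbf 1_{\dag}$ imposes no constraint. A diagonal filler is then precisely an extension of this map to $\Delta^n\to\calC(x,y)$. Hence lifting against all of $J^{\loc}_{\dag}$ is equivalent to every $\calC(x,y)$ being a Kan complex. This argument is the same as the one in \cref{bg.lem.I-inj}, with horns in place of boundaries.

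\emph{Interval generators.} Given a square with upper map $c:\mathbf 1_{\dag}\to\calC$, I need a dagger functor $\bbH\to\calC$ sending $0$ to $c$. I will take the composite
\begin{align}
  \bbH\xrightarrow{p_{\bbH}}\bbI_{\dag}\longrightarrow\calC,
\end{align}
where the second functor sends both objects to $c$ and $u\mapsto\id_c$. This is a well-defined dagger functor because $\id_c^{\dag}=\id_c$ and $\id_c\circ\id_c=\id_c$, so the defining relations of $\bbI_{\dag}$ hold. This is the same device as in \cref{bg.prop.cu-groupoid} (1). The lower triangle commutes automatically, since the target is terminal. So every $\calC$ lifts against the interval generators.

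Combining the two cases gives both directions. There is no real obstacle here. The only point needing care is that the interval generators impose no extra condition: this relies on the target being $\mathbf 1_{\dag}$, so that the endpoint $1$ may be sent back to $c$ itself. For a general fibration this freedom is absent, and the interval generators genuinely contribute.
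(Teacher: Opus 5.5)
Your proposal is correct and follows essentially the same route as the paper. The paper also reads the Kan condition off lifting against $J^{\loc}_{\dag}$ via \eqref{bg.eq.A-univ}, and it solves every interval-generator lifting problem with the same composite $\bbH\xrightarrow{p_{\bbH}}\bbI_{\dag}\to\calC$ sending both objects to $c$ and $u\mapsto\id_c$.
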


\begin{proof}
  Lifting of $\calC\to\mathbf 1_{\dag}$ against $J^{\loc}_{\dag}$ is the Kan condition by \eqref{bg.eq.A-univ}.
  Lifting against an interval generator $\mathbf 1_{\dag} \to \bbH$ with an upper map $c:\mathbf 1_{\dag}\to\calC$ always exists:
  we can take it as the composite $\bbH\xrightarrow{p_{\bbH}}\bbI_{\dag}\to\calC$, where the second functor sends both objects to $c$ and $u\mapsto\id_c$.
\end{proof}

\begin{corollary}\label{bg.prop.semantic}
  Every fibration $f:\calC\to\calD$ is locally a Kan fibration, and $(\h f)^{\cu}:(\h\calC)^{\cu}\to(\h\calD)^{\cu}$ is an isofibration: 
  for every $x\in\calC$ and every coherently unitary $[v]:fx\to d$, there exist $x'\in\calC$ with $fx'=d$ and a coherently unitary $[u]:x\to x'$ with $[f(u)]=[v]$.
\end{corollary}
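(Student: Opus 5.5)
The statement has two parts: fibrations are locally Kan fibrations, and they have the coherently unitary isofibration property. Both come from lifting against the generators in $J_{\dag}$, since every fibration lies in $J_{\dag}\text{-}\inj$ by \cref{bg.thm.main}.

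\emph{Local Kan fibrations.} Fix $x,y\in\calC$. By \eqref{bg.eq.A-univ}, a lifting problem of $\Lambda^n_k\to\Delta^n$ against $\calC(x,y)\to\calD(fx,fy)$ is the same as a lifting problem of $\bbA_{\dag}(\Lambda^n_k)\to\bbA_{\dag}(\Delta^n)$ against $f$. This is exactly the argument already used in \cref{bg.cor.fibrant}. Since these maps lie in $J^{\loc}_{\dag}\subseteq J_{\dag}$, the lifts exist.

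\emph{Isofibration property.} Let $x\in\calC$ and let $[v]:fx\to d$ be coherently unitary.

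First, apply \cref{bg.lem.realization} with the representative $v$. This gives a natural dagger interval $\bbH$ and a dagger functor $\psi:\bbH\to\calD$ with $\psi(0)=fx$, $\psi(1)=d$ and $\psi(h_{\bbH})=v$ on the nose.

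Second, replace $\bbH$ by its representative in $\calH_{\dag}$, precomposing $\psi$ with the isomorphism. Such an isomorphism preserves endpoints and the generator, so we may assume $\iota_{0,\bbH}$ is a generator in $J_{\dag}$.

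Third, form the square with top map $x:\mathbf 1_{\dag}\to\calC$ and bottom map $\psi$. It commutes because $\psi(0)=fx$. Lifting gives $\ell:\bbH\to\calC$ with $\ell(0)=x$ and $f\ell=\psi$.

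Set $x':=\ell(1)$ and $u:=\ell(h_{\bbH})\in\calC(x,x')_0$. Then $fx'=\psi(1)=d$ and $f(u)=v$ strictly, so $[f(u)]=[v]$.

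It remains to check that $[u]$ is coherently unitary. The functor $\ell$ induces a fixed-object dagger functor $\bbH\to\calC\langle x,x'\rangle$ sending $h_{\bbH}$ to $u$. Composing with $r_{\calC\langle x,x'\rangle}$ yields a witness in the sense of \cref{bg.def.cu}, as in the proof of \cref{bg.lem.interval-attachment}.

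Finally, to state this as "$(\h f)^{\cu}$ is an isofibration", I need $(\h f)^{\cu}$ to be a well-defined functor between groupoids. The subgroupoids exist by \cref{bg.cor.cohuni_is_subgroupoid}, and $\h f$ carries $(\h\calC)^{\cu}$ into $(\h\calD)^{\cu}$ by \cref{bg.prop.cu-groupoid}~(4).

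The only delicate step is obtaining an honest witness with $\psi(h_{\bbH})=v$ exactly rather than up to homotopy. Without that, the lift would only give $[f(u)]$ equal to the class of $\psi(h_{\bbH})$. This step is handled by \cref{bg.lem.realization}, and everything else is formal.
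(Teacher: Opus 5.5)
Your proof is correct and follows the paper's argument step for step. You get the local Kan condition by horn lifting against $J^{\loc}_{\dag}$, obtain an honest $\psi:\bbH\to\calD$ from \cref{bg.lem.realization}, replace $\bbH$ by its representative in $\calH_{\dag}$, lift against $\iota_{0,\bbH}$, and use the lift $\ell$ composed with $r_{\calC\langle x,x'\rangle}$ as the witness. One small correction to your closing remark: what \cref{bg.lem.realization} really supplies is a functor into $\calD$ itself rather than into $R(\calD\langle fx,d\rangle)$, and without that there is no lifting problem against $f$ at all. Hitting $v$ on the nose is inessential, since $[f(u)]=[\psi(h_{\bbH})]=[v]$ holds as soon as $\psi(h_{\bbH})$ represents $[v]$.
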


\begin{proof}
  Lifting against $J_{\dag}^{\loc}$ gives the local Kan-fibration condition.
  Given $[v]$, \cref{bg.lem.realization} provides $\psi:\bbH\to\calD$ with $\psi(0)=fx$, $\psi(1)=d$ and $\psi(h)$ representing $[v]$.
  After replacing $\bbH$ by its representative in $\calH_{\dag}$, this is a lifting problem against an actual interval generator.

  Consider a lifting problem against the interval generator $\mathbf 1_{\dag} \to \bbH$ with an upper map $x:\mathbf 1_{\dag}\to\calC$ selecting the given object.
  By assumption, we have a lift $\ell : \bbH \to \calC$.
  Take $x' = \ell(1)$ and $u = \ell(h_{\bbH})$, then $fx' = \psi(1) = d$ and $f(u) = \psi(h_{\bbH})$.
  The functor $\ell$ induces $\bbH\to\calC\langle x,x'\rangle$.
  Composing with $r_{\calC\langle x,x'\rangle}$ witnesses coherent unitarity of $[u]$.
\end{proof}

\begin{remark}\label{bg.rem.semantic-converse}
  We do not assert the converse of \cref{bg.prop.semantic}.
  The claim that every locally-Kan-fibered functor which is an isofibration on \emph{weakly} unitary classes has the right lifting property against the interval generators is precisely the step invalidated by \cref{bg.prop.weak-not-coherent}; whether the converse holds with coherently unitary classes is not needed in this paper and is left open.
\end{remark}

\begin{proposition}\label{pointset.thm.discrete-sector}
  Let $F:\calA\to\calB$ be a dagger functor between dagger $1$-categories, regarded as dagger simplicial categories with discrete mapping spaces.
  Then:
  \begin{enumerate}
    \item $F$ is a dagger DK-equivalence if and only if it is fully faithful and unitarily essentially surjective;
    \item $F$ is a dagger Bergner fibrantion if and only if it is a unitary isofibration.
  \end{enumerate}
\end{proposition}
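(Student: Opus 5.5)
Both parts reduce to \cref{bg.lem.discrete}, which identifies coherent unitarity with classical unitarity for discrete mapping spaces, together with the descriptions of $W_{\dag}$ in \cref{bg.def.W} and of fibrations in \cref{bg.thm.main}.

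For (1), the local condition for discrete mapping spaces says that each map of sets $\calA(x,y)\to\calB(Fx,Fy)$ is a weak homotopy equivalence of discrete simplicial sets. That is the same as a bijection, i.e.\ $F$ is fully faithful. For the object condition, $\h\calB=\calB$ since $\pi_0$ of a discrete set is itself. So a class $[v]:Fc\to d$ is just a morphism $v$, and by \cref{bg.lem.discrete} it is coherently unitary exactly when $v^{\dag}=v^{-1}$. Hence the object condition is unitary essential surjectivity.

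For (2), fibrations are $J_{\dag}\text{-}\inj$, and I would treat the two kinds of generators separately.
\begin{enumerate}
\item Lifting against $\bbA_{\dag}(\Lambda^n_k)\to\bbA_{\dag}(\Delta^n)$ is, by \eqref{bg.eq.A-univ}, the Kan fibration condition on maps of discrete sets. Every map between discrete simplicial sets is a Kan fibration, since horns for $n\ge1$ are connected and determine the simplex. So this condition is automatic.
\item Lifting against $\iota_{0,\bbH}:\mathbf 1_{\dag}\to\bbH$ needs more care. Any dagger functor from $\bbH$ into a discrete category factors uniquely through $\pi_0\bbH\cong\bbI_{\dag}$, as in the proof of \cref{bg.lem.discrete}. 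So a lifting square amounts to an object $x\in\calA$ and a unitary $v:Fx\to d$ in $\calB$, and a lift amounts to $x'$ with $Fx'=d$ and a unitary $u:x\to x'$ with $Fu=v$.
\end{enumerate}
The factorization through $\pi_0\bbH$ must be compatible with the square: the upper map only fixes object $0$, and the lift is determined by its image of $\bbI_{\dag}$. The interval part is thus exactly the unitary isofibration condition, applied to each $\bbH\in\calH_{\dag}$; at least one such $\bbH$ exists by \cref{bg.lem.interval-existence}, so the condition is not vacuous. Alternatively, \cref{bg.prop.semantic} gives the forward direction directly, again using \cref{bg.lem.discrete}.

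The main point needing care is this interval-generator step. One must check that the bijection between functors $\bbH\to\calB$ and unitaries, via $\pi_0\bbH\cong\bbI_{\dag}$ sending $h_{\bbH}\mapsto u$, is natural enough to match lifts with lifts. I expect this to be straightforward because $\pi_0$ is a left adjoint to the inclusion of discrete dagger categories.
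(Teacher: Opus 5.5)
Your proposal is correct and follows essentially the paper's own proof. For (1), you use that weak equivalences of discrete simplicial sets are bijections, together with \cref{bg.lem.discrete}. For (2), you use that every map of discrete simplicial sets is a Kan fibration, together with the factorization $\psi=\widetilde\psi\,p_{\bbH}$ through $\pi_0\bbH\cong\bbI_{\dag}$.

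The one minor difference is the forward direction of (2). The paper obtains it from \cref{bg.prop.semantic}, and hence ultimately from \cref{bg.lem.realization}. Your direct construction of $\psi$ from a unitary $v$ and a chosen $\bbH\in\calH_{\dag}$ is more elementary in this discrete setting.
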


\begin{proof}
  (1)
  A map between discrete simplicial sets is a weak homotopy equivalence if and only if it is a bijection.
  Thus the local weak-equivalence condition for $F$ is full faithfulness.
  By \cref{bg.lem.discrete}, coherent unitarity agrees with strict unitarity in a dagger one-category.
  Hence coherent-unitary essential surjectivity is ordinary unitary essential surjectivity.

  (2)
  Suppose that $F$ is a dagger Bergner fibration.
  By \cref{bg.prop.semantic}, it is an isofibration on coherently unitary morphisms.
  The discrete identification just recalled makes it a unitary isofibration.

  Conversely, suppose that $F$ is a unitary isofibration.
  Every map between discrete simplicial sets is a Kan fibration, so $F$ is locally a Kan fibration.
  It remains to solve a lifting problem against a natural dagger interval inclusion $\mathbf1_{\dag}\to\bbH$.
  
  A dagger functor $\psi:\bbH\to\calB$ factors through $\pi_0\bbH$, because the mapping spaces of $\calB$ are discrete.
  Since $p_{\bbH}:\bbH\to\bbI_{\dag}$ is an identity-on-objects local weak equivalence, \cref{bg.lem.interval-basic} gives an isomorphism $\pi_0p_{\bbH}: \pi_0\bbH \xrightarrow{\cong} \bbI_{\dag}$.

  If $\overline\psi:\pi_0\bbH\to\calB$ is the induced dagger functor, put $ \widetilde\psi := \overline\psi(\pi_0p_{\bbH})^{-1}: \bbI_{\dag} \to \calB$.
  Then $\psi=\widetilde\psi p_{\bbH}$, so $\psi$ is determined by a unitary morphism $v:F(a)\to b$.
  Since $F$ is a unitary isofibration, there are an object $a'$ and a unitary morphism $u:a\to a'$ with $F(a')=b$ and $F(u)=v$.
  The morphism $u$ determines a dagger functor $\bbI_{\dag}\to\calA$.
  Composing it with $p_{\bbH}$ gives the required lift.
  Hence $F$ is a dagger Bergner fibration.
\end{proof}

\begin{corollary}\label{pointset.cor.bunke}
  On the full subcategory of ordinary dagger categories, the dagger DK-equivalences and dagger Bergner fibrations are respectively the weak equivalences and fibrations of Bunke's model structure in the sense of \cite{Bunke19}.
\end{corollary}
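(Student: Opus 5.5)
The plan is to read the statement off from \cref{pointset.thm.discrete-sector}, once we recall what Bunke's model structure on dagger categories is \cite{Bunke19}. In that structure the weak equivalences are the dagger functors that are fully faithful and unitarily essentially surjective, i.e.\ the dagger functors that are equivalences of dagger categories in the unitary sense. The fibrations are the unitary isofibrations: for every object $a$ and every unitary $v:F(a)\to b$ there are $a'$ and a unitary $u:a\to a'$ with $F(a')=b$ and $F(u)=v$. The first step is to fix this description, citing the relevant characterization in \cite{Bunke19}, so that both classes are stated purely in terms of the underlying dagger $1$-categories.

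Second, regard the full subcategory of ordinary dagger categories inside $\sCat^{\dag}$ via discrete mapping spaces. This embedding is fully faithful on dagger functors, since simplicial functors between discrete simplicial categories are exactly ordinary functors. The dagger compatibility condition of \cref{bg.def.dagger-scat} is likewise unchanged. So a morphism in the subcategory is a dagger DK-equivalence, resp.\ a dagger Bergner fibration, exactly when the corresponding dagger functor satisfies the conditions of \cref{pointset.thm.discrete-sector} (1), resp.\ (2).

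Third, match the conditions clause by clause. Part (1) of \cref{pointset.thm.discrete-sector} gives fully faithful plus unitarily essentially surjective, which is Bunke's weak equivalences. Part (2) gives unitary isofibration, which is Bunke's fibrations. Both identifications pass through \cref{bg.lem.discrete}, which says coherent unitarity in a discrete dagger category is ordinary unitarity.

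The only real obstacle is bookkeeping: Bunke's fibrations must be exactly unitary isofibrations, with no extra condition. If Bunke phrases fibrations via lifting against the inclusion of a point into the walking unitary isomorphism $\bbI_{\dag}$, I would check that this lifting property unwinds to the unitary isofibration condition by the same factorization through $\bbI_{\dag}$ used in the proof of \cref{pointset.thm.discrete-sector} (2). Everything else is immediate.
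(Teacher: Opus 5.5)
Your proposal is correct and follows the paper's route. The paper likewise recalls that Bunke's weak equivalences are the fully faithful, unitarily essentially surjective dagger functors \cite[Lemma~5.3]{Bunke19} and that his fibrations are the unitary isofibrations \cite[Definition~9.5 and Corollary~9.9]{Bunke19}, and then reads off both identifications from \cref{pointset.thm.discrete-sector}. The one step the paper spells out that you pass over with an ``i.e.''\ is the elementary check that having a quasi-inverse with unitary unit and counit is the same as being fully faithful and unitarily essentially surjective.
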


\begin{proof}
  Bunke's weak equivalences are the unitary equivalences, namely the fully faithful functors which are unitarily essentially surjective \cite[Lemma~5.3]{Bunke19}.
  
  Indeed, full faithfulness and a choice of unitary arrows from objects in the image to all target objects construct a quasi-inverse whose unit and counit are unitary.
  
  Conversely, a quasi-inverse with unitary unit and counit implies full faithfulness and unitary essential surjectivity.
  Bunke's fibrations are the unitary isofibrations \cite[Definition~9.5 and Corollary~9.9]{Bunke19}.
  The two identifications therefore follow from \cref{pointset.thm.discrete-sector}.
\end{proof}

\begin{proposition}\label{pointset.prop.discrete-cofibrations}
  Under the identification of ordinary dagger categories with dagger simplicial categories having discrete mapping spaces, every dagger Bergner cofibration between such discrete objects is a cofibration in Bunke's model structure.
\end{proposition}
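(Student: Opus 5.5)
We need to show that a dagger Bergner cofibration $F:\calA\to\calB$ between discrete dagger categories is a Bunke cofibration. Bunke's cofibrations are the dagger functors that are injective on objects; more precisely, they are characterized by the left lifting property against his trivial fibrations. So the plan is to verify injectivity on objects, or, more robustly, to verify the lifting property directly.

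\textbf{Step 1 (identify Bunke's trivial fibrations).} By \cref{pointset.cor.bunke}, Bunke's fibrations are the unitary isofibrations and his weak equivalences are the fully faithful, unitarily essentially surjective functors. A functor in the intersection is therefore fully faithful and surjective on objects. Indeed, lifting the unitary from some $F(a)$ to $b$ along the isofibration gives an object mapping exactly to $b$. Conversely, every fully faithful, surjective-on-objects dagger functor is both a unitary equivalence and a unitary isofibration. So Bunke's trivial fibrations are exactly the fully faithful dagger functors that are surjective on objects.

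\textbf{Step 2 (compare with the Bergner side).} Let $p:\calE\to\calF$ be such a functor between discrete dagger categories. Each map $\calE(x,y)\to\calF(px,py)$ is a bijection of discrete simplicial sets, hence an isomorphism and in particular a trivial Kan fibration. Together with surjectivity on objects, \cref{bg.lem.I-inj} puts $p$ in $I_{\dag}\text{-}\inj$, so $p$ is a dagger Bergner trivial fibration by \cref{bg.thm.main}.

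\textbf{Step 3 (conclude).} Any dagger Bergner cofibration $F$ has the left lifting property in $\sCat^{\dag}$ against $p$. All objects in such a lifting square are discrete, and the discrete dagger categories form a full subcategory of $\sCat^{\dag}$. Hence the lift is a dagger functor between ordinary dagger categories, and $F$ has the left lifting property against every Bunke trivial fibration. Since Bunke's model structure is a model structure, its cofibrations are exactly the maps with this left lifting property, so $F$ is a Bunke cofibration.

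The main obstacle is Step 1: one must make sure that Bunke's trivial fibrations are precisely the fully faithful, surjective-on-objects functors, which requires that his fibration and weak-equivalence classes match \cref{pointset.cor.bunke} exactly. If that direct identification is problematic, I would instead check injectivity on objects from the generators. The maps $\bbA_{\dag}(\partial\Delta^n)\to\bbA_{\dag}(\Delta^n)$ and $\varnothing\to\mathbf1_{\dag}$ are injective on objects, and this property is preserved by pushouts, transfinite composites and retracts. That would suffice if Bunke's cofibrations are exactly the functors injective on objects.
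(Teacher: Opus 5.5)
Your proof is correct, but your main argument takes a different route from the paper's.

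\textbf{The paper's route.} The paper works on the cofibration side. It quotes Bunke's description of his cofibrations as the dagger functors injective on objects. It then reuses the observation from the proof of \cref{bg.thm.left-proper} that $\UdagSCat$ carries dagger Bergner cofibrations to ordinary Bergner cofibrations, and these are injective on objects.

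\textbf{Your route.} You work on the trivial-fibration side.
\begin{enumerate}
  \item From the weak equivalences and fibrations matched in \cref{pointset.cor.bunke}, you show that every Bunke trivial fibration is fully faithful and surjective on objects. Only this direction is needed; the converse you state is true but unused.
  \item Via \cref{bg.lem.I-inj}, such a functor lies in $I_{\dag}\text{-}\inj$, because a bijection of discrete simplicial sets is a trivial Kan fibration.
  \item You conclude from two facts: cofibrations in any model category are exactly the maps with the left lifting property against trivial fibrations, and discrete dagger categories form a full subcategory of $\sCat^{\dag}$.
\end{enumerate}

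\textbf{What each approach buys.} Your argument never uses Bunke's explicit description of cofibrations, nor the comparison with ordinary Bergner cofibrations. It is the formal observation that Bunke's trivial fibrations lie among the dagger Bergner trivial fibrations, so it depends only on the identifications already proved in \cref{pointset.cor.bunke}. The paper's argument is shorter in context, since the needed fact about $\UdagSCat$ was already established. It also yields slightly more: the underlying simplicial functor is even an ordinary Bergner cofibration.

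Your fallback argument is essentially the paper's, checked directly on the generators in $I_{\dag}$ rather than through $\UdagSCat$. It works because the object functor preserves colimits, as noted in \cref{bg.lem.creation}.
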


\begin{proof}
  Bunke's cofibrations are precisely the dagger functors which are injective on objects \cite[Definition~9.1]{Bunke19}.
  The proof of \cref{bg.thm.left-proper} shows that the forgetful functor $\UdagSCat:\sCat^{\dag}\to\sCat$ sends dagger Bergner cofibrations to ordinary Bergner cofibrations.
  In particular, it is injective on objects and is therefore a Bunke cofibration.
\end{proof}

\begin{remark}
  The converse of \cref{pointset.prop.discrete-cofibrations} does not hold in general:
  let $\calC=BC_2$ be the one-object dagger groupoid with dagger given by inversion \cite[Example~2.2]{Bunke19}, and let $r:\calC\to\mathbf1_{\dagger}$ be the collapse functor.
  The functor $r$ is bijective on objects, and hence is a Bunke cofibration.
  Its map on the unique mapping simplicial set is $C_2\to\Delta^0$, whose map in degree zero is not injective.
  If $r$ is a dagger Bergner cofibration, then its underlying simplicial functor is an ordinary Bergner cofibration.
  Ordinary Bergner cofibrations induce cofibrations, hence monomorphisms, on the mapping simplicial sets \cite[Remark~A.3.2.5]{HTT}.
  This contradicts the non-injectivity in degree zero.
  Thus $r$ is not a dagger Bergner cofibration.
\end{remark}

\section{The dagger Joyal model structure}\label{dj.section}

In this section, we transport the dagger Bergner model structure of \cref{bg.thm.main} along the rigidification adjunction, and obtain a model structure on $\sSet^{\dag}$ whose weak equivalences are created by $\frakC_{\dag}$, together with a Quillen equivalence $\frakC_{\dag}: \sSet^{\dag}_{\Joyal} \rightleftarrows \sCat^{\dag}_{\Bergner} :N_{\dag}$ (\cref{dj.thm.equivalence}).

It is tempting to take the cofibrations of $\sSet^{\dag}_{\Joyal}$ to be \emph{all} monomorphisms, as in the ordinary Joyal model structure.
This is impossible if the displayed adjunction is to be a Quillen adjunction: we exhibit in \cref{dj.prop.counterexample} a monomorphism of dagger simplicial sets whose image under $\frakC_{\dag}$ is \emph{not} a cofibration in the dagger Bergner model structure.
The culprit is the existence of \emph{self-conjugate} simplices ($\sigma^{\dag}=\sigma$), which have no analogue in the ordinary theory.

\subsection{Dagger quasi-categories}

We begin with simplicial sets carrying a strict reversal involution that fixes every vertex (\cref{def.dagger_simplicial_sets}).
Dagger quasi-categories and dagger Kan complexes are then defined by imposing the usual quasi-category and Kan conditions on the underlying simplicial sets.

\begin{definition} \label{def.dagger_simplicial_sets}
  A \emph{dagger simplicial set} $(K,\dag_{K})$ consists of a simplicial set $K : \prism^{\myop} \to \Set$ and a natural transformation $\dag_{K} : K \to K^{\myop}$ satisfying 
  $\dag_{K}^{\myop} \dag_{K} = \id_{K}$ and $\dag_{K}|_{0} = \id_{K_{0}}$.

  Let $(K,\dag_{K})$ and $(L,\dag_{L})$ be dagger simplicial sets.
  A \emph{dagger morphism of dagger simplicial sets} $f : (K,\dag_{K}) \to (L,\dag_{L})$ is a morphism of simplicial sets $f : K \to L$ satisfying $\dag_{L}f = f^{\myop} \dag_{K}$.

  We let $\sSet^{\dag}$ denote the category of dagger simplicial sets with dagger morphisms.
\end{definition}

\begin{remark}
  A dagger simplicial set $(K,\dag_{K})$ is equivalently a simplicial set $K$ together with maps $(-)^{\dag} : K_n \to K_n$ for all $n \geq 0$, satisfying
  \begin{align}
    d_i(\sigma^{\dag})=(d_{n-i}\sigma)^{\dag}, 
    \quad
    s_i(\sigma^{\dag})=(s_{n-i}\sigma)^{\dag}, 
    \quad
    (\sigma^{\dag})^{\dag}=\sigma,
    \quad \text{and} \quad
    x^{\dag}=x
  \end{align}
  for every $n$-simplex $\sigma \in K$ and every vertex $x \in K$.
\end{remark}

\begin{remark}
  \Cref{def.dagger_simplicial_sets} is the strict simplicial analogue of an ordinary dagger category.
  The dagger $\dag_{K} : K \to K^{\myop}$ reverses the orientation of every simplex and the condition $\dag_{K}|_{0}=\id_{K_{0}}$ records the usual convention that a dagger operation fixes objects.
  
  A dagger morphism is therefore not just a map of the underlying simplicial sets: 
  it must commute with this reversal, just as an ordinary dagger functor satisfies $ F(f^{\dag})=F(f)^{\dag}$.
\end{remark}

\begin{notation}
  We will usually denote a dagger simplicial set $(K,\dag_{K})$ simply by $K$, omitting the dagger structure $\dag_{K}$ from the notation.
  Similarly, we will denote a morphism of dagger simplicial sets $f : (K,\dag_{K}) \to (L,\dag_{L})$ simply by $f : K \to L$.
\end{notation}

\begin{definition}
  Let $K$ be a dagger simplicial set.
  We will say that it is a \emph{dagger quasi-category} if the underlying simplicial set $K$ is a quasi-category.

  We will refer to dagger morphisms of dagger quasi-categories as \emph{dagger functors}.

  We let $\qCat^{\dag} \subseteq \sSet^{\dag}$ denote the full subcategory spanned by the dagger quasi-categories.
\end{definition}

\begin{definition}
  Let $K$ be a dagger simplicial set.
  We will say that it is a \emph{dagger Kan complex} if the underlying simplicial set $K$ is a Kan complex.

  We let $\Kan^{\dag} \subseteq \qCat^{\dag}$ denote the full subcategory spanned by the dagger Kan complexes.
\end{definition}

\begin{example}
  Let $G$ be a group, and let $\theta : G \to G$ be an involutive anti-automorphism on $G$,
  i.e. it satisfies $\theta(gh) = \theta(h)\theta(g)$ and $\theta^{2} = \id_{G}$.
  Then $\theta$ defines a dagger structure on a one-object groupoid $\mathbf{B}G$ by setting $g^{\dag} := \theta(g)$ for every $g \in G$.

  Since $\mathbf{B}G$ is a groupoid, its nerve $N\mathbf{B}G$ is a Kan complex.  
  The dagger structure on $\mathbf{B}G$ induces a dagger structure $\dag_{N\mathbf{B}G} : N\mathbf{B}G \to (N\mathbf{B}G)^{\myop}$.
  Explicitly, for an $n$-simplex $(g_1,\dots,g_n) \in G^{n}$ of $N\mathbf{B}G$, the dagger structure is given by $(g_1,\dots,g_n)^{\dag} := (\theta(g_n),\dots,\theta(g_1))$.
  Therefore the pair $(N\mathbf{B}G,\dag_{N\mathbf{B}G})$ defines a dagger Kan complex.

  Let $(G,\theta)$ and $(H,\eta)$ be groups equipped with involutive anti-automorphisms.  
  A morphism $ N\mathbf{B}G \to N\mathbf{B}H$ of simplicial sets is the same as a functor $\mathbf{B}G \to \mathbf{B}H$, hence the same as a group homomorphism $\varphi : G \to H$.
  It defines a morphism in $\Kan^{\dag}$ if and only if it commutes with the chosen anti-involutions,
  i.e. it satisfies $\varphi(\theta(g)) = \eta(\varphi(g))$ for every $g \in G$.
\end{example}

\subsection{The rigidification adjunction}

We first identify dagger simplicial sets with presheaves on $\prism_{\rev}$ (\cref{prop.sSetdag_is_equivalent_to_Fun}).
We then lift the ordinary rigidification--nerve adjunction and record its compatibility with free dagger completion and with the cofibrations used below.

\begin{construction}\label{constr.alpha_rev}
  Let $\alpha : [m] \to [n]$ be a morphism in $\prism$.
  We define a morphism $\alpha^{\rev} : [m] \to [n]$ by $\alpha^{\rev}(i) := n - \alpha(m-i)$.
  The assignments $[n] \mapsto [n]$ and $\alpha \mapsto \alpha^{\rev}$ define an involutive functor $\rev : \prism \to \prism$.
\end{construction}

\begin{definition}
  We define the category $\prism_{\rev}$ as follows:
  \begin{itemize}
    \item the objects are the same as $\prism$;
    \item a morphism $\alpha:[m]\to[n]$ in $\prism_{\rev}$ is a map which is either order-preserving or order-reversing:
    that is, either $i \leq j  \to  \alpha(i) \leq \alpha(j)$ for all $i,j\in[m]$, or $i \leq j  \to  \alpha(i) \geq \alpha(j)$ for all $i,j\in[m]$.
  \end{itemize}
\end{definition}

\begin{proposition} \label{prop.sSetdag_is_equivalent_to_Fun}
  There exists an equivalence of categories 
  \begin{align}
    \sSet^{\dag} \simeq \Fun(\prism_{\rev}^{\myop},\Set);
  \end{align}
  in particular, the category $\sSet^{\dag}$ is presentable.
\end{proposition}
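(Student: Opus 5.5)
The plan is to build explicit functors in both directions and check that they are mutually inverse; presentability then follows for free. A presheaf on $\prism_{\rev}$ is a presheaf on the wide subcategory $\prism\subseteq\prism_{\rev}$ of order-preserving maps, together with extra structure. So given $X\in\Fun(\prism_{\rev}^{\myop},\Set)$, I would take the underlying simplicial set to be the restriction $X|_{\prism^{\myop}}$. For each $n$ let $\rho_n:[n]\to[n]$, $i\mapsto n-i$, be the reversal, and define $\sigma^{\dag}:=X(\rho_n)(\sigma)$.

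First I will check the basic structure of $\prism_{\rev}$: every order-reversing map $\beta:[m]\to[n]$ factors uniquely as $\beta=\rho_n\circ\alpha$ with $\alpha$ order-preserving. I also need the commutation rule $\alpha\circ\rho_m=\rho_n\circ\alpha^{\rev}$, where $\alpha^{\rev}$ is as in \cref{constr.alpha_rev}. This is a direct computation: $\rho_n\alpha^{\rev}(i)=\alpha(m-i)=\alpha\rho_m(i)$. Consequently $\prism_{\rev}$ is generated by $\prism$ together with the $\rho_n$, subject to the relations $\rho_n^2=\id$, $\rho_0=\id$, and $\alpha\rho_m=\rho_n\alpha^{\rev}$. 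Applying $X$ to these relations gives exactly what is needed:
\begin{itemize}
  \item $\rho_n^2=\id$ gives $(\sigma^{\dag})^{\dag}=\sigma$;
  \item $\rho_0=\id$ gives $x^{\dag}=x$ on vertices;
  \item the commutation relation gives $\alpha^*(\sigma^{\dag})=((\alpha^{\rev})^*\sigma)^{\dag}$, i.e.\ naturality of $\dag_K:K\to K^{\myop}$, where $K^{\myop}=K\circ\rev$.
\end{itemize}
Specialising the last item to faces and degeneracies recovers the identities $d_i(\sigma^{\dag})=(d_{n-i}\sigma)^{\dag}$ and $s_i(\sigma^{\dag})=(s_{n-i}\sigma)^{\dag}$.

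For the converse direction, given $(K,\dag_K)$ I define $X$ to agree with $K$ on objects and on order-preserving maps. On an order-reversing map $\beta=\rho_n\alpha$ I set $X(\beta)(\sigma):=\alpha^*(\sigma^{\dag})$. Functoriality amounts to checking the composites order-reversing$\circ$order-reversing, mixed, and order-preserving$\circ$order-preserving. Each case reduces, via the commutation rule and the unique factorisation, to naturality of $\dag_K$ together with the involution axiom.

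Morphisms match as well. A natural transformation of $\prism_{\rev}$-presheaves is the same thing as a simplicial map that commutes with each $X(\rho_n)$, because $\prism_{\rev}$ is generated by $\prism$ and the $\rho_n$; this is precisely the dagger-morphism condition. The two constructions are visibly inverse to each other, which gives the equivalence. Finally, a presheaf category on a small category is locally finitely presentable, so $\sSet^{\dag}$ is presentable.

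The only step needing real care is the functoriality check in the converse direction, especially the bookkeeping of $\rev$ versus $\myop$ conventions. One has to confirm that $K^{\myop}$ means precomposition with $\rev$, so that the naturality square for $\dag_K$ is exactly the relation $\alpha\rho_m=\rho_n\alpha^{\rev}$ read contravariantly.
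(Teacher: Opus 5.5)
Your strategy matches the paper's: restrict to $\prism$ and set $\dag_n:=X(\rho_n)$, and conversely extend by $X(\rho_n\alpha):=\alpha^*\circ\dag_n$. However, the converse direction has a gap exactly where the vertex axiom $\dag_0=\id$ is needed, and your sketch never invokes that axiom there.

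In $\prism_{\rev}$ the order-preserving and order-reversing maps are not disjoint. Every constant map $c_k:[m]\to[n]$ with value $k$ is both, and as an order-reversing map it factors as $c_k=\rho_nc_{n-k}$. Your definition therefore gives $c_k$ the two values $c_k^*$ and $c_{n-k}^*\circ\dag_n$, and you must show they agree. The simplest instance is $\id_{[0]}=\rho_0$, which is assigned both $\id$ and $\dag_0$.

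Write $c_{n-k}=v_{n-k}\circ{!}$ with $v_j:[0]\to[n]$ picking $j$. Since $v_{n-k}^{\rev}=v_k$, naturality gives $v_{n-k}^*\circ\dag_n=\dag_0\circ v_k^*$. So the two values coincide precisely because $\dag_0=\id$. Hence your claim that every functoriality case reduces to naturality of $\dag_K$ and involutivity alone cannot be right. Those two axioms describe anti-involutive simplicial sets, i.e.\ presheaves on the tagged category $\widetilde\prism_{\rev}$ (\cref{pointset.thm.strict-presheaves}). An anti-involutive simplicial set whose involution moves a vertex, such as the swapped two-point set of \cref{pointset.example.nonfixed-fibrant}, admits no extension to $\prism_{\rev}$.

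The same point is skipped in your ``Consequently'' that derives a presentation of $\prism_{\rev}$ from unique factorisation. Unique factorisation only makes the normal forms $\alpha$ and $\rho_n\alpha$ unique within a fixed orientation. To identify the presented category with $\prism_{\rev}$, you must also show two further things:
\begin{itemize}
  \item the relation $\rho_0=\id$, via the instance $v_k\rho_0=\rho_nv_{n-k}$ of the commutation rule, identifies the two normal forms $c_k$ and $\rho_nc_{n-k}$ of each constant map;
  \item constant maps are the only coincidences between the two orientations.
\end{itemize}
This is the content of \cref{pointset.lem.normal-forms}.

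You can close the gap in either of two ways: add the constant-map check to your explicit construction, or give an actual proof of the presentation, after which the converse functor is simply defined on generators. The rest of your argument is fine and agrees with the paper: the forward direction, morphisms via generation, mutual inverseness, and presentability of a presheaf category. The paper's proof records this check in the sentence ``These definitions agree on constant maps.''
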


\begin{proof}
  For every $n\geq0$, let $r_n:[n]\to[n] : i \mapsto n-i$ denote the order-reversing involution. 
  Thus $r_n$ is an automorphism of $[n]$ in $\prism_{\rev}$ and $r_n^2=\id_{[n]}$.

  For a dagger simplicial set $(X,\dag_X)$, define a functor $\widetilde X:\prism_{\rev}^{\myop}\to\Set$ by 
  \begin{align} 
    \widetilde X([n]) &:= X_n, \\
    \widetilde X(\alpha) := \alpha^* : X_n \to X_m 
    \quad &\text{or} \quad
    \widetilde X(\beta) := (r_n\beta)^*\circ \dag_n : X_n \to X_m,
  \end{align}
  for an order-preserving map $\alpha:[m]\to[n]$ and an order-reversing map $\beta:[m]\to[n]$.
  These definitions agree on constant maps.

  For morphisms $[\ell]\xrightarrow{\mu}[m]\xrightarrow{\nu}[n]$ in $\prism_{\rev}$, we can show $\widetilde X(\nu\mu) = \widetilde X(\mu)\circ \widetilde X(\nu)$ for $\nu$ and $\mu$ are order-preserving or order-reversing, respectively.

  Conversely, let $F:\prism_{\rev}^{\myop}\to\Set$ be a presheaf.  
  Its restriction to $F|_{\prism^{\myop}}$ is a simplicial set.
  The order-reversing automorphism $r_n:[n]\to[n]$ gives a map 
  \begin{align}
    \dag_n:=F(r_n):(F|_{\prism^{\myop}})_n\to (F|_{\prism^{\myop}})_n.
  \end{align}
  Since $r_n^2=\id_{[n]}$ and $r_0=\id_{[0]}$, we have $\dag_n^2=\id_{(F|_{\prism^{\myop}})_n}$ and $\dag_0$ is the identity on vertices.
  For an order-preserving map $\alpha:[m]\to[n]$, the equality $\alpha r_m = r_n\alpha^{\rev}$ in $\prism_{\rev}$ gives 
  \begin{align}
    \dag_m\circ \alpha^* = (\alpha^{\rev})^*\circ \dag_n.
  \end{align}
  Hence the maps $\dag_n$ assemble to a natural transformation $\dag_{F|_{\prism^{\myop}}}:F|_{\prism^{\myop}}\to (F|_{\prism^{\myop}})^{\myop}$.
  Therefore $(F|_{\prism^{\myop}},\dag_{F|_{\prism^{\myop}}})$ is a dagger simplicial set.
  We can check that these two constructions are inverse to each other.
\end{proof}

\begin{notation}\label{dj.not.adjunctions}
  We recall:
  \begin{itemize}
    \item $\FdagSSet:\sSet\rightleftarrows\sSet^{\dag}:\UdagSSet$ for the free--forgetful adjunction on simplicial sets, $\FdagSSet(A)=A\amalg_{A^{(0)}}A^{\myop}$ with the swap involution.
    Here, for a simplicial set $A$, we write $A^{(0)}=\sk_0A$ for the simplicial subset consisting of the totally degenerate simplices;
    \item $\FdagSCat:\sCat\rightleftarrows\sCat^{\dag}:\UdagSCat$ for the free--forgetful adjunction on simplicial categories;
    \item $\FCatDag:\sGph^{\dag}\rightleftarrows\sCat^{\dag}:\UCatDag$ for the free-category--underlying-graph adjunction in the dagger setting;
    \item $\frakC:\sSet\rightleftarrows\sCat:N$ for the ordinary rigidification adjunction.
  \end{itemize}
\end{notation}

\begin{remark}
  Explicitly, there exists a natural isomorphism $\UdagSCat\FdagSCat(\calA)\cong \calA\amalg_{\Ob\calA}\calA^{\myop}$.
  Here $\Ob\calA$ is regarded as a discrete simplicial category, and the dagger exchanges the two factors and reverses alternating words.

  This is different from the graph-to-category functor $\FCatDag$ used in \cref{bg.lem.presentable}: 
  the latter freely adds composition to a dagger graph, whereas $\FdagSCat$ freely adds formal daggers to an already composed simplicial category.  
  The asserted category-level adjunction follows directly from the pushout universal property: 
  restriction to the first factor gives a natural bijection 
  \begin{align}
    \Hom_{\sCat^{\dag}}(\FdagSCat\calA,\calC) \cong \Hom_{\sCat}(\calA,\UdagSCat\calC).
  \end{align}
  Indeed, once a simplicial functor on the first factor is fixed, its value on the opposite factor is forced by the dagger, and the two restrictions agree on the common discrete object category.  
  This also proves naturality in both variables.
\end{remark}

\begin{lemma}\label{dj.lem.lift}
  The adjunction $\frakC\dashv N$ lifts to an adjunction
  \begin{align}
    \frakC_{\dag}:\sSet^{\dag}\rightleftarrows\sCat^{\dag}:N_{\dag},
  \end{align}
  with $\UdagSCat\circ\frakC_{\dag}=\frakC\circ \UdagSSet$, $\UdagSSet\circ N_{\dag}=N\circ \UdagSCat$, and $\frakC_{\dag}\circ \FdagSSet \cong \FdagSCat\circ\frakC$ naturally.
\end{lemma}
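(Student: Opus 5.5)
The plan is to construct $\frakC_{\dag}$ first, by exploiting the compatibility of $\frakC$ with opposites, and then obtain $N_{\dag}$ by restricting $N$. Recall that there is a natural isomorphism $\frakC(A^{\myop})\cong\frakC(A)^{\myop}$. It is induced on representables by the isomorphisms $\frakC[\Delta^n]^{\myop}\cong\frakC[\Delta^n]$ given by $i\mapsto n-i$ on objects, with the induced identification of the cube-shaped mapping spaces of subsets. This family is natural in $\alpha\mapsto\alpha^{\rev}$, and the isomorphism extends to all simplicial sets because $\frakC$ and $(-)^{\myop}$ preserve colimits. Dually, $N(\calC^{\myop})\cong N(\calC)^{\myop}$, and the two isomorphisms correspond under the adjunction.

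For a dagger simplicial set $(K,\dag_K)$, I will define $\dag_{\frakC K}$ as the composite
\begin{align}
  \frakC K\xrightarrow{\frakC(\dag_K)}\frakC(K^{\myop})\cong(\frakC K)^{\myop}.
\end{align}
I then need to check three things. First, $\dag_{\frakC K}$ is the identity on objects, because $\Ob\frakC K=K_0$ and $\dag_K$ fixes vertices. Second, $\dag^{\myop}\dag=\id$; this follows from $\dag_K^{\myop}\dag_K=\id$ together with the fact that the opposite-comparison isomorphism is involutive, i.e.\ applying it twice gives the identity of $\frakC K$. I would verify involutivity on representables, where it reduces to $r_n^2=\id$, and then extend by colimits. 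Third, a dagger morphism $f$ induces a dagger simplicial functor $\frakC f$, by naturality of the comparison. Together these give $\frakC_{\dag}$ with $\UdagSCat\frakC_{\dag}=\frakC\UdagSSet$.

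On the other side, for $(\calC,\dag_{\calC})$ I will define $\dag_{N\calC}:N\calC\xrightarrow{N\dag_{\calC}}N(\calC^{\myop})\cong(N\calC)^{\myop}$. The same three checks apply. Vertices are fixed because $\dag_{\calC}$ is the identity on objects.

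For the adjunction, I will show that under the bijection $\Hom_{\sCat}(\frakC K,\calC)\cong\Hom_{\sSet}(K,N\calC)$, a functor $F$ satisfies $\dag_{\calC}F=F^{\myop}\dag_{\frakC K}$ exactly when its adjunct $F^\flat$ satisfies $\dag_{N\calC}F^\flat=(F^\flat)^{\myop}\dag_K$. This holds because the transposes of both sides of each equation correspond under the adjunction, once we use compatibility of the two opposite isomorphisms with the unit and counit. The resulting bijection is natural, since it is a restriction of a natural bijection, and the equalities $\UdagSSet N_{\dag}=N\UdagSCat$ hold by construction.

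For $\frakC_{\dag}\FdagSSet\cong\FdagSCat\frakC$, both functors are left adjoints. Their right adjoints are $\UdagSSet N_{\dag}$ and $N\UdagSCat$, which are equal, so the left adjoints are naturally isomorphic by uniqueness of adjoints. As a sanity check, the explicit formula $\frakC(A\amalg_{A^{(0)}}A^{\myop})\cong\frakC A\amalg_{\Ob}\frakC A^{\myop}$ is consistent with this, because $\frakC$ preserves pushouts and $\frakC(A^{(0)})$ is the discrete category on $A_0$.

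I expect the main obstacle to be the involutivity and coherence of the isomorphism $\frakC(A^{\myop})\cong\frakC(A)^{\myop}$, and specifically its compatibility with the adjunction unit. I would handle this at the level of representables via the presheaf description of \cref{prop.sSetdag_is_equivalent_to_Fun}: extend $[n]\mapsto\frakC[\Delta^n]$ to a functor $\prism_{\rev}\to\sCat$ that sends reversals to opposite-twisted maps, then left Kan extend.
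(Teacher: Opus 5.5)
Your proposal is correct and follows the paper's proof essentially step for step. In both, the daggers on $\frakC_{\dag}K$ and $N_{\dag}\calC$ are $\frakC(\dag_K)$, resp.\ $N(\dag_{\calC})$, composed with the involutive opposite comparisons. The adjunction bijection is shown to restrict via the compatibility of those two comparisons under the adjunction, and $\frakC_{\dag}\FdagSSet\cong\FdagSCat\frakC$ follows from uniqueness of left adjoints to $\UdagSSet N_{\dag}=N\UdagSCat$. Your plan to build $\frakC(A^{\myop})\cong\frakC(A)^{\myop}$ on representables and check involutivity via $r_n^2=\id$ only spells out what the paper asserts as the strict coherence of its comparison $\chi$.
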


\begin{proof}
  There exists a canonical isomorphism $\chi_X:\frakC(X^{\myop})\xrightarrow{\cong}\frakC(X)^{\myop}$.
  Since $r_n^2=\id$, we have
  \begin{align}
    \chi_X^{\myop}\chi_{X^{\myop}}
    =
    \id_{\frakC(X)}
    \quad \text{and} \quad
    \chi_{X^{\myop}}^{\myop}\chi_X
    =
    \id_{\frakC(X^{\myop})}.
  \end{align}
  The corresponding comparison for the nerve is $\lambda_{\calC}:N(\calC^{\myop})\xrightarrow{\cong}N(\calC)^{\myop}$.
  It satisfies
  \begin{align}
    \lambda_{\calC}^{\myop}\lambda_{\calC^{\myop}}
    =
    \id_{N(\calC)}
    \quad \text{and} \quad
    \lambda_{\calC^{\myop}}^{\myop}\lambda_{\calC}
    =
    \id_{N(\calC^{\myop})}.
  \end{align}

  Given $(X,\dag_X)\in\sSet^{\dag}$, define $\frakC_{\dag}(X)$ to be $\frakC(\UdagSSet X)$ with the dagger
  \begin{align}
    D_{\frakC X}
    :=
    \chi_{\UdagSSet X}\frakC(\dag_X):
    \frakC(\UdagSSet X)
    \to
    \frakC(\UdagSSet X)^{\myop}.
  \end{align}
  It is the identity on objects because $\dag_X$ and $\chi_{\UdagSSet X}$ fix vertices.
  Naturality of $\chi$, its strict coherence, and $\dag_X^{\myop}\dag_X=\id$ give $D_{\frakC X}^{\myop}D_{\frakC X} = \id$.
  Thus it is a dagger.

  Dually, for $(\calC,\dag_{\calC})\in\sCat^{\dag}$, define the dagger of $N_{\dag}(\calC)$ by
  \begin{align}
    D_{N\calC}
    :=
    \lambda_{\UdagSCat\calC}N(\dag_{\calC}):
    N(\UdagSCat\calC)
    \to
    N(\UdagSCat\calC)^{\myop}.
  \end{align}
  The map $\lambda_{\UdagSCat\calC}$ is the identity in degree zero because $r_0=\id$, and $\dag_{\calC}$ is the identity on objects.
  Hence $D_{N\calC}$ fixes vertices.
  Naturality and strict coherence of $\lambda$ give $D_{N\calC}^{\myop}D_{N\calC} = \id$.
  Both assignments are natural on dagger morphisms.

  We spell out the mate compatibility.
  If $F:\frakC(X)\to\calC$ has ordinary adjoint $f:X\to N(\calC)$, then the mate of $F^{\myop}\chi_X$ is $\lambda_{\calC}^{-1}f^{\myop}:X^{\myop}\to N(\calC^{\myop})$.
  Therefore
  \begin{align}
    \dag_{\calC}F
    =
    F^{\myop}\chi_X\frakC(\dag_X)
    \quad\Leftrightarrow\quad
    N(\dag_{\calC})f
    =
    \lambda_{\calC}^{-1}f^{\myop}\dag_X
    \quad\Leftrightarrow\quad
    \lambda_{\calC}N(\dag_{\calC})f
    =
    f^{\myop}\dag_X.
  \end{align}
  Thus a simplicial functor $\frakC(\UdagSSet X)\to \UdagSCat\calC$ is dagger-compatible if and only if its adjoint $\UdagSSet X\to N(\UdagSCat\calC)$ is dagger-compatible.
  Hence the ordinary adjunction bijection restricts to the dagger morphisms.
  This proves $\frakC_{\dag}\dashv N_{\dag}$ and also shows that its unit and counit are dagger morphisms.

  Finally, the two composites $\frakC_{\dag}\circ \FdagSSet$ and $\FdagSCat\circ\frakC$ are left adjoint to the same functor
  $\UdagSSet\circ N_{\dag}=N\circ \UdagSCat$.
  We take the displayed natural isomorphism to be the mate of the identity of this common right adjoint.
  By its construction, it is compatible with the units and counits of the two composite adjunction structures.
\end{proof}

\begin{lemma}\label{dj.lem.F-preserves-cof}
  We have:
  \begin{enumerate}
    \item the functor $\FdagSCat$ sends Bergner cofibrations to dagger Bergner cofibrations;
    \item if a Bergner trivial cofibration is bijective on objects, its image under $\FdagSCat$ is a dagger Bergner trivial cofibration;
    \item the functor $\frakC_{\dag}$ sends every free cofibration of dagger simplicial sets to a dagger Bergner cofibration.
  \end{enumerate}
\end{lemma}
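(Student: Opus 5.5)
For part (1), I will argue by adjunction. Since $\FdagSCat\dashv\UdagSCat$, it suffices to check that $\UdagSCat$ sends dagger Bergner trivial fibrations to ordinary Bergner trivial fibrations. By \cref{bg.thm.main} (3) and \cref{bg.lem.I-inj}, a dagger trivial fibration is surjective on objects and locally a trivial Kan fibration. The same two conditions describe ordinary Bergner trivial fibrations, and they only refer to the underlying simplicial functor. Hence $\UdagSCat$ preserves trivial fibrations, and its left adjoint $\FdagSCat$ preserves cofibrations by the usual lifting-property argument.

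For part (2), let $i:\calA\to\calB$ be a bijective-on-objects Bergner trivial cofibration. By (1), $\FdagSCat(i)$ is a dagger Bergner cofibration, so it remains to show that it lies in $W_{\dag}$.
\begin{itemize}
  \item \emph{Object condition.} This is immediate, because $\FdagSCat(i)$ is bijective on objects and identities are coherently unitary (\cref{bg.prop.cu-groupoid} (1)).
  \item \emph{Local condition.} Put $O:=\Ob\calA$. I will use the description $\UdagSCat\FdagSCat(\calA)\cong\calA\amalg_{O}\calA^{\myop}$, which is the coproduct $\calA*_O\calA^{\myop}$ in $\sCat_O$. Under this identification, $\UdagSCat\FdagSCat(i)=i*_O i^{\myop}$.
  \item \emph{Reduction to a fixed-object statement.} I plan to invoke the standard fact (\cite[Remark A.3.2.5]{HTT} and its surroundings) that a bijective-on-objects Bergner cofibration is a cofibration in the Dwyer--Kan fixed-object model structure on $\sCat_O$. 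Being a local weak equivalence, $i$ is then a trivial cofibration there, and so is $i^{\myop}$.
  \item \emph{Conclusion.} A coproduct of trivial cofibrations is a trivial cofibration, so $i*_Oi^{\myop}$ is a local weak equivalence. Alternatively, this follows from the homotopy invariance of fixed-object free products \cite[Proposition 2.7]{DK80}, as in \cref{bg.prop.amalgam}.
\end{itemize}
I expect the main obstacle to be this identification of bijective-on-objects Bergner cofibrations with fixed-object cofibrations. If quoting it is problematic, I would fall back on a direct argument. In that argument I would show that, on underlying simplicial categories, $\FdagSCat$ of a bijective-on-objects map is a fixed-object free product. I would then apply \cite[Proposition 2.7]{DK80}, after checking its cofibrancy hypotheses, which $\calA$ and $\calB$ satisfy when $i$ is a Bergner cofibration out of a cofibrant object. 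I would reduce to that case by pushout stability.

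For part (3), I will use that the free cofibrations are, by definition, the saturation of $I^{\sSet}_{\dag}=\{\FdagSSet(\partial\Delta^n\to\Delta^n)\}$. Since $\frakC_{\dag}$ is a left adjoint (\cref{dj.lem.lift}), it preserves pushouts, transfinite composites and retracts. It therefore suffices to treat the generators. By \cref{dj.lem.lift}, $\frakC_{\dag}\FdagSSet(\partial\Delta^n\to\Delta^n)\cong\FdagSCat\frakC(\partial\Delta^n\to\Delta^n)$. The map $\frakC(\partial\Delta^n\to\Delta^n)$ is an ordinary Bergner cofibration because $\frakC$ is left Quillen for the Joyal structure. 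Part (1) then shows that its image under $\FdagSCat$ is a dagger Bergner cofibration, which completes the argument.
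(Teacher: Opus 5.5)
Your parts (1) and (3) follow the paper's argument. In (1) you use adjunction against $\UdagSCat$ together with the description of dagger Bergner trivial fibrations in \cref{bg.lem.I-inj}. In (3) you reduce to the generators $I^{\sSet}_{\dag}$ via $\frakC_{\dag}\FdagSSet\cong\FdagSCat\frakC$. One small correction: free cofibrations are \emph{defined} in \cref{dj.def.free-cof} by the absence of new self-conjugate nondegenerate simplices of positive dimension. Their identification with the saturation of $I^{\sSet}_{\dag}$ is \cref{dj.lem.free-cof}, which you should cite there instead of saying ``by definition''.

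For (2) your route differs in framing. You pass to the fixed-object model structure on $\sCat_O$ and use that the coproduct $i*_Oi^{\myop}$ of two fixed-object trivial cofibrations is again one. The paper stays in the ordinary Bergner model structure and factors
$\calA\amalg_O\calA^{\myop}\to\calB\amalg_O\calA^{\myop}\to\calB\amalg_O\calB^{\myop}$
as a pushout of $j$ followed by a pushout of $j^{\myop}$. Each factor is then a Bergner trivial cofibration, and the composite is an identity-on-objects DK-equivalence, hence a local weak equivalence. That route sidesteps entirely the step you flagged as your main obstacle.

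Your step is nonetheless harmless and needs no citation or fallback. A trivial fibration of $\sCat_O$ is an identity-on-objects local trivial Kan fibration, hence an ordinary Bergner trivial fibration. So a bijective-on-objects Bergner cofibration $i$ lifts against it in $\sCat$. The lift is forced to be the identity on objects, because it restricts along $i$ to the identity on objects and $i$ is bijective on objects. Thus $i$ is a trivial cofibration in $\sCat_O$. So is $i^{\myop}$, since $(-)^{\myop}$ is an automorphism of that model structure. The detour through \cite[Proposition 2.7]{DK80} is unnecessary.
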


\begin{proof}
  (1)
  By \cref{bg.lem.I-inj,bg.thm.main}, a trivial fibration $p$ in $\sCat^{\dag}_{\Bergner}$ is surjective on objects and locally a trivial Kan fibration.  
  Thus $\UdagSCat p$ is a trivial fibration in the ordinary Bergner model structure.  
  Under the adjunction $\FdagSCat\dashv \UdagSCat$, a lifting problem for $\FdagSCat(j)$ against $p$ is therefore a lifting problem for $j$ against $\UdagSCat p$. 
  If $j$ is a Bergner cofibration, the latter problem has a lift; hence $\FdagSCat(j)$ has the left lifting property against every dagger Bergner trivial fibration and is a dagger Bergner cofibration.

  (2)
  Let $j:\calA\to\calB$ be an ordinary Bergner trivial cofibration which is bijective on objects, with common object set $O$.  
  Then the underlying map of its free dagger completion is $\calA\amalg_O\calA^{\myop}\to\calB\amalg_O\calB^{\myop}$.
  It factors as 
  \begin{align}
    \calA\amalg_O\calA^{\myop} \to \calB\amalg_O\calA^{\myop} \to \calB\amalg_O\calB^{\myop}.
  \end{align}
  The first map is a pushout of $j$ and the second a pushout of $j^{\myop}$; 
  hence both are ordinary Bergner trivial cofibrations.  
  Thus $\FdagSCat(j)$ is a local weak equivalence.  
  It is the identity on objects, so identities witness coherent essential surjectivity, and it is a dagger Bergner cofibration by (1).  
  It is therefore a dagger Bergner trivial cofibration.

  (3) 
  For every monomorphism $i:K\to L$, ordinary rigidification gives a Bergner cofibration $\frakC(i)$, and \cref{dj.lem.lift} gives $\frakC_{\dag}\FdagSSet(i) \cong \FdagSCat\frakC(i)$.
  Hence $\frakC_{\dag}$ sends each member of $I^{\sSet}_{\dag}$ to a dagger Bergner cofibration.  
  Since it preserves colimits and retracts, it sends the saturation of $I^{\sSet}_{\dag}$, which is the class of free cofibrations by \cref{dj.lem.free-cof}, to dagger Bergner cofibrations.
\end{proof}

\subsection{The obstruction: self-conjugate simplices}

Self-conjugate simplices behave as fixed cells rather than as freely paired dagger cells.
The self-conjugate $1$-simplex rigidifies to a self-adjoint endomorphism, and an explicit lifting obstruction shows that its boundary inclusion is not sent to a dagger Bergner cofibration (\cref{dj.prop.counterexample}).

\begin{notation}\label{dj.not.Q}
  We fix our notational conventions:
  \begin{itemize}
    \item under the equivalence $\sSet^{\dag}\simeq\Fun(\prism_{\rev}^{\myop},\Set)$ in \cref{prop.sSetdag_is_equivalent_to_Fun}, the representable presheaf $\yo_{\rev}([n])$ is $\FdagSSet(\Delta^n)$, and postcomposition with the order reversal $r_n:[n]\to[n]$ induces a $C_2$-action on it;
    \item we write $Q^n:=\yo_{\rev}([n])/C_2$ the \emph{self-conjugate $n$-simplex}.
    By the Yoneda lemma, $\Hom_{\sSet^{\dag}}(Q^n,Z)=\{\sigma\in Z_n\mid\sigma^{\dag}=\sigma\}$;
    \item we let $\partial Q^n\subseteq Q^n$ denote the image of the composite
    \begin{align}
      \FdagSSet(\partial\Delta^n)
      \to
      \FdagSSet(\Delta^n)
      \cong
      \yo_{\rev}([n])
      \to
      Q^n;
    \end{align}
    \item we let $\calE_{\sa}$ denote the free dagger simplicial category on one self-adjoint endomorphism: 
    it has a single object $x$, $\calE_{\sa}(x,x)=\bbN$ (discrete), composition given by addition, and identity dagger.
  \end{itemize}
\end{notation}

\begin{lemma}\label{dj.lem.CQ1}
  The morphism $\frakC_{\dag}(\partial Q^1\to Q^1)$ is $\mathbf 1_{\dag}\to\calE_{\sa}$.
\end{lemma}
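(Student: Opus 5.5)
We identify $Q^1$ and $\partial Q^1$ explicitly and then compute $\frakC_{\dag}$ via the underlying ordinary rigidification, using $\UdagSCat\circ\frakC_{\dag}=\frakC\circ\UdagSSet$ from \cref{dj.lem.lift}.

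\emph{Step 1: identify $Q^1$ and $\partial Q^1$.} The representable $\yo_{\rev}([1])=\FdagSSet(\Delta^1)=\Delta^1\amalg_{\{0,1\}}(\Delta^1)^{\myop}$ has two vertices $0,1$ and two nondegenerate edges $e:0\to1$ and $e^{\dag}:1\to0$. The $C_2$-action by $r_1$ exchanges the two vertices and exchanges $e$ with $e^{\dag}$. Its quotient $Q^1$ therefore has one vertex $x$ and one nondegenerate edge $\sigma:x\to x$ with $\sigma^{\dag}=\sigma$. There are no higher nondegenerate simplices, since the quotient of a $1$-dimensional presheaf is again $1$-dimensional. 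Thus $\UdagSSet Q^1=\Delta^1/\partial\Delta^1$, the simplicial circle, and $\partial Q^1$ is the image of the two vertices, namely the single vertex $\Delta^0$. To justify that colimits in $\sSet^{\dag}$ are computed levelwise on underlying simplicial sets, I would use the presheaf description of \cref{prop.sSetdag_is_equivalent_to_Fun}.

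\emph{Step 2: underlying rigidification.} Since $\frakC$ is a left adjoint, $\frakC(\Delta^1/\partial\Delta^1)$ is the pushout of $\frakC(\Delta^1)=\bbA(\Delta^0)$ along $\frakC(\partial\Delta^1)=\mathbf 2\to\mathbf 1$. This is the one-object simplicial category freely generated by a single $0$-morphism $\sigma$, with mapping space the discrete monoid $\bbN$ under addition. Also $\frakC(\Delta^0)=\mathbf 1$.

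\emph{Step 3: the dagger.} By construction in \cref{dj.lem.lift}, the dagger on $\frakC_{\dag}(Q^1)$ is $\chi\circ\frakC(\dag_{Q^1})$. Because $\sigma^{\dag}=\sigma$ and $\chi$ sends the generating arrow of $\frakC(\Delta^1)$ to its reversal, the dagger fixes the generator $\sigma$. Being an anti-functor on a commutative free monoid, it is then the identity on all of $\bbN$. This gives $\calE_{\sa}$, and the map from $\frakC_{\dag}(\partial Q^1)=\mathbf 1_{\dag}$ is the unit inclusion.

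The only delicate point is Step 3: checking that $\chi_{\Delta^1}$, on the nondegenerate edge of $\frakC(\Delta^1)(0,1)=\Delta^0$, corresponds to the edge of $(\Delta^1)^{\myop}$, so that the identification $\sigma^{\dag}=\sigma$ transports to the identity dagger on the generator. This follows from naturality of $\chi$ applied to the quotient map $\yo_{\rev}([1])\to Q^1$.
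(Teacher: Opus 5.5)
Your proof is correct, but it takes a different route from the paper's.

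The paper never computes $\UdagSSet Q^1$ or a pushout in $\sCat$. It combines the adjunction $\frakC_{\dag}\dashv N_{\dag}$ with the Yoneda description $\Hom_{\sSet^{\dag}}(Q^1,Z)=\{\sigma\in Z_1\mid\sigma^{\dag}=\sigma\}$. This shows that $\frakC_{\dag}Q^1$ corepresents pairs $(x,c)$ with $c\in\calC(x,x)_0$ and $c^{\dag}=c$. That is exactly what $\calE_{\sa}$ corepresents, so the two agree by Yoneda, compatibly with the vertex inclusion.

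You instead work through the underlying objects. You identify $\UdagSSet Q^1$ as the simplicial circle $\Delta^1/\partial\Delta^1$ and compute $\frakC(\Delta^1/\partial\Delta^1)$ as the free monoid $\bbN$ on one generator. You then check by hand that the transported dagger $\chi\circ\frakC(\dag_{Q^1})$ fixes that generator.

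Each approach pays for its step in a different place:
\begin{itemize}
\item The universal-property argument gets the dagger structure and the map from $\mathbf 1_{\dag}$ for free. Its one concrete input is how the dagger of $N_{\dag}\calC$ acts on $1$-simplices, namely $(x_0,x_1,c)^{\dag}=(x_1,x_0,c^{\dag})$. This is the nerve-side counterpart of your $\chi$ check.
\item Your computation is more hands-on. It yields the extra information that $Q^1$ is the simplicial circle with a self-conjugate edge, and it exhibits the mapping space $\bbN$ explicitly.
\end{itemize}

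One small point about the delicate step. The cleanest way to run it is naturality of $\chi$ along the characteristic map $\Delta^1\to Q^1$ of $\sigma$, rather than along the quotient $\yo_{\rev}([1])\to Q^1$. On $\Delta^1$ the map $\chi_{\Delta^1}$ is forced, since all mapping spaces of $\frakC(\Delta^1)$ and $\frakC((\Delta^1)^{\myop})$ are points. Your formulation ultimately reduces to this anyway.
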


\begin{proof}
  Since $\partial Q^1$ is a single vertex because the reversal exchanges the two vertices of $\yo_{\rev}[1]$, $\frakC_{\dag}(\partial Q^1)\cong \mathbf 1_{\dag}$.
  For every $\calC\in\sCat^{\dag}$, the adjunction and the defining universal property of $Q^1$ give
  \begin{align}
    \Hom_{\sCat^{\dag}}(\frakC_{\dag}Q^1,\calC)
    =\Hom_{\sSet^{\dag}}(Q^1,N_{\dag}\calC)
    =\{\sigma\in(N_{\dag}\calC)_1\mid \sigma^{\dag}=\sigma\}.
  \end{align}
  A $1$-simplex of $N_{\dag}\calC$ is a triple $(x_0,x_1,c)$ with $c\in\calC(x_0,x_1)_0$, and its conjugate is $(x_1,x_0,c^{\dag})$; self-conjugacy means $x_0=x_1=:x$ and $c^{\dag}=c$.
  The same data classifies dagger functors out of $\calE_{\sa}$ (a dagger functor is determined by the image of the additive generator, which must be a self-adjoint vertex).
  Hence $\frakC_{\dag}Q^1\cong\calE_{\sa}$, compatibly with the vertex inclusions.
\end{proof}

\begin{proposition}\label{dj.prop.counterexample}
  The functor $\frakC_{\dag}$ does not send all monomorphisms of $\sSet^{\dag}$ to cofibrations of $\sCat^{\dag}_{\Bergner}$: the map $\mathbf 1_{\dag}\to\calE_{\sa}$ of \cref{dj.lem.CQ1} is not a cofibration.
\end{proposition}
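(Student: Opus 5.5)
We exhibit a trivial fibration $p$ of $\sCat^{\dag}_{\Bergner}$ against which $\mathbf 1_{\dag}\to\calE_{\sa}$ has no lift. This suffices: cofibrations are exactly the maps with the left lifting property against trivial fibrations, and by \cref{bg.thm.main} together with \cref{bg.lem.I-inj} the trivial fibrations are the functors that are surjective on objects and locally trivial Kan fibrations. Since $\calE_{\sa}$ is free on a self-adjoint vertex, a lifting problem amounts to the following data: an object $c$ of the source over the object $x\in\calD$, and a self-adjoint vertex $e\in\calD(x,x)_0$. A lift is then a self-adjoint vertex of $\calC(c,c)$ lying over $e$. So the task is to find a locally trivial Kan fibration for which self-adjoint vertices fail to lift.

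The natural candidate is a one-object example built from a contractible free $C_2$-space mapping to a point. Take $\calD=\calE_{\sa}$ itself, with $e$ the generator, so that the lifting problem is the identity square. For the source $\calC$ we want one object $c$ and mapping space $\calC(c,c)$ weakly contractible and Kan, carrying a free $C_2$-action via the dagger on the vertices lying over the odd elements of $\bbN$.

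A concrete choice is the following. Let $E$ be the nerve of the chaotic groupoid on $C_2=\{+,-\}$; this is $EC_2$, which is contractible, Kan, and has free $C_2$-action swapping $\pm$. Define
\begin{align}
  \calC(c,c):=\coprod_{n\in\bbN}E,
\end{align}
with composition $(n,s)\cdot(m,t)=(n+m,st)$. This is a simplicial monoid because $EC_2$ is the nerve of a group object in groupoids. Take the dagger to be $(n,s)^{\dag}=(n,\omega^{n}s)$, where $\omega$ swaps the signs. We then verify the axioms: anti-multiplicativity reduces to commutativity together with $\omega^{n+m}=\omega^n\omega^m$, and involutivity is immediate. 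The projection $\calC(c,c)\to\bbN$ is a disjoint union of trivial Kan fibrations $EC_2\to\Delta^0$, and it is bijective on objects. Hence $p:\calC\to\calE_{\sa}$ is a trivial fibration.

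A lift would have to be a vertex $(1,s)$ with $(1,s)^{\dag}=(1,s)$, that is, $\omega s=s$. This is impossible because the action is free, so $p$ admits no lift and $\mathbf 1_{\dag}\to\calE_{\sa}$ is not a cofibration.

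The main thing to check carefully is that the $EC_2$-valued composition and dagger really are simplicial maps, meaning that $\omega$ acts simplicially on $N(\text{chaotic }C_2)$ and commutes with the group multiplication there. Both follow from functoriality of the nerve applied to the group structure on the chaotic groupoid. If this bookkeeping becomes awkward, a fallback is to use $\Sing$ of $\bbN\times S^\infty$ with the antipodal action on odd components. In that version contractibility is clear, but the multiplicativity of the antipode is less clean, so the $EC_2$ version is preferred.
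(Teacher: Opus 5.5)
Your argument is correct and is essentially the paper's: both refute the lifting property of $\mathbf 1_{\dag}\to\calE_{\sa}$ against the identity square over an identity-on-objects, locally trivially Kan-fibred dagger functor onto $\calE_{\sa}$ whose fibre over the generator is the contractible nerve of the chaotic groupoid on two objects, with its fixed-point-free swap. The only difference is the source category: the paper takes the free dagger simplicial category $\FCatDag(G)$ on that one-object graph, with mapping space $\coprod_n E^{\times n}$ and dagger reversing words and swapping letterwise, whereas your $\bbN\times E$ with dagger $(n,s)\mapsto(n,\omega^n s)$ is a quotient of it and works equally well — the identity you need is that $\omega$ is translation by the central element $-$, so that $\omega^a(x)\,\omega^b(y)=\omega^{a+b}(xy)$, not that $\omega$ is multiplicative (and it is the fibres over each $n$, not $\calC(c,c)$ itself, that are contractible).
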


\begin{proof}
  Let $E:=N(\bbE[1])$ be the nerve of the indiscrete groupoid on $\{a,b\}$, a weakly contractible Kan complex, and let $s:E\to E$ be the involution induced by exchanging $a$ and $b$; note that $s$ has no fixed vertex.
  Let $G$ be the dagger simplicial graph with one object $x$, $G(x,x):=E$, and involution $\tau_{x,x}:=s$, and let $\calC':=\FCatDag(G)$ be the free dagger simplicial category on it (\cref{bg.lem.presentable}).
  Its unique mapping space is the free simplicial monoid on $E$, $\calC'(x,x)=\coprod_{n\geq0}E^{\times n}$, with dagger given by reversing words and applying $s$ letterwise (an anti-multiplicative involution fixing the empty word).

  Define $p:\calC'\to\calE_{\sa}$ to be the dagger functor induced by the graph map $E\to\Delta^0$ over the generator; on the mapping space it is the length map $\coprod_nE^{\times n}\to\bbN$.
  Its fiber over $n$ is exactly $E^{\times n}$, a contractible Kan complex; since a boundary sphere $\partial\Delta^m\to\coprod_nE^{\times n}$ lands in a single summand whenever its image in $\bbN$ is constant, $p$ is locally a trivial Kan fibration.
  More explicitly, $E^{\times n}\to\Delta^0$ is a trivial Kan fibration for every $n$, and every lifting problem over the discrete simplicial set $\bbN$ lies in one such fiber.
  It is bijective on objects.
  Hence $p$ is a trivial fibration of $\sCat^{\dag}_{\Bergner}$ (by \cref{bg.lem.I-inj}).

  Consider the lifting problem $\mathbf 1_{\dag}\xrightarrow{x}\calC',\calE_{\sa}\xrightarrow{\id}\calE_{\sa}$ over $p$.
  A lift is a dagger functor $s':\calE_{\sa}\to\calC'$ with $ps'=\id$, i.e.\ a self-adjoint vertex $c\in\calC'(x,x)_0$ with $p(c)=1$.
  The vertices over $1$ are the vertices of $E$, namely $a$ and $b$, and the dagger acts on them by $s$: $a^{\dag}=b\neq a$.
  So no lift exists, and $\mathbf 1_{\dag}\to\calE_{\sa}$ fails the left lifting property against a trivial fibration.
\end{proof}

\begin{corollary}
  There is no model structure on $\sSet^{\dag}$ with cofibrations all monomorphisms for which $\frakC_{\dag}\dashv N_{\dag}$ is a Quillen adjunction into $\sCat^{\dag}_{\Bergner}$.
\end{corollary}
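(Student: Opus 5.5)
The argument is by contradiction, using \cref{dj.prop.counterexample} and \cref{dj.lem.CQ1}. Suppose $\sSet^{\dag}$ carries a model structure whose cofibrations are exactly the monomorphisms and for which $\frakC_{\dag}\dashv N_{\dag}$ is a Quillen adjunction into $\sCat^{\dag}_{\Bergner}$. A left Quillen functor preserves cofibrations, so $\frakC_{\dag}$ would have to send every monomorphism of dagger simplicial sets to a dagger Bergner cofibration. We therefore only need one monomorphism whose image under $\frakC_{\dag}$ is not a cofibration.

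We take the boundary inclusion $\partial Q^1\to Q^1$ of the self-conjugate $1$-simplex from \cref{dj.not.Q}.

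First we check that it is a monomorphism. By definition, $\partial Q^1$ is the image of a map into $Q^1$, so it is a dagger simplicial subset of $Q^1$. Hence the inclusion is levelwise injective, and it is a monomorphism in $\sSet^{\dag}\simeq\Fun(\prism_{\rev}^{\myop},\Set)$, where monomorphisms are detected levelwise.

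Next, \cref{dj.lem.CQ1} identifies $\frakC_{\dag}(\partial Q^1\to Q^1)$ with $\mathbf 1_{\dag}\to\calE_{\sa}$. By \cref{dj.prop.counterexample}, this functor fails the left lifting property against the trivial fibration $p:\calC'\to\calE_{\sa}$, so it is not a cofibration of $\sCat^{\dag}_{\Bergner}$. This contradicts the previous paragraph.

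Equivalently, one can argue on the right adjoint. The trivial fibration $p$ would go under $N_{\dag}$ to a trivial fibration, which must then lift against all monomorphisms. But by adjunction, lifting $\partial Q^1\to Q^1$ against $N_{\dag}p$ is the same problem as the one shown unsolvable in \cref{dj.prop.counterexample}.

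None of these steps is hard. The only point needing care is that "cofibrations are all monomorphisms" really includes $\partial Q^1\to Q^1$, which is the monomorphism check above.
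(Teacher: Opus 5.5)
Your proof is correct and follows the paper's argument: $\partial Q^1\to Q^1$ is a monomorphism, so it would be a cofibration, and a left Quillen $\frakC_{\dag}$ would then send it to a dagger Bergner cofibration, which contradicts \cref{dj.lem.CQ1,dj.prop.counterexample}. Your explicit monomorphism check and the reformulation via the right adjoint applied to $N_{\dag}p$ are both valid, but they do not change the route.
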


\begin{proof}
  In any model structure with cofibrations the monomorphisms, $\partial Q^1\to Q^1$ is a cofibration; 
  a Quillen left adjoint must send it to a cofibration in the codomain model structure, contradicting \cref{dj.lem.CQ1,dj.prop.counterexample}.
\end{proof}

\subsection{Free cofibrations}

Motivated by the preceding counterexample, we restrict to monomorphisms whose new nondegenerate simplices of positive dimension occur in free dagger orbits.
A cellular description identifies these maps with the saturation of the free dagger boundary inclusions, providing the cofibrations for the model structure (\cref{dj.lem.free-cof}).

\begin{definition}\label{dj.def.free-cof}
  A monomorphism $i:A\to B$ in $\sSet^{\dag}$ is a \emph{free cofibration} if no nondegenerate simplex $\sigma$ of $B$ of positive dimension with $\sigma\notin A$ satisfies $\sigma^{\dag}=\sigma$.
  An object $X$ is \emph{free} if $\varnothing\to X$ is a free cofibration.
\end{definition}

\begin{notation}
  We write
  \begin{align}
    I^{\sSet}_{\dag}:=\{\FdagSSet(\partial\Delta^n)\to \FdagSSet(\Delta^n)\}_{n\geq0}.
  \end{align}
\end{notation}

\begin{proposition}\label{dj.lem.free-cof}
  The class of free cofibrations coincides with the saturation of $I^{\sSet}_{\dag}$ (closure of pushouts of coproducts under transfinite composition and retracts).
  Moreover, every free cofibration is a relative $I^{\sSet}_{\dag}$-cell complex.
\end{proposition}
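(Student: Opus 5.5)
We prove two inclusions, using the presheaf description $\sSet^{\dag}\simeq\Fun(\prism_{\rev}^{\myop},\Set)$ of \cref{prop.sSetdag_is_equivalent_to_Fun} to describe $\FdagSSet(\Delta^n)$ and $\FdagSSet(\partial\Delta^n)$ explicitly.

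\emph{The saturation of $I^{\sSet}_{\dag}$ consists of free cofibrations.} Free cofibrations are closed under pushouts, coproducts, transfinite composition and retracts. This holds because the condition is stated in terms of new nondegenerate simplices, and monomorphisms in a presheaf topos have the same closure properties. For pushouts, a new nondegenerate simplex of the pushout is the image of a new nondegenerate simplex of the attached object, and the dagger is computed levelwise. For retracts, a self-conjugate new simplex of the retract maps under the section to a self-conjugate new simplex. One checks that the section preserves non-degeneracy, since it is a monomorphism, and preserves newness, since otherwise the retraction would pull the simplex back into $A$. It then suffices to check the generators. In $\FdagSSet(\Delta^n)=\Delta^n\amalg_{(\Delta^n)^{(0)}}(\Delta^n)^{\myop}$, the only nondegenerate simplices outside $\FdagSSet(\partial\Delta^n)$ are the top simplex $\iota_n$ and its conjugate $\iota_n^{\dag}$. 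For $n\ge1$ these lie in different summands, so $\iota_n^{\dag}\ne\iota_n$. For $n=0$ the new simplex has dimension zero, which the definition exempts.

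\emph{Every free cofibration $i:A\to B$ is a relative $I^{\sSet}_{\dag}$-cell complex.} We imitate the skeletal filtration. Set $B^{(n)}:=A\cup\sk_nB$; each $B^{(n)}$ is a dagger subobject, because the dagger preserves dimension and non-degeneracy. Then $B=\colim_n B^{(n)}$, and it suffices to exhibit each $B^{(n-1)}\to B^{(n)}$ as a pushout of a coproduct of generators. Let $S_n$ be the set of nondegenerate $n$-simplices of $B$ not in $A$; $C_2$ acts on it by $\dag$. For $n\ge1$ the action is free, by the freeness hypothesis. Choose one representative $\sigma$ from each orbit. By the adjunction $\FdagSSet\dashv\UdagSSet$, $\sigma$ classifies a dagger map $\FdagSSet(\Delta^n)\to B^{(n)}$ that sends $\partial$ into $B^{(n-1)}$. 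For $n=0$, take all new vertices, which are fixed by $\dag$, and use $\FdagSSet(\Delta^0)=\Delta^0$. The induced square
\begin{align}
  \coprod\FdagSSet(\partial\Delta^n)\to B^{(n-1)},\qquad \coprod\FdagSSet(\Delta^n)\to B^{(n)}
\end{align}
is claimed to be a pushout. We check this on underlying simplicial sets, since colimits in the presheaf category are computed levelwise. The underlying map of a representative's $\FdagSSet(\Delta^n)$ is the classical attachment of two $n$-cells, $\sigma$ and $\sigma^{\dag}$, along their boundaries. So the square becomes the usual skeletal pushout for $\UdagSSet B^{(n-1)}\to\UdagSSet B^{(n)}$, whose cells are indexed by $S_n=\coprod_{\text{orbits}}\{\sigma,\sigma^{\dag}\}$. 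That classical pushout holds by the Eilenberg--Zilber lemma.

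\emph{Main obstacle.} The delicate point is this pushout identification. We must verify that $\FdagSSet(\Delta^n)$ really contributes exactly the two top cells and is glued correctly, in particular that its two top simplices share only their vertices, which is the meaning of the $A^{(0)}$ amalgamation. We must also verify that distinct representatives do not interact. Both reduce to Eilenberg--Zilber uniqueness in $\UdagSSet B$ together with the freeness of the $C_2$-action on $S_n$. Freeness is exactly where the hypothesis enters: a self-conjugate simplex would force a quotient such as $Q^n$ rather than $\FdagSSet(\Delta^n)$.

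The two inclusions together give both assertions. The second also shows that the saturation needs no retracts.
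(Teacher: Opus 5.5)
Your proposal is correct and follows essentially the same route as the paper. In one direction you check the generators and show that free cofibrations are closed under coproducts, pushouts, transfinite composites and retracts. In the other you use the relative skeletal filtration $A\cup\sk_nB$ with one $\FdagSSet(\Delta^n)$-cell per free dagger orbit, justifying the pushout by Eilenberg--Zilber; you reach that by forgetting to the classical relative skeletal pushout, a harmless variant of the paper's direct argument. The only step you leave implicit is closure under transfinite composition. The paper handles it by noting that a self-conjugate new simplex would first appear, nondegenerate and self-conjugate, at some successor stage.
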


\begin{proof}
  Let $i:A\to B$ be a free cofibration.
  We identify $A$ with its image in $B$.
  Since the dagger of $B$ is a degreewise bijection commuting with the simplicial operators up to reversal, it preserves and reflects nondegeneracy and dimension;
  it fixes every vertex, and by hypothesis it permutes the nondegenerate simplices of positive dimension of $B\setminus A$ in free orbits $\{\sigma,\sigma^{\dag}\}$ of size exactly two.

  Put $B_{\leq-1}:=A$, and for $n\geq0$ let $B_{\leq n}$ be the dagger simplicial subset of $B$ generated by $A$ and the nondegenerate simplices of $B\setminus A$ of dimension at most $n$.
  Let $\Sigma_0:=B_0\setminus A_0$.
  For $n\geq1$, let $\Sigma_n$ contain one representative from each dagger orbit of the nondegenerate simplices in $B_n\setminus A_n$.
  Every proper face of $\sigma\in\Sigma_n$ lies in $B_{\leq n-1}$:
  if that face is degenerate, its unique nondegenerate ancestor has dimension less than $n$.
  The boundary maps and the free--forgetful adjunction give the following commutative square:
  \begin{align}
    \begin{matrix}
      \displaystyle\coprod_{\sigma\in\Sigma_n}
      \FdagSSet(\partial\Delta^n)
      &\to&
      B_{\leq n-1}\\
      \downarrow&&\downarrow\\
      \displaystyle\coprod_{\sigma\in\Sigma_n}
      \FdagSSet(\Delta^n)
      &\to&
      B_{\leq n}.
    \end{matrix}
  \end{align}
  For $n=0$, the same square attaches precisely the vertices in $\Sigma_0$, since $\FdagSSet(\partial\Delta^0)=\varnothing$ and $\FdagSSet(\Delta^0)=\Delta^0$.
  For $n\geq1$, each summand attaches $\sigma$, $\sigma^{\dag}$, and their degeneracies.
  The Eilenberg--Zilber uniqueness lemma shows that no other nondegenerate simplex is added and that distinct chosen orbits produce distinct new simplices.
  Hence the canonical map from the pushout of the displayed span to $B_{\leq n}$ is an isomorphism, so the square is a pushout.
  Finally, $\colim_{n\geq-1}B_{\leq n}=B$.
  Thus $i$ is a relative $I^{\sSet}_{\dag}$-cell complex.

  Conversely, the class of monomorphisms satisfying \cref{dj.def.free-cof} contains $I^{\sSet}_{\dag}$.
  Indeed, for $n\geq1$ the two new nondegenerate $n$-simplices of a map in $I^{\sSet}_{\dag}$ are exchanged by the dagger, while in dimension zero no freeness condition is imposed.
  Coproducts of free cofibrations are free.

  For a pushout of a free cofibration $i:A\to B$ along $g:A\to C$, put $P:=B\amalg_A C$.
  Degreewise in sets, $C\to P$ is a monomorphism and the map $B_k\setminus A_k\to P_k\setminus C_k$ is a dagger-equivariant bijection.
  For $k\geq1$, a simplex represented by $b\in B_k\setminus A_k$ is degenerate in $P$ if and only if $b$ is degenerate in $B$.
  Indeed, if $b=s_jb'$ in $B$, then $b'\notin A$, so the same relation holds in $P$.
  Conversely, suppose that the image of $b$ is $s_jp$ in $P$.
  Under the degreewise decomposition
  \begin{align}
    P_{k-1}
    \cong
    C_{k-1}\amalg(B_{k-1}\setminus A_{k-1}),
  \end{align}
  the element $p$ cannot lie in the first summand, since then $s_jp$ would lie in $C_k$.
  Thus $p$ is represented by some $b'\in B_{k-1}\setminus A_{k-1}$, and the equality in the complement implies $b=s_jb'$ in $B$.
  Hence pushouts of free cofibrations are free.

  Consider a continuous transfinite composite of free cofibrations $X_0\to X_{\lambda}$.
  The structure maps are monomorphisms.
  If a positive-dimensional nondegenerate simplex $\sigma\in X_{\lambda}\setminus X_0$ were fixed by the dagger, it would first occur at a successor stage $X_{\alpha}\to X_{\alpha+1}$.
  Indeed, at a limit stage the simplicial set is the union of its preceding stages.
  It is nondegenerate at that stage, since a degeneracy remains a degeneracy under every subsequent map.
  Moreover, the two simplices $\sigma$ and $\sigma^{\dag}$ could not become equal at a later stage because all structure maps are monomorphisms.
  This contradicts the freeness of $X_{\alpha}\to X_{\alpha+1}$.
  Thus transfinite composites are free.

  Finally, suppose that $i':A'\to B'$ is a retract of $i:A\to B$ in the arrow category, with section $s$ and retraction $r$.
  The map $i'$ is a monomorphism because monomorphisms are closed under retracts in the arrow category.
  A self-conjugate nondegenerate simplex $\sigma\in B'\setminus A'$ of positive dimension would give the self-conjugate simplex $s_B(\sigma)$.
  It is nondegenerate, since the split monomorphism $s_B$ preserves nondegeneracy.
  It lies outside $A$:
  if $s_B(\sigma)=i(a)$, then $\sigma=r_Bi(a)=i'r_A(a)$, contrary to $\sigma\notin A'$.
  This contradicts freeness of $i$.
  Therefore retracts are free.

  The class of free cofibrations contains $I^{\sSet}_{\dag}$ and is closed under coproducts, pushouts, transfinite compositions, and retracts.
  It contains the saturation of $I^{\sSet}_{\dag}$.
  The relative cell description proved above gives the reverse inclusion.
\end{proof}

\begin{corollary}\label{dj.cor.Fdag-free}
  For every monomorphism $i:K\to L$ of simplicial sets, $\FdagSSet(i):\FdagSSet(K)\to \FdagSSet(L)$ is a free cofibration;
  in particular, $\FdagSSet(A)$ is free for every $A\in\sSet$.
\end{corollary}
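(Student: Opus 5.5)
The plan is to check \cref{dj.def.free-cof} directly on $\FdagSSet(i)$ rather than go through the saturation; \cref{dj.lem.free-cof} then tells us that $\FdagSSet(i)$ also lies in the saturation of $I^{\sSet}_{\dag}$. Recall $\FdagSSet(A)=A\amalg_{A^{(0)}}A^{\myop}$, with the dagger swapping the two summands. Under this dagger, a simplex $\sigma$ in the $A$-copy goes to the corresponding simplex $\sigma^{\myop}$ in the $A^{\myop}$-copy.

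First, $\FdagSSet(i)$ is a monomorphism. Degreewise, $\FdagSSet(K)_n=K_n\amalg_{K^{(0)}_n}K_n$, where the two copies are glued only along the totally degenerate simplices $K^{(0)}_n$. The map $i$ is injective, sends $K^{(0)}$ into $L^{(0)}$, and reflects total degeneracy: a simplex of $K$ whose image in $L$ is a degeneracy of a vertex $\ell$ is itself a degeneracy of the vertex $d(\ell)\in K$ obtained by applying face maps. So the glued map is still injective.

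Second, no positive-dimensional nondegenerate simplex of $\FdagSSet(L)$ is self-conjugate. Such a simplex $\tau$ is not totally degenerate, so it lies in exactly one of the two copies, not in the common part $L^{(0)}$. The dagger sends it to the other copy, hence $\tau^{\dag}\neq\tau$. This holds even for $\tau\notin\FdagSSet(K)$, which is all the definition asks. Taking $K=\varnothing$ gives that $\FdagSSet(A)$ is free.

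The only delicate point is the description of simplices of the pushout $A\amalg_{A^{(0)}}A^{\myop}$. I expect to handle it with one observation: $A^{(0)}\subseteq A$ is a simplicial subset, and pushouts along monomorphisms are computed degreewise as a disjoint union of $A^{(0)}_n$ with two copies of $A_n\setminus A^{(0)}_n$. Note that $A^{\myop}$ has the same simplices as $A$, reindexed, and the same totally degenerate ones. Everything above then reduces to this degreewise decomposition.
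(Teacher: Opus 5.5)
Your proposal is correct and follows essentially the same route as the paper. Both check \cref{dj.def.free-cof} directly from the degreewise description $\FdagSSet(L)_n=L_n\amalg_{L^{(0)}_n}L_n$: a positive-dimensional nondegenerate simplex lies in exactly one of the two copies, so the swap moves it and it is not self-conjugate. Your explicit verification that $\FdagSSet(i)$ is a monomorphism, using that $i$ reflects total degeneracy, fills in a step the paper leaves implicit.
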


\begin{proof}
  In each simplicial degree, $\FdagSSet(L)=L\amalg_{L^{(0)}}L^{\myop}$ is the pushout of two copies of $L$ along the totally degenerate part; a nondegenerate simplex of positive dimension therefore lies in exactly one of the two summands, and the involution exchanges the summands while fixing the glued part.
  Hence for $\sigma\in \FdagSSet(L)\setminus \FdagSSet(K)$ nondegenerate of positive dimension, the simplex $\sigma^{\dag}$ lies in the other summand, so $\sigma^{\dag}\neq\sigma$.
  The new vertices of $\FdagSSet(L)\setminus \FdagSSet(K)$ are self-conjugate, as all vertices are, but \cref{dj.def.free-cof} imposes no condition in dimension zero.
\end{proof}

\begin{remark}\label{dj.rem.cofibrant-objects}
  By contrast, dagger nerves are usually not free; 
  their cofibrant replacements in the model structure below are ``free resolutions'', in which every self-adjoint cell of positive dimension is replaced by a freely permuted conjugate pair, the vertices being retained.
  This is forced by \cref{dj.prop.counterexample} and is the simplicial shadow of the passage from strict to homotopy fixed points.
\end{remark}

\subsection{The dagger Joyal model structure and the Quillen equivalence}

We define dagger Joyal equivalences through rigidification and verify the accessibility and closure conditions required by Smith's theorem.
This constructs the left proper combinatorial dagger Joyal model structure and proves the rigidification--nerve Quillen equivalence (\cref{dj.thm.main,dj.thm.equivalence}).

\begin{definition}\label{dj.def.W}
  A morphism $f$ of $\sSet^{\dag}$ is a \emph{dagger Joyal equivalence} if $\frakC_{\dag}(f)$ is a dagger DK-equivalence.
  We write $W^{\sSet}_{\dag}:=\frakC_{\dag}^{-1}(W_{\dag})$.
\end{definition}

\begin{lemma}\label{dj.lem.W-properties}
  The class $W^{\sSet}_{\dag}$ contains all isomorphisms, satisfies two-out-of-three and is closed under retracts.
\end{lemma}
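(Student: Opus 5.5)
The plan is to deduce all three properties from the corresponding ones for $W_{\dag}$, using only that $W^{\sSet}_{\dag}$ is by definition the preimage $\frakC_{\dag}^{-1}(W_{\dag})$ of the class of dagger DK-equivalences under a functor.

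\emph{Isomorphisms.} Since $\frakC_{\dag}$ is a functor, it sends an isomorphism $f$ of $\sSet^{\dag}$ to an isomorphism of $\sCat^{\dag}$. An isomorphism of dagger simplicial categories is bijective on objects, and each induced map of mapping spaces is an isomorphism, hence a weak homotopy equivalence. For the object condition of \cref{bg.def.W} we take, for each object $d$, the preimage $c$ with $fc=d$ and the identity $[\id_d]$, which is coherently unitary by \cref{bg.prop.cu-groupoid} (1). So $\frakC_{\dag}(f)\in W_{\dag}$. Alternatively, isomorphisms are trivial fibrations in $\sCat^{\dag}_{\Bergner}$, and \cref{bg.cor.Iinj-in-W} applies.

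\emph{Two-out-of-three.} Consider composable maps $f,g$ in $\sSet^{\dag}$. Functoriality gives $\frakC_{\dag}(gf)=\frakC_{\dag}(g)\frakC_{\dag}(f)$. Whenever two of $f,g,gf$ lie in $W^{\sSet}_{\dag}$, the corresponding two of $\frakC_{\dag}(f),\frakC_{\dag}(g),\frakC_{\dag}(gf)$ lie in $W_{\dag}$. Then \cref{bg.prop.W-23} places the third in $W_{\dag}$, so the third map lies in $W^{\sSet}_{\dag}$.

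\emph{Retracts.} A retract diagram in the arrow category of $\sSet^{\dag}$ is carried by $\frakC_{\dag}$ to a retract diagram in the arrow category of $\sCat^{\dag}$, again by functoriality. Closure of $W_{\dag}$ under retracts (\cref{bg.prop.W-23}) then gives closure of $W^{\sSet}_{\dag}$ under retracts.

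There is no genuine obstacle here: every step is formal once \cref{bg.prop.W-23} is available. The only point to watch is that the retract argument needs the full commutative retract diagram, not just the two objects, and functoriality of $\frakC_{\dag}$ provides exactly this.
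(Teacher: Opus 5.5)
Your proof is correct and follows the same route as the paper, which simply observes that everything is immediate from $W^{\sSet}_{\dag}=\frakC_{\dag}^{-1}(W_{\dag})$ together with \cref{bg.prop.W-23}. Your explicit check that isomorphisms lie in $W_{\dag}$, via \cref{bg.prop.cu-groupoid} (1) or via \cref{bg.cor.Iinj-in-W}, fills in a point the paper leaves implicit.
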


\begin{proof}
  This is immediate from $W^{\sSet}_{\dag}=\frakC_{\dag}^{-1}(W_{\dag})$ and \cref{bg.prop.W-23}.
\end{proof}

\begin{lemma}\label{dj.lem.ES}
  A dagger simplicial functor $f:\calC\to\calD$ lies in $W_{\dag}$ if and only if it is a local weak equivalence and satisfies:
  \begin{itemize}
    \item[$(\ES')$] for every $d\in\Ob\calD$, there exist $c\in\Ob\calC$, a natural dagger interval $\bbH\in\calH_{\dag}$ and a dagger functor $\psi:\bbH\to\calD$ with $\psi(0)=fc$ and $\psi(1)=d$.
  \end{itemize}
\end{lemma}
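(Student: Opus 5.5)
Since both sides share the local weak equivalence condition (1) of \cref{bg.def.W}, the plan is to show that, for a local weak equivalence $f$, the object condition (2) of \cref{bg.def.W} is equivalent to $(\ES')$. The two directions are handled separately, each through results already established about coherent unitarity.

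\emph{$(2)\Rightarrow(\ES')$.} Fix $d\in\Ob\calD$, and pick $c$ together with a coherently unitary class $[v]:fc\to d$. Applying \cref{bg.lem.realization} to a representative $v$ yields a natural dagger interval $\bbH$ and a dagger functor $\phi:\bbH\to\calD$ satisfying $\phi(0)=fc$, $\phi(1)=d$ and $\phi(h_{\bbH})=v$. The interval need not be the chosen representative, so let $\bbH'\in\calH_{\dag}$ be the representative of its isomorphism class, with $\beta:\bbH'\xrightarrow{\cong}\bbH$ an isomorphism under $\bbJ_{\dag}$ and over $\bbI_{\dag}$. This isomorphism preserves the endpoints, so $\psi:=\phi\beta$ is the functor demanded by $(\ES')$. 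The same replacement step already appears in the proofs of \cref{bg.lem.WJinj-in-Iinj} and \cref{bg.prop.semantic}.

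\emph{$(\ES')\Rightarrow(2)$.} Let $\psi:\bbH\to\calD$ have $\psi(0)=fc$ and $\psi(1)=d$, and put $v:=\psi(h_{\bbH})\in\calD(fc,d)_0$. Viewing $\psi$ as a map in $\sCat^{\dag}_{\{0,1\}}$, we obtain a fixed-object dagger functor $\bbH\to\calD\langle fc,d\rangle$ as in \cref{bg.constr.two-object-copy}. Composing it with $r_{\calD\langle fc,d\rangle}$ gives a functor $\Phi:\bbH\to R\calD\langle fc,d\rangle$ with $\Phi(h_{\bbH})=r(v)$ on the nose. By \cref{bg.def.cu}, this shows that $[v]:fc\to d$ is coherently unitary. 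The argument is the one used at the end of \cref{bg.lem.interval-attachment}.

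No step should be a real obstacle. The one point needing care is that \cref{bg.def.cu} is formulated through the fibrant replacement $R$, while $(\ES')$ asks for an interval mapping into $\calD$ itself. This gap is closed by the strict realization of \cref{bg.lem.realization}, which depends on \cref{bg.lem.extraction} and right properness, so it is already available. The replacement of an arbitrary interval by one in $\calH_{\dag}$ is harmless because isomorphisms of intervals fix the endpoints. Since neither direction uses anything about $f$ other than its action on objects, the equivalence holds for every local weak equivalence, and the lemma follows.
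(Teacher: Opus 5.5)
Your proposal is correct and is essentially the paper's proof. In the forward direction you realize a representative of the coherently unitary class on the nose via \cref{bg.lem.realization} and replace the interval by its representative in $\calH_{\dag}$. In the converse you compose the induced fixed-object functor $\bbH\to\calD\langle fc,d\rangle$ with $r$ to witness coherent unitarity of $[\psi(h_{\bbH})]$, and your remark that interval isomorphisms fix endpoints (so $\psi:=\phi\beta$ still satisfies $\psi(0)=fc$, $\psi(1)=d$) spells out a step the paper leaves implicit.
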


\begin{proof}
  Let $f\in W_{\dag}$.
  Take $d\in\calD$, and choose a coherently unitary $[v]:fc\to d$ and a representing vertex; \cref{bg.lem.realization} produces the required $\psi$ (replacing the interval by its representative in $\calH_{\dag}$).
  
  Conversely, assume that $f$ is a local weak equivalence and satisfies (ES').
  Given $\psi$ as in $(\ES')$, the induced map $\bbH\to\calD\langle fc,d\rangle\xrightarrow{r}R\calD\langle fc,d\rangle$ witnesses that the class of $\psi(h_{\bbH})$ is a coherently unitary equivalence $fc\to d$.
\end{proof}

\begin{proposition}\label{dj.lem.accessible}
  The full subcategories $W_{\dag}\subseteq\Fun(\Delta^1,\sCat^{\dag})$ and $W^{\sSet}_{\dag}\subseteq\Fun(\Delta^1,\sSet^{\dag})$ are accessible and accessibly embedded.
\end{proposition}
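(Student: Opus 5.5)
The plan is to prove accessibility of $W_{\dag}$ first, using the reformulation of \cref{dj.lem.ES}, and then to obtain $W^{\sSet}_{\dag}$ as a preimage. Write $W_{\dag}=W^{\loc}_{\dag}\cap W^{\ES}_{\dag}$, where $W^{\loc}_{\dag}$ is the class of local weak equivalences and $W^{\ES}_{\dag}$ is the class of functors satisfying $(\ES')$. Accessible, accessibly embedded full subcategories of a locally presentable category are closed under finite intersections, by \cite[Corollary~2.36]{AR94} or the limit theorem for accessible categories. It therefore suffices to treat each class separately inside $\Fun(\Delta^1,\sCat^{\dag})$, which is locally finitely presentable by \cref{bg.lem.presentable}.

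For the local condition, I will use that the weak homotopy equivalences $W_{\sSet}\subseteq\Fun(\Delta^1,\sSet)$ form an accessible, accessibly embedded subcategory; this holds because $\sSet_{\mathrm{KQ}}$ is combinatorial, cf.\ \cite[Corollary~A.2.6.6]{HTT}. The category of pairs $(f:\calC\to\calD,\,x,y\in\Ob\calC)$ is accessible, being a category of elements of an accessible functor. The functor sending such a pair to $\calC(x,y)\to\calD(fx,fy)$ is accessible, since mapping spaces commute with filtered colimits: filtered colimits in $\sCat^{\dag}$ are created on objects and mapping spaces. The local condition is a universally quantified pullback of $W_{\sSet}$ along this functor. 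Concretely, $f\in W^{\loc}_{\dag}$ if and only if, for every morphism from a representable pair $(\bbA_{\dag}(\partial\Delta^0)\to f)$ selecting $(x,y)$, the induced arrow lies in $W_{\sSet}$. Such "for all points" conditions are accessible, by the standard argument that accessible categories are closed under pseudo-pullbacks and under the inserter/equifier constructions \cite[Theorem~2.77]{AR94}.

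For $(\ES')$, I will encode the condition as the image of an accessible functor. Consider the category $\calT$ of data $(f,c,\bbH,\psi)$ with $\bbH\in\calH_{\dag}$, where $\calH_{\dag}$ is a set by \cref{bg.lem.interval-existence} and each $\bbH$ has countable mapping spaces, so is $\aleph_1$-presentable. This $\calT$ is a finite coproduct-and-comma construction over $\Fun(\Delta^1,\sCat^{\dag})$ built from the presentable objects $\mathbf 1_{\dag}$ and $\bbH$, and hence is accessible. It comes with a map to the category of pairs $(f,d)$ recording $d=\psi(1)$. Then $f$ satisfies $(\ES')$ if and only if every object $d$ of $\calD$ lifts along this map. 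This is a surjectivity condition, and I expect it to be the main obstacle, since the image of an accessible functor need not be accessible in general. To handle it, I will instead prove directly that $W^{\ES}_{\dag}$ is closed under $\aleph_1$-filtered colimits, and that every $f$ satisfying $(\ES')$ is an $\aleph_1$-filtered colimit of $\aleph_1$-presentable arrows satisfying $(\ES')$.

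Closure under filtered colimits holds because objects and witnesses $\psi$ pass to the colimit. For the second property, I will run a downward Löwenheim--Skolem closure. Start from a small subobject of $f$, and alternately enlarge it by adding, for each object $d$ of $\calD$ already present, a chosen witness $\psi$ with its countable image and the source object $c$. After $\omega$ steps this yields countable (hence $\aleph_1$-presentable) sub-arrows satisfying $(\ES')$, and these are cofinal in the canonical $\aleph_1$-filtered diagram of $\aleph_1$-presentable sub-arrows. The same closure argument also handles the local condition, since weak equivalences of simplicial sets satisfy the analogous Löwenheim--Skolem property, by closure under the countable homotopy groups of \cite[Lemma~4.3]{Bergner07}. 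By \cite[Corollary~2.36]{AR94}, these two properties give accessibility and accessible embedding.

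Finally, $W^{\sSet}_{\dag}=\frakC_{\dag}^{-1}(W_{\dag})$. Here $\frakC_{\dag}$ is a left adjoint between locally presentable categories (\cref{prop.sSetdag_is_equivalent_to_Fun,dj.lem.lift}), hence accessible. The preimage of an accessible, accessibly embedded subcategory under an accessible functor is again accessible and accessibly embedded, by \cite[Remark~A.2.6.5]{HTT} or the pseudo-pullback theorem \cite[Theorem~2.77]{AR94}.
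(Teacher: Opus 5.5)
Your route is correct in outline, but it is genuinely different from the paper's and much longer. The paper does not decompose $W_{\dag}$ at all. By \cref{bg.thm.main}, $W_{\dag}$ is already the class of weak equivalences of the combinatorial model category $\sCat^{\dag}_{\Bergner}$, so \cite[Corollary A.2.6.6]{HTT} gives accessibility and accessible embedding of $W_{\dag}\subseteq\Fun(\Delta^1,\sCat^{\dag})$ in one line. The paper then handles $W^{\sSet}_{\dag}$ exactly as in your last step, as a preimage along the accessible functor $\Fun(\Delta^1,\frakC_{\dag})$.

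You instead prove accessibility of $W_{\dag}$ by hand: you split it via \cref{dj.lem.ES} into the local part and the $(\ES')$ part, and use L\"owenheim--Skolem closures. Your treatment of $(\ES')$ is sound: closure under filtered colimits, plus cofinality of the countable sub-arrows obtained by adjoining chosen witnesses from the set $\calH_{\dag}$. This approach buys independence from the existence of the dagger Bergner model structure, and an explicit rank $\aleph_1$. Since \cref{bg.thm.main} is proved beforehand, neither is needed here. It also costs real bookkeeping, because in the closure for the local condition every homotopy-killing step must be interleaved with closure under composition, identities and dagger.

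If you keep this route, fix the ``for all points'' step. The appeal to inserters and equifiers in \cite[Theorem~2.77]{AR94} does not apply, since universal quantification over objects is not literally such a construction. Replace it with the observation that $W^{\loc}_{\dag}$ is the preimage of the weak homotopy equivalences under the filtered-colimit-preserving functor $f\mapsto\coprod_{x,y\in\Ob\calC}\bigl(\calC(x,y)\to\calD(fx,fy)\bigr)$. This works because a summandwise coproduct of maps of simplicial sets is a weak equivalence exactly when each summand is.
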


\begin{proof}
  The dagger Bergner model category is combinatorial (by \cref{bg.thm.main}), and its underlying category is presentable (by \cref{bg.lem.presentable}).  
  The general accessibility theorem for combinatorial model categories therefore implies that its weak-equivalence subcategory $W_{\dag}\subseteq\Fun(\Delta^1,\sCat^{\dag})$ is accessible and accessibly embedded (by \cite[Corollary A.2.6.6]{HTT}).

  The functor $\frakC_{\dag}$ is a left adjoint between presentable categories, hence accessible.  Its induced arrow-category functor is accessible, and $ W^{\sSet}_{\dag}=\Fun(\Delta^1,\frakC_{\dag})^{-1}(W_{\dag})$.
  By the inverse-image theorem for accessible, accessibly embedded full subcategories (by \cite[Corollary A.2.6.5]{HTT}), $W^{\sSet}_{\dag}$ is accessible and accessibly embedded as well.
\end{proof}

\begin{theorem}[The dagger Joyal model structure]\label{dj.thm.main}
  There exists a left proper combinatorial model structure $\sSet^{\dag}_{\Joyal}$ on $\sSet^{\dag}$ in which:
  \begin{enumerate}
    \item the cofibrations are the free cofibrations of \cref{dj.def.free-cof}, 
    i.e.\ the saturation of $I^{\sSet}_{\dag}$;
    \item the weak equivalences are the dagger Joyal equivalences $W^{\sSet}_{\dag}$;
    \item the trivial fibrations are exactly the morphisms $p$ such that $\UdagSSet(p)$ is a trivial Kan fibration.
  \end{enumerate}
\end{theorem}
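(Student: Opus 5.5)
We will apply Smith's recognition theorem for combinatorial model structures (in the form of \cite[Proposition A.2.6.15]{HTT} or Beke's version) to the presentable category $\sSet^{\dag}$ (\cref{prop.sSetdag_is_equivalent_to_Fun}), the set of generating cofibrations $I^{\sSet}_{\dag}$ and the class of weak equivalences $W^{\sSet}_{\dag}$. The cofibrations are then, by definition, the saturation of $I^{\sSet}_{\dag}$. By \cref{dj.lem.free-cof}, this saturation is exactly the class of free cofibrations, which gives (1). Smith's hypotheses are: $W^{\sSet}_{\dag}$ satisfies two-out-of-three and is closed under retracts (\cref{dj.lem.W-properties}); it is accessible and accessibly embedded (\cref{dj.lem.accessible}); $I^{\sSet}_{\dag}\text{-}\inj\subseteq W^{\sSet}_{\dag}$; and trivial cofibrations, i.e.\ $\mathrm{cof}(I^{\sSet}_{\dag})\cap W^{\sSet}_{\dag}$, are stable under pushouts and transfinite composition.

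\emph{Trivial fibrations.} By adjunction, $p$ has the right lifting property against $\FdagSSet(\partial\Delta^n\to\Delta^n)$ if and only if $\UdagSSet(p)$ lifts against $\partial\Delta^n\to\Delta^n$. So $I^{\sSet}_{\dag}\text{-}\inj$ consists exactly of the maps $p$ with $\UdagSSet(p)$ a trivial Kan fibration, which gives (3) once the model structure exists. To see that such $p$ lie in $W^{\sSet}_{\dag}$, use $\UdagSCat\frakC_{\dag}=\frakC\UdagSSet$ (\cref{dj.lem.lift}). First, $\frakC(\UdagSSet p)$ is an ordinary Bergner weak equivalence, because a trivial Kan fibration is a Joyal equivalence and $\frakC$ is left Quillen. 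Second, $\UdagSSet(p)$ is surjective on vertices. Hence $\frakC_{\dag}(p)$ is a local weak equivalence that is surjective on objects, and identities witness condition (2) of \cref{bg.def.W} by \cref{bg.prop.cu-groupoid}(1).

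\emph{Pushouts and transfinite composites of trivial cofibrations.} This is the main obstacle. The plan is to reduce it to the dagger Bergner structure: $\frakC_{\dag}$ is a left adjoint, so it preserves pushouts and transfinite composites, and it sends free cofibrations to dagger Bergner cofibrations (\cref{dj.lem.F-preserves-cof}(3)). So if $i$ is a free cofibration in $W^{\sSet}_{\dag}$, then $\frakC_{\dag}(i)$ is a dagger Bergner trivial cofibration. Trivial cofibrations in the model category $\sCat^{\dag}_{\Bergner}$ of \cref{bg.thm.main} are closed under pushouts and transfinite composition. Since $W^{\sSet}_{\dag}=\frakC_{\dag}^{-1}(W_{\dag})$, the required closure follows directly. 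This is cleaner than any cell-by-cell analysis. The only point to check carefully is that the pushout in $\sCat^{\dag}$ of $\frakC_{\dag}(i)$ is indeed $\frakC_{\dag}$ of the pushout, which holds because $\frakC_{\dag}$ is a left adjoint. Smith's theorem then yields a combinatorial model structure with the stated cofibrations, weak equivalences and trivial fibrations.

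\emph{Left properness.} Let $i$ be a free cofibration and $w\in W^{\sSet}_{\dag}$, and form their pushout. Applying $\frakC_{\dag}$ gives a pushout in $\sCat^{\dag}$ of the cofibration $\frakC_{\dag}(i)$ along the weak equivalence $\frakC_{\dag}(w)$. By left properness of $\sCat^{\dag}_{\Bergner}$ (\cref{bg.thm.left-proper}), the cobase change of $\frakC_{\dag}(w)$ lies in $W_{\dag}$. Since this cobase change is $\frakC_{\dag}$ applied to the cobase change of $w$, the latter lies in $W^{\sSet}_{\dag}$. This is the same transfer argument again, and I expect no additional difficulty beyond verifying that the cited left properness applies to arbitrary (not necessarily cofibrant) objects, which \cref{bg.thm.left-proper} asserts.
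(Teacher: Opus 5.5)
Your proposal is correct and follows essentially the same route as the paper. Both apply Smith's theorem to $(\sSet^{\dag},I^{\sSet}_{\dag},W^{\sSet}_{\dag})$, check $I^{\sSet}_{\dag}\text{-}\inj\subseteq W^{\sSet}_{\dag}$ by combining $\frakC$ being left Quillen with surjectivity on vertices, and transfer both the closure of trivial cofibrations and left properness along $\frakC_{\dag}$ via \cref{dj.lem.F-preserves-cof} and \cref{bg.thm.left-proper}. The only adjustment is bibliographic: the hypotheses you list and verify are those of \cite[Proposition A.2.6.8]{HTT}, which is the form the paper cites, so cite that rather than A.2.6.15.
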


\begin{proof}
  We apply Smith's theorem for combinatorial model categories \cite[Proposition A.2.6.8]{HTT} to the presentable category $\sSet^{\dag}$, the set $I^{\sSet}_{\dag}$, and the class $W^{\sSet}_{\dag}$.
  Its hypotheses are: (a) $W^{\sSet}_{\dag}$ satisfies two-out-of-three and is accessible and accessibly embedded; (b) $I^{\sSet}_{\dag}\text{-}\inj\subseteq W^{\sSet}_{\dag}$; (c) $\cof(I^{\sSet}_{\dag})\cap W^{\sSet}_{\dag}$ is closed under pushout, transfinite composition and retracts.

  (a) is \cref{dj.lem.W-properties,dj.lem.accessible}.

  (b) by the adjunction $\FdagSSet\dashv \UdagSSet$, a morphism $p$ lies in $I^{\sSet}_{\dag}\text{-}\inj$ if and only if $\UdagSSet(p)$ has the right lifting property against all $\partial\Delta^n\to\Delta^n$, i.e.\ is a trivial Kan fibration (this also proves (3) once the model structure exists).
  Then $\UdagSSet(p)$ is a weak equivalence of the Joyal model structure, so $\frakC(\UdagSSet p)$ is a DK-equivalence, ordinary rigidification being left Quillen for the Joyal model structure and all simplicial sets being cofibrant; thus $\frakC_{\dag}(p)$ is a local weak equivalence.
  Moreover $\UdagSSet(p)$ is surjective on vertices (lift against $\varnothing\to\Delta^0$), so $\frakC_{\dag}(p)$ is surjective on objects and coherent essential surjectivity is witnessed by identities (by \cref{bg.prop.cu-groupoid} (1)).
  Hence $\frakC_{\dag}(p)\in W_{\dag}$, i.e.\ $p\in W^{\sSet}_{\dag}$.

  (c) Let $j\in\cof(I^{\sSet}_{\dag})\cap W^{\sSet}_{\dag}$.
  By \cref{dj.lem.F-preserves-cof}, $\frakC_{\dag}(j)$ is a dagger Bergner cofibration; by the definition of $W^{\sSet}_{\dag}$ it is also a dagger DK-equivalence, hence a trivial cofibration.
  If $j'$ is a pushout of $j$, then $j'$ remains a free cofibration by \cref{dj.lem.free-cof}, while $\frakC_{\dag}(j')$ is the corresponding pushout of the trivial cofibration $\frakC_{\dag}(j)$.  
  Therefore $\frakC_{\dag}(j')$ is a trivial cofibration and $j'\in W^{\sSet}_{\dag}$.
  Likewise, $\frakC_{\dag}$ carries a transfinite composite of maps in $\cof(I^{\sSet}_{\dag})\cap W^{\sSet}_{\dag}$ to a transfinite composite of dagger Bergner trivial cofibrations.  
  Trivial cofibrations are closed under such composites, and the source composite is free by \cref{dj.lem.free-cof}; 
  hence the composite again belongs to $\cof(I^{\sSet}_{\dag})\cap W^{\sSet}_{\dag}$.
  Closure under retracts follows from the retract closure of free cofibrations in \cref{dj.lem.free-cof} and of $W^{\sSet}_{\dag}$.

  Smith's theorem now yields the combinatorial model structure with cofibrations $\cof(I^{\sSet}_{\dag})$ (identified in \cref{dj.lem.free-cof}) and weak equivalences $W^{\sSet}_{\dag}$.

  \emph{Left properness:}
  let $f\in W^{\sSet}_{\dag}$ and let $i$ be a free cofibration.  
  If $f'$ is their pushout, then $\frakC_{\dag}(f')$ is the pushout of $\frakC_{\dag}(f)\in W_{\dag}$ along the dagger Bergner cofibration $\frakC_{\dag}(i)$, the latter being a cofibration by \cref{dj.lem.F-preserves-cof}.  
  Left properness of $\sCat^{\dag}_{\Bergner}$ (by \cref{bg.thm.left-proper}) gives $\frakC_{\dag}(f')\in W_{\dag}$, and therefore $f'\in W^{\sSet}_{\dag}$.
\end{proof}

\begin{theorem}[Dagger Joyal--Bergner Quillen equivalence]\label{dj.thm.equivalence}
  The adjunction
  \begin{align}
    \frakC_{\dag}:
    \sSet^{\dag}_{\Joyal}
    \rightleftarrows
    \sCat^{\dag}_{\Bergner}
    :N_{\dag}
  \end{align}
  is a Quillen equivalence.
\end{theorem}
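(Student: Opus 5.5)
First I will check that the adjunction is Quillen. By \cref{dj.lem.F-preserves-cof} (3), $\frakC_{\dag}$ sends free cofibrations to dagger Bergner cofibrations. By \cref{dj.def.W}, $W^{\sSet}_{\dag}=\frakC_{\dag}^{-1}(W_{\dag})$, so $\frakC_{\dag}$ also preserves trivial cofibrations. Because the weak equivalences of $\sSet^{\dag}_{\Joyal}$ are created by $\frakC_{\dag}$, it then suffices to prove one statement: for every fibrant $\calC\in\sCat^{\dag}_{\Bergner}$, the derived counit
\begin{align}
  \frakC_{\dag}(Q N_{\dag}\calC)\to\frakC_{\dag}N_{\dag}\calC\xrightarrow{\varepsilon}\calC
\end{align}
lies in $W_{\dag}$. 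Here $Q$ is a cofibrant replacement in $\sSet^{\dag}_{\Joyal}$. Indeed, once the derived counit is a weak equivalence, a map $\frakC_{\dag}A\to\calC$ with $A$ cofibrant is in $W_{\dag}$ if and only if its adjoint $A\to N_{\dag}\calC$ is in $W^{\sSet}_{\dag}$. This follows by two-out-of-three (\cref{bg.prop.W-23}), because $\frakC_{\dag}$ sends the adjoint to a map whose composite with the derived counit is the original map. So reflection of weak equivalences between cofibrant objects is automatic, and only the derived counit needs work.

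To build $Q$, I will use the trivial fibration $q:QN_{\dag}\calC\to N_{\dag}\calC$ given by the small object argument for $I^{\sSet}_{\dag}$. By \cref{dj.thm.main} (3), $\UdagSSet(q)$ is a trivial Kan fibration, so $\frakC(\UdagSSet q)$ is an ordinary DK-equivalence. It is surjective on objects, since vertices lift, and the identities witness coherent essential surjectivity. Hence $\frakC_{\dag}(q)\in W_{\dag}$, exactly as in part (b) of the proof of \cref{dj.thm.main}. By two-out-of-three, it remains to show that $\varepsilon:\frakC_{\dag}N_{\dag}\calC\to\calC$ is in $W_{\dag}$. Forgetting daggers, \cref{dj.lem.lift} identifies $\UdagSCat\varepsilon$ with the ordinary counit $\frakC N(\UdagSCat\calC)\to\UdagSCat\calC$. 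By \cref{bg.cor.fibrant}, $\UdagSCat\calC$ is fibrant in the ordinary Bergner structure. The ordinary counit is then a DK-equivalence by \cite[Theorem 2.2.5.1]{HTT}, so $\varepsilon$ is a local weak equivalence. The ordinary counit is also bijective on objects, so $\varepsilon$ is bijective on objects, and the object condition of \cref{bg.def.W} holds via identities by \cref{bg.prop.cu-groupoid} (1). Therefore $\varepsilon\in W_{\dag}$.

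The step I expect to be the real obstacle is the bookkeeping that makes this argument legitimate in the dagger setting. The first issue is to confirm that the counit of $\frakC_{\dag}\dashv N_{\dag}$ really is the ordinary counit with its dagger structure; \cref{dj.lem.lift} asserts that unit and counit are dagger morphisms and that the underlying functors agree. The second issue is that the derived counit uses a cofibrant replacement in the \emph{free} cofibrations, and $N_{\dag}\calC$ is generally not free (\cref{dj.rem.cofibrant-objects}). So I cannot simply quote the ordinary result for $N\calC$; I have to route through $q$ as above. I will double-check that $\frakC$ sends the trivial Kan fibration $\UdagSSet q$ to a DK-equivalence, using that every simplicial set is cofibrant in the Joyal model structure and that trivial Kan fibrations are Joyal equivalences. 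Finally, in the criterion I will use the standard form of a Quillen equivalence: the left adjoint reflects weak equivalences between cofibrant objects, and the derived counit on fibrant objects is a weak equivalence (\cite[Corollary 1.3.16]{Hov99}). The first condition is automatic here because $W^{\sSet}_{\dag}$ is created by $\frakC_{\dag}$.
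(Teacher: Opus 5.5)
Your proposal is correct and follows the paper's proof essentially step for step. Quillen-ness comes from \cref{dj.lem.F-preserves-cof} together with the fact that $W^{\sSet}_{\dag}$ is created by $\frakC_{\dag}$; the key input is that the counit $\varepsilon_{\calC}$ is an identity-on-objects local weak equivalence (via \cref{dj.lem.lift}, \cref{bg.cor.fibrant} and the ordinary Quillen equivalence); and the conclusion is two-out-of-three applied to $f=\varepsilon_{\calC}\circ\frakC_{\dag}(f^{\sharp})$. One small slip: that factorization passes through the counit $\varepsilon_{\calC}$ itself, not the derived counit, but this is harmless because you prove $\varepsilon_{\calC}\in W_{\dag}$ directly.
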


\begin{proof}
  \emph{Quillen adjunction:}
  By \cref{dj.lem.F-preserves-cof}, $\frakC_{\dag}$ sends every free cofibration, hence every cofibration of $\sSet^{\dag}_{\Joyal}$, to a dagger Bergner cofibration.  
  If $j$ is also a dagger Joyal equivalence, then $\frakC_{\dag}(j)\in W_{\dag}$ by \cref{dj.def.W}; 
  consequently $\frakC_{\dag}(j)$ is a dagger Bergner trivial cofibration. 
  Thus $\frakC_{\dag}\dashv N_{\dag}$ is a Quillen adjunction.

  Let $\calC$ be dagger Bergner fibrant.  
  By \cref{bg.cor.fibrant}, all mapping spaces of $\calC$ are Kan complexes, so $\UdagSCat\calC$ is Bergner fibrant.  
  The counit $\varepsilon_{\calC}:\frakC_{\dag}N_{\dag}\calC\to\calC$ is the identity on objects.  
  Under $\UdagSCat$ it is the ordinary counit $\frakC N(\UdagSCat\calC)\to \UdagSCat\calC$ by the compatibility of the lifted adjunction in \cref{dj.lem.lift}.  
  The ordinary Joyal--Bergner Quillen equivalence makes this a DK-equivalence, since $\UdagSCat\calC$ is fibrant.  Hence $\varepsilon_{\calC}$ is a local weak equivalence; identities provide coherent essential surjectivity by \cref{bg.prop.cu-groupoid} (1).  
  Therefore $\varepsilon_{\calC}\in W_{\dag}$.

  \emph{Quillen-equivalence:}
  Let $X$ be cofibrant, let $\calC$ be fibrant, and let $f:\frakC_{\dag}X\to\calC$ have adjoint $f^{\sharp}:X\to N_{\dag}\calC$.  
  The triangle identity gives $f=\varepsilon_{\calC}\circ\frakC_{\dag}(f^{\sharp})$.
  By definition, $f^{\sharp}$ is a weak equivalence in $\sSet^{\dag}_{\Joyal}$ exactly when $\frakC_{\dag}(f^{\sharp})\in W_{\dag}$.  
  Since $\varepsilon_{\calC}\in W_{\dag}$, two-out-of-three shows that this is equivalent to $f\in W_{\dag}$.  

  We record the derived maps explicitly.  
  If $q:Q N_{\dag}\calC\to N_{\dag}\calC$ is a cofibrant replacement and $\calC$ is fibrant, the derived counit is represented by $ \frakC_{\dag}Q N_{\dag}\calC \xrightarrow{\frakC_{\dag}(q)} \frakC_{\dag}N_{\dag}\calC \xrightarrow{\varepsilon_{\calC}}\calC$.
  The first arrow belongs to $W_{\dag}$ by the definition of $W^{\sSet}_{\dag}$ and the second by the preceding paragraph, so the derived counit is invertible.  
  If $X$ is cofibrant and $r:\frakC_{\dag}X\to R\frakC_{\dag}X$ is a fibrant replacement, the derived unit is the adjoint $X\to N_{\dag}R\frakC_{\dag}X$ of $r$.  
  Applying the adjoint-map criterion to the weak equivalence $r$ shows that this derived unit is a weak equivalence.  
  Cofibrant replacement gives the corresponding statements for arbitrary objects.
\end{proof}

\begin{corollary}\label{dj.cor.fibrant-underlying}
  Let $X$ be fibrant in $\sSet^{\dag}_{\Joyal}$.
  Then $\UdagSSet(X)$ is a quasi-category.
  Moreover, for every fibrant $\calC\in\sCat^{\dag}_{\Bergner}$, the dagger simplicial set $N_{\dag}(\calC)$ is fibrant in $\sSet^{\dag}_{\Joyal}$ and its underlying simplicial set is a quasi-category.
\end{corollary}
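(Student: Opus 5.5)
The statement has two parts, and I would prove them in the reverse order: first the fibrancy of nerves, then the underlying quasi-category for an arbitrary fibrant object.

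\emph{Nerves of fibrant objects.} Let $\calC$ be fibrant in $\sCat^{\dag}_{\Bergner}$. By \cref{dj.thm.equivalence}, $N_{\dag}$ is a right Quillen functor, so $N_{\dag}\calC$ is fibrant in $\sSet^{\dag}_{\Joyal}$. For the underlying simplicial set, \cref{dj.lem.lift} gives $\UdagSSet N_{\dag}\calC=N(\UdagSCat\calC)$. By \cref{bg.cor.fibrant}, every mapping space of $\calC$ is a Kan complex. Hence $\UdagSCat\calC$ is fibrant in the ordinary Bergner model structure, and its homotopy coherent nerve is a quasi-category by the ordinary Joyal--Bergner theory.

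\emph{General fibrant $X$.} I would show that $\UdagSSet\colon\sSet^{\dag}_{\Joyal}\to\sSet_{\Joyal}$ is right Quillen. Its left adjoint is $\FdagSSet$, so it suffices to check that $\FdagSSet$ preserves cofibrations and trivial cofibrations.

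Cofibrations are handled by \cref{dj.cor.Fdag-free}: $\FdagSSet$ sends monomorphisms to free cofibrations.

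For trivial cofibrations, let $i$ be a Joyal trivial cofibration. Then $\frakC(i)$ is an ordinary Bergner trivial cofibration, and $\frakC_{\dag}\FdagSSet(i)\cong\FdagSCat\frakC(i)$ by \cref{dj.lem.lift}. It remains to show that $\FdagSCat\frakC(i)\in W_{\dag}$. Injectivity on objects does not by itself give bijectivity, so \cref{dj.lem.F-preserves-cof}(2) does not apply directly.

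\emph{Main obstacle: the objects not hit by $\frakC(i)$.} I would first try a reduction to generators. Joyal trivial cofibrations are generated by inner horns together with $\Delta^0\to J$ (or $\Delta^{\{0\}}\to E$ for a suitable interval $E$). Since $\FdagSSet$ is a left adjoint, it preserves colimits, so it is enough to treat such generating maps together with the saturation argument of \cref{dj.thm.main}(c). This handles the bijective-on-objects generators via \cref{dj.lem.F-preserves-cof}(2), since $\frakC$ of an inner horn inclusion is bijective on objects. For $\Delta^0\to J$, I would compute $\FdagSCat\frakC(J)$ directly.

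If that computation is delicate, there is a cleaner route that avoids identifying generators. I would show that whenever $\UdagSSet p$ has the right lifting property against all Joyal trivial cofibrations, $p$ is a fibration. One way to get this is to prove that $\UdagSSet$ preserves fibrant objects, which is all the statement needs. For this, I would use the characterization of fibrant objects through the Quillen equivalence: $X$ fibrant implies $X\to N_{\dag}R\frakC_{\dag}X$ is a weak equivalence between fibrant objects. I would not pursue this route unless forced to; for generating Joyal trivial cofibrations there are suitably bijective-on-objects choices (inner horns plus $\Delta^0\to J$), and I would verify $\Delta^0\to J$ by hand using the explicit walking-isomorphism structure of $\frakC J$.

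Once $\UdagSSet$ is known to be right Quillen, $\UdagSSet X$ is fibrant in $\sSet_{\Joyal}$, that is, a quasi-category.
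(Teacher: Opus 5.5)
Your argument for $N_{\dag}\calC$ is fine. You use the standard fact that the homotopy coherent nerve of a locally Kan simplicial category is a quasi-category; the paper instead applies the first part of the statement. The argument for a general fibrant $X$, however, has a genuine gap. It rests on showing that $\UdagSSet\colon\sSet^{\dag}_{\Joyal}\to\sSet_{\Joyal}$ is right Quillen, and this is false.

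The obstruction is exactly the generator you single out. The map $\FdagSSet(\Delta^0)\to\FdagSSet(\bbJ)$ is a free cofibration but not a dagger Joyal equivalence (\cref{pointset.thm.generator-comparison}(3)). Freely adjoining a dagger to an isomorphism does not make it unitary. Since $\h$ preserves pushouts, $\h\FdagSCat\frakC(\bbJ)$ is the free groupoid on the two parallel arrows $u$ and $(u^{\dag})^{-1}$ from $0$ to $1$. So the endomorphisms of $0$ form $\bbZ$, generated by $u^{\dag}u$, and the local condition already fails for $\mathbf 1_{\dag}\to\frakC_{\dag}\FdagSSet(\bbJ)$. The hand verification you plan for $\Delta^0\to J$ therefore cannot succeed. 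Independently, $\Delta^0\to J$ is not bijective on objects. Also, inner horns together with $\Delta^0\to J$ only detect fibrant objects and fibrations between fibrant objects; they do not generate all Joyal trivial cofibrations, so the reduction to generators was not available anyway.

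The detour through a right Quillen functor is unnecessary. Being a quasi-category is the right lifting property against inner horns only, and you already have everything needed for those:
\begin{itemize}
\item For $0<k<n$ the map $\frakC(\Lambda^n_k\to\Delta^n)$ is a bijective-on-objects Bergner trivial cofibration.
\item \Cref{dj.lem.lift} and \cref{dj.lem.F-preserves-cof}(2) then make $\frakC_{\dag}\FdagSSet(\Lambda^n_k\to\Delta^n)$ a dagger Bergner trivial cofibration, so $\FdagSSet(\Lambda^n_k\to\Delta^n)\in W^{\sSet}_{\dag}$.
\item It is a free cofibration by \cref{dj.cor.Fdag-free}, hence a trivial cofibration of $\sSet^{\dag}_{\Joyal}$.
\item A fibrant $X$ lifts against these maps, and by $\FdagSSet\dashv\UdagSSet$ these lifting problems are exactly the inner-horn lifting problems for $\UdagSSet X$.
\end{itemize}
This is the paper's proof.

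Your fallback, via the weak equivalence $X\to N_{\dag}R\frakC_{\dag}X$ between fibrant objects, could also be completed. A Ken Brown factorization exhibits $X$ as a retract of some $P$ equipped with a trivial fibration $P\to N_{\dag}R\frakC_{\dag}X$. The underlying map of that trivial fibration is a trivial Kan fibration by \cref{dj.thm.main}(3), so $\UdagSSet P$, and hence its retract $\UdagSSet X$, is a quasi-category. As written, though, this transfer step is missing, and you explicitly set the route aside.
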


\begin{proof}
  Fix $0<k<n$ and write $i:\Lambda^n_k\to\Delta^n$.  
  This is a Joyal trivial cofibration. 
  Since ordinary rigidification is left Quillen, $\frakC(i)$ is a Bergner trivial cofibration; 
  it is bijective on objects because both rigidifications have object set $\{0,\ldots,n\}$.  
  The natural isomorphism of \cref{dj.lem.lift} identifies $\frakC_{\dag}\FdagSSet(i) \cong \FdagSCat\frakC(i)$.

  The second assertion of \cref{dj.lem.F-preserves-cof} therefore makes this a dagger Bergner trivial cofibration.  
  On the other hand, $\FdagSSet(i)$ is a free cofibration by \cref{dj.cor.Fdag-free}.  
  Its image under $\frakC_{\dag}$ lies in $W_{\dag}$, so $\FdagSSet(i)\in W^{\sSet}_{\dag}$ by definition.  
  Thus $\FdagSSet(i)$ is a trivial cofibration in $\sSet^{\dag}_{\Joyal}$.

  A fibrant $X$ has the right lifting property against every $\FdagSSet(i)$.  
  By $\FdagSSet\dashv \UdagSSet$, these lifting problems are precisely the ordinary inner-horn lifting problems for $\UdagSSet(X)$.  
  Hence $\UdagSSet(X)$ is a quasi-category.

  If $\calC$ is dagger Bergner fibrant, then $N_{\dag}\calC$ is dagger Joyal fibrant because $N_{\dag}$ is right Quillen by \cref{dj.thm.equivalence}; 
  applying the first part shows that its underlying simplicial set is a quasi-category.  
  Equivalently, $\UdagSSet N_{\dag}\calC=N(\UdagSCat\calC)$ is the usual homotopy coherent nerve of the locally Kan simplicial category $\UdagSCat\calC$.
\end{proof}

\begin{remark}\label{dj.rem.fibrant-open}
  We do \emph{not} assert that every dagger simplicial set whose underlying simplicial set is a quasi-category is fibrant in $\sSet^{\dag}_{\Joyal}$.
  In the present setting, the question is precisely whether self-conjugate lifting problems against trivial cofibrations can always be solved, and \cref{dj.prop.counterexample} shows that such equivariance questions are delicate.
  The characterization of the fibrant objects is left open.
\end{remark}

\section{The unitary core of dagger quasi-categories}\label{dj.core-section}

In this section, we identify coherent unitarity with paths in the unitary core.
The key step is a Real edgewise-subdivision comparison between the covariant involution on a mapping space and the reversal action on geometric realization.
This gives an intrinsic characterization for dagger Joyal equivalences (\cref{dj.thm.intrinsic-equivalence}).

\subsection{The unitary core}

The dagger induces a $C_2$-action on geometric realization, so homotopy fixed points provide a natural receptacle for coherent unitary data.
Applying this construction to the maximal Kan subcomplex and retaining the canonical points associated to objects defines the unitary core functorially in dagger quasi-categories (\cref{prop.unitary_core_functor}).

\begin{construction}
  Let $K$ be a simplicial set.
  The \emph{geometric realization} $|K|$ of $K$ is 
  \begin{align}
    |K| := (\coprod_{n\geq0} K_n \times \Delta^n_{\rmtop}) / \sim,
  \end{align}
  where the equivalence relation is generated by $(K(\alpha)(\sigma),t) \sim (\sigma,|\alpha|(t))$ for every morphism $\alpha : [m] \to [n]$ in $\prism$, every $n$-simplex $\sigma \in K$, and every point $t \in \Delta^m_{\rmtop}$.
  We denote the equivalence class of $(\sigma,t)$ by $[\sigma;t]$.
\end{construction}

\begin{lemma} \label{lem.coordinate_reversal_naturality}
  For every $n \geq 0$, we define a continuous map $R_{n} : \Delta^{n}_{\rmtop} \to \Delta^{n}_{\rmtop}$ by reversing barycentric coordinates: 
  \begin{align}
    R_{n} : 
    \Delta^{n}_{\rmtop} \to \Delta^{n}_{\rmtop}
    : (t_0,\cdots,t_n) \mapsto (t_n,\cdots,t_0).
  \end{align}
  Then, for every morphism $\alpha : [m] \to [n]$ in $\prism$, $R_{n}|\alpha| = |\alpha^{\rev}|R_{m}$.
\end{lemma}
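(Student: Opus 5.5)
We have to prove $R_n|\alpha| = |\alpha^{\rev}|R_m$, an identity of continuous maps $\Delta^m_{\rmtop}\to\Delta^n_{\rmtop}$. The plan is to compute both sides explicitly in barycentric coordinates and compare them coordinatewise.

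First recall the formula for the realization of a simplicial operator. For $\alpha:[m]\to[n]$ and $t=(t_0,\dots,t_m)$,
\begin{align}
  |\alpha|(t)_j=\sum_{i\,:\,\alpha(i)=j}t_i, \qquad j=0,\dots,n.
\end{align}
This is the affine extension of the vertex map $e_i\mapsto e_{\alpha(i)}$. Also, $R_n$ is the affine map sending the vertex $e_j$ to $e_{n-j}$.

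Next, compute both sides at the $j$-th coordinate. The left-hand side gives
\begin{align}
  (R_n|\alpha|(t))_j=|\alpha|(t)_{n-j}=\sum_{\alpha(i)=n-j}t_i .
\end{align}
For the right-hand side, $(R_mt)_k=t_{m-k}$, so
\begin{align}
  (|\alpha^{\rev}|R_mt)_j=\sum_{k\,:\,\alpha^{\rev}(k)=j}t_{m-k}.
\end{align}
By \cref{constr.alpha_rev}, $\alpha^{\rev}(k)=n-\alpha(m-k)$, so the condition $\alpha^{\rev}(k)=j$ is equivalent to $\alpha(m-k)=n-j$. Reindexing with $i=m-k$, which is a bijection of $[m]$, turns this sum into $\sum_{\alpha(i)=n-j}t_i$. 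This agrees with the left-hand side.

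A cleaner equivalent route avoids sums altogether. Both sides are affine maps, so it suffices to check them on vertices. Indeed, $R_n|\alpha|(e_i)=e_{n-\alpha(i)}$ and $|\alpha^{\rev}|R_m(e_i)=|\alpha^{\rev}|(e_{m-i})=e_{n-\alpha(i)}$. I would present this vertex check as the proof, since affine maps out of a simplex are determined by their values on vertices.

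There is no real obstacle here. The only point needing care is to use the barycentric convention consistently, so that $|\alpha|$ is the affine extension of $\alpha$ on vertices, and to apply the reindexing $i=m-k$ correctly.
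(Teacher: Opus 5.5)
Your proposal is correct and follows the paper's proof, which is the same coordinatewise computation: it reindexes by $i\mapsto m-i$ and uses $\alpha^{\rev}(k)=n-\alpha(m-k)$. The vertex check you prefer is an equally valid shortcut, since both composites are affine (indeed linear in barycentric coordinates) and so are determined by their values $e_{n-\alpha(i)}$ on the vertices.
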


\begin{proof}
  For $t=(t_0,\ldots,t_m)$ and $0\leq j\leq n$, the $j$-th coordinates of the two composites are
  \begin{align}
    (R_n|\alpha|(t))_j
    =
    \sum_{\alpha(k)=n-j}t_k
    =
    \sum_{\alpha(m-i)=n-j}t_{m-i}
    =
    (|\alpha^{\rev}|R_m(t))_j.
  \end{align}
  Hence $R_n|\alpha|=|\alpha^{\rev}|R_m$.
\end{proof}

\begin{lemma} \label{prop.realization_of_opposite}
  Let $K$ be a simplicial set.
  We define a map $\rho_{K} : |K^{\myop}| \to |K|$ by 
  \begin{align}
    \rho_{K} : |K^{\myop}| \to |K| : [\sigma;(t_0,\cdots,t_n)] \mapsto [\sigma;(t_n,\cdots,t_0)].
  \end{align}
  Then it is a homeomorphism, and for every morphism of simplicial sets $f : K \to L$, it satisfies $|f|\rho_{K} = \rho_{L}|f^{\myop}|$.
\end{lemma}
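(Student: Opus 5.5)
The plan is to make $\rho_K$ well defined by checking it on the presentation of $|K^{\myop}|$, to exhibit $\rho_{K^{\myop}}$ as its inverse, and to read off naturality directly from the formula.

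First recall the convention for $K^{\myop}$. Its $n$-simplices are those of $K$, i.e.\ $(K^{\myop})_n=K_n$, and a simplicial operator $\alpha:[m]\to[n]$ acts by $K^{\myop}(\alpha)=K(\alpha^{\rev})$. This is precompositon of $K$ with the involutive functor $\rev$ of \cref{constr.alpha_rev}.

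Now define $\widetilde\rho:\coprod_n K_n\times\Delta^n_{\rmtop}\to|K|$ by $(\sigma,t)\mapsto[\sigma;R_n(t)]$. It is continuous on each summand, so it suffices to check that $\widetilde\rho$ respects the generating relation of $|K^{\myop}|$. Take $\alpha:[m]\to[n]$, $\sigma\in K_n$ and $t\in\Delta^m_{\rmtop}$. The relation identifies $(K^{\myop}(\alpha)\sigma,t)=(K(\alpha^{\rev})\sigma,t)$ with $(\sigma,|\alpha|t)$. These have images $[K(\alpha^{\rev})\sigma;R_m t]$ and $[\sigma;R_n|\alpha|t]$. By \cref{lem.coordinate_reversal_naturality},
\begin{align}
  R_n|\alpha|t=|\alpha^{\rev}|R_m t .
\end{align}
So the second image is $[\sigma;|\alpha^{\rev}|R_m t]$, which equals the first by the defining relation of $|K|$ applied to $\alpha^{\rev}$. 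Hence $\widetilde\rho$ descends to a continuous map $\rho_K$ on the quotient.

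For bijectivity, apply the same construction to $K^{\myop}$. This gives $\rho_{K^{\myop}}:|(K^{\myop})^{\myop}|=|K|\to|K^{\myop}|$, using $(K^{\myop})^{\myop}=K$ because $\rev$ is involutive. Since $R_n^2=\id$, the composites $\rho_K\rho_{K^{\myop}}$ and $\rho_{K^{\myop}}\rho_K$ are the identity on representatives. Therefore $\rho_K$ is a continuous bijection with continuous inverse, i.e.\ a homeomorphism. No compactness argument is needed.

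Naturality holds on representatives: $f^{\myop}$ acts on simplices as $f$ does, so both composites send $[\sigma;t]$ to $[f\sigma;R_n t]$.

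The only real point is the well-definedness step. Its one subtlety is matching the convention $K^{\myop}(\alpha)=K(\alpha^{\rev})$ with the direction of \cref{lem.coordinate_reversal_naturality}. If the paper's convention for $(-)^{\myop}$ is stated differently, I would swap $\alpha$ and $\alpha^{\rev}$ throughout, which is harmless since $\rev$ is an involution.
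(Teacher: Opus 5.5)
Your proposal is correct and follows essentially the same route as the paper: well-definedness via the generating relation of $|K^{\myop}|$ together with $R_n|\alpha|=|\alpha^{\rev}|R_m$, the inverse given by $\rho_{K^{\myop}}$ using $R_n^2=\id$, and naturality checked on representatives. Your convention $K^{\myop}(\alpha)=K(\alpha^{\rev})$ is also the one the paper uses, so no swapping is needed.
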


\begin{proof}
  This is the coordinate-reversal comparison used in the realization of Real simplicial sets (see \cite[\S 3.1]{Dotto16}).
  For a generating relation in $|K^{\myop}|$, the two representatives are $[K(\alpha^{\rev})(\sigma);t]$ and $[\sigma;|\alpha|(t)]$.
  Their images agree in $|K|$ because $R_n|\alpha|=|\alpha^{\rev}|R_m$ (by \cref{lem.coordinate_reversal_naturality}).
  Thus the formula descends continuously through the quotient.
  Since $R_n^2=\id$, the corresponding map for $K^{\myop}$ is its inverse.
  Finally, we prove naturality:
  \begin{align}
    |f|\rho_K[\sigma;t] = [f(\sigma);R_n(t)] = \rho_L|f^{\myop}|[\sigma;t].
  \end{align}
\end{proof}

\begin{lemma} \label{prop.dagger_realization_action}
  Let $K$ be a dagger simplicial set.
  Then a continuous map $\tau_{K}:= \rho_K \circ |\dag_K| : |K| \to |K|$ satisfies:
  \begin{enumerate}
    \item For every $n$-simplex $\sigma\in K$, $\tau_K([\sigma;(t_0,\cdots,t_n)])=[\sigma^\dagger;(t_n,\cdots,t_0)]$.
    \item The map $\tau_K$ is involutive: $\tau_K^2=\id_{|K|}$ and $\tau_{K}|_{0}=\id_{K_{0}}$.
    Therefore $|K|$ is a $C_{2}$-topological space.
    \item For every dagger morphism $f : K \to L$, the continuous map $|f| : |K| \to |L|$ is $C_2$-equivariant.
  \end{enumerate}
\end{lemma}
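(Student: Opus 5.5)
The plan is to unwind the definition $\tau_K=\rho_K\circ|\dag_K|$ on representatives and then deduce (2) and (3) from (1) together with the naturality in \cref{prop.realization_of_opposite}.

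For (1), I view $\dag_K:K\to K^{\myop}$ as a map of simplicial sets, so $|\dag_K|[\sigma;t]=[\dag_K(\sigma);t]$ in $|K^{\myop}|$. Here $\dag_K(\sigma)=\sigma^{\dag}$ is regarded as an $n$-simplex of $K^{\myop}$. Applying the formula for $\rho_K$ reverses the barycentric coordinates, which gives $[\sigma^{\dag};(t_n,\ldots,t_0)]$ in $|K|$. Well-definedness and continuity need no separate argument, since $\tau_K$ is a composite of continuous maps that are already known to be well defined: $|\dag_K|$ by functoriality of realization, and $\rho_K$ by \cref{prop.realization_of_opposite}.

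For (2), I apply (1) twice: $\tau_K^2[\sigma;t]=[(\sigma^{\dag})^{\dag};R_n^2 t]=[\sigma;t]$, using $(\sigma^\dag)^\dag=\sigma$ and $R_n^2=\id$. On vertices, $R_0=\id$ and $x^{\dag}=x$, so $\tau_K$ fixes the image of $K_0$ pointwise. Since $\tau_K$ is a continuous involution, it defines a $C_2$-action on $|K|$, with $\omega$ acting by $\tau_K$.

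For (3), I use the dagger condition $\dag_L f=f^{\myop}\dag_K$ and compute $|f|\tau_K=|f|\rho_K|\dag_K|=\rho_L|f^{\myop}||\dag_K|=\rho_L|\dag_L||f|=\tau_L|f|$. The second equality is naturality of $\rho$, and the third is functoriality of $|\cdot|$ applied to the dagger condition. Alternatively, one can check this directly on representatives using (1): $f(\sigma^{\dag})=f(\sigma)^{\dag}$.

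The only delicate point is bookkeeping: realization of $K^{\myop}$ uses the reversed face maps, and one must make sure the coordinate reversal in $\rho_K$ cancels this correctly. This compatibility is exactly what \cref{prop.realization_of_opposite}, via \cref{lem.coordinate_reversal_naturality}, already provides, so I expect no real obstacle.
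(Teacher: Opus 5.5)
Your proposal is correct and follows essentially the same route as the paper's proof. Both compute (1) on representatives by applying $\rho_K$ to $|\dag_K|[\sigma;t]=[\sigma^{\dag};t]$, derive (2) from (1) using $(\sigma^{\dag})^{\dag}=\sigma$, $R_n^2=\id$ and $x^{\dag}=x$, and prove (3) by the same chain of equalities from the dagger condition $\dag_L f=f^{\myop}\dag_K$ and the naturality of $\rho$.
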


\begin{proof}
  (1) 
  Since $ \dag_K:K \to K^{\myop}$ is degreewise given by $\sigma\mapsto \sigma^{\dag}$, we have $|\dag_K|([\sigma;t]) = [\sigma^{\dag};t]$ as a point of $|K^{\myop}|$.  
  Applying $\rho_K$, we obtain
  \begin{align*}
    \tau_K([\sigma;t])
    =
    \rho_K|\dag_K|([\sigma;t]) 
    =
    \rho_K([\sigma^{\dag};t]) 
    =
    [\sigma^{\dag};R_n(t)].
  \end{align*}

  (2)
  By $(1)$, we have
  \begin{align*}
    \tau_K^2([\sigma;t])
    =
    \tau_K([\sigma^{\dag};R_n(t)]) 
    =
    [(\sigma^{\dag})^{\dag};R_n^2(t)] 
    =
    [\sigma;t].
  \end{align*}
  Now let $x\in K_0$ be a vertex.  
  Since $\Delta^0_{\rmtop}$ has one
  point, write this point as $1$.  Then 
  \begin{align}
    \tau_K([x;1]) = [x^{\dag};R_0(1)] = [x;1].
  \end{align}

  (3)
  By definition, this means $\dag_L\circ f = f^{\myop}\circ \dag_K$.
  Taking geometric realizations gives $|\dag_L|\circ |f| = |f^{\myop}|\circ |\dag_K|$.
  By the naturality of $\rho$, we also have $|f|\circ \rho_K = \rho_L\circ |f^{\myop}|$.
  Therefore
  \begin{align*}
    \tau_L\circ |f|
    =
    \rho_L\circ |\dag_L|\circ |f| 
    =
    \rho_L\circ |f^{\myop}|\circ |\dag_K| 
    =
    |f|\circ \rho_K\circ |\dag_K| 
    =
    |f|\circ \tau_K.
  \end{align*}
\end{proof}

\begin{construction}
  We fix some constructions:
  \begin{itemize}
    \item we define the groupoid $\widetilde{C_2}$ as follows:
    Its set of objects is $C_2=\{1,\omega\}$, and for every pair of objects $g,h\in C_2$ there is exactly one morphism $e_{g,h}:g \to h$.
    It is a $C_2$-category.  
    For $k\in C_2$, the action functor $L_k:\widetilde{C_2} \to \widetilde{C_2}$ is defined by $ L_k(g) := kg$ and $L_k(e_{g,h}) := e_{kg,kh}$;
    \item we define a simplicial set $E_{\bullet}C_2 := N(\widetilde{C_2})$.
    Since there is a unique morphism between any two objects of $\widetilde{C_2}$, we have a natural bijection $(E_{\bullet}C_2)_n = N(\widetilde{C_2})_n \simeq C_2^{n+1}$;
    \item under this identification, the $C_2$-action is given by $h\cdot(g_0,\cdots,g_n) := (hg_0,\cdots,hg_n)$.
    This action commutes with the face and degeneracy maps;
    \item we define the $C_2$-topological space $EC_{2} := |E_{\bullet}C_2|$.
  \end{itemize}
\end{construction}

\begin{lemma} \label{lem.EC2_free_contractible}
  The topological space $EC_2$ is a free contractible $C_2$-CW complex.
\end{lemma}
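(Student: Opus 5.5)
The plan is to verify three properties of $EC_2=|E_{\bullet}C_2|$ in turn: contractibility, freeness of the action, and the $C_2$-CW structure.

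\emph{Contractibility.} The groupoid $\widetilde{C_2}$ is indiscrete, so it is equivalent to the terminal category. Concretely, the functor $\widetilde{C_2}\to *$ and the inclusion $*\to\widetilde{C_2}$ at the object $1$ have a natural isomorphism $\id\Rightarrow(\text{const}_1)$ given by the unique arrows $e_{g,1}$. Taking nerves, this natural transformation becomes a simplicial homotopy $E_{\bullet}C_2\times\Delta^1\to E_{\bullet}C_2$. After realization, and using that $|{-}\times\Delta^1|\cong|{-}|\times[0,1]$ for these finite-type objects (or working in compactly generated spaces), this is a homotopy from the identity to a constant map. Hence $EC_2$ is contractible.

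\emph{Freeness and the CW structure.} The action $h\cdot(g_0,\dots,g_n)=(hg_0,\dots,hg_n)$ is a simplicial action by automorphisms, and $\omega$ acts freely on simplices of every dimension, since $\omega g_0\neq g_0$. I would then use the standard fact that the realization of a simplicial set with a simplicial $G$-action is a $G$-CW complex. Its $n$-cells are the nondegenerate $n$-simplices, and these are permuted by $G$ because the action commutes with degeneracies, so it preserves nondegeneracy. Grouping the nondegenerate $n$-simplices into $C_2$-orbits, the $n$-skeleton is obtained from the $(n-1)$-skeleton by attaching equivariant cells $C_2\times\Delta^n_{\rmtop}$, one for each orbit. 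Because every orbit is free, all cells have type $C_2/e$, so $EC_2$ is a free $C_2$-CW complex.

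It remains to check that the action on the space itself is free, not only on simplices. Every point of $EC_2$ has a unique representative $[\sigma;t]$ with $\sigma$ nondegenerate and $t$ in the interior of the simplex. Since $\omega\cdot[\sigma;t]=[\omega\sigma;t]$ and $\omega\sigma$ is again nondegenerate with $\omega\sigma\neq\sigma$, uniqueness of this representative shows there are no fixed points.

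The main point needing care is this CW/freeness step: one must ensure that the action uses $|{-}|$ on simplicial automorphisms, as opposed to the reversal involution $\tau$ from the earlier lemma. This holds because the action here is order-preserving, so the cell structure is respected on the nose. Beyond that, everything is routine.
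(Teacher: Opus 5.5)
Your proof is correct and follows essentially the same route as the paper: contractibility comes from the terminal object $1$ of $\widetilde{C_2}$ (your natural transformation to the constant functor is that argument made explicit), and the free $C_2$-CW structure comes from the equivariant skeletal filtration whose cells are free $C_2$-orbits of nondegenerate simplices. The differences are minor. The paper also notes that there are exactly two nondegenerate $n$-simplices, $(1,\omega,1,\ldots)$ and $(\omega,1,\omega,\ldots)$, giving one free cell per dimension; you instead deduce freeness from $\omega g_0\neq g_0$ and check separately that no point of the space is fixed.
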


\begin{proof}
  Since the category $\widetilde{C_2}$ admits a terminal object $1$, the nerve $E_{\bullet}C_2$ is contractible.
  Therefore its geometric realization $EC_2$ is contractible as an underlying space.

  An $n$-simplex $(g_0,\ldots,g_n)$ is nondegenerate if and only if $g_i\neq g_{i+1}$ for every $0\leq i<n$.
  Since $C_2=\{1,\omega\}$, there are exactly two nondegenerate $n$-simplices $(1,\omega,1,\ldots)$ and $(\omega,1,\omega,\ldots)$.
  They are exchanged by the $C_2$-action.
  Hence the equivariant skeletal filtration of the realization is obtained, in each dimension $n$, from a pushout
  \begin{align}
    \begin{matrix}
      C_2\times|\partial\Delta^n|
      &\to&
      |\sk_{n-1}E_{\bullet}C_2|\\
      \downarrow&&\downarrow\\
      C_2\times|\Delta^n|
      &\to&
      |\sk_nE_{\bullet}C_2|.
    \end{matrix}
  \end{align}
  Here $\sk_{-1}E_{\bullet}C_2=\varnothing$, and $C_2$ acts by left multiplication on the first factor.
  Thus $EC_2$ has one free equivariant cell $C_2/\{1\}\times D^n$ in each dimension.
  It is a free $C_2$-CW complex.
\end{proof}

\begin{remark}
  We use the standard $q$-model structure on compactly generated weak Hausdorff spaces, in which every object is fibrant, and equip $C_2$-spaces with the Borel model structure created by the forgetful functor.
  Equivariant mapping spaces carry the compactly generated compact-open topology.
  The map $EC_2 \to *$ is an underlying weak equivalence because $EC_2$ is contractible and $EC_2$ is cofibrant as a $C_2$-topological space because \cref{lem.EC2_free_contractible} identifies it as a free $C_2$-CW complex.
  Thus $EC_2$ is a cofibrant replacement of the point in $C_2$-topological spaces.
  Consequently, for a $C_2$-topological space $X$, the space $\Map_{C_2}(EC_2,X)$ computes the derived fixed point space $\RMap_{C_2}(*,X)$.
\end{remark}

\begin{definition} \label{def.topological_homotopy_fixed_points}
  Let $X$ be a strict $C_2$-space.  
  We define the \emph{topological homotopy $C_{2}$-fixed point space} $X^{hC_2}_{\rmtop}$ of $X$ by 
  \begin{align}
    X^{hC_2}_{\rmtop} := \Map_{C_2}(EC_2,X).
  \end{align}
\end{definition}

\begin{definition}
  Let $K$ be a dagger simplicial set.  
  We define the \emph{homotopy $C_{2}$-fixed point Kan complex} $K^{hC_2}$ of $K$ by 
  \begin{align}
    K^{hC_2} := \Sing \Map_{C_2}(EC_2,|K|).
  \end{align}
\end{definition}

\begin{remark}
  Let $f : EC_{2} \to |K|$ be a $C_{2}$-equivariant map.
  We set a point $x := f(1)$ in $|K|$ for the vertex $1 \in EC_{2}$.
  Since $f$ is equivariant, $f(\omega) = \omega x$.

  An edge $(1,\omega) \in EC_{2}$ gives a path $\gamma : x \to \omega x$.
  Similarly, an edge $(\omega,1) \in EC_{2}$ gives a path $\omega\gamma : \omega x \to x$ in $|K|$.
  A $2$-simplex $(1,\omega,1)$ gives a homotopy $\omega\gamma \circ \gamma \simeq \id_{x}$.
  Higher simplices give coherence data of these homotopies.
  Thus a homotopy $C_{2}$-fixed point is a homotopy-coherent version of a fixed point $x = \omega x$.
\end{remark}

\begin{lemma}
  \label{lem.strict_to_homotopy_fixed_points}
  Let $X$ be a $C_2$-topological space.  
  There exists a natural continuous map 
  \begin{align}
    \kappa_{X} : 
    X^{C_2} \to X^{hC_2}_{\rmtop} 
    : x \mapsto c_{x},
  \end{align}
  where $c_{x} : EC_{2} \to X : e \mapsto x$ is the constant map.
\end{lemma}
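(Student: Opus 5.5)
The plan is to verify three things in order: that $c_x$ is a genuine equivariant map, that $\kappa_X$ is continuous, and that $\kappa_X$ is natural in $X$.

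First, for a strict fixed point $x\in X^{C_2}$, the constant map $c_x:EC_2\to X$ is continuous. It is $C_2$-equivariant because
\begin{align}
  c_x(\omega e)=x=\omega x=\omega c_x(e)
\end{align}
for every $e\in EC_2$. Hence $c_x\in\Map_{C_2}(EC_2,X)=X^{hC_2}_{\rmtop}$, and $\kappa_X$ is well defined as a function of sets.

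Next, for continuity I would factor $\kappa_X$ as a composite
\begin{align}
  X^{C_2}\hookrightarrow X\xrightarrow{\mathrm{const}}\Map(EC_2,X).
\end{align}
The second map is the adjoint of the projection $X\times EC_2\to X$ under the exponential law for compactly generated weak Hausdorff spaces, so it is continuous. Here $X^{C_2}$ carries the subspace topology, made compactly generated, and the inclusion is continuous. The composite lands in the subspace $\Map_{C_2}(EC_2,X)$ of equivariant maps, which is a closed subspace (an equalizer of two maps $\Map(EC_2,X)\rightrightarrows\Map(C_2\times EC_2,X)$). The corestriction is therefore continuous, since the compactly generated refinement of a subspace topology has the required universal property for maps out of compactly generated spaces.

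Finally, for naturality, take an equivariant map $g:X\to Y$. Then $g$ restricts to $X^{C_2}\to Y^{C_2}$ and induces $g_*:\Map_{C_2}(EC_2,X)\to\Map_{C_2}(EC_2,Y)$ by postcomposition. We have $g\circ c_x=c_{g(x)}$, so $g_*\kappa_X=\kappa_Y g$.

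I expect no real obstacle here. The only point needing care is the point-set one: the topologies on the fixed-point and mapping subspaces must be the compactly generated ones, so that the corestriction argument is legitimate.
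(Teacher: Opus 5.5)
Your proposal is correct and follows essentially the same route as the paper. The paper obtains continuity by identifying $\kappa_X$ with precomposition $p^*:\Map_{C_2}(*,X)\to\Map_{C_2}(EC_2,X)$ along the equivariant map $p:EC_2\to *$, which is continuous by the topological enrichment of compactly generated spaces, under the homeomorphism $\Map_{C_2}(*,X)\cong X^{C_2}$; this is a packaged form of your exponential-law and subspace-corestriction argument, and your naturality check $g\circ c_x=c_{g(x)}$ is the same as the paper's.
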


\begin{proof}
  Let $p:EC_2\to *$ be the unique equivariant map.
  In compactly generated spaces, equivariant mapping spaces form a topological enrichment, so precomposition with $p$ is continuous:
  \begin{align}
    p^*:\Map_{C_2}(*,X)\to\Map_{C_2}(EC_2,X).
  \end{align}
  Under the natural homeomorphism $\Map_{C_2}(*,X)\cong X^{C_2}$, this map is precisely $x\mapsto c_x$.
  This proves continuity.
  If $f:X\to Y$ is an equivariant continuous map, then
  \begin{align}
    \Map_{C_2}(EC_2,f)(c_x)
    =
    f\circ c_x
    =
    c_{f(x)}
    =
    \kappa_Y(f^{C_2}(x)).
  \end{align}
  Hence the maps $\kappa_X$ are natural.
\end{proof}

\begin{remark}
  The map $\kappa_X$ need not be a weak equivalence.
  For example, if $X=EC_2$, then $X^{C_2}=\varnothing$, whereas $\Map_{C_2}(EC_2,EC_2)$ contains the identity map.
\end{remark}

\begin{definition} \label{def.canonical_homotopy_fixed_point}
  Let \(K\) be a dagger simplicial set and let \(x\in K_0\).
  By \cref{prop.dagger_realization_action}, the point \(x\in|K|\) is strictly fixed by \(\tau_K\).  
  Therefore the constant map $c_{x} : EC_2\to|K| : e \mapsto x$ is \(C_2\)-equivariant.
  We denote the corresponding vertex of $K^{hC_{2}}$ by $\widehat{x}$.
  We will refer to \(\widehat{x}\) as the \emph{canonical homotopy $C_{2}$-fixed point associated to \(x\)}.
\end{definition}

\begin{remark}
  Let $\calC$ be a dagger quasi-category.
  We write $\calC^{\simeq}$ for the maximal Kan subcomplex of $\calC$, also called its groupoid core.
  The dagger preserves equivalences, so it restricts to a dagger involution on $\calC^{\simeq}$.
\end{remark}

\begin{definition} \label{def.unitary_core}
  Let $\calC$ be a dagger quasi-category.
  We let $\calU(\calC) \subseteq (\calC^{\simeq})^{hC_{2}}$ denote the full sub Kan complex spanned by the canonical homotopy fixed points associated to objects of $\calC$.
  We will refer to $\calU(\calC)$ as the \emph{unitary core} of $\calC$.
\end{definition}

\begin{lemma} \label{prop.unitary_core_functor}
  The construction $\calC \mapsto \calU(\calC)$ defines a functor $\calU:\qCat^\dag\to\Kan$.
\end{lemma}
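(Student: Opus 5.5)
The plan is to build $\calU$ as a composite of functors already available, and then check that the full-subcomplex restriction is compatible with maps.

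First, for a dagger functor $f:\calC\to\calD$ between dagger quasi-categories, $f$ carries equivalences to equivalences. Since $\calC^{\simeq}$ is the maximal Kan subcomplex, spanned by simplices whose edges are equivalences, $f$ restricts to a map $f^{\simeq}:\calC^{\simeq}\to\calD^{\simeq}$. This restriction commutes with the daggers, because the daggers on the cores are restrictions of $\dag_{\calC}$ and $\dag_{\calD}$. The assignment is strictly functorial, so $(-)^{\simeq}$ is a functor $\qCat^{\dag}\to\Kan^{\dag}\subseteq\sSet^{\dag}$.

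Next, by \cref{prop.dagger_realization_action} (3), geometric realization sends dagger morphisms to $C_2$-equivariant maps, and it does so functorially. Postcomposition then gives a continuous map
\begin{align}
  \Map_{C_2}(EC_2,|K|)\to\Map_{C_2}(EC_2,|L|),
\end{align}
using the topological enrichment already invoked in \cref{lem.strict_to_homotopy_fixed_points}. Applying $\Sing$ shows that $K\mapsto K^{hC_2}$ is a functor from $\sSet^{\dag}$ to Kan complexes.

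The only point needing care is that the composite $(f^{\simeq})^{hC_2}$ restricts to the full subcomplexes $\calU(\calC)\to\calU(\calD)$. A full sub Kan complex is determined by its vertex set, so it suffices to check that canonical homotopy fixed points go to canonical ones. This is the naturality computation of \cref{lem.strict_to_homotopy_fixed_points}: for a vertex $x$ of $\calC$ we have $|f|\circ c_x=c_{f(x)}$, so $\widehat{x}\mapsto\widehat{f(x)}$. Here we use that $|f|$ sends the point $[x;1]$ to $[f(x);1]$, and that $f(x)$ is an object of $\calD$ lying in $\calD^{\simeq}$. Any simplex of $\calU(\calC)$ has all its vertices canonical, so its image has canonical vertices and therefore lies in $\calU(\calD)$ by fullness.

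Identities and composition are preserved because each step, namely core restriction, realization, postcomposition and $\Sing$, is strictly functorial, and restriction to subcomplexes preserves this. I also need $\calU(\calC)$ to be Kan. A full subcomplex of a Kan complex spanned by a set of vertices is a union of path components together with all simplices among them, hence Kan, because horns of dimension $\geq 2$ contain every vertex of the simplex. Dimension $1$ horn-filling needs no new vertex, since the filler can be taken to be a degenerate edge on the existing vertex. I expect no real obstacle here; the only subtle point is the vertex-level check above.
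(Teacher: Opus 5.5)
Your proposal is correct and follows the paper's proof step for step: restrict $f$ to the cores, use \cref{prop.dagger_realization_action} for the equivariance of $|f^{\simeq}|$, postcompose, and observe $|f^{\simeq}|\circ c_x=c_{f(x)}$, so that canonical vertices go to canonical vertices and the map restricts to the full subcomplexes. Your extra check that $\calU(\calC)$ is Kan is the horn argument of \cref{dj.lem.full-subkan}, but drop the phrase ``a union of path components'': it is false, since a full subcomplex on a set of vertices need not be one, and the horn argument alone suffices.
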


\begin{proof}
  Let $f : \calC \to \calD$ be a dagger functor. 
  Then it restricts to a dagger $f^{\simeq} : \calC^{\simeq} \to \calD^{\simeq}$.
  By \cref{prop.dagger_realization_action}, $|f^{\simeq}| : |\calC^{\simeq}| \to |\calD^{\simeq}|$ is \(C_2\)-equivariant.  
  Therefore postcomposition induces $(f^\simeq)^{hC_2} : (\calC^{\simeq})^{hC_2} \to (\calD^{\simeq})^{hC_2}$.
  For every object \(x\in\calC_0\), we have $|f^\simeq|\circ c_x = c_{f(x)}$.
  Hence $(f^\simeq)^{hC_2}(\widehat{x}) = \widehat{f(x)}$.
  Thus it restricts uniquely to $\calU(f) : \calU(\calC) \to \calU(\calD)$.
\end{proof}

\subsection{The characterization of dagger Joyal equivalences}

For a functor between quasi-categories, being a Joyal equivalence is equivalent both to being fully faithful and essentially surjective.
The dagger-specific issue is that ordinary essential surjectivity must be replaced by essential surjectivity through coherently unitary equivalences.
We compare this condition with surjectivity on the components of the unitary core (\cref{dj.thm.intrinsic-equivalence}).

\begin{remark}\label{dj.prop.ordinary-recognition}
  Let $f:X\to Y$ be a functor between quasi-categories.
  Then $f$ is a Joyal equivalence if and only if $f$ is fully faithful and essentially surjective.
  In this case, the induced map of groupoid cores $f^{\simeq}:X^{\simeq}\to Y^{\simeq}$ is a weak homotopy equivalence.
\end{remark}

\begin{notation}\label{dj.not.intrinsic-mapping}
  We fix some constructions:
  \begin{itemize}
    \item for Kan complex $K$ and vertices $a,b\in K_0$, we let $\Path_K(a,b)$ denote the homotopy fiber over $(a,b)$ of the endpoint map $K^{\Delta^1}\to K\times K$;
    \item for a compactly generated space $T$ and points $a,b\in T$, let
    \begin{align}
      P_{a,b}T
      :=
      \{\gamma:[0,1]\to T\mid\gamma(0)=a,\ \gamma(1)=b\}
    \end{align}
    with the compact-open topology followed by compact generation.
    Since $\{0,1\}\to[0,1]$ is a Hurewicz cofibration, endpoint evaluation $T^{[0,1]}\to T\times T$ is a Hurewicz fibration in compactly generated spaces.
    Thus $P_{a,b}T$ represents its derived fiber over $(a,b)$.
  \end{itemize}
\end{notation}

\begin{lemma}\label{dj.lem.full-subkan}
  Let $K$ be a Kan complex and let $A\subseteq K$ be the full simplicial subset spanned by a set of vertices.
  Then $A$ is a Kan complex.
  For every $a,b\in A_0$, the inclusion induces a weak equivalence $\Path_A(a,b)\to\Path_K(a,b)$.
\end{lemma}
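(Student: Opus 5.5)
The argument has two parts: the Kan condition for $A$, and the comparison of path spaces. Throughout, write $i:A\to K$ for the inclusion.

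For the Kan condition, consider a horn $h:\Lambda^n_k\to A$ with $n\geq1$. Since $K$ is Kan, $h$ extends to some $\sigma:\Delta^n\to K$. For $n\geq1$ the horn $\Lambda^n_k$ already contains every vertex of $\Delta^n$, so all vertices of $\sigma$ lie in $A_0$. Because $A$ is full, meaning a simplex of $K$ lies in $A$ exactly when all its vertices do, $\sigma$ lies in $A$. This gives the required filler, so $A$ is a Kan complex.

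For the path spaces, I will use the fact that both $A$ and $K$ are Kan. The endpoint maps $A^{\Delta^1}\to A\times A$ and $K^{\Delta^1}\to K\times K$ are then Kan fibrations, by the pullback-power property for $\partial\Delta^1\to\Delta^1$. So $\Path_A(a,b)$ and $\Path_K(a,b)$ may be computed as the strict fibres. By the same fullness argument applied to $\Delta^1\times\Delta^m$, whose vertices are $\{0,1\}\times[m]$, the square
\begin{align}
  \begin{matrix}
    A^{\Delta^1} & \to & K^{\Delta^1}\\
    \downarrow && \downarrow\\
    A\times A & \to & K\times K
  \end{matrix}
\end{align}
is a pullback. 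Indeed, an $m$-simplex of $K^{\Delta^1}$ whose two endpoint restrictions lie in $A$ has all its vertices in $A_0$, hence lies in $A^{\Delta^1}$. Taking fibres over $(a,b)$, this shows that the strict fibres are actually isomorphic, not merely weakly equivalent. Hence $\Path_A(a,b)\to\Path_K(a,b)$ is a weak equivalence.

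The only step that needs care is the identification of homotopy fibres with strict fibres, which relies on the endpoint maps being fibrations. If the paper's $\Path$ is instead defined through a functorial factorization, I would conclude by invariance of homotopy fibres under the strict-fibre comparison, since both endpoint maps are fibrations between Kan complexes. Alternatively, one could note that $A\to K$ is the inclusion of a union of path components of the Kan complex $K$, restricted to the chosen vertices, and compare via $\pi_1$; however, the pullback argument above is cleaner and is the one I would use.
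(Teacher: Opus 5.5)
\textbf{Verdict.} Your approach is the paper's. Fullness carries fillers and path-simplices found in $K$ into $A$, and because the endpoint maps are Kan fibrations the homotopy fibres can be computed as strict fibres. Your pullback-square formulation is a global version of the paper's fibrewise check over $(a,b)$. It is correct: every vertex of $\Delta^m\times\Delta^1$ lies in one of the two ends $\Delta^m\times\{0\}$ or $\Delta^m\times\{1\}$.

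\textbf{The error at $n=1$.} In the Kan part, the claim that $\Lambda^n_k$ contains every vertex of $\Delta^n$ for all $n\geq1$ is false.
\begin{itemize}
  \item For $n=1$, the horn $\Lambda^1_k$ is the single vertex $\Delta^{\{k\}}$. A filler chosen in $K$ may then have its other endpoint outside $A_0$, and fullness gives you nothing.
  \item The claim is true only for $n\geq2$. There, each vertex $j$ lies in some face $d_i\Delta^n$ with $i\neq j$ and $i\neq k$, which requires at least three faces.
  \item The repair is the one the paper uses. For $n=1$, take the degenerate edge $s_0a$ on the unique vertex $a$ of the horn. All its vertices equal $a\in A_0$, so it lies in $A$.
\end{itemize}

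\textbf{The closing aside.} Your remark that $A\to K$ is the inclusion of a union of path components is inaccurate. For example, in a connected Kan complex with two vertices, the full simplicial subset spanned by one vertex is not a union of components. You do not rely on this remark, so nothing in the proof depends on it.
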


\begin{proof}
  Let $\Lambda_i^n\to A$ be a horn.
  If $n\geq2$, every vertex of $\Delta^n$ occurs in $\Lambda_i^n$.
  A filler in $K$ therefore has all its vertices in $A_0$, and fullness implies that it factors through $A$.
  If $n=1$, the degenerate edge at the unique vertex of the horn is a filler in $A$.
  Thus $A$ is a Kan complex.

  The endpoint maps $A^{\Delta^1}\to A^{\partial\Delta^1}$ and $K^{\Delta^1}\to K^{\partial\Delta^1}$ are Kan fibrations.
  An $n$-simplex in the strict fiber of the second map over $(a,b)$ is a map $\Delta^n\times\Delta^1\to K$ which is constant at $a$ and $b$ on the two ends.
  Every vertex of its domain is therefore sent to either $a$ or $b$.
  Fullness shows that the map factors through $A$.
  Hence the two strict fibers are isomorphic, and they compute the corresponding homotopy fibers.
\end{proof}

\begin{construction}
  For a simplicial category $\calC$, write $\calC[\Mor^{-1}]$ for its degreewise groupoid completion:
  \begin{align}
    (\calC[\Mor^{-1}])_n
    :=
    \calC_n[\Mor(\calC_n)^{-1}].
  \end{align}
  Here $\calC_n$ denotes the ordinary category with the same objects as $\calC$ and with $\Hom_{\calC_n}(x,y)=\calC(x,y)_n$;
  its composition is induced by the simplicial composition of $\calC$.
  
  If $\calC$ is a dagger simplicial category, then $\calC[\Mor^{-1}]$ inherits a dagger because groupoid completion commutes with opposites.
\end{construction}

\begin{lemma}\label{dj.lem.unitary-derived-fixed}
  Let $\bbG$ be a strict dagger simplicial groupoid on the object set $\{0,1\}$.
  The formula $\sigma(g):=(g^{\dag})^{-1}$ defines a $C_2$-action on $\bbG(0,1)$.
  Then there exists a natural weak equivalence
  \begin{align}
    \RMap_{\sCat^{\dag}_{\{0,1\}}}(\bbI_{\dag},\bbG)
    \simeq
    \underline{\Map}_{C_2} (E_{\bullet}C_2,\bbG(0,1)_{\sigma}).
  \end{align}
\end{lemma}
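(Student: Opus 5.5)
The plan is to compute the derived mapping space by resolving the source $\bbI_{\dag}$ cofibrantly in the fixed-object model structure of \cref{bg.thm.fixed-object}, and to choose a resolution whose maps into a groupoid are easy to describe. Since $\bbG$ is a simplicial groupoid, we first replace $\bbG$ by a fibrant model if necessary: applying $\Ex^\infty$ mapping-spacewise is not available as a macro here, so instead we note that the statement is about $\RMap$, and that the right-hand side only depends on the weak homotopy type of $\bbG(0,1)$ with its $C_2$-action (the simplicial mapping space $\underline{\Map}_{C_2}(E_{\bullet}C_2,-)$ is homotopy invariant on Kan targets, and $\bbG(0,1)$ is Kan being a simplicial group torsor). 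So we may assume $\bbG$ is locally Kan, hence fibrant.

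For the cofibrant resolution, I would build an explicit natural dagger interval $\bbH_E$ adapted to $E_{\bullet}C_2$: take the free dagger simplicial category on the dagger graph with $\bbH_E(0,1)$ generated by a copy of $E_{\bullet}C_2$-indexed data, namely the fixed-object cell complex obtained by attaching, for each nondegenerate simplex $(g_0,\dots,g_n)$ of $E_{\bullet}C_2$, a free off-diagonal cell so that a dagger functor $\bbH_E\to\bbG$ is the same as a $C_2$-equivariant map $E_{\bullet}C_2\to\bbG(0,1)_\sigma$. Concretely, the vertex $1$ goes to $h$, the vertex $\omega$ goes to $(h^{\dag})^{-1}$, and higher simplices encode the coherences relating $h$ and $(h^\dag)^{-1}$; in a groupoid target, the value on all words in $h,h^\dag$ is then forced. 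The key steps in order: (i) verify the universal property $\Hom(\bbH_E\otimes\Delta^k,\bbG)\cong\Hom_{C_2}(E_{\bullet}C_2\times\Delta^k,\bbG(0,1)_\sigma)$ using \eqref{bg.eq.A-univ} and the fact that in a groupoid target the generators of $\bbH_E(0,0)$, $\bbH_E(1,1)$, $\bbH_E(1,0)$ are determined by those in $(0,1)$ via composition with inverses (this is where groupoid-ness and the formula $\sigma(g)=(g^\dag)^{-1}$ enter; check $\sigma^2=\id$ and equivariance); (ii) show $\bbH_E$ is cofibrant, via the cellular description and \cref{bg.lem.reduction}; (iii) show $\bbH_E\to\bbI_{\dag}$ is a local weak equivalence, using contractibility of $E_{\bullet}C_2$ (\cref{lem.EC2_free_contractible}). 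Then the simplicial model structure gives $\RMap(\bbI_{\dag},\bbG)\simeq\Map(\bbH_E,\bbG)$, which by (i) is $\underline{\Map}_{C_2}(E_{\bullet}C_2,\bbG(0,1)_\sigma)$; naturality in $\bbG$ is clear from the construction.

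The main obstacle is (iii) together with (i): a free dagger category generated only by $(0,1)$-cells has mapping spaces given by alternating words, which are not contractible, so $\bbH_E$ as a pure free object will not be a resolution of $\bbI_\dag$. I would therefore first try a two-stage argument: take instead $\bbH_E:=$ the groupoid completion–compatible resolution, i.e. compare everything after applying $(-)[\Mor^{-1}]$, using that maps into a groupoid factor uniquely through degreewise groupoid completion, and show that $\bbH_E[\Mor^{-1}]$ is the free dagger simplicial groupoid on one arrow $h$ with an equivariant identification by $E_{\bullet}C_2$, whose mapping spaces are $E_{\bullet}C_2$-like hence contractible. If this fails, fall back on the Dwyer--Kan-style description: compute $\RMap$ via any natural dagger interval $\bbH$ and show, by the extraction/realization machinery of \cref{bg.lem.extraction,bg.lem.realization}, that restricting to the generator and its conjugate-inverse identifies the derived mapping space with the homotopy fixed points of $\sigma$.
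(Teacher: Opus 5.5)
Your primary construction already fails at step (i), not only at (iii). A dagger simplicial category has no inverses, so attaching one off-diagonal cell per nondegenerate simplex of $E_{\bullet}C_2$ just produces $\bbA_{\dag}(E_{\bullet}C_2)$. By \eqref{bg.eq.A-univ}, its dagger functors into $\bbG$ are arbitrary, non-equivariant maps $E_{\bullet}C_2\to\bbG(0,1)$. Nothing forces the $\omega$-translate of a cell to go to $(h^{\dag})^{-1}$. Attaching one cell per $C_2$-orbit is impossible for the same reason, since the faces of an orbit representative involve such translates.

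The groupoid-completion fallback, as you state it, is false. The completion $\bbA_{\dag}(E_{\bullet}C_2)[\Mor^{-1}]$ is the free simplicial groupoid on the $(0,1)$-edge space $E_{\bullet}C_2\amalg E_{\bullet}C_2$, because the generators $a$ and $(a^{\dag})^{-1}$ remain unrelated. It is therefore weakly equivalent to the free groupoid on two parallel arrows, whose mapping spaces have $\pi_0\cong\bbZ$, so it is not contractible.

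The object with the correct strict universal property is the free simplicial groupoid $\bbF_{01}(E_{\bullet}C_2)$ with $[a]^{\dag}:=[\omega a]^{-1}$. However, it has non-identity isomorphisms in each degree, so it is not cofibrant in $\sCat^{\dag}_{\{0,1\}}$. It therefore cannot be used directly to compute $\RMap_{\sCat^{\dag}_{\{0,1\}}}(\bbI_{\dag},\bbG)$.

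What is missing is a homotopy theory of dagger simplicial groupoids linking the two computations. The link is a comparison of two cofibrant replacements, not an isomorphism $\bbH_E[\Mor^{-1}]\cong\bbF_{01}(E_{\bullet}C_2)$. The paper proceeds as follows.
\begin{enumerate}
  \item It identifies $\sGpd^{\dag}_{\{0,1\}}\cong\Fun(BC_2,\sGpd_{\{0,1\}})$ via $\dag\leftrightarrow\sigma$ and transports the projective model structure.
  \item It shows that $(-)[\Mor^{-1}]\dashv\iota$ is a simplicial Quillen adjunction. Hence $\underline{\Map}(\bbH,\iota\bbG)\cong\underline{\Map}(\bbH[\Mor^{-1}],\bbG)$ for any natural dagger interval $\bbH$.
  \item It invokes \cite[Proposition~9.5]{DK80}, using that $\bbH$ is cofibrant and $\pi_0\bbH$ is a groupoid, to see that $\bbH\to\bbH[\Mor^{-1}]$ is a local weak equivalence. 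So $\bbH[\Mor^{-1}]$ is a cofibrant replacement of $\bbI_{\dag}$ among dagger groupoids.
  \item It uses the left Quillen functor $\bbF_{01}:\Fun(BC_2,\sSet)_{\proj}\to\sGpd^{\dag}_{\{0,1\}}$ and projective cofibrancy of $E_{\bullet}C_2$ to see that $\bbF_{01}(E_{\bullet}C_2)\to\bbF_{01}(*)=\bbI_{\dag}$ is a second cofibrant replacement.
\end{enumerate}
Mapping both replacements into the fibrant $\bbG$ gives a weak equivalence. The adjunction $\bbF_{01}\dashv\ev^{\sigma}_{01}$ then identifies the second mapping space with $\underline{\Map}_{C_2}(E_{\bullet}C_2,\bbG(0,1)_{\sigma})$.

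Your last resort via \cref{bg.lem.extraction,bg.lem.realization} cannot replace this. Those results only produce individual witnesses, i.e.\ $\pi_0$-level information. Moreover, restricting to the generator lands in $\bbG(0,1)$ rather than in the homotopy fixed points.
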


\begin{proof}
  Inversion is simplicial in a strict simplicial groupoid.
  Since the dagger is involutive and commutes with inversion, $\sigma$ is a covariant simplicial involution.

  For a set $O$, the category $\sGpd_O$ of simplicial groupoids with object set $O$ admits a cofibrantly generated simplicial model structure in which weak equivalences and fibrations are detected on all mapping simplicial sets \cite[\S 2]{Ber08Inv}.
  Since this model category is cofibrantly generated and simplicial, the projective model structure $\Fun(BC_{2},\sGpd_O)_{\proj}$ exists and is simplicial by \cite[Theorems 11.6.1 and 11.7.3]{Hir02}.

  For a strict dagger simplicial groupoid $\bbA$, the formula
  $\sigma(g):=(g^{\dag})^{-1}$ defines a covariant simplicial involution fixing the objects.
  Conversely, such an involution defines a dagger by $g^{\dag}:=(\sigma g)^{-1}$.
  These constructions are mutually inverse and identify dagger functors with $C_2$-equivariant simplicial functors.
  Thus there exists an isomorphism of simplicial categories
  \begin{align}
    \sGpd^{\dag}_O
    \cong
    \Fun(BC_{2},\sGpd_O).
  \end{align}
  We transport the projective model structure across this isomorphism.
  Consequently, weak equivalences and fibrations in $\sGpd^{\dag}_O$ are detected on all underlying mapping simplicial sets.

  Degreewise groupoid completion and inclusion give an adjunction
  \begin{align}
    (-)[\Mor^{-1}]:
    \sCat^{\dag}_{O,\Bergner}
    \rightleftarrows
    \sGpd^{\dag}_{O,\Bergner}
    :\iota .
  \end{align}
  This adjunction is simplicially enriched.
  Indeed, cotensors of simplicial groupoids are computed as in simplicial categories and remain simplicial groupoids, so $(\iota\bbA)^{\Delta^n} = \iota(\bbA^{\Delta^n})$.
  Applying the ordinary completion--inclusion adjunction to these cotensors gives the required isomorphism of simplicial mapping objects.
  Finally, $\iota$ preserves fibrations and trivial fibrations because both model structures detect them on the same mapping simplicial sets.
  Hence this is a simplicial Quillen adjunction.

  From now on, take $O=\{0,1\}$.
  Choose a natural dagger interval $\bbH\to\bbI_{\dag}$.
  The object $\bbH$ is cofibrant in $\sCat^{\dag}_{\{0,1\},\Bergner}$, and $\UdagSCat\bbH$ is cofibrant in $\sCat_{\{0,1\},\Bergner}$ by \cref{bg.lem.reduction} and a relative-cell induction.
  Moreover, $\pi_0\bbH$ is a groupoid by \cref{bg.lem.interval-basic}.
  Therefore \cite[Proposition~9.5]{DK80} gives a local weak equivalence $\bbH\to\bbH[\Mor^{-1}]$.
  Two-out-of-three shows that $\bbH[\Mor^{-1}]\to\bbI_{\dag}$ is a local weak equivalence.
  Since groupoid completion is left Quillen, $\bbH[\Mor^{-1}]$ is cofibrant.
  The enriched adjunction consequently identifies the derived mapping space in the statement with the mapping space out of any cofibrant replacement of $\bbI_{\dag}$ in $\sGpd^{\dag}_{\{0,1\},\Bergner}$.

  We now give another replacement.
  For a $C_2$-simplicial set $A$, let $\bbF_{01}(A)$ be the free simplicial groupoid on generators $[a]:0\to1$, for $a\in A$, with dagger determined by $[a]^{\dag}:=[\omega a]^{-1}$.
  A dagger functor $\bbF_{01}(A)\to\bbG$ is uniquely determined by a $C_2$-equivariant map $A\to\bbG(0,1)_{\sigma}$.
  Hence there exists a natural enriched adjunction
  \begin{align}
    \underline{\Map}_{\sGpd^{\dag}_{\{0,1\},\Bergner}}(\bbF_{01}(A), \bbG)
    \cong
    \underline{\Map}_{C_2}(A,\bbG(0,1)_{\sigma}).
  \end{align}
  Let $\ev_{01}^{\sigma}$ denote the right adjoint given by $\ev_{01}^{\sigma}(\bbG):=\bbG(0,1)_{\sigma}$.
  In this model category, fibrations and trivial fibrations are detected on all four underlying mapping simplicial sets.
  Consequently, $\ev_{01}^{\sigma}$ preserves fibrations and trivial fibrations.
  Hence
  \begin{align}
    \bbF_{01}:
    \Fun(BC_{2},\sSet)_{\proj}
    \rightleftarrows
    \sGpd^{\dag}_{\{0,1\},\Bergner}
    :\ev_{01}^{\sigma}
  \end{align}
  is a simplicial Quillen adjunction.
  The action on $E_{\bullet}C_2$ is free in every simplicial degree, so $E_{\bullet}C_2$ is cofibrant in $\Fun(BC_{2},\sSet)_{\proj}$.
  Indeed, its nondegenerate simplices form free $C_2$-orbits, giving the usual projective cell decomposition.
  Thus $\bbF_{01}(E_{\bullet}C_2)$ is cofibrant in $\sGpd^{\dag}_{\{0,1\},\Bergner}$.
  There exist canonical identifications $\bbF_{01}(*)=\bbI_{\dag}$ and an augmentation $\bbF_{01}(E_{\bullet}C_2)\to\bbI_{\dag}$.
  After forgetting the $C_2$-action, this is the image of the weak equivalence $E_{\bullet}C_2\to*$ under the ordinary fixed-object free-groupoid functor.
  The ordinary functor is left Quillen from $\sSet_{\Kan}$ to $\sGpd_{\{0,1\},\Bergner}$ because its right adjoint, evaluation at $(0,1)$, preserves fibrations and trivial fibrations.
  Since every object of $\sSet_{\Kan}$ is cofibrant, it carries $E_{\bullet}C_2\to*$ to a local weak equivalence.
  Hence $\bbF_{01}(E_{\bullet}C_2)$ is another cofibrant replacement of $\bbI_{\dag}$.

  Every strict simplicial groupoid has Kan mapping simplicial sets.
  Thus $\bbG$ is fibrant in $\sGpd^{\dag}_{\{0,1\},\Bergner}$ and in $\sCat^{\dag}_{\{0,1\},\Bergner}$.
  The enriched mapping space out of $\bbF_{01}(E_{\bullet}C_2)$ therefore computes the derived mapping space from $\bbI_{\dag}$ to $\bbG$.
  Thus we have 
  \begin{align}
    \RMap_{\sCat^{\dag}_{\{0,1\},\Bergner}}(\bbI_{\dag},\bbG)
    &\simeq \underline{\Map}_{\sCat^{\dag}_{\{0,1\},\Bergner}}(\bbH,\iota\bbG)\\
    &\cong \underline{\Map}_{\sGpd^{\dag}_{\{0,1\},\Bergner}}(\bbH[\Mor^{-1}],\bbG)\\
    &\simeq \underline{\Map}_{\sGpd^{\dag}_{\{0,1\},\Bergner}}(\bbF_{01}(E_\bullet C_2),\bbG)\\
    &\cong \underline{\Map}_{C_2}(E_\bullet C_2,\bbG(0,1)_\sigma).
  \end{align}
  The middle weak equivalence is the comparison of the derived mapping spaces obtained from the two cofibrant replacements
  $\bbH[\Mor^{-1}]\to\bbI_{\dag}$ and
  $\bbF_{01}(E_{\bullet}C_2)\to\bbI_{\dag}$ against the fibrant object $\bbG$.
  This also proves naturality in $\bbG$.
\end{proof}

Recall from \cref{prop.dagger_realization_action} that, for a dagger simplicial set $K$, the Real realization involution is
\begin{align}
  \tau_K
  :=
  \rho_K\circ|\dag_K|
  : |K|\to|K|: 
  [\sigma;(t_0,\ldots,t_n)]
  \mapsto 
  [\sigma^\dagger;(t_n,\ldots,t_0)].
\end{align}
It defines the action of the non-identity element of $C_2$ on $|K|$.

\begin{construction}\label{dj.con.real-edgewise}
  Let $K$ be a simplicial set.
  Use the convention $\Tw(K)_n:=K_{2n+1}$ induced by $\epsilon([n]):=[n]^{\myop}\star[n]\cong[2n+1]$.
  Write $\rho_q:[q]\to[q]$ for the order-reversing bijection defined by $\rho_q(r)=q-r$.
  For an order-preserving map $\alpha:[m]\to[n]$, the map $\epsilon(\alpha):[2m+1]\to[2n+1]$ is
  \begin{align}
    \epsilon(\alpha)(r)
    =
    \begin{cases}
      n-\alpha(m-r),
      &0\leq r\leq m,\\
      n+1+\alpha(r-m-1),
      &m+1\leq r\leq2m+1.
    \end{cases}
  \end{align}
  These formulas give an relation 
  \begin{align}
    \rho_{2n+1}\epsilon(\alpha)\rho_{2m+1} = \epsilon(\alpha).
  \end{align}
\end{construction}

\begin{lemma}\label{dj.lem.real-edgewise-homeomorphism}
  Let $K$ be a simplicial set.
  Then we have:
  \begin{enumerate}
    \item there exists a natural edgewise-subdivision homeomorphism 
    \begin{align}
      h_K:(|\Tw(K)|,|D_{\Tw}|)\to(|K|,\tau_K).
    \end{align}
    For every $x\in K_0$, it carries the vertex $s_0x\in\Tw(K)_0$ to $x\in K_0$;
    \item if $K$ is a dagger simplicial set with dagger $d:K\to K^{\myop}$, then the maps $d_{2n+1}$ define a strict simplicial involution $D_{\Tw}$ on $\Tw(K)$;
    \item under the hypothesis of (2), $h_K$ is a strict $C_2$-equivariant homeomorphism.
  \end{enumerate}
\end{lemma}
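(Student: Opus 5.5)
We will treat the three assertions in the order (2), (1), (3). Assertion (2) is purely combinatorial. For $\sigma\in\Tw(K)_n=K_{2n+1}$ we put $D_{\Tw}(\sigma):=\sigma^{\dag}=d_{2n+1}(\sigma)$. Involutivity comes from $(\sigma^\dag)^\dag=\sigma$. The simplicial structure map of $\Tw(K)$ along $\alpha$ is $\epsilon(\alpha)^*$. The dagger satisfies $\beta^*(\tau^\dag)=((\beta^{\rev})^*\tau)^\dag$, and for $\beta:[2m+1]\to[2n+1]$ we have $\beta^{\rev}=\rho_{2n+1}\beta\rho_{2m+1}$. The relation $\rho_{2n+1}\epsilon(\alpha)\rho_{2m+1}=\epsilon(\alpha)$ recorded in \cref{dj.con.real-edgewise} therefore gives $\epsilon(\alpha)^*(\sigma^\dag)=(\epsilon(\alpha)^*\sigma)^\dag$. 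So $D_{\Tw}$ commutes with all face and degeneracy maps covariantly, i.e.\ it is a strict simplicial involution. In particular $|D_{\Tw}|$ is an honest $C_2$-action on $|\Tw(K)|$, which makes the pair in (1) meaningful.

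For (1) we use the standard Segal/edgewise-subdivision homeomorphism. On the simplex $\Delta^n_{\rmtop}$ indexed by $\sigma\in K_{2n+1}$ we define
\begin{align}
  h_K[\sigma;(t_0,\ldots,t_n)] := \Bigl[\sigma;\tfrac12(t_n,\ldots,t_0,t_0,\ldots,t_n)\Bigr],
\end{align}
which is the realization of the affine map $\Delta^n_{\rmtop}\to\Delta^{2n+1}_{\rmtop}$ induced by $\epsilon$. Well-definedness reduces to the identity $|\epsilon(\alpha)|\circ e_m=e_n\circ|\alpha|$, where $e_n$ is the displayed affine embedding; this is checked coordinatewise exactly as in \cref{lem.coordinate_reversal_naturality}. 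That $h_K$ is a homeomorphism is the classical statement that the edgewise subdivision (in the $[n]^{\myop}\star[n]$ convention) is naturally homeomorphic to the original realization. We will cite it (e.g.\ Segal, or \cite[\S 3.1]{Dotto16}) rather than reprove it. It reduces by naturality and colimit-preservation to $K=\Delta^q$, where it is the standard barycentric-type decomposition of $\Delta^q_{\rmtop}$ into convex cells. Naturality in $K$ is immediate from the formula. For $x\in K_0$, the vertex $s_0x\in\Tw(K)_0=K_1$ is the degenerate edge at $x$, and $h_K$ sends it to $[s_0x;(\tfrac12,\tfrac12)]=[x;1]$.

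For (3) it suffices to compare the two involutions on representatives:
\begin{align}
  h_K|D_{\Tw}|[\sigma;t]=\bigl[\sigma^\dag;\tfrac12(t_n,\ldots,t_0,t_0,\ldots,t_n)\bigr],
\end{align}
while
\begin{align}
  \tau_Kh_K[\sigma;t]=\bigl[\sigma^\dag;R_{2n+1}\tfrac12(t_n,\ldots,t_0,t_0,\ldots,t_n)\bigr].
\end{align}
The vector $\tfrac12(t_n,\ldots,t_0,t_0,\ldots,t_n)$ is palindromic, so $R_{2n+1}$ fixes it, and the two sides agree. This is exactly the reason for using the symmetric convention $[n]^{\myop}\star[n]$. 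The main obstacle is the homeomorphism claim in (1); everything else is a direct formula check. If a citation is deemed insufficient, we will prove it for representables, where both sides are CW structures on $\Delta^q_{\rmtop}$ and the map is cellwise affine and bijective, and then pass to colimits using that both realizations preserve colimits and $h$ is natural.
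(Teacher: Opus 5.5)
Your proposal is correct and follows the paper's proof essentially step for step. It uses the same affine maps $e_n(t)=\frac12(t_n,\ldots,t_0,t_0,\ldots,t_n)$ with the coordinatewise check $|\epsilon(\alpha)|e_m=e_n|\alpha|$, cites the edgewise-subdivision theorem for the homeomorphism, uses the relation $\rho_{2n+1}\epsilon(\alpha)\rho_{2m+1}=\epsilon(\alpha)$ for (2), and uses palindromicity $R_{2n+1}e_n=e_n$ for (3). Proving (2) first, so that $|D_{\Tw}|$ in (1) is actually defined, is only a sensible expository reordering.
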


\begin{proof}
  (1)
  The standard homeomorphism $h_K:|\Tw(K)|\to|K|$ is induced in degree $n$ by
  \begin{align}
    e_n(t_0,\ldots,t_n)
    :=
    \frac12(t_n,\ldots,t_0,t_0,\ldots,t_n)
    \quad \text{and} \quad
    h_K([z;t])
    :=
    [z;e_n(t)].
  \end{align}
  For $0\leq j\leq n$, the coordinates in positions $n-j$ and $n+1+j$ of both $|\epsilon(\alpha)|e_m(t)$ and $e_n|\alpha|(t)$ are $\frac12\sum_{\alpha(i)=j}t_i$.
  Thus the maps $e_n$ induce $h_K$, and the edgewise-subdivision theorem identifies it as a natural homeomorphism.
  This is the coordinate-reversed form of \cite[Remark 3.1.1]{Dotto16}.
  \begin{align}
    h_K[s_0x;1]
    =
    [s_0x;(1/2,1/2)]
    =
    [x;1].
  \end{align}

  (2)
  Suppose now that $K$ is a dagger simplicial set.
  The dagger simplicial identity, together with the last equality of \cref{dj.con.real-edgewise}, gives 
  \begin{align}
    d_{2m+1}K(\epsilon(\alpha)) =K(\epsilon(\alpha))d_{2n+1}.
  \end{align}
  Since $d$ is involutive, the maps $d_{2n+1}$ form a strict simplicial involution $D_{\Tw}$ on $\Tw(K)$.
  
  (3)
  Since $R_{2n+1}e_n=e_n$, \cref{prop.dagger_realization_action} gives 
  \begin{align}
    h_K[d_{2n+1}z;t]=\tau_Kh_K[z;t].
  \end{align}
  Thus $h_K$ is strictly $C_2$-equivariant.
\end{proof}

\begin{lemma}\label{dj.lem.twisted-target-equivalence}
  Let $K$ be a simplicial set, and use the edgewise-subdivision convention of \cref{dj.con.real-edgewise}.
  The maps $\nu_n:[n]\to[2n+1]$ defined by $\nu_n(i):=n+1+i$ induce a natural simplicial map $t_K:\Tw(K)\to K$.
\end{lemma}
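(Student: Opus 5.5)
The plan is to define $t_K$ levelwise and reduce both simpliciality and naturality to identities between maps in $\prism$. In degree $n$, I will set $(t_K)_n := K(\nu_n):K_{2n+1}\to K_n$, that is, $z\mapsto \nu_n^*z$. This is the restriction of a $(2n+1)$-simplex of $K$ to its last $n+1$ vertices, which form the copy of $[n]$ in $\epsilon([n])=[n]^{\myop}\star[n]$. Because $K$ is contravariant, the family $(t_K)_n$ is simplicial exactly when, for every order-preserving $\alpha:[m]\to[n]$, we have $K(\alpha)\circ K(\nu_n)=K(\nu_m)\circ K(\epsilon(\alpha))$ as maps $K_{2n+1}\to K_m$. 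By functoriality, it suffices to prove the identity $\epsilon(\alpha)\circ\nu_m=\nu_n\circ\alpha$ of maps $[m]\to[2n+1]$.

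This identity follows from the explicit formula in \cref{dj.con.real-edgewise}. For $0\le i\le m$ we have $\nu_m(i)=m+1+i$, which lies in the second range $m+1\le r\le 2m+1$. The formula there gives $\epsilon(\alpha)(m+1+i)=n+1+\alpha(i)=\nu_n(\alpha(i))$. In other words, $\nu$ is a natural transformation from the identity functor on $\prism$ to $\epsilon$, and $t_K$ is precomposition with it. It is therefore a map $\Tw(K)=K\circ\epsilon^{\myop}\to K$.

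Naturality in $K$ is formal. For a simplicial map $f:K\to L$, the map $\Tw(f)$ is $f_{2n+1}$ in degree $n$, and $f_n\circ K(\nu_n)=L(\nu_n)\circ f_{2n+1}$ holds because $f$ is itself natural. No step here is a genuine obstacle. The only point that needs care is to check the correct branch of the piecewise formula for $\epsilon(\alpha)$, since $\nu_m$ lands in the upper half, so that the identity holds on the nose and not merely up to reversal.

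I would also record that $t_K(s_0x)=x$ for each vertex $x$: here $\nu_0(0)=1$ and $s_0x$ restricted to vertex $1$ is $x$. This compatibility with \cref{dj.lem.real-edgewise-homeomorphism} (1) will be needed later.
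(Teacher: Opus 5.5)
Your proposal is correct and follows essentially the same route as the paper: you define $(t_K)_n:=K(\nu_n)$, reduce simpliciality to the identity $\epsilon(\alpha)\nu_m=\nu_n\alpha$ (checked on the upper branch of the formula in \cref{dj.con.real-edgewise}), and get naturality in $K$ from $f_nK(\nu_n)=L(\nu_n)f_{2n+1}$. Your added observation that $t_K(s_0x)=d_0s_0x=x$ is also correct.
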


\begin{proof}
  For every order-preserving map $\alpha:[m]\to[n]$, we have $\epsilon(\alpha)\nu_m = \nu_n\alpha$.
  Consequently, the maps
  \begin{align}
    (t_K)_n
    :=
    K(\nu_n):
    \Tw(K)_n=K_{2n+1}
    \to
    K_n
  \end{align}
  define a natural simplicial map $t_K:\Tw(K)\to K$.
  Naturality in $K$ follows from $f_nK(\nu_n)=L(\nu_n)f_{2n+1}$ for every simplicial map $f:K\to L$.
\end{proof}

\begin{proposition}\label{dj.prop.real-identity-section}
  Let $\bbG$ be a strict dagger simplicial groupoid.
  Write $D:\bbG\to\bbG^{\myop}$ for its dagger, let $I:\bbG^{\myop}\to\bbG$ be inversion, put $\sigma:=ID$, and put $X:=N(\UdagSCat\bbG)$.
  Then we have:
  \begin{enumerate}
    \item $\sigma$ is a covariant involution fixing the objects and $X$ is a Kan complex;
    \item for the involution $D_{\Tw}$ of \cref{dj.lem.real-edgewise-homeomorphism}, there exists a morphism 
    \begin{align}
      j:(X,N\sigma) \to (\Tw(X),D_{\Tw})
    \end{align}
    in $\Fun(BC_{2},\sSet)_{X_0/}$, natural in strict dagger simplicial functors.
    It satisfies $j(x)=\id_x$, and its value on $h:x\to y$ is represented by $(h^{-1},h)$;
    \item after forgetting the $C_2$-actions, the map $t_X$ of \cref{dj.lem.twisted-target-equivalence} satisfies $t_Xj_{\und}\simeq\id_X$.
    Moreover, $j$ is an equivalence.
  \end{enumerate}
\end{proposition}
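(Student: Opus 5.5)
Part (1) is a direct check, and I would dispose of it first. The dagger $D:\bbG\to\bbG^{\myop}$ and inversion $I:\bbG^{\myop}\to\bbG$ are both identity on objects and contravariant. So $\sigma=ID$ is a covariant simplicial functor fixing objects. Since the dagger of a strict groupoid commutes with inversion ($(g^{-1})^{\dag}=(g^{\dag})^{-1}$, obtained by applying $\dag$ to $gg^{-1}=\id$), we get $\sigma^2(g)=((g^{\dag})^{-1})^{\dag})^{-1}=g$. The nerve of a strict simplicial groupoid is the homotopy coherent nerve of a locally Kan simplicial category whose homotopy category is a groupoid. It is therefore a quasi-category with all edges invertible, hence a Kan complex (Joyal). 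Equivalently, one can use $N(\UdagSCat\bbG)$ directly, since a simplicial groupoid has Kan mapping spaces.

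For (2), I would build $j$ simplex-by-simplex using the identification $\Tw(X)_n=X_{2n+1}=\Hom(\frakC[2n+1],\bbG)$. An $n$-simplex $\phi:\frakC[n]\to\bbG$ of $X$ determines a simplicial functor $\frakC[n]^{\myop}\star\frakC[n]\to\bbG$. On the second factor it is $\phi$. On the first factor it is $I\circ\phi^{\myop}$ composed with the natural comparison $\frakC([n]^{\myop})\cong\frakC[n]^{\myop}$. The join data, namely the cubes $\frakC[2n+1](r,s)$ for $r\le n<s$, are filled using the groupoid structure. Concretely, the mapping cube from $n-a$ to $n+1+b$ factors as a product of cubes through the "cone point", and we send it to $\phi(b)\circ\phi(a)^{-1}$-type composites. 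Checking compatibility with $\epsilon(\alpha)$ then reduces to functoriality of $\phi$ and of $I$, and the formula yields $j(x)=\id_x$ and $j(h)=(h^{-1},h)$ on edges. The cleanest route is to realize $X\to\Tw(X)$ as $N$ applied to a simplicial functor $\frakC[\epsilon(-)]\to$ "$\frakC[-]$ with inverses adjoined". That is, I would use the unit of the groupoid completion $\frakC[n]\to\frakC[n][\Mor^{-1}]$ together with a retraction $\frakC[2n+1]\to\frakC[n][\Mor^{-1}]$ sending $i\le n$ to $n-i$ and $i>n$ to $i-n-1$, with the initial edges mapped to inverses. Naturality in strict dagger functors is then formal. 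Equivariance is the identity $D_{\Tw}\circ j=j\circ N\sigma$. Since $D_{\Tw}$ reverses $[2n+1]$ via $\rho_{2n+1}$ and applies $D$, while the retraction is symmetric under $\rho_{2n+1}$ (exchanging the two halves), the check comes down to the equation $D(h^{-1})=\sigma(h)$ paired with $D(h)=\sigma(h)^{-1}$, which is exactly the definition of $\sigma$. The constraint under $X_0$ holds because $j(x)=\id_x=s_0x$.

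For (3), $t_Xj$ restricts an $n$-simplex of $\Tw(X)$ along $\nu_n$, i.e.\ to the second half. By construction that half is $\phi$ itself, so $t_Xj_{\und}=\id_X$ strictly, which in particular gives a homotopy. To show $j$ is an equivalence, it therefore suffices to show that $t_X$ is a weak equivalence. The general fact is that $t_K:\Tw(K)\to K$ is a weak homotopy equivalence for every simplicial set $K$. I would prove this by noting that both sides commute with colimits, so that, by a skeletal induction with the gluing lemma, the claim reduces to $K=\Delta^m$. There $\Tw(\Delta^m)$ is the nerve of the twisted arrow category of $[m]$, which has an initial object and is therefore contractible. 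Two-out-of-three then makes $j_{\und}$ a weak equivalence. Because the weak equivalences of $\Fun(BC_2,\sSet)_{\proj}$ (and of the under-category) are underlying ones, $j$ is an equivalence there.

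The main obstacle is the explicit definition of $j$ on higher simplices in (2), namely producing the join cubes coherently and verifying equivariance on them. I would first try the retraction-through-groupoid-completion description. If that is awkward, I would define $j$ only up to a zigzag, for example by passing through $N$ of the twisted arrow construction of $\bbG$ itself, at the cost of weakening "morphism" to an equivariant zigzag.
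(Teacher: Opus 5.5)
Your part (1) is fine. Your route to the ``Moreover'' in (3) would also be clean \emph{if} you had a strict simplicial map $j$ whose second half is $\phi$: you show $t_K$ is a weak equivalence for all $K$ by skeletal induction down to the contractible $\Tw(\Delta^m)$, then use $t_Xj_{\und}=\id_X$ and two-out-of-three.

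The gap is that (2) never constructs such a $j$. For $j$ to be a simplicial map you need simplicial functors $F_n:\frakC[2n+1]\to\frakC[n][\Mor^{-1}]$ with $F_n\circ\frakC(\epsilon(\alpha))=\frakC(\alpha)[\Mor^{-1}]\circ F_m$ for every $\alpha:[m]\to[n]$. This does not reduce to functoriality of $\phi$ and $I$.

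Your concrete recipe routes each cross cube from $n-a$ to $n+1+b$ through the middle positions $n,n+1$, i.e.\ through the object $0$, via $S\mapsto S\cup\{n,n+1\}$. For each $\phi$ this does give a $\rho_{2n+1}$-symmetric simplicial functor. However, it does not commute with faces that move $0$. Take $n=2$ and $\alpha=\delta^0$, so $\epsilon(\delta^0)=(0,1,4,5)$. The edge $0\to 2$ of $d_0j(\phi)$ is $j(\phi)(\{0,4\})=\phi_{01}\phi_{02}^{-1}$. The edge $0\to2$ of $j(d_0\phi)$ is $\phi_{12}^{-1}$. These agree only if $\phi_{12}\phi_{01}=\phi_{02}$ strictly, which is false in general.

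The problem is structural, not a matter of picking a better basepoint. For any $j$ built from $\phi$ and inversion alone, naturality along the injections $[1]\to[n]$ forces the vertex $\{n-a,n+1+b\}$ of each cross cube to go to the direct edge between $a$ and $b$ (or its inverse). The vertex passing through $n,n+1$ must go to the composite through $0$. So every cross cube needs non-degenerate fillers, chosen coherently in $[n]$ and symmetrically under $\rho_{2n+1}$. That coherent choice is the whole content of (2). Your ``retraction through the groupoid completion'' merely restates it, and it does not follow formally from the contractibility of the mapping spaces of $\frakC[n][\Mor^{-1}]$.

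The paper does not attempt a strict construction. It imports from [HLS25] (Lemma 3.9 and the proof of Lemma 3.14), applied to the duality $d=\iota\circ N\sigma$, a natural equivalence $r:(\Tw(X),D_{\Tw})\to(X,N\sigma)$ in $\Fun(BC_2,\sSet)$. It then takes $j$ to be an inverse of $r$. Only the low-dimensional representative ($\id_x$ and $(h^{-1},h)$) is made explicit, which is why the statement says ``represented by'' and why \cref{dj.prop.strict-borel-presentation} is needed afterwards. Part (3) comes from factoring the endpoint map $\Tw(X)\to X^{\myop}\times X$ as $(\iota,\id_X)\circ r_{\und}$, which gives $t_X\simeq r_{\und}$.

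Your fallback of defining $j$ ``up to a zigzag'' would have to be developed into something of this kind. In that case the strict identity $t_Xj_{\und}=\id_X$ is lost, so your argument for (3) would also have to be replaced by a homotopy $t_X\simeq r_{\und}$, as in the paper.
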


\begin{proof}
  (1)
  For a morphism $g$ of $\bbG$, one has $\sigma(g)=(Dg)^{-1}$.
  If $f$ and $g$ are composable, then
  \begin{align}
    \sigma(gf)
    =
    (D(gf))^{-1}
    =
    (DfDg)^{-1}
    =
    (Dg)^{-1}(Df)^{-1}
    =
    \sigma(g)\sigma(f).
  \end{align}
  Thus $\sigma$ is covariant.
  Since the dagger commutes with inversion, one has $\sigma^2=\id_{\bbG}$.
  Both the dagger and inversion fix the objects, so $\sigma$ fixes them as well.

  Every mapping simplicial set of $\UdagSCat\bbG$ is a Kan complex, and its homotopy category is a groupoid.
  Hence $X$ is a Kan complex.

  (2)
  Put $\rho:=N\sigma$.
  Let $d:X\to X^{\myop}$ be the duality induced by $D$, and let $\iota:X\to X^{\myop}$ be the equivalence induced by inversion.
  Since $I^{\myop}I=\id_{\bbG^{\myop}}$, the naturality and strict coherence of the nerve--opposite comparison in \cref{dj.lem.lift} give $\iota\rho = d$.
  Applying \cite[Lemma~3.9 and proof of Lemma~3.14]{HLS25} to $d$, and passing from their convention to that of \cref{dj.con.real-edgewise}, gives a natural equivalence
  \begin{align}
    r:
    (\Tw(X),D_{\Tw})
    \xrightarrow{\simeq}
    (X,\rho)
  \end{align}
  in $\Fun(BC_{2},\sSet)$.
  Here the transported involution is $D_{\Tw}$ because both involutions act in simplicial degree $n$ by $d_{2n+1}$.
  Let $j:(X,\rho) \to (\Tw(X),D_{\Tw})$ be the constant-diagram inverse of $r$ furnished by the cited proof.
  Its constant-diagram representative satisfies $j(x)=\id_x$ and $j(h)=(h^{-1},h)$.
  Since $\sigma$ fixes the objects and $D_{\Tw}(\id_x)=\id_x$, this defines $j$ under $X_0$.
  Naturality of the cited equivalence and of the convention change proves the asserted naturality in strict dagger simplicial functors.

  (3)
  We work after forgetting the $C_2$-actions.
  By \cite[proof of Lemma~3.14]{HLS25}, the endpoint map $\Tw(X)\to X^{\myop}\times X$ factors as
  \begin{align}
    \Tw(X)
    \xrightarrow{r_{\und}}
    X
    \xrightarrow{(\iota,\id_X)}
    X^{\myop}\times X.
  \end{align}
  Taking the second component gives $t_X\simeq r_{\und}$.
  Since $j$ was chosen as an inverse of $r$, we obtain
  \begin{align}
    t_Xj_{\und}
    \simeq
    r_{\und}j_{\und}
    \simeq
    \id_X.
  \end{align}

  Finally, $j$ is an equivalence because it was chosen as an inverse of $r$ in (2).
\end{proof}

\begin{lemma}\label{dj.prop.strict-borel-presentation}
  Let $O$ be a $C_2$-simplicial set, and let $A$ and $B$ be objects of $\Fun(BC_{2},\sSet)_{O/}$.
  Assume that the structure map $O\to A$ is a monomorphism.
  For every equivalence $e:A\to B$ in $\Fun(BC_{2},\sSet)_{O/}$, the induced isomorphism in the homotopy category is represented by a zigzag between $A$ and $B$ of strict $C_2$-equivariant Borel weak equivalences under $O$.
\end{lemma}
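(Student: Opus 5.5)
We interpret ``equivalence in $\Fun(BC_{2},\sSet)_{O/}$'' as an isomorphism in the homotopy category of the undercategory model structure. Here $\Fun(BC_{2},\sSet)$ carries the Borel (projective) model structure, whose weak equivalences and fibrations are detected on underlying simplicial sets, and the undercategory $\Fun(BC_{2},\sSet)_{O/}$ carries the induced model structure. In that undercategory, weak equivalences and fibrations are created by forgetting $O$. The plan is to use the standard description of morphisms in a homotopy category of a model category. Every morphism $A\to B$ in $\Ho$ is represented by a span $A\xleftarrow{\sim}QA\to RB\xleftarrow{\sim}B$, with $Q$ a cofibrant replacement and $R$ a fibrant replacement. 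If the morphism is an isomorphism, the middle map is a weak equivalence by saturation of weak equivalences in a model category. Thus the whole zigzag consists of strict equivariant Borel weak equivalences, and all maps are under $O$ because we work in the undercategory.

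First, we fix the model structure. Projective model structures exist on $\Fun(BC_{2},\sSet)$ by \cite[Theorem 11.6.1]{Hir02}, and slices inherit model structures. Second, we take a cofibrant replacement $QA\to A$ in $\Fun(BC_{2},\sSet)_{O/}$ by factoring $O\to A$ as a cofibration $O\to QA$ followed by a trivial fibration $QA\to A$. We take a fibrant replacement $B\to RB$ by factoring $B\to *$, which is automatic if we also replace $B$ through an $\operatorname{Ex}^\infty$-type functorial fibrant replacement that is compatible with the action and with the map from $O$. Then $e$ is represented by an honest map $QA\to RB$ under $O$. Since $e$ is an isomorphism in $\Ho$, this map is a weak equivalence by \cite[Theorem 1.2.10]{Hov99}.

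The hypothesis that $O\to A$ is a monomorphism should be used to make the zigzag better behaved: we want to ensure that the maps are strict weak equivalences under $O$ relative to the given structure map, and not merely relative to a replaced one. If the intended notion of ``equivalence in $\Fun(BC_{2},\sSet)_{O/}$'' is instead that of the $\infty$-categorical undercategory (that is, under $O$ only up to coherent homotopy), the monomorphism hypothesis is exactly what is needed. The $\infty$-categorical undercategory is then modeled by the strict undercategory with the injective-type model structure. More precisely, $O\to A$ must be a cofibration for the strict slice to compute the homotopy-coherent slice; in the Borel injective model structure the cofibrations are exactly the monomorphisms. So we would use the injective (Borel) model structure on $\Fun(BC_{2},\sSet)$, in which cofibrations are monomorphisms and weak equivalences are underlying ones. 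We would cite \cite[Proposition A.2.8.2]{HTT} for its existence, and \cite[Proposition~A.3.6.1 or Remark~A.3.1.x]{HTT}-style results for the fact that its undercategory at a cofibrant object $O\to A$ presents the $\infty$-categorical undercategory.

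The main obstacle is this comparison between the $\infty$-categorical slice $\Fun(BC_2,\calS)_{O/}$ and the strict slice. The delicate point is that $B$ need not have a cofibrant structure map, so we first replace $B$ by factoring $O\to B$ as a monomorphism $O\to B'$ followed by a trivial fibration $B'\to B$, which is a strict equivalence under $O$. Once both objects are cofibrant under $O$, the homotopy category of the slice agrees with the $\infty$-categorical one. The remaining argument is then the model-categorical representation of an isomorphism described above, applied within the injective-model slice, with the last step using a fibrant replacement $B'\to RB'$ under $O$.
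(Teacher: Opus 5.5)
Your argument is correct, and its core mechanism matches the paper's. An isomorphism in $\Ho$ of the slice model structure, from a cofibrant object to a fibrant one, has a strict representative. That representative is a weak equivalence by saturation \cite[Theorem~1.2.10]{Hov99}.

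The difference is where cofibrancy comes from. The paper works in the injective Borel structure, where the monomorphism hypothesis says exactly that $A$ is already cofibrant under $O$. One fibrant replacement $r\colon B\to B^{\fib}$ under $O$ then gives the two-step zigzag $A\xrightarrow{\widetilde e}B^{\fib}\xleftarrow{r}B$. Since the injective and projective structures have the same weak equivalences, these are Borel weak equivalences.

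Your first (projective) route also replaces $A$ and produces $A\xleftarrow{\sim}QA\to RB\xleftarrow{\sim}B$. It never uses the monomorphism hypothesis, so it shows the hypothesis is dispensable for the statement read in $\Ho$. The cost is one extra backward leg, which is harmless for the later application: that only needs some zigzag under the vertex set.

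Your subsequent discussion misplaces the role of the hypothesis, however. Whether the strict slice under $O$ presents the $\infty$-categorical slice depends on $O$ itself: it needs cofibrancy of $O$ or left properness, and both are automatic in the injective structure. It does not depend on a particular structure map $O\to A$ being a cofibration. Likewise, the extra replacement $B'\to B$ making $B$ cofibrant under $O$ is superfluous. Only the source of the representing map must be cofibrant, and only its target fibrant.
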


\begin{proof}
  The assumption that $O \to A$ is a monomorphism says that $A$ is cofibrant in $\Fun(BC_{2},\sSet)_{\inj})_{O/}$.
  Choose a fibrant replacement $r:B \xrightarrow{\sim} B^{\fib}$ under $O$.
  Let $[e]:A\to B$ denote the isomorphism in $\Ho(\left(\Fun(BC_{2},\sSet)_{\inj}\right)_{O/})$ determined by the given equivalence.
  Since $A$ is cofibrant and $B^{\fib}$ is fibrant, $[r]\circ[e]:A\to B^{\fib}$ has a strict representative $\widetilde e: A \to B^{\fib}$ under $O$.
  Its homotopy class is an isomorphism, so $\widetilde e$ is a weak equivalence.
  Thus $A \xrightarrow{\widetilde e} B^{\fib} \xleftarrow{r} B$ is a zigzag of strict $C_2$-equivariant maps under $O$.
  The injective and projective model structures have the same objectwise weak equivalences.
  Therefore it is a zigzag of Borel weak equivalences.
\end{proof}

\begin{proposition}\label{dj.lem.real-edgewise}
  Let $\bbG$ be a strict dagger simplicial groupoid.
  Write $D:\bbG\to\bbG^{\myop}$ for its dagger and let $I:\bbG^{\myop}\to\bbG$ be inversion.
  Then $\sigma:=ID$ is a covariant involution fixing the objects.
  If $X:=N(\UdagSCat\bbG)$, then there exists a natural vertex-preserving Borel equivalence of $C_2$-spaces 
  \begin{align}
    (|X|,|N\sigma|) \simeq (|X|,\tau_{N_{\dag}\bbG}).
  \end{align}
\end{proposition}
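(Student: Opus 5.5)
The natural route chains three comparisons, each essentially established already. First, \cref{dj.prop.real-identity-section} (1) shows that $\sigma$ is a covariant involution fixing objects and that $X$ is a Kan complex; this disposes of the preliminary assertion. The dagger nerve $N_{\dag}\bbG$ has underlying simplicial set $X$ and dagger $d:=\lambda N(D)$ by \cref{dj.lem.lift}. So the right-hand side $(|X|,\tau_{N_{\dag}\bbG})$ is the realization of the dagger simplicial set $(X,d)$. By \cref{dj.lem.real-edgewise-homeomorphism} (2),(3), the edgewise homeomorphism $h_X$ gives a strict $C_2$-equivariant homeomorphism
\begin{align}
  h_X:(|\Tw(X)|,|D_{\Tw}|)\xrightarrow{\cong}(|X|,\tau_{N_{\dag}\bbG}).
\end{align}
By \cref{dj.lem.real-edgewise-homeomorphism} (1), it sends $s_0x$ to $x$, and in particular it preserves vertices.

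Second, \cref{dj.prop.real-identity-section} (2),(3) supplies the morphism $j:(X,N\sigma)\to(\Tw(X),D_{\Tw})$ in $\Fun(BC_2,\sSet)_{X_0/}$. It is an equivalence there, natural in strict dagger simplicial functors, and satisfies $j(x)=\id_x=s_0x$. To obtain honest strict maps, apply \cref{dj.prop.strict-borel-presentation} with $O:=X_0$, carrying the trivial action, and $A:=(X,N\sigma)$. The structure map $X_0\to X$ is a monomorphism, so the hypothesis holds. The lemma produces a zigzag of strict $C_2$-equivariant Borel weak equivalences under $X_0$:
\begin{align}
  (X,N\sigma)\xrightarrow{\widetilde j}B^{\fib}\xleftarrow{r}(\Tw(X),D_{\Tw}).
\end{align}
If $j$ is itself already a strict equivariant map, which the explicit formula $j(h)=(h^{-1},h)$ suggests, then one can simply use $j$ and skip this step. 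In that case I would still check that $j$ commutes strictly with the involutions: $D_{\Tw}(h^{-1},h)$ should equal $(\sigma(h)^{-1},\sigma(h))$ as a $(2n+1)$-simplex, and this follows from $\iota\rho=d$.

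Third, I would realize the zigzag. Geometric realization sends strict equivariant maps to equivariant maps and underlying weak equivalences to weak homotopy equivalences, so each leg is a Borel equivalence of $C_2$-spaces. The legs are under $X_0$, hence vertex-preserving. Composing with $h_X$ then yields the claimed natural vertex-preserving Borel equivalence $(|X|,|N\sigma|)\simeq(|X|,\tau_{N_{\dag}\bbG})$.

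The main obstacle is naturality. The fibrant replacement in \cref{dj.prop.strict-borel-presentation} must be chosen functorially, for instance by a functorial factorization in the injective model structure under $X_0$, and the lift $\widetilde e$ must be natural. This is why I would prefer to verify directly that $j$ is strictly equivariant and natural, as \cref{dj.prop.real-identity-section} (2) asserts, so that the composite $|h_X|\circ|j|$ is a single natural strict equivariant map. That map is a weak equivalence by \cref{dj.prop.real-identity-section} (3), and it preserves vertices because $j(x)=s_0x$ and $h_X(s_0x)=x$.
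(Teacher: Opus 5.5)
Your proposal is correct and follows the paper's proof essentially step for step. Like the paper, you strictify the equivalence $j$ of \cref{dj.prop.real-identity-section} under $X_0$ via \cref{dj.prop.strict-borel-presentation}, realize the resulting zigzag, and compose with the strict equivariant homeomorphism $h_X$ of \cref{dj.lem.real-edgewise-homeomorphism}. On naturality, the paper does not choose the strict zigzag functorially; it only asserts that the zigzag represents the natural class of $j$ (and the natural $h_X$) in the Borel homotopy category, so no functorial lift and no strict-equivariance check of $j$ are needed (the explicit formula $j(h)=(h^{-1},h)$ only pins $j$ down on vertices and edges in any case).
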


\begin{proof}
  By \cref{dj.prop.real-identity-section}, $\sigma$ is a covariant involution fixing the objects, and there exists a natural equivalence 
  \begin{align}
    j:(X,N\sigma) \simeq (\Tw(X),D_{\Tw})
  \end{align}
  in $\Fun(BC_{2},\sSet)_{X_0/}$.
  The structure map $X_0\to X$ is a monomorphism.
  By \cref{dj.prop.strict-borel-presentation}, the isomorphism induced by $j$ in the homotopy category is represented by a zigzag of strict $C_2$-equivariant Borel weak equivalences under $X_0$.
  Geometric realization gives a vertex-preserving Borel equivalence 
  \begin{align}
    |j| : (|X|,|N\sigma|) \simeq (|\Tw(X)|,|D_{\Tw}|).
  \end{align}

  By \cref{dj.lem.real-edgewise-homeomorphism}, the natural map $h_X: (|\Tw(X)|,|D_{\Tw}|) \to (|X|,\tau_{N_{\dag}\bbG})$ is a strict $C_2$-equivariant homeomorphism.
  Since $\id_x=s_0x$, it is defined under the full vertex space.
  Composing these equivalences gives the required vertex-preserving Borel equivalence.

  The equivalence $j$ and the homeomorphism $h_X$ are natural in strict dagger simplicial functors.
  The strict zigzag need not be chosen naturally, but it represents the image of the natural equivalence determined by $j$ in the Borel homotopy category.
  Hence the resulting equivalence class is natural.
\end{proof}

\begin{lemma}\label{dj.lem.equivariant-path}
  Let $K$ be a dagger Kan complex and let $x,y\in K_0$.
  Give $((\frakC_{\dag}K)[\Mor^{-1}])(x,y)$ the involution $g\mapsto(g^{\dag})^{-1}$.
  Give the ordinary path space $P_{x,y}|K|$ the pointwise involution $\gamma(t)\mapsto\tau_K(\gamma(t))$.
  Then there exists a zigzag of $C_2$-equivariant weak equivalences, through projectively fibrant $C_2$-simplicial sets,
  \begin{align}
    ((\frakC_{\dag}K)[\Mor^{-1}])(x,y)
    \simeq
    \Sing P_{x,y}|K|.
  \end{align}
\end{lemma}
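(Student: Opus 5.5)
The plan is to pass through the strict dagger simplicial groupoid $\bbG:=(\frakC_{\dag}K)[\Mor^{-1}]$ and its nerve, and compare both sides with path spaces in $|N(\UdagSCat\bbG)|$.
Write $X:=N(\UdagSCat\bbG)$.
By \cref{dj.prop.real-identity-section} (1), $\sigma(g)=(g^{\dag})^{-1}$ is a covariant involution of $\bbG$ fixing objects, and $X$ is a Kan complex.
The left-hand side of the statement is the $C_2$-simplicial set $\bbG(x,y)_{\sigma}$.
It is projectively fibrant, because strict simplicial groupoids have Kan mapping spaces and projective fibrancy is underlying.

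\emph{Step 1: from $\bbG(x,y)$ to paths in $|X|$.}
Nonequivariantly, for a simplicial groupoid one has the standard equivalence $\bbG(x,y)\simeq\Path_X(x,y)\simeq\Sing P_{x,y}|X|$, given by the nerve/classifying-space comparison (a vertex $g$ gives the edge $g$ of $X$).
Since $\sigma$ is a strict simplicial functor $\bbG\to\bbG$, this map is strictly natural.
We will therefore realize it as an equivariant map $\bbG(x,y)_{\sigma}\to\Path_X(x,y)$, where $\Path_X(x,y)$ carries the involution induced by $N\sigma$.
It is a weak equivalence because it is one underlying.
Passing to $\Sing P_{x,y}(|X|,|N\sigma|)$ with the pointwise action is then a strict equivariant equivalence, by the unit $X\to\Sing|X|$ applied to the strict fiber.

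\emph{Step 2: change the involution.}
\Cref{dj.lem.real-edgewise} gives a vertex-preserving Borel equivalence $(|X|,|N\sigma|)\simeq(|X|,\tau_{N_{\dag}\bbG})$ under the vertex set.
It is represented, as in its proof, by a zigzag of strict equivariant weak equivalences fixing $x,y$.
Applying $\Sing P_{x,y}(-)$, which preserves weak equivalences between spaces under $\{x,y\}$ because endpoint evaluation is a Hurewicz fibration, yields a zigzag of equivariant weak equivalences.
All terms are projectively fibrant, since $\Sing$ of any space is Kan.

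\emph{Step 3: from $N_{\dag}\bbG$ back to $K$.}
The unit $\eta:K\to N_{\dag}\frakC_{\dag}K$ composed with $N_{\dag}$ of the localization $\frakC_{\dag}K\to\bbG$ is a dagger map $K\to N_{\dag}\bbG$.
By \cref{prop.dagger_realization_action} (3), its realization is strictly equivariant for the $\tau$-actions and fixes $x,y$.
Underlying, it is a weak homotopy equivalence: $K$ is Kan, so $K\to N\frakC K$ is a Joyal equivalence into a Kan-localizable target.
Moreover, by \cite[Proposition~9.5]{DK80}, $\frakC K\to\frakC K[\Mor^{-1}]$ is a local weak equivalence, since $\h\frakC K$ is a groupoid; hence $N\frakC K\to X$ is an equivalence of Kan nerves after fibrant replacement.
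Applying $\Sing P_{x,y}|-|$ then gives the final equivariant equivalence $\Sing P_{x,y}|K|\to\Sing P_{x,y}|X|$.

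The main obstacle is Step 1.
It requires a strictly $C_2$-equivariant comparison $\bbG(x,y)_\sigma\to\Path_X(x,y)$, where the involution on $\Path_X$ is the covariant $N\sigma$ rather than $\tau$.
I would build it as the composite of the standard map $\bbG(x,y)\to\Map(\Delta^1,X)_{x,y}$ (using the nerve of $\bbG(x,y)\times[1]\to\bbG$ type simplices) and check strict naturality in simplicial functors, which makes it $\sigma$-equivariant.
If strictness fails, I would instead invoke \cref{dj.prop.strict-borel-presentation} to replace a homotopy-coherent equivariant equivalence by a strict zigzag under $\{x,y\}$.
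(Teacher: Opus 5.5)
Your proposal follows the paper's proof closely. You pass to $\bbG=(\frakC_{\dag}K)[\Mor^{-1}]$ and $X=N(\UdagSCat\bbG)$. You compare $\bbG(x,y)_\sigma$ with paths in $(|X|,|N\sigma|)$ through comparisons that are natural in simplicial functors fixing $x,y$, and hence $\sigma$-equivariant. You then switch to the $\tau$-action via \cref{dj.lem.real-edgewise} and transport back along the adjoint $K\to N_{\dag}\bbG$ of the localization. Two parts of your version are simpler than the paper's:
\begin{itemize}
\item Passing from the strict fibre $\Map(\Delta^1,X)_{x,y}$ to $\Sing P_{x,y}|X|$ by the unit $X\to\Sing|X|$ is shorter than the paper's detour through $\frakC L$, its groupoid completion and Moore paths \cite{MRZ23}.
\item Every object in your chain is already Kan, so you do not need the paper's final projective fibrant replacement.
\end{itemize}

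Two steps need repair.

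\emph{Step 1.} You only assert a strictly natural map $\bbG(x,y)\to\Map(\Delta^1,X)_{x,y}$. An $n$-simplex of the target is a simplicial functor $\frakC(\Delta^n\times\Delta^1)\to\bbG$ that is constant at $x$ and $y$ on the two ends. Producing one from $g\in\bbG(x,y)_n$, naturally in $[n]$, requires a natural comparison of the relevant rigidified mapping spaces with $\Delta^n$, and you have not supplied this. You do not actually need a single map. Any zigzag that is natural in simplicial functors fixing $x,y$ is automatically $\sigma$-equivariant. The paper uses the counit $\frakC N\bbG\to\bbG$ together with the natural Dugger--Spivak comparison \cite[Corollary~5.3]{DS11}. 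Your fallback via \cref{dj.prop.strict-borel-presentation} cannot replace this: that lemma only strictifies an equivalence already given in $\Fun(BC_2,\sSet)$, and such an equivariant equivalence is exactly the missing input.

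\emph{Step 3.} The claim that $K\to N\frakC K$ is a Joyal equivalence is unjustified, because $\frakC K$ is not fibrant. Argue instead, as the paper does:
\begin{itemize}
\item $K\to N\bbG$ is the adjoint of $\frakC K\to\bbG$.
\item This map is the identity on objects and a local weak equivalence by \cite[Proposition~9.5]{DK80}. This uses that $\frakC K$ is cofibrant in the fixed-object structure and that $\h K$ is a groupoid.
\item Its target $\bbG$ is fibrant.
\end{itemize}
Since $\frakC\dashv N$ is a Quillen equivalence and $K$ is cofibrant, the adjoint is a Joyal equivalence. As a map between Kan complexes, it is therefore a weak homotopy equivalence.
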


\begin{proof}
  Put $\bbG:=(\frakC_{\dag}K)[\Mor^{-1}]$.
  Write $\sigma(g):=(g^{\dag})^{-1}$.
  This is a strict covariant involution of $\bbG$ fixing its objects.

  The monomorphism $\sk_0K\to K$ is a Joyal cofibration.
  Ordinary rigidification is left Quillen by \cite[Theorem 2.2.5.1]{HTT}.
  The simplicial category $\frakC(\sk_0K)$ is the discrete simplicial category on $K_0$:
  Since $\sk_0K\to K$ is the identity on vertices, the induced Bergner cofibration $\frakC(\sk_0K)\to\frakC K$ is the identity on objects and is therefore a cofibration in $\sCat_{K_0}$.
  It follows that $\frakC K$ is cofibrant in the ordinary fixed-object Bergner model structure.
  Its homotopy category is canonically $\h K$, which is a groupoid because $K$ is a Kan complex.
  Therefore \cite[Proposition 9.5]{DK80} gives a local weak equivalence $\frakC K\to(\frakC K)[\Mor^{-1}]$, which is the identity on objects.
  Since degreewise groupoid completion commutes with opposites, the ordinary localization map admits a dagger refinement $q_{\dag}:\frakC_{\dag}K\to\bbG$.
  By \cref{dj.lem.lift}, it has an adjoint $\eta_K:K\to N_{\dag}\bbG$.
  The adjunction bijection of \cref{dj.lem.lift} is the restriction of the ordinary adjunction bijection for $\frakC\dashv N$.
  Consequently, the underlying map $\UdagSSet(\eta_K)$ is precisely the ordinary adjoint of $\UdagSCat(q_{\dag})$.

  The functor $\UdagSCat(q_{\dag})$ is the preceding local weak equivalence and is the identity on objects.
  Hence it is a Bergner weak equivalence.
  Since the adjunction $\frakC \dashv N$ is a Quillen equivalence, $\UdagSSet(\eta_K)$ is a Joyal equivalence.

  The homotopy category of $\UdagSCat\bbG$ is a groupoid.
  Hence $N(\UdagSCat\bbG)$ is a Kan complex.
  Since $\UdagSSet K$ is also a Kan complex, the Joyal equivalence $\UdagSSet(\eta_K)$ is a weak homotopy equivalence.

  We moreover verify that $\eta_K$ is a dagger Joyal equivalence.
  The functor $q_{\dag}$ is a local weak equivalence and is the identity on objects.
  Identities are coherently unitary, so $q_{\dag}$ is a dagger DK-equivalence by \cref{bg.prop.cu-groupoid} (1).
  The simplicial category $\bbG$ is dagger Bergner fibrant.
  By \cref{dj.lem.lift}, the underlying functor of the counit $\varepsilon_{\bbG}:\frakC_{\dag}N_{\dag}\bbG\to\bbG$ is the ordinary rigidification--nerve counit.
  Since $\UdagSCat\bbG$ is Bergner fibrant, the ordinary Joyal--Bergner Quillen equivalence makes this underlying counit a Bergner weak equivalence.
  The counit is a dagger functor and is the identity on objects, so it is a dagger DK-equivalence.
  The adjunction identity gives $q_{\dag}=\varepsilon_{\bbG}\circ\frakC_{\dag}(\eta_K)$.
  Two-out-of-three therefore shows that $\frakC_{\dag}(\eta_K)$ is a dagger DK-equivalence.
  Thus $\eta_K$ is a dagger Joyal equivalence.

  Since $q_{\dag}$ is the identity on objects, $\eta_K$ is the identity on vertices.
  Consequently, $|\eta_K|: (|K|,\tau_K) \to (|N(\UdagSCat\bbG)|,\tau_{N_{\dag}\bbG})$ is a vertex-preserving $C_2$-equivariant weak equivalence by \cref{prop.dagger_realization_action}.

  The natural mapping-space comparison for the homotopy coherent nerve \cite[Corollary 5.3]{DS11}, together with the rigidification--nerve counit \cite[Proposition 5.9]{DS11}, gives a natural zigzag
  \begin{align}
    \bbG(x,y)_{\sigma}
    \xleftarrow{\simeq}
    \frakC(N(\UdagSCat\bbG))(x,y)
    \simeq
    \Map_{N(\UdagSCat\bbG)}(x,y)_{N\sigma}.
  \end{align}
  Naturality of the Dugger--Spivak comparisons with respect to the involution $N\sigma$ makes this a zigzag of $C_2$-equivariant weak equivalences.

  Let $L$ be a Kan complex and let $a,b\in L_0$.
  By \cite[Corollary 5.3]{DS11}, $\Map_L(a,b)$ and $\frakC L(a,b)$ are connected by a natural weak-equivalence zigzag.
  As in the first paragraph of this proof, $\frakC L$ is cofibrant in the fixed-object Bergner model structure and its homotopy category is $\h L$, which is a groupoid.
  Hence \cite[Proposition 9.5]{DK80} gives a natural local weak equivalence $\frakC L\to(\frakC L)[\Mor^{-1}]$.
  The path-category comparison of \cite[Proposition 4.9 and Theorem 4.10]{MRZ23} identifies this localized rigidification, by a natural weak-equivalence zigzag, with the singular Moore path category.
  Finally, inclusion as paths of length one and rescaling of lengths give a natural homotopy equivalence from the Moore path space to the ordinary path space.
  Combining these comparisons gives a natural zigzag
  \begin{align}
    \Map_L(a,b)
    \simeq
    \frakC L(a,b)
    \simeq
    ((\frakC L)[\Mor^{-1}])(a,b)
    \simeq
    \Sing P^{\rmM}_{a,b}|L|
    \simeq
    \Sing P_{a,b}|L|,
  \end{align}
  where $P^{\rmM}_{a,b}|L|$ is the Moore path space between the realized vertices.
  Every comparison is natural in bipointed maps.
  Hence, if $L$ carries an involution fixing $a$ and $b$, the zigzag is $C_2$-equivariant.
  Applying it to $(N(\UdagSCat\bbG),x,y)$ with the $N\sigma$-action gives the required comparison between
  $\Map_{N(\UdagSCat\bbG)}(x,y)_{N\sigma}$ and the path space with the pointwise $|N\sigma|$-action.

  Regard $P_{x,y}|N(\UdagSCat\bbG)|$ first with the pointwise action induced by $|N\sigma|$ and then with the pointwise action induced by $\tau_{N_{\dag}\bbG}$.
  For a bipointed $C_2$-space $(T,a,b)$, the space $P_{a,b}T$ represents the homotopy fiber over $(a,b)$ of endpoint evaluation by \cref{dj.not.intrinsic-mapping}.
  It therefore depends functorially on $(T,a,b)$ in the $\infty$-category of bipointed $C_2$-spaces and carries equivalences under the two endpoints to equivalences.
  The equivalence of \cref{dj.lem.real-edgewise} is represented by a zigzag under the full vertex space.
  Applying the endpoint homotopy-fiber functor at $x$ and $y$ gives a $C_2$-equivariant weak-equivalence zigzag between the two actions on $P_{x,y}|N(\UdagSCat\bbG)|$.
  Applying the same argument to the vertex-preserving equivariant weak equivalence $|\eta_K|$ gives
  \begin{align}
    P_{x,y}|K|
    \simeq
    P_{x,y}|N(\UdagSCat\bbG)|
  \end{align}
  for the corresponding pointwise actions.
  Thus we obtain 
  \begin{align}
    \bbG(x,y)_{\sigma}
    &\simeq
    \Map_{N(\UdagSCat\bbG)}(x,y)_{N\sigma} \\
    &\simeq
    \Sing P_{x,y}(|N(\UdagSCat\bbG)|,|N\sigma|) \\
    &\simeq
    \Sing P_{x,y}(|N(\UdagSCat\bbG)|,\tau_{N_{\dag}\bbG}) \\
    &\simeq
    \Sing P_{x,y}(|K|,\tau_K).
  \end{align}
  All the preceding comparisons are natural in bipointed dagger maps after passage to the projective homotopy category.

  Finally, apply a functorial fibrant replacement in
  $\Fun(BC_{2},\sSet)_{\proj}$ to every intermediate object in the preceding natural zigzags.
  Two-out-of-three shows that all induced arrows remain weak equivalences.
  The two displayed endpoints are already projectively fibrant.
  Adjoining their replacement maps therefore gives the asserted zigzag entirely through projectively fibrant $C_2$-simplicial sets.
\end{proof}

\begin{proposition}\label{dj.lem.core-interval-mapping}
  Let $X$ be a dagger quasi-category, let $x,y\in X_0$, and let $\bbH$ be a natural dagger interval.
  There exists a zigzag of weak equivalences
  \begin{align}
    \Path_{\calU(X)}(\widehat x,\widehat y)
    \simeq
    \RMap_{\sCat^{\dag}_{\{0,1\}}}(\bbI_{\dag},\frakC_{\dag}X\langle x,y\rangle)
    \simeq
    \underline{\Map}_{\sCat^{\dag}_{\{0,1\}}}(\bbH,R(\frakC_{\dag}X\langle x,y\rangle)).
  \end{align}
  This zigzag is natural in bipointed dagger maps in the homotopy category.
  Here $R$ is the fixed-object fibrant replacement fixed before \cref{bg.def.cu}.
\end{proposition}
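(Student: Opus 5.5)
Prove the two equivalences separately; start with the second, since it is formal. The object $\bbH$ is cofibrant in $\sCat^{\dag}_{\{0,1\}}$: $\mathbf 2_{\dag}$ is cofibrant, $\bbJ_{\dag}$ is one generating cell over it, and $j_{\bbH}$ is a cofibration. The map $p_{\bbH}:\bbH\to\bbI_{\dag}$ is a local weak equivalence, so $\bbH$ is a cofibrant replacement of $\bbI_{\dag}$. The target $R(\frakC_{\dag}X\langle x,y\rangle)$ is fibrant, and $r$ is a trivial cofibration. Since the fixed-object model structure is simplicial (\cref{bg.thm.fixed-object}), the enriched mapping space $\underline{\Map}(\bbH,R(-))$ computes $\RMap(\bbI_{\dag},\frakC_{\dag}X\langle x,y\rangle)$. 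Naturality in bipointed dagger maps is then functoriality of $R$, together with independence of the choice of cofibrant replacement up to homotopy.

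For the first equivalence, first reduce from $X$ to the core. Coherent data out of $\bbI_{\dag}$ only sees invertible morphisms: a map $\bbH\to\calC$ lands in the full sub-simplicial category spanned by components that are invertible in $\h\calC$, by \cref{bg.lem.interval-basic} (2). I would use this to replace $\frakC_{\dag}X\langle x,y\rangle$ by the corresponding data built from $K:=X^{\simeq}$, viewed as a dagger Kan complex. The target is a local weak equivalence, on the relevant components, between $\frakC_{\dag}K\langle x,y\rangle$ and the locally-invertible part of $\frakC_{\dag}X\langle x,y\rangle$. I expect this to follow from the ordinary fact that $\frakC(X^{\simeq})(a,b)\to\frakC(X)(a,b)$ is the inclusion of the components of equivalences up to weak equivalence, with the dagger compatibility checked directly.

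Next, replace $\frakC_{\dag}K\langle x,y\rangle$ by the strict dagger simplicial groupoid $\bbG:=(\frakC_{\dag}K)[\Mor^{-1}]\langle x,y\rangle$. This is a local weak equivalence by \cite[Proposition 9.5]{DK80}, exactly as in the proof of \cref{dj.lem.equivariant-path}. Now \cref{dj.lem.unitary-derived-fixed} gives
\begin{align}
  \RMap(\bbI_{\dag},\bbG)\simeq\underline{\Map}_{C_2}\bigl(E_{\bullet}C_2,\bbG(0,1)_{\sigma}\bigr),
\end{align}
the homotopy fixed points of $\bbG(x,y)$ with $\sigma(g)=(g^{\dag})^{-1}$. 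By \cref{dj.lem.equivariant-path}, $\bbG(x,y)_{\sigma}$ is connected to $\Sing P_{x,y}|K|$ with the pointwise $\tau_K$-action through projectively fibrant objects. Homotopy fixed points preserve such zigzags, so we get $(\Sing P_{x,y}|K|)^{hC_2}$.

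Finally, identify $\Path_{\calU(X)}(\widehat x,\widehat y)$ with this space. By \cref{dj.lem.full-subkan} it equals $\Path_{(K)^{hC_2}}(\widehat x,\widehat y)$, which is modelled by the topological path space in $\Map_{C_2}(EC_2,|K|)$ between the constant maps $c_x$ and $c_y$. A path between two constant maps is an equivariant map $EC_2\to P_{x,y}|K|$, via the adjunction $\Map([0,1],\Map_{C_2}(EC_2,|K|))\cong\Map_{C_2}(EC_2,|K|^{[0,1]})$ restricted to the endpoint fibre. This gives $\Map_{C_2}(EC_2,P_{x,y}|K|)=(P_{x,y}|K|)^{hC_2}_{\rmtop}$, and applying $\Sing$ compares it with the simplicial homotopy fixed points. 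The strict fibre computes the homotopy fibre because endpoint evaluation is a fibration and the basepoints are strict fixed points.

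I expect the main obstacle to be the first reduction, from $X$ to $X^{\simeq}$ at the level of dagger fixed-object categories. The second concern is that every comparison in the chain must be natural in bipointed dagger maps in the homotopy category, even though several steps are only natural zigzags, as in \cref{dj.lem.real-edgewise}. I would handle naturality as in that proof, working in the homotopy category of bipointed Borel $C_2$-objects throughout.
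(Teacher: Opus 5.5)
Your proposal is correct and follows essentially the same route as the paper's proof. The second equivalence comes from cofibrancy of $\bbH$ and the simplicial fixed-object structure. The first comes from restricting to invertible components, passing through $\frakC_{\dag}(X^{\simeq})$ to the strict dagger groupoid $\bbG$, applying \cref{dj.lem.unitary-derived-fixed} and \cref{dj.lem.equivariant-path}, and finishing with the exponential adjunction and \cref{dj.lem.full-subkan}.

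The core reduction you flag as the main obstacle is handled in the paper as follows:
\begin{itemize}
  \item The ``lands in invertible components'' argument is applied to the fibrant replacement $R(\frakC_{\dag}X\langle x,y\rangle)$. For higher simplices of the enriched mapping space, this uses connectedness of $\bbH(i,j)\times\Delta^n$.
  \item The groupoidal part of $R(\frakC_{\dag}X\langle x,y\rangle)$ is still fibrant.
  \item It is linked to $\bbG$ by the zigzag of local weak equivalences $(R(\frakC_{\dag}X\langle x,y\rangle))^{\simeq}\leftarrow(\frakC_{\dag}X\langle x,y\rangle)^{\simeq}\leftarrow\frakC_{\dag}(X^{\simeq})\langle x,y\rangle\to\bbG$. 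The first arrow is a local weak equivalence because $r$ is the identity on objects and bijective on $\pi_0$ of mapping spaces.
\end{itemize}
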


\begin{proof}
  Let $(\frakC_{\dag}X)^{\simeq}$ denote the dagger simplicial subcategory whose mapping spaces are the unions of the components representing isomorphisms in $\h\frakC_{\dag}X$.
  The natural maps give a zigzag, identical on objects,
  \begin{align}
    (\frakC_{\dag}X)^{\simeq}
    \leftarrow
    \frakC_{\dag}(X^{\simeq})
    \to
    \frakC_{\dag}(X^{\simeq})[\Mor^{-1}] .
  \end{align}
  For every pair of vertices $a,b$, the Dugger--Spivak comparison
  \cite[Corollary 5.3]{DS11} identifies the mapping space
  $\frakC(X^{\simeq})(a,b)$ with the union of those components of
  $\Map_X(a,b)$ which represent equivalences.
  The same union of components is weakly equivalent to
  $(\frakC X)^{\simeq}(a,b)$.
  Naturality of these comparisons shows that the first arrow is a local weak equivalence.

  The monomorphism $\operatorname{sk}_0X^{\simeq}\to X^{\simeq}$ is a cofibration.
  Since rigidification is left Quillen, $\frakC(X^{\simeq})$ is cofibrant with its object set fixed.
  Its homotopy category is a groupoid because every edge of $X^{\simeq}$ is an equivalence.
  Therefore \cite[Proposition~9.5]{DK80} applies and shows that the second arrow is a local weak equivalence.

  The displayed maps themselves are obtained from the inclusion $X^{\simeq}\to X$ by rigidification and from degreewise groupoid completion.
  Both constructions commute with opposites, so these are dagger functors.

  Put $\bbG := \frakC_{\dag}(X^{\simeq})[\Mor^{-1}]\langle x,y\rangle$.
  Let $(\frakC_{\dag}X\langle x,y\rangle)^{\simeq}\subseteq\frakC_{\dag}X\langle x,y\rangle$ be defined by the same union-of-components construction.
  Let $(R(\frakC_{\dag}X\langle x,y\rangle))^{\simeq}\subseteq R(\frakC_{\dag}X\langle x,y\rangle)$ be the union of the mapping-space components which represent isomorphisms in the homotopy category.
  This is closed under composition, identities, and the dagger.
  It is fibrant because each mapping space is a union of components of a Kan complex.
  Its homotopy category is a groupoid, although
  $(R(\frakC_{\dag}X\langle x,y\rangle))^{\simeq}$ need not itself be a strict simplicial groupoid.
  
  Every dagger functor from a natural dagger interval into the fixed-object fibrant replacement of $\frakC_{\dag}X\langle x,y\rangle$ factors through its groupoidal part by \cref{bg.lem.interval-basic}.
  This also identifies the enriched mapping spaces.
  An $n$-simplex of the enriched mapping space is given on each ordered pair by a map from $\bbH(i,j)\times\Delta^n$.
  The mapping space $\bbH(i,j)$ is weakly contractible and therefore connected, while $\Delta^n$ is connected.
  Restriction to any vertex of $\Delta^n$ gives a dagger functor into this fibrant replacement.
  Every morphism in $\h\bbH$ is invertible by \cref{bg.lem.interval-basic}, so the image of every vertex of $\bbH(i,j)\times\Delta^n$ represents an isomorphism in the homotopy category of this fibrant replacement.
  Since this domain is connected, its entire image lies in one of the selected components.

  The initial fixed-object dagger category embeds cofibrantly into $\bbJ_{\dag}$.
  Moreover, $j_{\bbH}:\bbJ_{\dag}\to\bbH$ is a fixed-object cofibration by definition.
  Hence $\bbH$ is cofibrant.
  Since $p_{\bbH}:\bbH\to\bbI_{\dag}$ is a local weak equivalence, it is a cofibrant approximation of $\bbI_{\dag}$ in the fixed-object model structure.
  Since $\bbH\to\bbI_{\dag}$ is a cofibrant approximation and the chosen replacement is fibrant, the derived mapping space from $\bbI_{\dag}$ is computed by the enriched mapping space from $\bbH$ into this target.
  The preceding factorization identifies that enriched mapping space with the one obtained after restricting the target to its groupoidal part.

  Write $r:\frakC_{\dag}X\langle x,y\rangle \to R(\frakC_{\dag}X\langle x,y\rangle)$ for the fixed-object fibrant replacement.
  It restricts to a local weak equivalence $r^{\simeq}: (\frakC_{\dag}X\langle x,y\rangle)^{\simeq} \to (R(\frakC_{\dag}X\langle x,y\rangle))^{\simeq}$
  Indeed, it is a local weak equivalence and hence induces bijections on the components of all mapping spaces.
  Since it is the identity on objects, the induced functor on homotopy categories is an isomorphism and therefore preserves and reflects which components represent isomorphisms.
  Restricting to the unions of these corresponding components therefore remains a weak equivalence on every mapping space.

  Restricting the preceding rigidification and completion maps to the two selected objects gives the explicit fixed-object zigzag
  \begin{align}
    (R(\frakC_{\dag}X\langle x,y\rangle))^{\simeq}
    \xleftarrow{\simeq}
    (\frakC_{\dag}X\langle x,y\rangle)^{\simeq}
    \xleftarrow{\simeq}
    \frakC_{\dag}(X^{\simeq})\langle x,y\rangle
    \xrightarrow{\simeq}
    \bbG .
  \end{align}
  Thus $(R(\frakC_{\dag}X\langle x,y\rangle))^{\simeq}$ and the strict simplicial groupoid $\bbG$ are isomorphic in the fixed-object homotopy category.
  The derived comparison can now be written without applying the strict-groupoid calculation to $(R(\frakC_{\dag}X\langle x,y\rangle))^{\simeq}$:
  \begin{align}\label{dj.eq.core-strictification}
    \RMap_{\sCat^{\dag}_{\{0,1\}}}
    (\bbI_{\dag},\frakC_{\dag}X\langle x,y\rangle)
    &\simeq
    \underline{\Map}_{\sCat^{\dag}_{\{0,1\}}}
    (\bbH,R(\frakC_{\dag}X\langle x,y\rangle))\\
    &\cong
    \underline{\Map}_{\sCat^{\dag}_{\{0,1\}}}
    (\bbH,(R(\frakC_{\dag}X\langle x,y\rangle))^{\simeq})\\
    &\simeq
    \RMap_{\sCat^{\dag}_{\{0,1\}}}
    (\bbI_{\dag},(R(\frakC_{\dag}X\langle x,y\rangle))^{\simeq})\\
    &\simeq
    \RMap_{\sCat^{\dag}_{\{0,1\}}}(\bbI_{\dag},\bbG)\\
    &\simeq 
    \underline{\Map}_{C_2}(E_{\bullet}C_2,\bbG(0,1)_{\sigma}).
  \end{align}
  The first and third weak equivalences use the cofibrancy of $\bbH$ and the fibrancy of the indicated targets.
  The last weak equivalence is the strict simplicial groupoid calculation of \cref{dj.lem.unitary-derived-fixed}.
  Both $(\frakC_{\dag}(X^{\simeq})[\Mor^{-1}])(x,y)_{\sigma}$ and $\Sing P_{x,y}|X^{\simeq}|$ are fibrant in the projective model structure on $C_2$-simplicial sets.
  The first is a Kan complex because it is a mapping object of a strict simplicial groupoid, and the second is a singular complex.
  Since $E_{\bullet}C_2$ is projectively cofibrant, the simplicial right Quillen functor $\underline{\Map}_{C_2}(E_{\bullet}C_2,-)$ carries the zigzag of \cref{dj.lem.equivariant-path} to a zigzag of weak equivalences.
  We therefore obtain
  \begin{align}
    \underline{\Map}_{C_2}
    (
      E_{\bullet}C_2,
      (\frakC_{\dag}(X^{\simeq})[\Mor^{-1}])(x,y)_{\sigma}
    )
    \simeq
    \underline{\Map}_{C_2}
    (
      E_{\bullet}C_2,
      \Sing P_{x,y}|X^{\simeq}|
    ).
  \end{align}
  The realization--singular adjunction and exponential adjunction now give a natural zigzag
  \begin{align}
    \underline{\Map}_{C_2}(E_{\bullet}C_2,\Sing P_{x,y}|X^{\simeq}|)
    &\simeq
    \Sing\Map_{C_2}(EC_2,P_{x,y}|X^{\simeq}|)\\
    &\cong
    \Sing P_{c_x,c_y}\Map_{C_2}(EC_2,|X^{\simeq}|).
  \end{align}
  The enriched realization--singular adjunction identifies
  $\Sing P_{c_x,c_y}\Map_{C_2}(EC_2,|X^{\simeq}|)$ with the strict fiber over $(c_x,c_y)$ of
  \begin{align}
    (\Sing \Map_{C_2}(EC_2,|X^{\simeq}|))^{\Delta^1}\to(\Sing \Map_{C_2}(EC_2,|X^{\simeq}|))^{\partial\Delta^1}.
  \end{align}
  This endpoint map is a Kan fibration.
  Hence the strict fiber computes the homotopy fiber and is naturally weakly equivalent to the path space from $\widehat x$ to $\widehat y$ in $(X^{\simeq})^{hC_2}$.
  Since $\calU(X)$ is the full Kan subcomplex spanned by the canonical vertices, \cref{dj.lem.full-subkan} gives
  \begin{align}
    \Sing P_{c_x,c_y}\Map_{C_2}(EC_2,|X^{\simeq}|)
    \simeq
    \Path_{\calU(X)}(\widehat x,\widehat y)
  \end{align}
  Together with \eqref{dj.eq.core-strictification}, this proves the first weak equivalence in the statement.

  The target $R(\frakC_{\dag}X\langle x,y\rangle)$ is fibrant by construction.
  Since the fixed-object model structure is simplicial by \cref{bg.thm.fixed-object}, the enriched mapping space from $\bbH$ into this target is a Kan complex and computes the second derived mapping space in the statement.

  All replacements used above may be chosen functorially.
  The maximal Kan subcomplex, rigidification, groupoid completion, the Dugger--Spivak comparison, the equivariant path comparison, and the enriched adjunctions then define natural transformations after passage to the homotopy category.
  Hence the displayed zigzag is natural in bipointed dagger maps.
\end{proof}

\begin{corollary}\label{dj.lem.core-interval}
  Let $X$ be a dagger quasi-category and let $x,y\in X_0$.
  Then $\widehat x$ and $\widehat y$ lie in the same component of $\calU(X)$ if and only if there exists a coherently unitary equivalence from $x$ to $y$ in $\h\frakC_{\dag}X$.
\end{corollary}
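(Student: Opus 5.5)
The plan is to read the corollary off $\pi_0$ of the zigzag in \cref{dj.lem.core-interval-mapping}. Since $\calU(X)$ is a Kan complex (\cref{dj.lem.full-subkan}), the vertices $\widehat x$ and $\widehat y$ lie in the same component exactly when $\Path_{\calU(X)}(\widehat x,\widehat y)$ is nonempty. A zigzag of weak equivalences preserves nonemptiness. So $\widehat x$ and $\widehat y$ lie in the same component if and only if
\begin{align}
  \underline{\Map}_{\sCat^{\dag}_{\{0,1\}}}(\bbH,R(\frakC_{\dag}X\langle x,y\rangle))
\end{align}
has a vertex, for some, equivalently any, natural dagger interval $\bbH$. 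Such a vertex is precisely a dagger functor $\Phi:\bbH\to R(\frakC_{\dag}X\langle x,y\rangle)$ in $\sCat^{\dag}_{\{0,1\}}$.

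For the direction ``same component $\Rightarrow$ coherently unitary'', fix any $\bbH$, using \cref{bg.lem.interval-existence}, and take such a $\Phi$. Consider the vertex $\Phi(h_{\bbH})$ of $R(\frakC_{\dag}X\langle x,y\rangle)(0,1)$. The map $r:\frakC_{\dag}X\langle x,y\rangle\to R(\frakC_{\dag}X\langle x,y\rangle)$ is a local weak equivalence, so it is bijective on $\pi_0$. Hence there is a vertex $u\in\frakC_{\dag}X(x,y)_0$ with $[r(u)]=[\Phi(h_{\bbH})]$. By \cref{bg.def.cu}, $\Phi$ then witnesses that $[u]:x\to y$ is coherently unitary in $\h\frakC_{\dag}X$.

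For the converse, a coherently unitary $[u]:x\to y$ provides by definition a natural dagger interval $\bbH$ and a dagger functor $\Phi:\bbH\to R(\frakC_{\dag}X\langle x,y\rangle)$. This $\Phi$ is a vertex of the enriched mapping space for that $\bbH$, so by the zigzag the path space is nonempty.

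The only point requiring care is that \cref{dj.lem.core-interval-mapping} holds for an arbitrary natural dagger interval $\bbH$, so the witness's interval may be used directly. This is legitimate because every $\bbH$ is a cofibrant approximation of $\bbI_{\dag}$, so all of these mapping spaces compute the same $\RMap(\bbI_{\dag},-)$. I expect no real obstacle: the substantive work was done in the proposition, and the corollary is the statement that $\pi_0$ of both sides is simultaneously empty or nonempty.
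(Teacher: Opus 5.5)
Your proposal is correct and follows essentially the same route as the paper: read off $\pi_0$ of the zigzag in \cref{dj.lem.core-interval-mapping}, use that $r$ is surjective on $\pi_0$ of mapping spaces to produce $u$ in the forward direction, and note that a witness $\Phi$ gives a vertex of the enriched mapping space for the converse. The only cosmetic difference is that the paper fixes one interval and transfers the witness to it via the cofibrant-approximation argument. You instead apply the proposition directly to the witness's own interval, which is equally valid since the proposition is stated for an arbitrary natural dagger interval.
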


\begin{proof}
  Fix a natural dagger interval $\bbH$.
  By \cref{dj.lem.core-interval-mapping}, $\widehat x$ and $\widehat y$ lie in the same component of $\calU(X)$ if and only if $\underline{\Map}_{\sCat^{\dag}_{\{0,1\}}}(\bbH,R(\frakC_{\dag}X\langle x,y\rangle))$ is nonempty.
  The last mapping space is nonempty precisely when there exists a dagger functor $\bbH\to R(\frakC_{\dag}X\langle x,y\rangle)$.
  The local weak equivalence $\frakC_{\dag}X\langle x,y\rangle\to R(\frakC_{\dag}X\langle x,y\rangle)$ is surjective on $\pi_0$ of every mapping space.

  Hence, if $\Phi$ is such a functor, there exists a vertex $u\in\frakC_{\dag}X(x,y)$ satisfying $[r(u)]=[\Phi(h_{\bbH})]$.
  The pair $(\bbH,\Phi)$ is therefore a witness for the coherent unitarity of $[u]$ in the sense of \cref{bg.def.cu}.

  Conversely, every natural dagger interval is a cofibrant approximation of $\bbI_{\dag}$.
  A witness using any such interval makes the derived mapping space nonempty, and weak equivalences of the resulting Kan mapping spaces preserve nonemptiness.
  Hence there exists a witness using the fixed $\bbH$.
  By \cref{dj.lem.core-interval-mapping}, this is equivalent to the existence of a path from $\widehat x$ to $\widehat y$ in $\calU(X)$.
\end{proof}

\begin{lemma}\label{dj.lem.intrinsic-local}
  Let $f:X\to Y$ be a dagger functor between dagger quasi-categories.
  Then $\frakC_{\dag}(f)$ is a local weak equivalence if and only if $f$ is fully faithful.
\end{lemma}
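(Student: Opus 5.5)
The claim is a statement about underlying objects only, since the dagger plays no role in whether mapping spaces are weakly equivalent. By \cref{dj.lem.lift} we have $\UdagSCat\circ\frakC_{\dag}=\frakC\circ\UdagSSet$. Hence $\frakC_{\dag}(f)$ is a local weak equivalence exactly when the ordinary rigidification $\frakC(\UdagSSet f)$ induces weak homotopy equivalences
\begin{align}
  \frakC X(a,b)\to\frakC Y(fa,fb)
\end{align}
for all vertices $a,b\in X_0$. So I would reduce immediately to the ordinary statement for the functor $\UdagSSet f$ between quasi-categories. Here "fully faithful" is read in the usual sense: for all $a,b$, the induced map $\Map_X(a,b)\to\Map_Y(fa,fb)$ is a weak homotopy equivalence.

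The key step is to compare $\frakC X(a,b)$ with $\Map_X(a,b)$ naturally in $X$. I would invoke \cite[Corollary 5.3]{DS11}, as in the proofs of \cref{dj.lem.equivariant-path,dj.lem.core-interval-mapping}. For a quasi-category $X$ and vertices $a,b$, it gives a natural zigzag of weak homotopy equivalences between $\frakC X(a,b)$ and any of the standard models of $\Map_X(a,b)$ (for instance $\Hom^R_X(a,b)$ or the Dugger--Spivak necklace model). This zigzag is natural in maps of simplicial sets that preserve the chosen vertices, so $f$ induces a commutative ladder of such zigzags.

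Two-out-of-three along the ladder then shows the following: $\frakC X(a,b)\to\frakC Y(fa,fb)$ is a weak equivalence if and only if $\Map_X(a,b)\to\Map_Y(fa,fb)$ is. Quantifying over all pairs $(a,b)$ gives both directions at once. The object sets of $\frakC_{\dag}X$ and $X_0$ coincide, so no essential-surjectivity issue enters.

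The main obstacle is bookkeeping rather than mathematics. The paper must fix which model of mapping spaces defines "fully faithful," and the cited comparison must be natural in arbitrary maps of quasi-categories, not only inclusions. If some model is not directly covered, I would pass through the homotopy category of the Joyal model structure instead. There, $\frakC\dashv N$ is a Quillen equivalence with all objects cofibrant, and $\RMap$ in $\sCat_{\Bergner}$ from $\frakC X$ recovers mapping spaces by \cite[Theorem 2.2.5.1]{HTT}. This yields the required natural identification up to weak equivalence.
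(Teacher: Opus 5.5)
Your proposal is correct and follows the paper's proof. Like the paper, you use \cref{dj.lem.lift} to identify the mapping spaces of $\frakC_{\dag}(f)$ with those of $\frakC(\UdagSSet f)$, compare them with $\Map_X$ and $\Map_Y$ through the natural Dugger--Spivak zigzag \cite[Corollary 5.3]{DS11}, and conclude by two-out-of-three (your fallback via \cite[Theorem 2.2.5.1]{HTT} is not needed).
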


\begin{proof}
  By \cref{dj.lem.lift}, the underlying map of mapping simplicial sets is the corresponding component of $\frakC(\UdagSSet f) : \frakC(\UdagSSet X)\to\frakC(\UdagSSet Y)$.
  The natural Dugger--Spivak comparison \cite[Corollary 5.3]{DS11} gives, for every $x,x'\in X_0$, a commutative zigzag 
  \begin{align}
    \frakC(\UdagSSet X)(x,x') \simeq \Map_X(x,x')
    \quad \text{and} \quad
    \frakC(\UdagSSet Y)(fx,fx') \simeq \Map_Y(fx,fx').
  \end{align}
  Two-out-of-three proves the assertion.
\end{proof}

\begin{theorem}[Intrinsic recognition of dagger Joyal equivalences]\label{dj.thm.intrinsic-equivalence}
  Let $f:X\to Y$ be a dagger functor between dagger quasi-categories.
  The following conditions are equivalent:
  \begin{enumerate}
    \item $f$ is a dagger Joyal equivalence, equivalently a weak equivalence in $\sSet^{\dag}_{\Joyal}$;
    \item $f$ is fully faithful and the map $\pi_0\calU(f):\pi_0\calU(X)\to\pi_0\calU(Y)$ is surjective;
    \item $f$ is fully faithful and $\calU(f):\calU(X)\to \calU(Y)$ is a weak homotopy equivalence.
  \end{enumerate}
\end{theorem}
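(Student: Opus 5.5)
We prove the cycle $(1)\Rightarrow(3)\Rightarrow(2)\Rightarrow(1)$. The local half of every condition is handled once: by \cref{dj.def.W} and \cref{bg.def.W}, $f$ is a dagger Joyal equivalence if and only if $\frakC_{\dag}(f)$ is a local weak equivalence and coherently unitarily essentially surjective, and by \cref{dj.lem.intrinsic-local} the first half is exactly full faithfulness of $f$. So throughout we assume $f$ fully faithful and compare the object-level conditions. The only dictionary needed is \cref{dj.lem.core-interval}: $\widehat x,\widehat y$ lie in one component of $\calU(X)$ if and only if there is a coherently unitary $[u]:x\to y$ in $\h\frakC_{\dag}X$. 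The vertices of $\calU(X)$ are exactly the canonical points $\widehat x$, so $\pi_0\calU(X)$ is the quotient of $X_0$ by coherently unitary equivalence.

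For $(2)\Rightarrow(1)$, let $d\in Y_0$. Surjectivity of $\pi_0\calU(f)$ gives $c\in X_0$ with $\calU(f)(\widehat c)=\widehat{fc}$ in the component of $\widehat d$, using \cref{prop.unitary_core_functor}. Then \cref{dj.lem.core-interval} applied in $Y$ yields a coherently unitary $[v]:fc\to d$ in $\h\frakC_{\dag}Y$, which is condition (2) of \cref{bg.def.W}. The implication $(3)\Rightarrow(2)$ is trivial.

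For $(1)\Rightarrow(3)$, surjectivity of $\pi_0\calU(f)$ follows by reading \cref{dj.lem.core-interval} backwards. For injectivity, suppose $\widehat{fx}$ and $\widehat{fx'}$ are connected. Then there is a coherently unitary class $fx\to fx'$. Since $\frakC_{\dag}(f)$ is a local weak equivalence, this class lifts to a unique class $[u]:x\to x'$, and $[u]$ is coherently unitary by \cref{bg.prop.cu-groupoid} (5). Hence $\widehat x$ and $\widehat{x'}$ are connected, and $\pi_0\calU(f)$ is bijective.

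It remains to show that $\calU(f)$ induces equivalences on all higher homotopy groups. For this I would show that
\begin{align}
  \Path_{\calU(X)}(\widehat x,\widehat{x'})\to\Path_{\calU(Y)}(\widehat{fx},\widehat{fx'})
\end{align}
is a weak equivalence for all $x,x'$. By the naturality in \cref{dj.lem.core-interval-mapping}, this map is identified in the homotopy category with
\begin{align}
  \RMap_{\sCat^{\dag}_{\{0,1\}}}(\bbI_{\dag},\frakC_{\dag}X\langle x,x'\rangle)\to\RMap_{\sCat^{\dag}_{\{0,1\}}}(\bbI_{\dag},\frakC_{\dag}Y\langle fx,fx'\rangle),
\end{align}
which is induced by the map $\frakC_{\dag}(f)\langle x,x'\rangle$. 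That map is a local weak equivalence, that is, a weak equivalence in the fixed-object model structure of \cref{bg.thm.fixed-object}, so derived mapping spaces carry it to a weak equivalence. Since path spaces at all pairs of vertices together with $\pi_0$ control all homotopy groups of a Kan complex, $\calU(f)$ is a weak homotopy equivalence.

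The main obstacle is the naturality bookkeeping in this last step. \cref{dj.lem.core-interval-mapping} provides naturality only in the homotopy category, so one must check that the square comparing the two zigzags commutes up to homotopy. That suffices here, because being a weak equivalence is detected on homotopy-category isomorphisms. I would invoke the functoriality of all replacements recorded at the end of that proof.
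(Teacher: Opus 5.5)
Your proof is correct, but it handles the higher homotopy of $\calU(f)$ by a different route from the paper. The paper proves $(1)\Leftrightarrow(2)$ as you do. It then gets $(2)\Rightarrow(3)$ using only the $\pi_0$-level statement \cref{dj.lem.core-interval}, not the mapping-space comparison of \cref{dj.lem.core-interval-mapping}. Evaluation at the vertex $1\in EC_2$ shows that surjectivity of $\pi_0\calU(f)$ forces ordinary essential surjectivity. Hence $f$ is a Joyal equivalence and $f^{\simeq}$ is a weak homotopy equivalence. Since $|f^{\simeq}|$ is equivariant, the right Quillen functor $\Map_{C_2}(EC_2,-)$ followed by $\Sing$ gives a weak equivalence $(X^{\simeq})^{hC_2}\to(Y^{\simeq})^{hC_2}$. \Cref{dj.lem.full-subkan} then transfers this to path spaces between canonical vertices of the full subcomplexes $\calU(X),\calU(Y)$. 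You instead identify these path spaces with $\RMap(\bbI_{\dag},\frakC_{\dag}X\langle x,x'\rangle)$ and use homotopy invariance of derived mapping spaces under the local weak equivalence $\frakC_{\dag}(f)\langle x,x'\rangle$. Your route shows that full faithfulness alone already makes $\calU(f)$ an equivalence on all path spaces between canonical vertices. It needs no ordinary essential surjectivity, no \cref{dj.prop.ordinary-recognition}, and no Borel invariance of homotopy fixed points. It also makes your separate $\pi_0$-injectivity argument via \cref{bg.prop.cu-groupoid}~(5) redundant, since path-space equivalences detect nonemptiness. The cost is the point you flag yourself: you rely on the naturality, in the homotopy category, of the long zigzag in \cref{dj.lem.core-interval-mapping}. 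Its construction passes through non-natural choices, for example the strict Borel zigzag in \cref{dj.lem.real-edgewise}, and it is only asserted to be natural after passing to homotopy categories. The paper's argument needs only functoriality of $\calU$ and homotopy invariance of homotopy fixed points, so it is the more robust of the two.
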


\begin{proof}
  (1) $\Rightarrow$ (2):
  Suppose that $f$ is a dagger Joyal equivalence.
  By \cref{dj.def.W}, the dagger simplicial functor
  $\frakC_{\dag}(f)$ is a local weak equivalence and is coherently unitarily essentially surjective.
  By \cref{dj.lem.intrinsic-local}, the local weak-equivalence condition is equivalent to full faithfulness of $f$.
  It remains to prove that $\pi_0\calU(f): \pi_0\calU(X)\to\pi_0\calU(Y)$ is surjective.

  Let $y\in Y_0$.
  Coherent-unitary essential surjectivity gives an object $x\in X_0$ and a coherently unitary equivalence $[v]: f(x) \to y$ in $\h\frakC_{\dag}Y$.
  By \cref{dj.lem.core-interval}, this is equivalent to the assertion that the canonical vertices $\widehat{f(x)}$ and $\widehat y$ lie in the same connected component of $\calU(Y)$.
  Naturality of the unitary core gives $\calU(f)(\widehat x)=\widehat{f(x)}$.
  Hence $[\widehat y]=\pi_0\calU(f)([\widehat x])$ in $\pi_0\calU(Y)$.

  By definition, $\calU(Y)$ is the full Kan subcomplex spanned by the canonical vertices $\widehat y$ for $y\in Y_0$.
  Thus every connected component of $\calU(Y)$ contains a canonical vertex.
  It follows that $\pi_0\calU(f)$ is surjective.

  (2) $\Rightarrow$ (1):
  Suppose that $f$ is fully faithful and that $\pi_0\calU(f)$ is surjective.
  Let $y\in Y_0$.
  By surjectivity, the component $[\widehat y]\in\pi_0\calU(Y)$ has a preimage in $\pi_0\calU(X)$.
  Every connected component of $\calU(X)$ contains a vertex, and every vertex of $\calU(X)$ is a canonical vertex.
  Hence this preimage can be represented by $\widehat x$ for some $x\in X_0$.
  We therefore have 
  \begin{align}
    [\widehat y] = \pi_0\calU(f)([\widehat x]) = [\widehat{f(x)}].
  \end{align}
  Thus $\widehat{f(x)}$ and $\widehat y$ lie in the same connected component of $\calU(Y)$.
  By \cref{dj.lem.core-interval}, there exists a coherently unitary equivalence $[v]: f(x) \to y$ in $\h\frakC_{\dag}Y$.
  Since this holds for every $y\in Y_0$, the functor $\frakC_{\dag}(f)$ is coherently unitarily essentially surjective.

  Moreover, full faithfulness of $f$ implies that $\frakC_{\dag}(f)$ is a local weak equivalence (by \cref{dj.lem.intrinsic-local}).
  Consequently, $\frakC_{\dag}(f)$ is a dagger DK-equivalence.
  By \cref{dj.def.W}, $f$ is a dagger Joyal equivalence.

  (2) $\Rightarrow$ (3):
  Choose the vertex $1\in EC_2$.
  Evaluation at this vertex gives a natural map $\ev_1:(Y^{\simeq})^{hC_2}\to\Sing|Y^{\simeq}|$, which sends $\widehat y$ to $y$.
  Surjectivity of $\pi_0\calU(f)$ therefore implies that, for every $y\in Y_0$, the vertices $f(x)$ and $y$ lie in the same component of $Y^{\simeq} \simeq \Sing|Y^{\simeq}|$ for some $x\in X_0$.
  Thus the underlying functor of quasi-categories is ordinarily essentially surjective.
  It is fully faithful by assumption, so \cref{dj.prop.ordinary-recognition} shows that the underlying map $f:X\to Y$ is a Joyal equivalence.

  By the last assertion of \cref{dj.prop.ordinary-recognition}, the dagger map $f^{\simeq}:X^{\simeq}\to Y^{\simeq}$ is an underlying weak homotopy equivalence of dagger Kan complexes.
  By \cref{prop.dagger_realization_action}, its realization $|f^{\simeq}|$ is $C_2$-equivariant, and it is therefore a weak equivalence in the Borel model structure on $C_2$-spaces.
  The space $EC_2$ is cofibrant in this model structure (by \cref{lem.EC2_free_contractible}).
  Hence the right Quillen functor $\Map_{C_2}(EC_2,-)$, followed by $\Sing$, carries this map to a weak equivalence $(X^{\simeq})^{hC_2}\to(Y^{\simeq})^{hC_2}$.

  The unitary cores are the full Kan subcomplexes spanned by the canonical vertices.
  By \cref{dj.lem.full-subkan}, their path spaces between canonical vertices agree with the corresponding path spaces in the ambient homotopy fixed point Kan complexes.
  The preceding weak equivalence therefore makes $\calU(f)$ fully faithful as a map of Kan complexes.
  Condition (2) makes it essentially surjective.
  Therefore $\calU(f)$ is a weak homotopy equivalence.

  (3) $\Rightarrow$ (2):
  The implication from (3) to (2) is immediate.
\end{proof}

\begin{corollary}\label{dj.cor.intrinsic-arbitrary}
  Let $g:A\to B$ be any morphism of dagger simplicial sets.
  Choose a functorial dagger Joyal fibrant replacement and let $g^{\fib}:A^{\fib}\to B^{\fib}$ be the induced map.
  Then $g$ is a dagger Joyal equivalence if and only if $g^{\fib}$ is fully faithful and $\pi_0\calU(g^{\fib})$ is surjective.
  Equivalently, $g^{\fib}$ is fully faithful and $\calU(g^{\fib})$ is a weak homotopy equivalence.
\end{corollary}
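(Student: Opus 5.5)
The plan is to reduce to \cref{dj.thm.intrinsic-equivalence} by two-out-of-three for $W^{\sSet}_{\dag}$. Write the functorial fibrant replacement as a natural transformation $\iota:\id\Rightarrow(-)^{\fib}$ whose components $\iota_A:A\to A^{\fib}$ are dagger Joyal trivial cofibrations, and whose values $A^{\fib}$ are fibrant in $\sSet^{\dag}_{\Joyal}$. Naturality gives the strictly commutative square
\begin{align}
  g^{\fib}\circ\iota_A=\iota_B\circ g.
\end{align}
Since $\iota_A$ and $\iota_B$ lie in $W^{\sSet}_{\dag}$, and this class satisfies two-out-of-three (\cref{dj.lem.W-properties}), $g\in W^{\sSet}_{\dag}$ holds if and only if $g^{\fib}\in W^{\sSet}_{\dag}$.

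Next I would check that $g^{\fib}$ is a legitimate input for \cref{dj.thm.intrinsic-equivalence}. By \cref{dj.cor.fibrant-underlying}, the underlying simplicial sets $\UdagSSet(A^{\fib})$ and $\UdagSSet(B^{\fib})$ are quasi-categories, so $A^{\fib}$ and $B^{\fib}$ are dagger quasi-categories. The map $g^{\fib}$ is a dagger morphism between them, hence a dagger functor. Thus $\calU(A^{\fib})$, $\calU(B^{\fib})$ and $\calU(g^{\fib})$ are defined by \cref{prop.unitary_core_functor}, and full faithfulness of $g^{\fib}$ makes sense.

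Applying the equivalences (1)$\Leftrightarrow$(2)$\Leftrightarrow$(3) of \cref{dj.thm.intrinsic-equivalence} to $g^{\fib}$ gives both asserted characterizations. The only point requiring care is the choice of replacement. It must be a genuine natural transformation whose components are weak equivalences, not just an objectwise choice, so that the comparison square commutes strictly. Such a replacement exists by the small object argument in the combinatorial model category of \cref{dj.thm.main}.

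I expect no serious obstacle. The corollary is a formal consequence once fibrant objects are known to have quasi-categorical underlying simplicial sets. Note that the converse, namely which dagger quasi-categories are fibrant, is not needed here.
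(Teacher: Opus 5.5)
Your proposal is correct and follows the paper's own proof: two-out-of-three along the naturality square of the replacement reduces the claim to $g^{\fib}$, \cref{dj.cor.fibrant-underlying} makes $A^{\fib}$ and $B^{\fib}$ dagger quasi-categories, and \cref{dj.thm.intrinsic-equivalence} then gives both characterizations. Your remark that the replacement must be a genuine natural transformation, so that the square commutes strictly and two-out-of-three applies, is a detail the paper leaves implicit.
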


\begin{proof}
  Both fibrant-replacement maps are weak equivalences.
  Two-out-of-three therefore reduces the assertion to $g^{\fib}$.
  By \cref{dj.cor.fibrant-underlying}, its source and target are dagger quasi-categories.
  Apply \cref{dj.thm.intrinsic-equivalence}.
\end{proof}

\begin{remark}\label{dj.rem.intrinsic-sanity}
  For a dagger $1$-category $\calA$, the counit $\frakC_{\dag}N_{\dag}\calA\to\calA$ is an identity-on-objects local weak equivalence by the proof of \cref{dj.thm.equivalence}.
  Coherent unitarity is preserved and reflected by this counit by \cref{bg.prop.cu-groupoid} (5), and it agrees with strict unitarity in $\calA$ by \cref{bg.lem.discrete}.
  Hence \cref{dj.lem.core-interval} shows that two canonical vertices of $\calU(N_{\dag}\calA)$ lie in the same component precisely when the corresponding objects are joined by a unitary isomorphism.
  Thus \cref{dj.thm.intrinsic-equivalence} reduces to the usual criterion:
  a dagger functor is an equivalence precisely when it is fully faithful and every target object is unitarily isomorphic to an object in its image.
\end{remark}

\section{Comparisons with anti-involutive models}\label{pointset.ainv-section}

We compare the dagger simplicial sets and model structures constructed above with the anti-involutive point-set models of \cite{DCH19,DH25}.
We first compare the underlying indexing categories and identify dagger simplicial sets as the strict fixed-vertex anti-involutive simplicial sets.
We then compare weak equivalences, cofibrations, fibrant objects, and generators in the two model structures, culminating in \cref{pointset.cor.no-naive-quillen}.

\subsection{Anti-involutive simplicial sets}

We begin by separating the vertex-level involution from the reversal already present in the dagger indexing category.
The resulting tagged indexing category admits a normal-form description, and its fixed-vertex quotient recovers the indexing category for dagger simplicial sets.

Recall the involutive automorphism $\rev:\prism\to\prism$ and the notation $\alpha^{\rev}$ in \cref{constr.alpha_rev}.

\begin{definition}\label{pointset.def.tagged}
  We let $\widetilde\prism_{\rev}$ denote the category generated by $\prism$ together with symbols $\widetilde\rho_n:[n]\to[n]$ for $n\geq0$, subject to the relations $\widetilde\rho_n^2=\id_{[n]}$ and $\widetilde\rho_n\alpha=\alpha^{\rev}\widetilde\rho_m$ for every order-preserving map $\alpha:[m]\to[n]$.
\end{definition}

\begin{lemma}\label{pointset.lem.normal-forms}
  We have:
  \begin{enumerate}
    \item every morphism $[m]\to[n]$ of $\widetilde\prism_{\rev}$ has a normal form $\alpha$ or $\widetilde\rho_n\alpha$, with $\alpha:[m]\to[n]$ order preserving;
    \item two normal forms of the same orientation agree if and only if their order-preserving parts agree;
    \item normal forms of opposite orientations have the same underlying set map if and only if that set map is constant;
    \item the quotient obtained by imposing $\widetilde\rho_0=\id_{[0]}$ is canonically isomorphic to $\prism_{\rev}$.
  \end{enumerate}
\end{lemma}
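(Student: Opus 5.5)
The plan is to construct an explicit model of $\widetilde\prism_{\rev}$ and use it both to produce normal forms and to separate them. Let $\calT$ be the category with objects $[n]$ and morphisms $[m]\to[n]$ the pairs $(\epsilon,\alpha)$, where $\epsilon\in C_2$ and $\alpha:[m]\to[n]$ is order preserving. The pair $(\epsilon,\alpha)$ is read as $\widetilde\rho_n^{\epsilon}\alpha$. Composition is
\begin{align}
  (\epsilon,\alpha)\circ(\delta,\beta):=(\epsilon\delta,\ \alpha^{\rev^{\delta}}\beta),
\end{align}
where $\alpha^{\rev^{1}}=\alpha$ and $\alpha^{\rev^{\omega}}=\alpha^{\rev}$. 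Associativity and unitality follow from functoriality and involutivity of $\rev$ (\cref{constr.alpha_rev}); thus $\calT$ is the Grothendieck-type semidirect product of $\prism$ with the $C_2$-action $\rev$.

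First, for (1), use the relations of \cref{pointset.def.tagged} to push every $\widetilde\rho$ in a word to the left: $\widetilde\rho_n\alpha=\alpha^{\rev}\widetilde\rho_m$ gives $\alpha\widetilde\rho_m=\widetilde\rho_n\alpha^{\rev}$, and $\widetilde\rho_n^2=\id$ lets pairs cancel. Induction on word length then reduces every morphism to the form $\alpha$ or $\widetilde\rho_n\alpha$.

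Second, for (2), define a functor $\Phi:\widetilde\prism_{\rev}\to\calT$ by $\alpha\mapsto(1,\alpha)$ and $\widetilde\rho_n\mapsto(\omega,\id)$. It is well defined because both defining relations hold in $\calT$: $(\omega,\id)^2=(1,\id)$, and $(\omega,\id)(1,\alpha)=(\omega,\alpha)=(1,\alpha^{\rev})(\omega,\id)$. Since $\Phi(\widetilde\rho_n^{\epsilon}\alpha)=(\epsilon,\alpha)$, normal forms with distinct data have distinct images, so they are distinct. This gives (2), and also shows that opposite orientations are never equal in $\widetilde\prism_{\rev}$ itself.

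Third, for (3), define the underlying set map of $\widetilde\rho_n^{\epsilon}\alpha$ as $r_n^{\epsilon}\circ\alpha$, with $r_n(i)=n-i$; this is compatible with the relations by \cref{lem.coordinate_reversal_naturality}-type bookkeeping, since $r_n\alpha=\alpha^{\rev}r_m$. Suppose $\alpha=r_n\beta$ as set maps. Then $\alpha$ is order preserving while $r_n\beta$ is order reversing, so $\alpha$ is both monotone and antitone and hence constant. Conversely, if $\alpha$ is constant at $c$, then $\beta:=r_n\alpha$ is the constant map at $n-c$, which is order preserving, and $\widetilde\rho_n\beta$ has underlying map $\alpha$.

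Finally, for (4), define $\widetilde\prism_{\rev}\to\prism_{\rev}$ by sending each morphism to its underlying set map. It preserves composition and sends $\widetilde\rho_0$ to $\id_{[0]}$, so it factors through the quotient $\bar{\calT}$. It is full, since every order-reversing map has the form $r_n\beta$ with $\beta$ order preserving. For faithfulness on the quotient I would identify $\bar{\calT}$ explicitly as $\calT$ modulo the relation $(\omega,\alpha)\sim(1,\alpha^{\rev}\text{-twisted constant})$ for constant $\alpha$. The point is that a constant $\alpha$ factors through $[0]$, as $\alpha=c\circ !$, and then
\begin{align}
  \widetilde\rho_n\alpha=\widetilde\rho_n c\,!=c^{\rev}\widetilde\rho_0\,!,
\end{align}
so imposing $\widetilde\rho_0=\id$ identifies exactly the pairs described in (3). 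I expect the main obstacle to be checking that this identification generates a congruence that identifies nothing more. I would handle it by verifying directly that the underlying-set-map assignment on $\calT$ is a functor to $\prism_{\rev}$ whose kernel congruence is exactly the one generated by $\widetilde\rho_0\sim\id$, using (2) and (3) to describe the fibres.
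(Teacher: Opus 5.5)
Your proposal is correct and follows essentially the paper's route: the same semidirect-product model $\calT$ separates the normal forms for (1)--(2), (3) is the same monotone/antitone argument, and (4) uses the same computation $\widetilde\rho_n c\,! = c^{\rev}\widetilde\rho_0\,!$ together with the fibre description from (2)--(3). The step you flag as the main obstacle is in fact automatic. The underlying-set-map functor sends $\widetilde\rho_0$ to $\id_{[0]}$, so the congruence generated by $\widetilde\rho_0\sim\id_{[0]}$ already lies in its kernel, and your computation plus (2)--(3) gives the reverse inclusion, exactly as in the paper.
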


\begin{proof}
  (1)
  Using the crossed relation
  $\widetilde\rho_n\alpha=\alpha^{\rev}\widetilde\rho_m$,
  every occurrence of a tagged reversal can be moved to the left of every order-preserving map.
  Since $\widetilde\rho_n^2=\id_{[n]}$, pairs of tagged reversals cancel.
  Hence every morphism $[m]\to[n]$ has the form
  $\alpha$ or $\widetilde\rho_n\alpha$ for an order-preserving map
  $\alpha:[m]\to[n]$.

  (2)
  Let $\calT$ have the same objects as $\prism$ and the following hom-sets:
  \begin{align}
    \Hom_{\calT}([m],[n])
    :=
    \Hom_{\prism}([m],[n])\times C_2.
  \end{align}
  Put $\rev^1:=\id$ and $\rev^\omega:=\rev$;
  similarly, put $\widetilde\rho_n^1:=\id_{[n]}$ and $\widetilde\rho_n^\omega:=\widetilde\rho_n$.
  Define composition by
  \begin{align}
    (\beta,h)\circ(\alpha,g)
    :=
    (\rev^g(\beta)\circ\alpha,hg).
  \end{align}
  Since $\rev$ is an involutive automorphism of $\prism$, this composition is associative and has identities $(\id_{[n]},1)$.

  The assignments $\alpha\mapsto(\alpha,1)$ and $\widetilde\rho_n\mapsto(\id_{[n]},\omega)$ satisfy the defining relations and induce a functor $\widetilde\prism_{\rev}\to\calT$.
  Conversely, define a functor $\calT\to\widetilde\prism_{\rev}$ by $(\alpha,g)\mapsto\widetilde\rho_n^g\alpha$.
  This assignment respects composition because, for every $\beta:[n]\to[p]$, we have
  \begin{align}
    \beta\widetilde\rho_n^g
    =
    \widetilde\rho_p^g\rev^g(\beta).
  \end{align}
  The two functors are inverse to each other.
  Therefore two normal forms with the same orientation agree if and only if their order-preserving parts agree.

  (3)
  The underlying set map of $\alpha$ is order preserving, whereas that of
  $\widetilde\rho_n\beta$ is order reversing.
  If these underlying maps agree, the common map is both order preserving and order reversing, and is therefore constant.
  Conversely, if $c_k:[m]\to[n]$ is the constant map with value $k$, then
  $c_k$ and $\widetilde\rho_nc_{n-k}$ have the same underlying set map.

  (4)
  Let $!:[m]\to[0]$ be the unique map, let
  $v_k:[0]\to[n]$ be given by $v_k(0)=k$, and put $c_k:=v_k!$.
  After imposing $\widetilde\rho_0=\id_{[0]}$, the crossed relation gives:
  \begin{align}
    \widetilde\rho_nc_{n-k}
    =
    \widetilde\rho_nv_{n-k}! 
    =
    v_k\widetilde\rho_0! 
    =
    v_k! 
    =
    c_k.
  \end{align}
  Thus the quotient identifies the two opposite normal forms associated with every constant map.

  The underlying-set-map functor from the quotient to
  $\prism_{\rev}$ is surjective by (1).
  If two representatives of the same orientation have the same image, they agree by (2).
  If representatives of opposite orientations have the same image, their common underlying map is constant by (3), and the preceding calculation shows that they agree in the quotient.
  Hence the functor is also injective on every hom-set and is therefore an isomorphism.
\end{proof}

\begin{notation}
  We write $q_{\rmtag}:\widetilde\prism_{\rev}\to\prism_{\rev}$ for the quotient functor obtained from \cref{pointset.lem.normal-forms}.
\end{notation}

Let $C_2$ act on $\prism$ by $\rev$.
Drummond-Cole and Hackney define the reflexive crossed simplicial category $\nabla:=\prism\rtimes C_2$ in \cite[Definition~4.8]{DCH19}.
The same category is recalled in \cite[Section~5]{DH25}.

\begin{proposition}\label{pointset.prop.reflexive-index}
  There is an isomorphism of categories:
  \begin{align}
    \Phi_{\rmDH}:
    \nabla
    \xrightarrow{\cong}
    \widetilde\prism_{\rev}.
  \end{align}
  Under this isomorphism, the composite $q_{\rmtag}\Phi_{\rmDH}:\nabla\to\prism_{\rev}$ is the quotient generated by $\widetilde\rho_0=\id$.
\end{proposition}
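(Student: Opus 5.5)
The plan is to build $\Phi_{\rmDH}$ from a presentation of $\nabla$ and then compare normal forms. By \cite[Definition~4.8]{DCH19}, $\nabla=\prism\rtimes C_2$ is the Grothendieck construction of the $C_2$-action on $\prism$ given by $\rev$. Concretely, its objects are the $[n]$, and a morphism $[m]\to[n]$ is a pair $(\alpha,g)$ with $\alpha\in\Hom_{\prism}([m],[n])$ and $g\in C_2$. Composition is twisted by the action:
\begin{align}
  (\beta,h)\circ(\alpha,g)=(\rev^g(\beta)\circ\alpha,hg).
\end{align}
This is precisely the category $\calT$ introduced in the proof of \cref{pointset.lem.normal-forms}~(2). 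So the first step is to record that identification. If the convention in \cite{DCH19} differs (say the twist sits on $\alpha$, or the pair is written in the order $(g,\alpha)$), I would compose with the evident isomorphism $(\alpha,g)\mapsto(\rev^g\alpha,g)$ or its analogue, so that the convention matches the one used for $\calT$.

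Next I would define $\Phi_{\rmDH}(\alpha,g):=\widetilde\rho_n^{\,g}\alpha$. This is exactly the functor $\calT\to\widetilde\prism_{\rev}$ built in the proof of \cref{pointset.lem.normal-forms}~(2), and that proof already shows it is an isomorphism with inverse $\alpha\mapsto(\alpha,1)$, $\widetilde\rho_n\mapsto(\id_{[n]},\omega)$. Functoriality uses the identity $\beta\widetilde\rho_n^g=\widetilde\rho_p^g\rev^g(\beta)$. Well-definedness of the inverse uses the two defining relations: $(\id,\omega)^2=(\id,1)$, and $(\id,\omega)(\alpha,1)=(\alpha^{\rev},\omega)=(\alpha^{\rev},1)(\id,\omega)$. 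Both are one-line checks in $\calT$.

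For the second assertion, the composite $q_{\rmtag}\Phi_{\rmDH}$ is surjective and full, being the composite of an isomorphism with the quotient $q_{\rmtag}$. By \cref{pointset.lem.normal-forms}~(4), its kernel congruence is generated by $\widetilde\rho_0=\id_{[0]}$. Transporting along $\Phi_{\rmDH}^{-1}$, this says the congruence on $\nabla$ is generated by $(\id_{[0]},\omega)\sim(\id_{[0]},1)$. That is the stated quotient, and explicitly it sends $(\alpha,g)$ to the underlying set map of $\rev^g$ applied appropriately: $\alpha$ if $g=1$, and $r_n\alpha$ if $g=\omega$.

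The main obstacle I expect is not mathematical but conventional: matching the exact semidirect-product composition law and the direction of the action in \cite{DCH19} and \cite{DH25} with $\calT$. If their law is $(\beta,h)(\alpha,g)=(\beta\circ\rev^h(\alpha),hg)$, I would instead define $\Phi_{\rmDH}(\alpha,g):=\alpha'\widetilde\rho_m^{\,g}$ with the reversal placed on the right, and rewrite it into normal form using the crossed relation. The remaining verifications are then identical.
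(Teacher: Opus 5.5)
Your proposal is correct and is essentially the paper's proof. The paper uses the convention $(\beta,h)\circ(\alpha,g)=(\beta\circ\rev^h(\alpha),hg)$, which is your fallback case. It sets $\Phi_{\rmDH}(\alpha,g):=\alpha\widetilde\rho_m^g=\widetilde\rho_n^g\rev^g(\alpha)$, i.e.\ your $\calT$-functor precomposed with $(\alpha,g)\mapsto(\rev^g\alpha,g)$, inverts it via tagged normal forms, and reads off the second assertion from \cref{pointset.lem.normal-forms}~(4) exactly as you do.

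One small slip: in $\calT$, both $(\id_{[n]},\omega)\circ(\alpha,1)$ and $(\alpha^{\rev},1)\circ(\id_{[m]},\omega)$ equal $(\alpha,\omega)$, not $(\alpha^{\rev},\omega)$. The relation check is still valid.
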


\begin{proof}
  Use the multiplicative notation $C_2=\{1,\omega\}$ for the second coordinate of the semidirect product.
  Thus the hom-sets are
  \begin{align}
    \Hom_{\nabla}([m],[n])
    =
    \Hom_{\prism}([m],[n])\times C_2.
  \end{align}
  We use the DCH composition convention $(\beta,h)\circ(\alpha,g) = (\beta\circ\rev^h(\alpha),hg)$.
  For $(\alpha,g):[m]\to[n]$, define
  \begin{align}
    \Phi_{\rmDH}(\alpha,g)
    :=
    \widetilde\rho_n^g\rev^g(\alpha)
    =
    \alpha\widetilde\rho_m^g,
  \end{align}
  where the second equality follows from the crossed relation.

  The mutually inverse functors constructed in the proof of \cref{pointset.lem.normal-forms} show that every morphism of $\widetilde\prism_{\rev}$ has a unique tagged normal form $\widetilde\rho_n^g\gamma$.
  Define $\Psi(\widetilde\rho_n^g\gamma) := (\rev^g(\gamma),g)$.
  Then we have
  \begin{align}
    \Psi\Phi_{\rmDH}(\alpha,g)
    =
    (\rev^g(\rev^g(\alpha)),g)
    =
    (\alpha,g)
    \quad \text{and} \quad
    \Phi_{\rmDH}\Psi(\widetilde\rho_n^g\gamma)
    =
    \widetilde\rho_n^g\rev^g(\rev^g(\gamma))
    =
    \widetilde\rho_n^g\gamma.
  \end{align}
  Thus $\Psi$ is the inverse of $\Phi_{\rmDH}$.

  In $\nabla$, the elements $(\id_{[0]},1)$ and $(\id_{[0]},\omega)$ are distinct.
  They correspond to $\id_{[0]}$ and $\widetilde\rho_0$.
  By \cref{pointset.lem.normal-forms}, imposing their equality identifies exactly the two orientation tags on constant maps and produces $\prism_{\rev}$.
\end{proof}

\begin{notation}
  We let $\sSet^{\AInv}$ denote the category of simplicial sets equipped with an isomorphism $\tau:X\to X^{\myop}$ satisfying $\tau^{\myop}\tau=\id$, with equivariant simplicial maps as morphisms.
\end{notation}

\begin{proposition}\label{pointset.thm.strict-presheaves}
  There is a natural isomorphism
  \begin{align}
    \Fun(\widetilde\prism_{\rev}^{\myop},\Set)
    \cong
    \sSet^{\AInv}.
  \end{align}
  Under this isomorphism and the equivalence $\Fun(\prism_{\rev}^{\myop},\Set)\simeq\sSet^{\dag}$ of \cref{prop.sSetdag_is_equivalent_to_Fun}, precomposition with $q_{\rmtag}$ is the following fully faithful inclusion
  \begin{align}
    q_{\rmtag}^*:
    \sSet^{\dag}
    \hookrightarrow
    \sSet^{\AInv}.
  \end{align}
\end{proposition}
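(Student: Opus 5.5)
The isomorphism will be built from the normal forms of \cref{pointset.lem.normal-forms}, after which we check that $q_{\rmtag}^*$ lands in the fixed-vertex objects and is fully faithful.

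\emph{Step 1: presheaves on $\widetilde\prism_{\rev}$ are anti-involutive simplicial sets.} By \cref{pointset.def.tagged}, $\widetilde\prism_{\rev}$ is presented by generators and relations: $\prism$ together with the $\widetilde\rho_n$, subject to $\widetilde\rho_n^2=\id$ and $\widetilde\rho_n\alpha=\alpha^{\rev}\widetilde\rho_m$. A presheaf $F$ is therefore the same as a simplicial set $X:=F|_{\prism^{\myop}}$ together with maps $\tau_n:=F(\widetilde\rho_n):X_n\to X_n$. These maps must satisfy $\tau_n^2=\id$ and $\alpha^*\tau_n=\tau_m(\alpha^{\rev})^*$ for every order-preserving $\alpha$.

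The second identity says exactly that $(\tau_n)_n$ is a simplicial map $X\to X^{\myop}$, where $X^{\myop}$ denotes precomposition with $\rev$. The first identity says $\tau^{\myop}\tau=\id$, and in particular that $\tau$ is an isomorphism. Morphisms of presheaves are natural transformations, and naturality in $\widetilde\rho_n$ is equivariance. This gives a functor to $\sSet^{\AInv}$ with an evident inverse, which is the isomorphism. Naturality is clear. We use only the presentation here, not the full normal-form lemma.

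\emph{Step 2: identify $q_{\rmtag}^*$ under this isomorphism.} By \cref{pointset.lem.normal-forms}(4), $q_{\rmtag}$ is the quotient of $\widetilde\prism_{\rev}$ by the single relation $\widetilde\rho_0=\id_{[0]}$. Precomposition along a quotient by relations identifies $\Fun(\prism_{\rev}^{\myop},\Set)$ with the full subcategory of presheaves $F$ on $\widetilde\prism_{\rev}$ satisfying $F(\widetilde\rho_0)=\id$. In terms of Step 1, these are the anti-involutive simplicial sets with $\tau_0=\id_{X_0}$.

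Next we compare with the equivalence of \cref{prop.sSetdag_is_equivalent_to_Fun}, which sends a presheaf $G$ on $\prism_{\rev}$ to $(G|_{\prism^{\myop}},G(r_n))$. The quotient $q_{\rmtag}$ sends $\widetilde\rho_n$ to $r_n$, as seen from the underlying-set-map description in the proof of \cref{pointset.lem.normal-forms}(4). Hence the composite sends a dagger simplicial set $(K,\dag_K)$ to $(K,\dag_K)$, viewed as an anti-involutive simplicial set.

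\emph{Step 3: full faithfulness.} Full faithfulness follows because precomposition along a functor that is bijective on objects and full (here a quotient) is fully faithful. More concretely, morphisms on either side are simplicial maps commuting with the involutions, so the inclusion is literally full.

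The main point to check carefully is the compatibility in Step 2. It rests on $q_{\rmtag}(\widetilde\rho_n)=r_n$, which requires that the explicit description of $\Phi$ in \cref{prop.sSetdag_is_equivalent_to_Fun} on order-reversing maps (namely $(r_n\beta)^*\dag_n$) agrees with $F(\widetilde\rho_n\alpha)=\alpha^*\tau_n$. Taking $\beta=r_n\alpha^{\rev}$ handles this. The remaining steps are routine.
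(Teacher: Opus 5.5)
Your proof is correct and follows the paper's argument. You restrict to $\prism$, set $\tau_n$ to be the value on $\widetilde\rho_n$, read off involutivity and naturality from the two relations, and characterize the presheaves factoring through $q_{\rmtag}$ by $\tau_0=\id$. The variations are minor:
\begin{itemize}
  \item You build the inverse from the universal property of the presentation, whereas the paper uses the explicit normal-form formula $W(\widetilde\rho_n^g\alpha)=X(\alpha)\tau_n^g$.
  \item You spell out $q_{\rmtag}(\widetilde\rho_n)=r_n$ and full faithfulness, which the paper leaves implicit.
\end{itemize}

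One small slip in your final check: the order-reversing map to use is $\beta=r_n\alpha=\alpha^{\rev}r_m$, the image of $\widetilde\rho_n\alpha$, not $r_n\alpha^{\rev}$. With your choice, $(r_n\beta)^*\dag_n=(\alpha^{\rev})^*\dag_n$, which does not match $\alpha^*\tau_n$. The check is redundant anyway, since agreement on the generators $\widetilde\rho_n$ already suffices.
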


\begin{proof}
  Let $W:\widetilde\prism_{\rev}^{\myop}\to\Set$ be a presheaf, and let $X$ be its restriction to $\prism$.
  Put $\tau_n:=W(\widetilde\rho_n)$.
  The crossed relations say that $\tau_n^2=\id$ and that, for every $\alpha:[m]\to[n]$, we have
  \begin{align}
    X(\alpha)\tau_n
    =
    \tau_mX(\alpha^{\rev}).
  \end{align}
  This is precisely the naturality condition for an involutive isomorphism $X\to X^{\myop}$.

  Conversely, let $(X,\tau)$ be anti-involutive.
  For $g\in C_2=\{1,\omega\}$, put $\tau_n^1:=\id_{X_n}$ and $\tau_n^\omega:=\tau_n$.
  On a tagged normal form $\widetilde\rho_n^g\alpha:[m]\to[n]$, define $W(\widetilde\rho_n^g\alpha) := X(\alpha)\tau_n^g$.
  The uniqueness of normal forms and the preceding naturality identity show that this defines a presheaf.
  These constructions are inverse on objects and morphisms.

  Such a presheaf factors through $q_{\rmtag}$ if and only if it sends $\widetilde\rho_0=\id$ to an equality.
  Under the displayed isomorphism, this condition is $\tau_0=\id$, which is exactly the fixed-vertex condition for a dagger simplicial set.
\end{proof}

\begin{remark}
  More generally, for every category $\calV$, precomposition defines a fully faithful functor
  \begin{align}
    q_{\rmtag}^*:
    \Fun(\prism_{\rev}^{\myop},\calV)
    \longrightarrow
    \Fun(\widetilde\prism_{\rev}^{\myop},\calV),
  \end{align}
  whose essential image consists of the diagrams sending $\widetilde\rho_0$ to an identity.
\end{remark}

\subsection{Comparisons between the dagger Joyal and DCH model structures}

We compare the two model structures along the adjoint triple $q_{\rmtag,!}\dashv q_{\rmtag}^*\dashv q_{\rmtag,*}$.
The comparison is organized by weak equivalences, cofibrations, fibrant objects, and generators, after which we combine the obstructions to prove \cref{pointset.cor.no-naive-quillen}.

\begin{notation}
  The free--forgetful adjunction of \cite[Section~5]{DH25} is
  \begin{align}
    F_{\AInv}:
    \sSet
    \rightleftarrows
    \sSet^{\AInv}
    :U_{\AInv}
  \end{align}
  We retain the notation $X^{(0)}=\sk_0X$ and the adjunction $\FdagSSet\dashv\UdagSSet$ from \cref{dj.not.adjunctions}.
\end{notation}

\begin{lemma}\label{pointset.prop.free-completions}
  We have:
  \begin{enumerate}
    \item for every simplicial set $X$, there are natural isomorphisms
    \begin{align}
      F_{\AInv}(X)
      \cong
      X\amalg X^{\myop}
      \quad \text{and} \quad
      \FdagSSet(X)
      \cong
      X\amalg_{X^{(0)}}X^{\myop};
    \end{align}
    \item the canonical map $F_{\AInv}(X) \to q_{\rmtag}^*\FdagSSet(X)$ identifies the two copies of every vertex and its degeneracies;
    \item we let $q_{\rmtag,!}$ denotes left Kan extension along $q_{\rmtag}$; 
    then there is a natural isomorphism
    \begin{align}
      q_{\rmtag,!}F_{\AInv}
      \cong
      \FdagSSet.
    \end{align}
  \end{enumerate}
\end{lemma}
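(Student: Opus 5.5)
The plan is to compute everything from the presheaf descriptions of \cref{pointset.thm.strict-presheaves,prop.sSetdag_is_equivalent_to_Fun} and from the universal properties of the free functors. I will then obtain (3) formally from (1) and (2).

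For (1), $F_{\AInv}$ is left adjoint to restriction along $\prism\hookrightarrow\widetilde\prism_{\rev}$. It is therefore the left Kan extension $X\mapsto\colim$ over $X$'s simplices of representables $\widetilde\prism_{\rev}(-,[n])$. By the normal forms of \cref{pointset.lem.normal-forms}(1,2), the representable $\widetilde\prism_{\rev}(-,[n])$ restricted to $\prism$ is $\Delta^n\amalg(\Delta^n)^{\myop}$: the two summands are indexed by the tag $g\in C_2$, and $\widetilde\rho$ swaps them. Left Kan extension commutes with colimits and the formula is natural in $[n]$ (crossed relation), so it extends to $X\amalg X^{\myop}$ with the swap involution. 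For $\FdagSSet$ I argue the same way using $\prism_{\rev}$. By \cref{pointset.lem.normal-forms}(3,4), the representable $\yo_{\rev}([n])$ restricted to $\prism$ is the same coproduct with exactly the constant maps identified, i.e.\ the two copies of each vertex and its degeneracies. This gives $\Delta^n\amalg_{(\Delta^n)^{(0)}}(\Delta^n)^{\myop}$. Since $(-)^{(0)}=\sk_0$ commutes with colimits, extending by colimits gives $X\amalg_{X^{(0)}}X^{\myop}$. This agrees with the formula already stated in \cref{dj.not.adjunctions}, so I only need to check that the identification is natural.

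For (2), the canonical map is the adjoint of the unit $X\to\UdagSSet\FdagSSet X=U_{\AInv}q_{\rmtag}^*\FdagSSet X$. On representables it is induced by $q_{\rmtag}$ on hom-sets. By \cref{pointset.lem.normal-forms}(3,4), $q_{\rmtag}$ is bijective on nonconstant maps and identifies $c_k$ with $\widetilde\rho_n c_{n-k}$. These pairs are exactly the two copies of a vertex and of its degeneracies. Colimit-compatibility then gives the claim for general $X$.

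For (3), I compose adjunctions. Here $q_{\rmtag,!}\dashv q_{\rmtag}^*$ and $F_{\AInv}\dashv U_{\AInv}$, and $U_{\AInv}q_{\rmtag}^*=\UdagSSet$: both restrict a presheaf to $\prism$, using the compatibility of the embeddings $\prism\to\widetilde\prism_{\rev}\to\prism_{\rev}$. So $q_{\rmtag,!}F_{\AInv}$ and $\FdagSSet$ are left adjoint to the same functor, and uniqueness of adjoints gives the natural isomorphism. As a consistency check, one can also note that $q_{\rmtag,!}$ of a free presheaf is the coequalizer imposing $\widetilde\rho_0=\id$, which matches (2).

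The main obstacle I expect is the bookkeeping in (1) and (2). I must verify that the coproduct/pushout identifications are natural in order-preserving maps, including degeneracies, which land in the glued part only when they are totally degenerate. This is where the crossed relation and the constant-map analysis of \cref{pointset.lem.normal-forms}(3) must be used carefully.
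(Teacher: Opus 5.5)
Your proposal is correct. Your argument for (3) is exactly the paper's: $q_{\rmtag,!}F_{\AInv}$ and $\FdagSSet$ are both left adjoint to $U_{\AInv}q_{\rmtag}^*=i_{\rev}^*=\UdagSSet$. Despite your opening sentence, this step uses neither (1) nor (2).

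For (1) and (2) you take a different, more computational route. The paper proves the first formula of (1) by a direct universal-property check: an anti-involutive map out of $X\amalg X^{\myop}$ with the swap involution is determined by its restriction to the first summand. It simply quotes the pushout formula for $\FdagSSet$ from \cref{dj.not.adjunctions}, and reads off (2) as the evident coproduct-to-pushout quotient. You instead compute both free functors as left Kan extensions. You identify the restricted representables $\widetilde\prism_{\rev}(-,[n])|_{\prism}$ and $\yo_{\rev}([n])|_{\prism}$ through \cref{pointset.lem.normal-forms} and extend by colimits. In (2) you identify the canonical map on representables with $q_{\rmtag}$ on hom-sets. This costs the naturality bookkeeping you flag. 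In exchange, it actually derives the formula for $\FdagSSet$ that the paper only asserts. It also pins down the map in (2) as the unit of $q_{\rmtag,!}\dashv q_{\rmtag}^*$, which the paper records only in the last sentence of its proof.

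Two simplifications are available. First, use the tag-on-the-right normal form $\gamma\widetilde\rho_m^g$ from the proof of \cref{pointset.prop.reflexive-index}. Then the second summand is literally $(\Delta^n)^{\myop}$, and postcomposition with $\theta:[n]\to[n']$ acts by $\gamma\mapsto\theta\gamma$ on both summands. Also, $\widetilde\rho_nc_{n-k}=c_k\widetilde\rho_m$ is visibly the second copy of the constant simplex at $k$. Second, the $\FdagSSet$ formula also follows from the paper's one-line universal-property argument. You only need to note that the dagger of any dagger simplicial set fixes every totally degenerate simplex. Indeed, it fixes vertices, and $(\alpha^*y)^{\dag}=(\alpha^{\rev})^*y^{\dag}$ with $\alpha^{\rev}=\alpha$ for $\alpha:[n]\to[0]$. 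So the two restrictions automatically agree on $X^{(0)}$.
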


\begin{proof}
  (1)
  Let $Y$ be anti-involutive.
  An anti-involutive map $X\amalg X^{\myop}\to Y$ is uniquely determined by its restriction to the first summand, because the involution forces its restriction to the second summand.
  Thus there is a natural isomorphism
  \begin{align}
    \Hom_{\sSet^{\AInv}}(X\amalg X^{\myop},Y)
    \cong
    \Hom_{\sSet}(X,U_{\AInv}Y),
  \end{align}
  proving the first formula.
  The second formula is the free-dagger calculation of \cref{dj.not.adjunctions}.

  (2)
  Its pushout identifies the two copies of every totally degenerate simplex, and the coproduct-to-pushout map is the stated natural quotient.

  (3)
  Let $i_{\rmtag}:\prism\to\widetilde\prism_{\rev}$ and $i_{\rev}:\prism\to\prism_{\rev}$ be the inclusions.
  Since $q_{\rmtag}i_{\rmtag}=i_{\rev}$, their restriction functors satisfy $i_{\rmtag}^*q_{\rmtag}^*=i_{\rev}^*$.
  The functors $q_{\rmtag,!}F_{\AInv}$ and $\FdagSSet$ are both left adjoint to $i_{\rev}^*$.
  Uniqueness of a left adjoint gives the final natural isomorphism.
  Under it, the unit of $q_{\rmtag,!}\dashv q_{\rmtag}^*$ is the quotient just described.
\end{proof}

\subsubsection{Weak equivalences}

The DCH model structure is right-induced from the ordinary Joyal model structure.
Thus a morphism is a DCH weak equivalence exactly when its underlying simplicial map is a Joyal equivalence \cite[Corollary~4.9]{DCH19}.
The dagger Joyal weak equivalences additionally detect the selected coherent-unitary core.

\begin{proposition}\label{pointset.thm.weak-comparison}
  Let $f:X\to Y$ be a functor of dagger quasi-categories.
  If $f$ is a dagger Joyal equivalence, then $q_{\rmtag}^*f$ is a DCH weak equivalence.
\end{proposition}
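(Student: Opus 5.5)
The plan is to reduce the claim to the ordinary Joyal recognition criterion, using the intrinsic characterization of \cref{dj.thm.intrinsic-equivalence}. By \cite[Corollary~4.9]{DCH19}, recalled just above, a map of anti-involutive simplicial sets is a DCH weak equivalence exactly when its underlying simplicial map is a Joyal equivalence. The underlying simplicial set of $q_{\rmtag}^*X$ is $\UdagSSet X$, since $q_{\rmtag}i_{\rmtag}=i_{\rev}$ (as in the proof of \cref{pointset.prop.free-completions} (3)). It therefore suffices to show that $\UdagSSet(f):\UdagSSet X\to\UdagSSet Y$ is a Joyal equivalence of quasi-categories.

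For this we apply \cref{dj.prop.ordinary-recognition}: we need $f$ to be fully faithful and essentially surjective. Since $f$ is a dagger Joyal equivalence between dagger quasi-categories, the implication (1) $\Rightarrow$ (2) of \cref{dj.thm.intrinsic-equivalence} gives two facts: $f$ is fully faithful, and $\pi_0\calU(f)$ is surjective. Full faithfulness is a property of the underlying map, equivalently of $\frakC(\UdagSSet f)$ on mapping spaces by \cref{dj.lem.intrinsic-local}, so it transfers directly.

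For essential surjectivity we repeat the argument in the proof of (2) $\Rightarrow$ (3). Evaluation at the vertex $1\in EC_2$ gives a natural map $\ev_1:(Y^{\simeq})^{hC_2}\to\Sing|Y^{\simeq}|$ with $\widehat y\mapsto y$. Surjectivity of $\pi_0\calU(f)$ produces, for each $y\in Y_0$, some $x\in X_0$ with $\widehat{f(x)}$ and $\widehat y$ in one component of $\calU(Y)$. Applying $\ev_1$ puts $f(x)$ and $y$ in one component of $\Sing|Y^{\simeq}|\simeq Y^{\simeq}$, so they are equivalent in $Y$.

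Alternatively, one can avoid the unitary core. The object condition of \cref{bg.def.W} yields a coherently unitary class $[v]:f(x)\to y$ in $\h\frakC_{\dag}Y$. By \cref{bg.prop.cu-groupoid} (2) this class is an isomorphism, hence an equivalence in $\h Y$. The only mild obstacle is bookkeeping: the underlying functor of $q_{\rmtag}^*$ must literally be $\UdagSSet$, and the homotopy category of $\frakC Y$ must agree with that of $Y$. Both facts are standard.
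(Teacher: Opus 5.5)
Your proposal is correct and follows essentially the same route as the paper. Both arguments reduce to showing that the underlying map is a Joyal equivalence, using that the DCH structure is right-induced from the Joyal structure. Both then use \cref{dj.thm.intrinsic-equivalence}, or directly the coherent-unitary object condition, to obtain full faithfulness and ordinary essential surjectivity before invoking \cref{dj.prop.ordinary-recognition}. Your explicit justification of essential surjectivity fills in a step the paper's short proof leaves implicit. You give it either via $\ev_1$, or via \cref{bg.prop.cu-groupoid} (2) together with $\h\frakC Y\cong\h Y$.
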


\begin{proof}
  By \cref{dj.thm.intrinsic-equivalence}, a dagger Joyal equivalence is fully faithful and induces a weak equivalence $\calU(X)\to\calU(Y)$ on unitary cores.
  In particular, every vertex of $Y$ is connected by a coherently unitary equivalence to a vertex in the image of $f$.
  Hence the underlying map of quasi-categories is fully faithful and essentially surjective, and is therefore a Joyal equivalence.
\end{proof}

\begin{remark}\label{pointset.prop.weak-counterexample}
  The converse of \cref{pointset.thm.weak-comparison} does not hold in general:
  there is a fully faithful dagger functor $F:\calC\to\calD$ between dagger groupoids such that:
  \begin{enumerate}
    \item the underlying functor is an equivalence of categories;
    \item $F$ is not unitarily essentially surjective;
    \item $q_{\rmtag}^*N_{\dag}(F)$ is a DCH weak equivalence, while $N_{\dag}(F)$ is not a dagger Joyal equivalence.
  \end{enumerate}

  Let $\calD$ have objects $0$ and $1$, with $\Hom_{\calD}(i,j):=\bbZ$ for every ordered pair $(i,j)$.
  Identities are represented by $0$, and composition is $b\circ a:=a+b$.
  Every morphism $a$ has inverse $-a$, so $\calD$ is a groupoid.

  Put $t_0:=0$ and $t_1:=1$.
  For $a:i\to j$, define $a^{\dag}:=a+t_i-t_j$, regarded as a morphism $j\to i$.
  The dagger axioms follow from
  \begin{align}
    (a^{\dag})^{\dag}
    =
    a+t_i-t_j+t_j-t_i
    =a
    \quad \text{and} \quad
    (b\circ a)^{\dag}
    =
    a+b+t_i-t_k
    =
    a^{\dag}\circ b^{\dag}.
  \end{align}
  Identities are fixed, so this defines a dagger on $\calD$.

  Let $\calC$ be the full dagger subgroupoid on $0$, and let $F:\calC\hookrightarrow\calD$ be the inclusion.
  It is fully faithful.
  The morphism $0\in\Hom_{\calD}(0,1)$ is an isomorphism, so the underlying functor is essentially surjective and hence an equivalence of categories.

  A morphism $a:i\to j$ is unitary exactly when $a^{\dag}=a^{-1}=-a$, or equivalently $2a=t_j-t_i$.
  For $i\neq j$, the right-hand side is $1$ or $-1$, so this equation has no solution in $\bbZ$.
  For $i=j$, it forces $a=0$.
  Hence the only unitary morphisms of $\calD$ are its identities, and no unitary morphism joins $0$ to $1$.
  Therefore $F$ is not unitarily essentially surjective.

  Regard $\calC$ and $\calD$ as dagger simplicial categories with discrete mapping spaces.
  They are fibrant by \cref{bg.cor.fibrant}, and their dagger nerves are fibrant by \cref{dj.cor.fibrant-underlying}.
  Their underlying nerves are Kan complexes, so their images under $q_{\rmtag}^*$ are DCH fibrant.
  Since the underlying functor is an equivalence, $q_{\rmtag}^*N_{\dag}(F)$ is a DCH weak equivalence.
  By \cref{bg.lem.discrete}, coherent unitarity is strict unitarity here.
  The unitary core of $\calC$ therefore has one component, while that of $\calD$ has two.
  By \cref{dj.thm.intrinsic-equivalence}, $N_{\dag}(F)$ is not a dagger Joyal equivalence.
\end{remark}

\subsubsection{Cofibrations}

Recall that a normal monomorphism in the DCH model structure is a monomorphism for which the involution acts freely on the new nondegenerate simplices in every degree.
On the other hand, a free cofibration of the dagger Joyal model structure requires this freeness only in positive dimensions, because every vertex is dagger-fixed.

\begin{proposition}\label{pointset.cor.normal-to-free}
  The left adjoint $q_{\rmtag,!}:\sSet^{\AInv}\to\sSet^{\dag}$ sends every DCH normal monomorphism to a free cofibration.
\end{proposition}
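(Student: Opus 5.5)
The plan is to argue at the level of saturated classes rather than simplex by simplex. By \cref{dj.lem.free-cof}, the free cofibrations are exactly the saturation of $I^{\sSet}_{\dag}$. Since $q_{\rmtag,!}$ is a left adjoint, it preserves coproducts, pushouts, transfinite composites and retracts. It therefore suffices to know two things: that the DCH normal monomorphisms form the saturation of a generating set, and that $q_{\rmtag,!}$ sends each generator into the saturation of $I^{\sSet}_{\dag}$.

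For the first point I will take the generators to be $\{F_{\AInv}(\partial\Delta^n)\to F_{\AInv}(\Delta^n)\}_{n\geq0}$. These are DCH normal monomorphisms, since $F_{\AInv}(X)\cong X\amalg X^{\myop}$ by \cref{pointset.prop.free-completions} (1), so the involution exchanges the two summands and acts freely on all new simplices, including vertices. For the converse, that every normal monomorphism is a relative cell complex on these generators, I will adapt the skeletal argument in the proof of \cref{dj.lem.free-cof}. Given a normal monomorphism $A\to B$, filter $B$ by $B_{\leq n}$, the anti-involutive subobject generated by $A$ and the new nondegenerate simplices of dimension at most $n$. Choose one representative from each free orbit $\{\sigma,\tau\sigma\}$ of new nondegenerate $n$-simplices; freeness now holds in every degree, including $n=0$. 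The adjunction $F_{\AInv}\dashv U_{\AInv}$ gives attaching squares. By Eilenberg--Zilber uniqueness these squares are pushouts, exactly as in that proof, with $F_{\AInv}$ in place of $\FdagSSet$. The DCH paper may already record this cellular description; if so I will cite it.

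For the second point, \cref{pointset.prop.free-completions} (3) gives a natural isomorphism $q_{\rmtag,!}F_{\AInv}\cong\FdagSSet$. Hence $q_{\rmtag,!}$ carries $F_{\AInv}(\partial\Delta^n)\to F_{\AInv}(\Delta^n)$ to $\FdagSSet(\partial\Delta^n)\to\FdagSSet(\Delta^n)$, which lies in $I^{\sSet}_{\dag}$. Combining the two points, $q_{\rmtag,!}$ sends the saturation of the DCH generators into the saturation of $I^{\sSet}_{\dag}$, which is the class of free cofibrations.

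The main obstacle is the cellular identification of DCH normal monomorphisms, specifically checking that the attaching squares are pushouts in $\sSet^{\AInv}$ when the boundary of a chosen simplex has faces in the orbit of the simplex or in lower strata. I expect the proof of \cref{dj.lem.free-cof} to carry over verbatim, because the normal-form description of $\widetilde\prism_{\rev}$ in \cref{pointset.lem.normal-forms} makes faces and degeneracies behave exactly as in the dagger case, with the tag at $[0]$ simply not collapsed.
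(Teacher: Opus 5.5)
Your proposal is correct and follows the paper's proof. The paper likewise generates the normal monomorphisms from $F_{\AInv}(\partial\Delta^n)\to F_{\AInv}(\Delta^n)$ and uses $q_{\rmtag,!}F_{\AInv}\cong\FdagSSet$ to identify their images with $I^{\sSet}_{\dag}$. It then concludes from preservation of colimits and retracts together with \cref{dj.lem.free-cof}. The only difference is that you sketch the skeletal Eilenberg--Zilber proof of the cellular description of normal monomorphisms, which works and which the paper simply takes as known from the DCH setting.
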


\begin{proof}
  The DCH normal monomorphisms are generated under pushouts, transfinite compositions, and retracts by $F_{\AInv}(\partial\Delta^n) \to  F_{\AInv}(\Delta^n)$.
  By \cref{pointset.prop.free-completions} (3), their images are the generating free cofibrations $\FdagSSet(\partial\Delta^n)\to\FdagSSet(\Delta^n)$.
  The functor $q_{\rmtag,!}$ preserves colimits and retracts.
\end{proof}

\begin{proposition}\label{pointset.thm.cofibrations}
  Let $i:A\to B$ be a monomorphism of dagger simplicial sets.
  The anti-involutive map $q_{\rmtag}^*i$ is a normal monomorphism if and only if:
  \begin{enumerate}
    \item $i$ is bijective on vertices;
    \item $i$ is a free cofibration, equivalently a cofibration of the dagger Joyal model structure.
  \end{enumerate}
\end{proposition}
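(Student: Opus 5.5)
The argument works degreewise on the complement $B\setminus A$, using the definitions directly. Recall that a DCH normal monomorphism is a monomorphism whose anti-involution acts freely on the new nondegenerate simplices in \emph{every} degree, including degree $0$. The functor $q_{\rmtag}^*$ does not change the underlying simplicial set or the involution $\tau_n=\dag_n$; the only difference is that $\tau_0=\id$ on vertices. Since $q_{\rmtag}^*$ is faithful and the underlying map of $q_{\rmtag}^*i$ is $i$, $q_{\rmtag}^*i$ is a monomorphism whenever $i$ is. So the question reduces to comparing two freeness conditions on the nondegenerate simplices of $B$ not in the image of $A$.

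For the forward direction, assume $q_{\rmtag}^*i$ is normal. Any vertex $v\in B_0\setminus A_0$ is a new nondegenerate $0$-simplex with $\tau_0 v=v^{\dag}=v$, because daggers fix vertices. This contradicts freeness in degree $0$, so $B_0\setminus A_0=\varnothing$ and $i$ is bijective on vertices; injectivity is already known. In positive degrees, normality says that no new nondegenerate simplex satisfies $\sigma^{\dag}=\sigma$, which is exactly \cref{dj.def.free-cof}. Its equivalence with being a cofibration of $\sSet^{\dag}_{\Joyal}$ is \cref{dj.thm.main} (1) together with \cref{dj.lem.free-cof}.

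For the converse, if $i$ is bijective on vertices then there are no new $0$-simplices, so freeness in degree $0$ holds vacuously. Freeness in each positive degree is the free-cofibration hypothesis, since the anti-involution there is the dagger.

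The one point that needs care is making sure the DCH notion of normal monomorphism is literally the condition \enquote{the involution has no fixed points among new nondegenerate simplices} in each degree, as recalled just before \cref{pointset.cor.normal-to-free}, and not a weaker orbit condition. One might also worry about degenerate simplices, but the dagger preserves and reflects nondegeneracy, as noted in the proof of \cref{dj.lem.free-cof}, so only nondegenerate simplices matter. If one instead uses the cellular characterization of normal monomorphisms, namely the saturation of $F_{\AInv}(\partial\Delta^n)\to F_{\AInv}(\Delta^n)$, then one would first run the skeletal cell decomposition from the proof of \cref{dj.lem.free-cof} on the anti-involutive side. Every $0$-cell there is $F_{\AInv}(\Delta^0)=\Delta^0\amalg\Delta^0$, which contributes a swapped pair of vertices and so can never be a dagger-fixed vertex.
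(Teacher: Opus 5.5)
Your proposal is correct and follows essentially the same route as the paper. It compares the two freeness conditions degreewise on the new nondegenerate simplices: because daggers fix every vertex, normality forces bijectivity on vertices, and in positive degrees normality is exactly the condition of \cref{dj.def.free-cof}, with the converse obtained from the nondegenerate-simplex criterion of \cite[Corollary~4.9]{DCH19}.
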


\begin{proof}
  Suppose first that $q_{\rmtag}^*i$ is normal.
  The involution on $B_0$ is the identity.
  Hence an element of $B_0\setminus A_0$ would be fixed by the involution, contradicting normality.
  Thus $A_0\to B_0$ is bijective.
  In positive dimensions, normality says precisely that no new nondegenerate simplex is fixed by the involution.
  By \cref{dj.def.free-cof}, this is the free-cofibration condition.

  Conversely, suppose that (1) and (2) hold.
  There are no new nondegenerate simplices in degree zero, and in positive dimensions condition (2) makes the involution free on all new nondegenerate simplices.
  The nondegenerate-simplex criterion \cite[Corollary~4.9]{DCH19} shows that $q_{\rmtag}^*i$ is normal.
\end{proof}

\begin{corollary}\label{pointset.cor.cofibrant-obstruction}
  If a dagger simplicial set $X$ has a vertex, then $q_{\rmtag}^*X$ is not cofibrant in the DCH model structure.
  In contrast, the one-vertex dagger simplicial set $\FdagSSet(\Delta^0)$ is cofibrant in the dagger Joyal model structure.
\end{corollary}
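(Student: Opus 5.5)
The first assertion will follow from \cref{pointset.thm.cofibrations} applied to the monomorphism $\varnothing\to X$. Here $q_{\rmtag}^*$ is a right adjoint (it is restriction along $q_{\rmtag}$), and it also preserves colimits because it is precomposition of presheaves. In particular $q_{\rmtag}^*\varnothing=\varnothing$. Hence $q_{\rmtag}^*X$ is DCH cofibrant exactly when $\varnothing\to q_{\rmtag}^*X$ is a normal monomorphism, since DCH cofibrations are the normal monomorphisms. By \cref{pointset.thm.cofibrations}, this would force $\varnothing\to X$ to be bijective on vertices. That fails as soon as $X_0\neq\varnothing$. Concretely, any vertex $x\in X_0$ satisfies $\tau_0(x)=x$, because the involution on vertices is the identity for dagger simplicial sets. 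So $x$ is a new nondegenerate $0$-simplex fixed by the involution, which contradicts normality. I would give this direct one-line argument alongside the citation, so the degree-zero step is visible.

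For the second assertion, I would use \cref{dj.cor.Fdag-free}: $\FdagSSet(A)$ is free for every simplicial set $A$, and we take $A=\Delta^0$. Alternatively, $\varnothing=\FdagSSet(\partial\Delta^0)\to\FdagSSet(\Delta^0)$ is itself a member of $I^{\sSet}_{\dag}$. By \cref{dj.thm.main}(1), the cofibrations of $\sSet^{\dag}_{\Joyal}$ are the saturation of $I^{\sSet}_{\dag}$, so this map is a cofibration. It remains to check that $\FdagSSet(\Delta^0)=\Delta^0\amalg_{\Delta^0}\Delta^0\cong\Delta^0$ has exactly one vertex, which follows from the formula in \cref{dj.not.adjunctions}.

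There is no real obstacle. The only point to state carefully is that the DCH cofibrant objects are those for which $\varnothing\to q_{\rmtag}^*X$ is normal. This uses the identification of DCH cofibrations with normal monomorphisms recalled before \cref{pointset.cor.normal-to-free}, together with the preservation of the initial object by $q_{\rmtag}^*$.
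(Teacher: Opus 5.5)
Your proposal is correct and follows the paper's proof. You apply \cref{pointset.thm.cofibrations} to $\varnothing\to X$, which fails to be bijective on vertices, and you observe that $\varnothing=\FdagSSet(\partial\Delta^0)\to\FdagSSet(\Delta^0)$ is itself a generating free cofibration. The details you add are accurate and only spell out what the paper leaves implicit: that $q_{\rmtag}^*$ preserves the initial object, the direct check that a vertex is a new nondegenerate $0$-simplex fixed by the involution, and the computation $\FdagSSet(\Delta^0)\cong\Delta^0$.
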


\begin{proof}
  If $X_0$ is nonempty, the map $\varnothing\to X$ is not bijective on vertices.
  By \cref{pointset.thm.cofibrations}, $\varnothing\to q_{\rmtag}^*X$ is not a DCH normal monomorphism.
  Hence $q_{\rmtag}^*X$ is not DCH cofibrant.
  On the other hand, $\varnothing\to\FdagSSet(\Delta^0)$ is one of the generating free cofibrations.
\end{proof}

\subsubsection{Fibrant objects}

\begin{proposition}\label{pointset.prop.fibrant-comparison}
  For a dagger simplicial set $X$, the following are equivalent:
  \begin{enumerate}
    \item $q_{\rmtag}^*X$ is fibrant in the DCH anti-involutive Joyal model structure;
    \item $\UdagSSet X$ is a quasi-category.
  \end{enumerate}
\end{proposition}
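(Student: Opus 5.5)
The plan is to use that the DCH anti-involutive Joyal model structure is right-induced from the ordinary Joyal model structure along the forgetful functor $U_{\AInv}:\sSet^{\AInv}\to\sSet$, as recalled at the start of the subsection on weak equivalences (\cite[Corollary~4.9]{DCH19}). In a right-induced model structure the fibrations are created by the right adjoint, so an anti-involutive simplicial set $W$ is DCH fibrant if and only if $U_{\AInv}W$ is fibrant in the Joyal model structure, i.e.\ a quasi-category. Equivalently, $W$ is fibrant if and only if it has the right lifting property against the generating trivial cofibrations $F_{\AInv}(\Lambda^n_k)\to F_{\AInv}(\Delta^n)$ for $0<k<n$ (together with the remaining generators transferred from a generating set of Joyal trivial cofibrations). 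By $F_{\AInv}\dashv U_{\AInv}$, these lifting problems are exactly the corresponding lifting problems for $U_{\AInv}W$.

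Applying this to $W=q_{\rmtag}^*X$, it remains to identify $U_{\AInv}q_{\rmtag}^*X$ with $\UdagSSet X$. With $i_{\rmtag}:\prism\to\widetilde\prism_{\rev}$ and $i_{\rev}:\prism\to\prism_{\rev}$ the inclusions, as in the proof of \cref{pointset.prop.free-completions} (3), we have $U_{\AInv}=i_{\rmtag}^*$ under \cref{pointset.thm.strict-presheaves} and $\UdagSSet=i_{\rev}^*$ under \cref{prop.sSetdag_is_equivalent_to_Fun}. Since $q_{\rmtag}i_{\rmtag}=i_{\rev}$, this gives
\begin{align}
  U_{\AInv}q_{\rmtag}^*X = i_{\rmtag}^*q_{\rmtag}^*X = i_{\rev}^*X = \UdagSSet X.
\end{align}
Concretely, the underlying simplicial set is unchanged and only the involution is reinterpreted.

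Combining the two steps, $q_{\rmtag}^*X$ is DCH fibrant if and only if $\UdagSSet X$ is a quasi-category, which proves (1)$\Leftrightarrow$(2).

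The main obstacle is confirming that the cited DCH structure is genuinely right-induced, so that fibrations, and not just weak equivalences, are detected on underlying simplicial sets. If \cite{DCH19} instead presents it through generating trivial cofibrations, I would check that these are exactly the images under $F_{\AInv}$ of a generating set of Joyal trivial cofibrations, and then conclude by the same adjunction argument.
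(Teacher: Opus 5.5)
Your proposal is correct and follows the paper's proof. The DCH structure is right-induced from the Joyal model structure (\cite[Corollary~4.9]{DCH19}), so its fibrant objects are exactly the anti-involutive simplicial sets whose underlying simplicial set is a quasi-category, and $q_{\rmtag}^*$ leaves the underlying simplicial set unchanged; your identity $U_{\AInv}q_{\rmtag}^*=i_{\rmtag}^*q_{\rmtag}^*=i_{\rev}^*=\UdagSSet$ just makes that last point explicit.
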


\begin{proof}
  The DCH model structure is right-induced from the ordinary Joyal model structure, and its fibrant objects are precisely the anti-involutive quasi-categories \cite[Corollary~4.9]{DCH19}.
  The functor $q_{\rmtag}^*$ does not change the underlying simplicial set.
\end{proof}

\begin{corollary}\label{pointset.cor.dagger-fibrant-to-dch}
  Every fibrant object of the dagger Joyal model becomes DCH fibrant after applying $q_{\rmtag}^*$.
\end{corollary}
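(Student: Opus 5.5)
The plan is to combine \cref{dj.cor.fibrant-underlying} with \cref{pointset.prop.fibrant-comparison}; no new construction is needed. Let $X$ be fibrant in $\sSet^{\dag}_{\Joyal}$. The first assertion of \cref{dj.cor.fibrant-underlying} shows that $\UdagSSet X$ is a quasi-category.

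The argument behind that assertion is the following. Each inner horn inclusion $\Lambda^n_k\to\Delta^n$ is sent by $\FdagSSet$ to a free cofibration by \cref{dj.cor.Fdag-free}. Its image under $\frakC_{\dag}$ is a dagger Bergner trivial cofibration, by \cref{dj.lem.lift} together with \cref{dj.lem.F-preserves-cof} (2). Hence $\FdagSSet(\Lambda^n_k\to\Delta^n)$ is a trivial cofibration of $\sSet^{\dag}_{\Joyal}$. By the adjunction $\FdagSSet\dashv\UdagSSet$, the right lifting property of $X$ against these maps is exactly inner horn filling for $\UdagSSet X$.

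Next, apply \cref{pointset.prop.fibrant-comparison}, direction (2)$\Rightarrow$(1). Since $q_{\rmtag}^*$ does not change the underlying simplicial set, $q_{\rmtag}^*X$ is an anti-involutive simplicial set whose underlying simplicial set is a quasi-category. By \cite[Corollary~4.9]{DCH19}, this is precisely DCH fibrancy.

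There is no real obstacle here. The only point that needs care is that DCH fibrancy depends only on the underlying simplicial set: the DCH structure is right-induced from the Joyal model structure, so the extra vertex-fixing condition imposed by $q_{\rmtag}^*$ plays no role.
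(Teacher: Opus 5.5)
Your proposal is correct and matches the paper's proof: \cref{dj.cor.fibrant-underlying} shows that the underlying simplicial set is a quasi-category, and \cref{pointset.prop.fibrant-comparison} turns this into DCH fibrancy. Your recap of the inner-horn argument just reproduces the paper's proof of \cref{dj.cor.fibrant-underlying}, so citing that corollary is enough.
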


\begin{proof}
  By \cref{dj.cor.fibrant-underlying}, the underlying simplicial set of every dagger Joyal fibrant object is a quasi-category.
  Apply \cref{pointset.prop.fibrant-comparison}.
\end{proof}

\begin{example}\label{pointset.example.nonfixed-fibrant}
  The discrete simplicial set on two vertices, equipped with the involution which exchanges them, is DCH fibrant.
  It does not belong to the essential image of $q_{\rmtag}^*$, since its degree-zero involution is nontrivial.
  Thus DCH fibrant objects need not have strict fixed vertices.
\end{example}

\subsubsection{Generators}

\begin{notation}
  We fix some notational conventions:
  \begin{itemize}
    \item let $\mathbf J$ be the ordinary walking-isomorphism category, and write $\bbJ:=N(\mathbf J)$;
    \item let $d_{\ai} : F_{\AInv}(\Delta^0) \to F_{\AInv}(\bbJ)$ be the double-$\bbJ$ inclusion of \cite[Example~5.9]{DH25}.
  \end{itemize}
\end{notation}

\begin{proposition}\label{pointset.prop.free-iso-to-unitary}
  There is a canonical degreewise-surjective dagger map $\pi:  \FdagSSet(\bbJ) \to N_{\dag}(\bbI_{\dag})$ which imposes the relation $u^{\dag}=u^{-1}$ on the generating isomorphism.
\end{proposition}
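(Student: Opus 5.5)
The map $\pi$ will be built by adjunction, and then surjectivity will be checked degreewise using the fact that $\bbI_{\dag}$ is indiscrete.

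\emph{Construction.} By the adjunction $\FdagSSet\dashv\UdagSSet$, a dagger map $\FdagSSet(\bbJ)\to N_{\dag}(\bbI_{\dag})$ is the same as an ordinary simplicial map $\bbJ\to\UdagSSet N_{\dag}(\bbI_{\dag})=N(\UdagSCat\bbI_{\dag})$. Here $\bbI_{\dag}$ has discrete mapping spaces, so its homotopy coherent nerve is its ordinary nerve. Such a simplicial map is a functor $\mathbf J\to\UdagSCat\bbI_{\dag}$. Let $\pi_{\und}$ be the isomorphism of categories sending the generating isomorphism $e:0\to1$ of $\mathbf J$ to $u$, and so $e^{-1}$ to $u^{-1}=u^{\dag}$. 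Let $\pi$ be its adjunct.

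Concretely, via \cref{pointset.prop.free-completions} (1), $\FdagSSet(\bbJ)=\bbJ\amalg_{\bbJ^{(0)}}\bbJ^{\myop}$. On the first summand $\pi$ is $N(\pi_{\und})$. On the second summand it is forced by the dagger: $\pi(\sigma^{\dag})=\pi(\sigma)^{\dag}$, using the nerve dagger $D_{N}$ of \cref{dj.lem.lift}. Canonicity comes from naturality of the adjunction bijection.

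\emph{The imposed relation.} In $\FdagSSet(\bbJ)$ the $1$-simplices $e^{-1}$ (first copy) and $e^{\dag}$ (the conjugate of $e$, second copy) are distinct nondegenerate simplices. Their images are $u^{-1}$ and $u^{\dag}$, which coincide in $\bbI_{\dag}$. So $\pi$ identifies the dagger of the generator with its inverse, and in this sense imposes $u^{\dag}=u^{-1}$. I would also note that $\pi$ is injective on the first summand, because $N(\pi_{\und})$ is an isomorphism. Hence these identifications between the two copies are the only collapsing that occurs.

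\emph{Degreewise surjectivity.} Already the first summand surjects. Both $\mathbf J$ and $\bbI_{\dag}$ are indiscrete on $\{0,1\}$, so an $n$-simplex of either nerve is just a sequence $(x_0,\ldots,x_n)\in\{0,1\}^{n+1}$. Under this description $N(\pi_{\und})$ is the identity on $\{0,1\}^{n+1}$, hence bijective in each degree, and $\pi$ is degreewise surjective.

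The only step needing care is the second one: checking that the dagger on $N_{\dag}(\bbI_{\dag})$ really sends the image of $e$ to $u^{\dag}$, not to some reversed face. This is a direct check with $\lambda$ in degree $1$, where the order reversal just swaps endpoints.
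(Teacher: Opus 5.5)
Your proposal is correct and follows the paper's proof. Like the paper, you define $\pi$ as the adjunct, under $\FdagSSet\dashv\UdagSSet$, of the identification $\bbJ\cong\UdagSSet N_{\dag}(\bbI_{\dag})$, with the second summand forced by the dagger. You also deduce degreewise surjectivity, as the paper does, from the fact that the first summand already maps onto the target; your extra remarks (the coherent nerve agreeing with the ordinary nerve, injectivity on the first summand) just spell out details the paper leaves implicit.
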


\begin{proof}
  The underlying simplicial set of $N_{\dag}(\bbI_{\dag})$ is $\bbJ$.
  The identity map on $\bbJ$ therefore extends uniquely along the free--forgetful adjunction to $\pi$.
  Under the presentation $\FdagSSet(\bbJ) = \bbJ \amalg_{\sk_0\bbJ} \bbJ^{\myop}$, it is the identity on the first summand and is determined on the second summand by the dagger of $N_{\dag}(\bbI_{\dag})$.
  Thus it identifies the formal dagger of $u$ with $u^{-1}$.
  The first summand already maps onto the target, so $\pi$ is degreewise surjective.
\end{proof}

\begin{proposition}\label{pointset.thm.generator-comparison}
  Left Kan extension along $q_{\rmtag}$ has the following effect on the DCH/DH generators:
  \begin{enumerate}
    \item for every $n\geq0$, the image of $F_{\AInv}(\partial\Delta^n)\to F_{\AInv}(\Delta^n)$ is canonically isomorphic to $\FdagSSet(\partial\Delta^n)\to\FdagSSet(\Delta^n)$;
    \item for every inner horn $0<k<n$, the image of $F_{\AInv}(\Lambda^n_k)\to F_{\AInv}(\Delta^n)$ is canonically isomorphic to the dagger Joyal trivial cofibration $\FdagSSet(\Lambda^n_k)\to\FdagSSet(\Delta^n)$;
    \item the image of $d_{\ai}:F_{\AInv}(\Delta^0) \to F_{\AInv}(\bbJ)$ is canonically isomorphic to $\FdagSSet(\Delta^0)\to\FdagSSet(\bbJ)$, which is a dagger Joyal cofibration but not a dagger Joyal weak equivalence.
  \end{enumerate}
\end{proposition}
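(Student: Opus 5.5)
The plan is to derive all three parts from the natural isomorphism $q_{\rmtag,!}F_{\AInv}\cong\FdagSSet$ of \cref{pointset.prop.free-completions} (3). Every generator in the statement has the form $F_{\AInv}(i)$ for a map $i:K\to L$ of simplicial sets. By naturality of that isomorphism, $q_{\rmtag,!}F_{\AInv}(i)$ is identified, canonically in the arrow category, with $\FdagSSet(i)$. Applying this to $i=\partial\Delta^n\to\Delta^n$ gives (1) immediately. For $d_{\ai}$, I first note that the double-$\bbJ$ inclusion of \cite[Example~5.9]{DH25} is $F_{\AInv}$ applied to a vertex inclusion $\Delta^0\to\bbJ$. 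If instead it is presented as an anti-involutive map not of free form, I check it against the adjunction: maps out of $F_{\AInv}(\bbJ)$ correspond to maps out of $\bbJ$. The same identification then yields $\FdagSSet(\Delta^0)\to\FdagSSet(\bbJ)$.

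For (2), the triviality of $\FdagSSet(\Lambda^n_k)\to\FdagSSet(\Delta^n)$ was already established in the proof of \cref{dj.cor.fibrant-underlying}. There, \cref{dj.cor.Fdag-free} shows the map is a free cofibration. Its rigidification is $\FdagSCat\frakC(i)$, and \cref{dj.lem.F-preserves-cof} (2) makes this a dagger Bergner trivial cofibration, because $\frakC(i)$ is a bijective-on-objects Bergner trivial cofibration. I will simply cite that argument.

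For (3), $\FdagSSet(\Delta^0)\to\FdagSSet(\bbJ)$ is a free cofibration by \cref{dj.cor.Fdag-free}, since $\Delta^0\to\bbJ$ is a monomorphism. The main obstacle is showing that it is \emph{not} a dagger Joyal equivalence. My plan is to apply $\frakC_{\dag}$ and use $\frakC_{\dag}\FdagSSet\cong\FdagSCat\frakC$. This gives $\mathbf1_{\dag}\to\FdagSCat\frakC(\bbJ)$, whose target has two objects $0,1$, and I must show it violates coherent-unitary essential surjectivity. Since both objects lie in the image only up to equivalence, I need that $1$ is not coherently unitarily equivalent to $0$. I will test this by mapping to a suitable target where unitarity fails. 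Take the dagger groupoid $\calD$ of \cref{pointset.prop.weak-counterexample}, with discrete mapping spaces. The adjunction $\FdagSCat\dashv\UdagSCat$ gives a dagger functor $G:\FdagSCat\frakC(\bbJ)\to\calD$ from an ordinary simplicial functor $\frakC(\bbJ)\to\UdagSCat\calD$, namely the adjoint of $\bbJ\to N(\calD)$ sending the isomorphism to $0\in\Hom_{\calD}(0,1)$. This is possible since $\mathbf J\to\calD$ exists as a functor.

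Now suppose there were a coherently unitary class $[v]$ from $0$ to $1$ in $\h\FdagSCat\frakC(\bbJ)$. By \cref{bg.prop.cu-groupoid} (4), $[G(v)]$ would be coherently unitary in $\calD$. By \cref{bg.lem.discrete}, it would then be a unitary morphism $0\to1$ in $\calD$, which is impossible by the computation $2a=\pm1$ in \cref{pointset.prop.weak-counterexample}. Hence the object condition of \cref{bg.def.W} fails at $1$, so $\frakC_{\dag}$ of the map is not in $W_{\dag}$. By \cref{dj.def.W}, the map is not a dagger Joyal equivalence. The delicate points are confirming the exact form of $d_{\ai}$ and the endpoint conventions, so that the image is the vertex inclusion. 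Either endpoint works, since the argument is symmetric: it only needs that the two objects are not coherently unitarily equivalent.
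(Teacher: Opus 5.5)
Your proposal is correct and follows the paper's proof essentially step for step. The identifications come from the natural isomorphism $q_{\rmtag,!}F_{\AInv}\cong\FdagSSet$, part (2) reuses the argument of \cref{dj.cor.fibrant-underlying}, and the failure of weak equivalence in (3) is detected by the dagger functor $\FdagSCat\frakC(\bbJ)\to\calD$ into the groupoid of \cref{pointset.prop.weak-counterexample}, combined with \cref{bg.prop.cu-groupoid} (4) and \cref{bg.lem.discrete}. Your description of that functor as the adjoint of $\bbJ\to N(\calD)$ agrees with the paper's composite of the counit $\frakC(\bbJ)\to\mathbf J$ with $\mathbf J\to\UdagSCat\calD$.
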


\begin{proof}
  The three identifications follow from $q_{\rmtag,!}F_{\AInv}\cong\FdagSSet$ in \cref{pointset.prop.free-completions} (3).
 
  The maps in (1) generate the free cofibrations.
  The proof of \cref{dj.cor.fibrant-underlying} shows that the maps in (2) are dagger Joyal trivial cofibrations.
  The map in (3) is a free cofibration by \cref{dj.cor.Fdag-free}.

  It remains to prove that the map in (3) is not a dagger Joyal weak equivalence.
  Suppose that the map in (3) were a dagger Joyal weak equivalence.
  Applying dagger rigidification and its compatibility with free dagger completion would give a dagger DK-equivalence
  \begin{align}
    \mathbf1_{\dag} 
    \cong 
    \frakC_{\dag}\FdagSSet(\Delta^0)
    \to
    \frakC_{\dag}\FdagSSet(\bbJ)
    \cong
    \FdagSCat\frakC(\bbJ).
  \end{align}
  Hence the objects $0$ and $1$ of the target would be coherently unitarily equivalent.

  Compose the ordinary counit $\frakC(\bbJ)\to\mathbf J$ with the functor $\mathbf J\to\UdagSCat\calD$ which sends the generating isomorphism to $0\in\Hom_{\calD}(0,1)=\bbZ$.
  The free--forgetful adjunction extends this composite uniquely to a dagger simplicial functor $\FdagSCat\frakC(\bbJ) \to \calD$, where $\calD$ is the dagger groupoid of \cref{pointset.prop.weak-counterexample}.
  Dagger functors preserve coherent-unitary equivalences.
  Since $\calD$ has discrete mapping spaces, coherent unitarity in $\calD$ is strict unitarity by \cref{bg.lem.discrete}.
  We would therefore obtain a unitary morphism from $0$ to $1$ in $\calD$, contradicting \cref{pointset.prop.weak-counterexample}.
\end{proof}

\subsubsection{Model comparisons}

\begin{theorem}\label{pointset.cor.no-naive-quillen}
  Neither the adjunction
  \begin{align}
    q_{\rmtag,!}:
    \sSet^{\AInv}_{\DCH}
    \rightleftarrows
    \sSet^{\dag}_{\Joyal}:
    q_{\rmtag}^*
    \quad \text{nor} \quad 
    q_{\rmtag}^*:
    \sSet^{\dag}_{\Joyal}
    \rightleftarrows
    \sSet^{\AInv}_{\DCH}
    :q_{\rmtag,*}
  \end{align}
  is a Quillen adjunction.
  In particular, the dagger Joyal model structure is not obtained by restricting the DCH model structure to its strict fixed-vertex objects.
\end{theorem}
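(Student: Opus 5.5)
For each adjunction I will exhibit one generating map that the left adjoint fails to send into the required class.

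For the first adjunction, $q_{\rmtag,!}\dashv q_{\rmtag}^*$, I would use the DCH/DH generator $d_{\ai}:F_{\AInv}(\Delta^0)\to F_{\AInv}(\bbJ)$. This is a DCH trivial cofibration: it is a normal monomorphism whose underlying map $\Delta^0\amalg\Delta^0\to\bbJ\amalg\bbJ^{\myop}$ is a Joyal equivalence, as recorded in \cite[Example~5.9]{DH25}. By \cref{pointset.thm.generator-comparison} (3), $q_{\rmtag,!}(d_{\ai})$ is isomorphic to $\FdagSSet(\Delta^0)\to\FdagSSet(\bbJ)$. That map is a dagger Joyal cofibration but not a dagger Joyal weak equivalence. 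Hence $q_{\rmtag,!}$ does not preserve trivial cofibrations, and the first adjunction is not Quillen. The one point to check is that $d_{\ai}$ really is a DCH trivial cofibration in the chosen convention. If the citation needs support, I would argue that the DCH weak equivalences are created underlying \cite[Corollary~4.9]{DCH19}, and that the underlying map is a coproduct of two Joyal equivalences $\Delta^0\to\bbJ$.

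For the second adjunction, $q_{\rmtag}^*\dashv q_{\rmtag,*}$, the left adjoint is $q_{\rmtag}^*$, so it would have to preserve cofibrations. Take the generating free cofibration $\varnothing\to\FdagSSet(\Delta^0)$ from \cref{pointset.cor.cofibrant-obstruction}. Its image $\varnothing\to q_{\rmtag}^*\FdagSSet(\Delta^0)$ is not a DCH normal monomorphism, because it is not bijective on vertices (\cref{pointset.thm.cofibrations}). So $q_{\rmtag}^*$ does not preserve cofibrations, and the second adjunction is not Quillen either.

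The final clause says the dagger Joyal model structure is not obtained by restricting the DCH structure along the fully faithful inclusion $q_{\rmtag}^*$ of \cref{pointset.thm.strict-presheaves}. I would derive it from the same two facts, read as mismatches between the classes. The cofibrant object $\FdagSSet(\Delta^0)$ has a non-cofibrant image. The DCH weak equivalences do not restrict to the dagger Joyal ones: \cref{pointset.prop.weak-counterexample} gives a map $N_{\dag}(F)$ between fibrant objects whose image $q_{\rmtag}^*N_{\dag}(F)$ is a DCH weak equivalence, yet $N_{\dag}(F)$ is not a dagger Joyal equivalence. So neither the cofibrations nor the weak equivalences are pulled back along $q_{\rmtag}^*$.

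I expect the only real obstacle to be confirming that $d_{\ai}$ is a trivial cofibration in the DCH structure; everything else reduces to results already in the paper.
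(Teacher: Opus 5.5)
Your proposal is correct and follows the paper's own proof essentially step for step. The first adjunction fails because $d_{\ai}$ is a DCH trivial cofibration whose image $\FdagSSet(\Delta^0)\to\FdagSSet(\bbJ)$ is not a dagger Joyal weak equivalence (\cref{pointset.thm.generator-comparison}). The second fails because $q_{\rmtag}^*$ sends the cofibration $\varnothing\to\FdagSSet(\Delta^0)$ to a non-normal map, and \cref{pointset.prop.weak-counterexample} supplies the weak-equivalence mismatch for the final clause.

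Your fallback check that $d_{\ai}$ is a trivial cofibration is sound and a bit more self-contained than the paper's bare citation of \cite[Definition~5.10]{DH25}. It is a normal monomorphism because the swap involution on $F_{\AInv}(-)$ fixes no simplex, and its underlying map is a coproduct of Joyal equivalences.
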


\begin{proof}
  The map $d_{\ai}$ is a DCH trivial cofibration \cite[Definition~5.10]{DH25}.
  Its image under $q_{\rmtag,!}$ is not a dagger Joyal weak equivalence by \cref{pointset.thm.generator-comparison}.
  Hence $q_{\rmtag,!}\dashv q_{\rmtag}^*$ is not a Quillen adjunction.

  On the other hand, by \cref{pointset.cor.cofibrant-obstruction}, $\FdagSSet(\Delta^0)$ is dagger Joyal cofibrant, whereas $q_{\rmtag}^*\FdagSSet(\Delta^0)$ is not DCH cofibrant.
  Thus $q_{\rmtag}^*$ does not preserve $\varnothing\to\FdagSSet(\Delta^0)$.
  The weak equivalences also disagree on strict fixed-vertex objects by \cref{pointset.prop.weak-counterexample}.
\end{proof}

\section*{References}
\begingroup
\printbibliography[heading=none]
\endgroup

\end{document}